
\documentclass{article}

\usepackage{microtype}
\usepackage{graphicx}
\usepackage{subfigure}
\usepackage{booktabs} 

\usepackage[x11names,table,xcdraw]{xcolor} 
\usepackage{mdframed} 
\usepackage{subcaption} 
\usepackage{enumitem} 

\usepackage{hyperref}



\usepackage[accepted]{icml2025}

\usepackage{amsmath}
\usepackage{amssymb}
\usepackage{mathtools}
\usepackage{amsthm}

\usepackage{thmtools, thm-restate}
\usepackage[frozencache,cachedir=.]{minted} 

\definecolor{codeBgLight}{rgb}{0.95, 0.95, 0.95} 
\definecolor{codeBgDark}{rgb}{0.153, 0.157, 0.153} 
\setminted[python]{
    breaklines=true,
    encoding=utf8,
    fontsize=\small,
    bgcolor=codeBgLight,
    baselinestretch=1,
    linenos=false,
    obeytabs=true,
    tabsize=4
}

\usepackage{tablefootnote}

\usepackage[capitalize,noabbrev]{cleveref}

\theoremstyle{plain}
\newtheorem{theorem}{Theorem}[section]
\newtheorem{proposition}[theorem]{Proposition}
\newtheorem{lemma}[theorem]{Lemma}

\newtheorem{conjecture}[theorem]{Conjecture} 

\theoremstyle{definition}
\newtheorem{definition}[theorem]{Definition}
\newtheorem{assumption}[theorem]{Assumption}

\theoremstyle{remark}

\newtheorem*{remark*}{Remark} 

\usepackage[textsize=tiny]{todonotes}
\usepackage{amsmath,amsfonts,amssymb,bm,bbm,pifont}

\def\norm#1{\lVert#1\rVert}

\def\bignorm#1{\left\lVert#1\right\rVert}

\def\bigopen#1{\left(#1\right)}
\def\bigset#1{\left\{#1\right\}}
\def\bigclosed#1{\left[#1\right]}

\newcommand{\vertiii}[1]{{\left\vert\kern-0.25ex\left\vert\kern-0.25ex\left\vert #1 
    \right\vert\kern-0.25ex\right\vert\kern-0.25ex\right\vert}}

\newcommand{\red}[1]{\textcolor{Firebrick3}{#1}}%
\newcommand{\yellow}[1]{\textcolor{Orange2}{#1}}%
\newcommand{\green}[1]{\textcolor{Chartreuse4}{#1}}%
\newcommand{\blue}[1]{\textcolor{DodgerBlue3}{#1}}%

\renewcommand{\sp}{\mathrm{span}}
\newcommand{\tril}{\mathrm{tril}}
\newcommand{\mGamma}{\bm{\Gamma}}
\newcommand{\mTheta}{\bm{\Theta}}

\newcommand{\RCD}{\text{RCD}} 
\newcommand{\RPCD}{\text{RPCD}} 
\newcommand{\MARCD}{\mathcal{M}_{\bm{A}}^{\RCD}} 
\newcommand{\MARPCD}{\mathcal{M}_{\bm{A}}^{\RPCD}} 


















\def\1{\bm{1}}







\def\vzero{{\bm{0}}}
\def\vone{{\bm{1}}}

\def\va{{\bm{a}}}
\def\vb{{\bm{b}}}

\def\ve{{\bm{e}}}

\def\vu{{\bm{u}}}
\def\vv{{\bm{v}}}
\def\vw{{\bm{w}}}
\def\vx{{\bm{x}}}
\def\vy{{\bm{y}}}
\def\vz{{\bm{z}}}


\def\mA{{\bm{A}}}
\def\mB{{\bm{B}}}
\def\mC{{\bm{C}}}
\def\mD{{\bm{D}}}
\def\mE{{\bm{E}}}
\def\mF{{\bm{F}}}

\def\mI{{\bm{I}}}

\def\mM{{\bm{M}}}

\def\mP{{\bm{P}}}
\def\mQ{{\bm{Q}}}

\def\mT{{\bm{T}}}

\def\mV{{\bm{V}}}
\def\mW{{\bm{W}}}
\def\mX{{\bm{X}}}
\def\mY{{\bm{Y}}}
\def\mZ{{\bm{Z}}}

\DeclareMathAlphabet{\mathsfit}{\encodingdefault}{\sfdefault}{m}{sl}
\SetMathAlphabet{\mathsfit}{bold}{\encodingdefault}{\sfdefault}{bx}{n}

\def\gA{{\mathcal{A}}}

\def\gD{{\mathcal{D}}}

\def\gM{{\mathcal{M}}}

\def\gO{{\mathcal{O}}}

\def\gS{{\mathcal{S}}}



\def\sN{{\mathbb{N}}}

\def\sS{{\mathbb{S}}}








\newcommand{\E}{\mathbb{E}}

\newcommand{\R}{\mathbb{R}}



\DeclareMathOperator*{\argmax}{arg\,max}
\DeclareMathOperator*{\argmin}{arg\,min}

\DeclareMathOperator{\tr}{tr}

\DeclareMathOperator{\diag}{diag}

\icmltitlerunning{Provable Benefit of Random Permutations over Uniform Sampling in Stochastic Coordinate Descent}

\begin{document}

\twocolumn[
\icmltitle{\texorpdfstring{Provable Benefit of Random Permutations over Uniform Sampling\\in Stochastic Coordinate Descent}{Provable Benefit of Random Permutations over Uniform Sampling in Stochastic Coordinate Descent}}



\icmlsetsymbol{equal}{*}

\begin{icmlauthorlist}
\icmlauthor{Donghwa Kim}{kaist}
\icmlauthor{Jaewook Lee}{kaist}
\icmlauthor{Chulhee Yun}{kaist}
\end{icmlauthorlist}

\icmlaffiliation{kaist}{KAIST AI, Seoul, South Korea}

\icmlcorrespondingauthor{Chulhee Yun}{chulhee.yun@kaist.ac.kr}

\icmlkeywords{Machine Learning, ICML}

\vskip 0.3in
]



\printAffiliationsAndNotice{}  

\begin{abstract}
We analyze the convergence rates of two popular variants of coordinate descent (CD): random CD (RCD), in which the coordinates are sampled uniformly at random, and random-permutation CD (RPCD), in which random permutations are used to select the update indices.
Despite abundant empirical evidence that RPCD outperforms RCD in various tasks, the theoretical gap between the two algorithms’ performance has remained elusive.
Even for the benign case of positive-definite quadratic functions with permutation-invariant Hessians, previous efforts have failed to demonstrate a provable performance gap between RCD and RPCD.
To this end, we present novel results showing that, for a class of quadratics with permutation-invariant structures, the contraction rate upper bound for RPCD is always strictly smaller than the contraction rate lower bound for RCD for every individual problem instance.
Furthermore, we conjecture that this function class contains the worst-case examples of RPCD among all positive-definite quadratics. Combined with our RCD lower bound, this conjecture extends our results to the general class of positive-definite quadratic functions.
\end{abstract}


\section{Introduction}
\label{sec:1}

We consider the minimization problem:
\begin{align}
    \min_{\vx \in \R^{n}} f(\vx), \label{eq:1}
\end{align}
where $f : \R^{n} \rightarrow \R$ is a smooth and convex function.

The \emph{coordinate descent} (CD) algorithm has been proposed and widely used for solving problem \eqref{eq:1} arising in modern optimization and machine-learning problems as it can significantly reduce the computational overhead of high-dimensional or large-scale problems by updating only a single coordinate (or sometimes a small subset, referred to as blocks) instead of the whole set of parameters.
Stemming from extensive studies on different types of CD algorithms \citep{beck13,lee13,wright15,nesterov17}, \textit{randomized} versions of CD have been especially popularized since \citet{nesterov12,richtarik11}.
Usually referred to as \emph{random coordinate descent} (RCD), these types of algorithms choose the coordinates to update i.i.d.\ randomly from a certain distribution, typically uniform and sometimes specifically chosen according to the problem geometry.
The introduction of randomness has been demonstrated in \citet{sun16} to outperform deterministic algorithms like \emph{cyclic coordinate descent} (CCD) in terms of worst-case performance.

Meanwhile, for stochastic algorithms under finite-sum minimization settings, utilizing \emph{random permutations} has been common for a long time, based on observations that taking a full pass among the component gradient updates in a randomly permuted order (commonly referred to as SGD with Random Reshuffling) shows faster convergence speed than its ordinary, i.i.d.\ with-replacement-sampling counterpart.
After empirical observations and some conjectures \citep{bottou09,recht12}, a recent line of work has theoretically analyzed the exact convergence rates of SGD-RR and demonstrated the benefits over ordinary SGD 
\citep{ahn20,mishchenko20,cha2023,liu2024}.
Many other stochastic algorithms also incorporate random reshuffling for acceleration, including federated learning \citep{mishchenko22a,yun2022} and minimax optimization \citep{das22,cho2023}.

A similar variant also exists for CD, often referred to as \emph{random-permutation coordinate descent} (RPCD) \citep{sun20}.
While RPCD is based on a similar idea that using permutations can accelerate,
theoretical analysis of RPCD is even harder because we must focus on the \emph{preconditioning-like effects} of using permutations, which is very different from the case of SGD-RR (where random permutations induce \emph{variance reduction}) and is notoriously difficult to inspect theoretically.
However, for strongly convex quadratic functions with \emph{permutation-invariant Hessians}, it is possible to compute the expectation of such matrices over all permutations explicitly.
Based on this approach, \citet{lee2019} (first appeared in 2016) derive convergence bounds for the expected \emph{function value} for quadratic functions with permutation-invariant Hessians, and a follow-up work \citet{wright17} further extends to Hessians with slight diagonal perturbations.

A natural, important question arises on \emph{whether RPCD beats RCD}, just as in the case of SGD with random permutations \citet{bottou09}.
Many empirical studies have shown this to be true, as we demonstrate in \cref{fig:intro}, but the theoretical aspects of this phenomenon have been relatively less revealed in the literature. 
\citet{gurbuzbalaban18} demonstrate that it is possible to derive a stronger convergence upper bound for RPCD for permutation-invariant matrices with negative off-diagonal entries.
However, they only compare between the \emph{upper bounds} of RCD and RPCD, failing to demonstrate a rigorous gap between the two algorithms.
Also, even for this comparison, the paper does not provide a clear analysis (other than numerical experiments) on the assertion that the ratio of contraction upper bounds for RCD to RPCD is larger than $1$.

\paragraph{Summary of Contributions.}
Our results contribute to overcoming these limitations by adequately comparing the lower bounds for RCD and the upper bounds for RPCD.
We present the following results in the paper.

\begin{itemize}
    \item In \cref{thm:rcdlb}, we show a novel convergence lower bound of RCD that holds for general quadratics with positive definite Hessians.
    \item In \cref{thm:rpcdub}, we show the convergence upper bound of RPCD for a class of quadratic functions including permutation-invariant Hessians.
    This upper bound coincides with the RCD lower bound, concluding that RPCD outperforms RCD for \emph{any} problem instance.
    \item In \cref{thm:rcdlbpi}, we show a stronger convergence lower bound of RCD that holds for the same function class with the RPCD upper bounds.
    This demonstrates the existence of \emph{a wider gap} between RCD and RPCD for all problem instances in this function class.
    \item In \cref{sec:4}, we conjecture that our RPCD upper bounds can be extended to the general class of positive definite quadratic functions.
    We also provide some experiments demonstrating the convergence of RPCD and RCD in practice.
\end{itemize}

\begin{figure}[tb]
    \centering
    \includegraphics[width=0.9\linewidth]{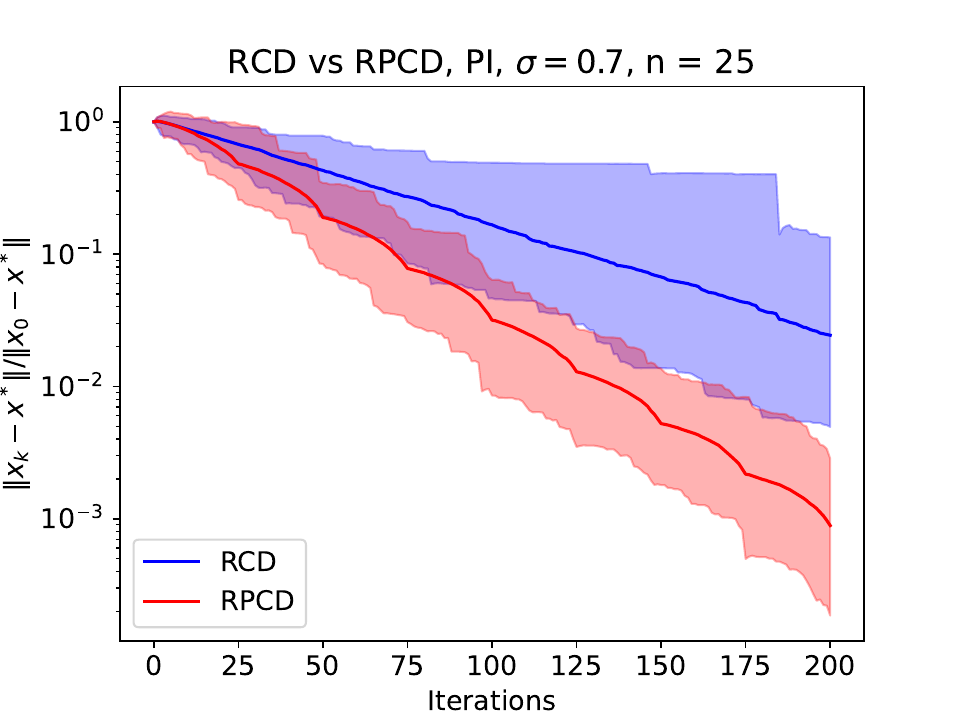}
    \caption{Performance comparison of RCD and RPCD.
    We use the objective function $f(\vx) = \frac{1}{2} \vx^{\top} \mA \vx$ with $\mA = \sigma \mI + (1 - \sigma) \vone \vone^{\top}$.
    For this plot we use $\sigma = 0.7$ and dimension $n = 25$.}
    \label{fig:intro}
\end{figure}

\subsection{Related Work}

\paragraph{Randomized CD.}

\citet{nesterov12} presents global non-asymptotic convergence rates of RCD for (strongly) convex, smooth functions, where the probability of sampling each index is proportional to the $\alpha$-th power of the coordinate-wise Lipschitz constant.
(The case $\alpha = 0$ corresponds to uniform sampling).
\citet{richtarik11} further improve the constant factors of the convergence rate and extend to composite minimization problems.
After \citet{beck13} established global non-asymptotic convergence rates of block CCD with gradient-based updates, \citet{sun16} show that RCD outperforms CCD for quadratics $f(\vx) = \frac{1}{2} \vx^{\top} \mA \vx$ with \emph{permutation-invariant} Hessians of the form
$\mA = \sigma \mI + (1- \sigma) \vone \vone^{\top}$
for some $\sigma \in (0, 1]$, having positive off-diagonals\footnote{See \cref{sec:2} for definitions and notations.}.
They also show that this is the worst-case instance of CCD via a (up-to-constant) matching lower bound.
\citet{gurbuzbalaban17} demonstrate a different case analysis on when CCD outperforms RCD, the Hessians of which they refer to as \emph{2-cyclic matrices}. 

\begin{table*}[tb]
    \centering
    \begin{tabular}{|c|c|c|cc|}
        \hline
        Reference & Algorithm & Function Class & \multicolumn{2}{|c|}{Bounds} \\
        \hline
        \hline
        \cref{thm:rcdlb} & RCD & All Quadratics & $\max \bigset{\bigopen{1 - \frac{1}{n}}^n\!\!\!, \bigopen{1 - \frac{\sigma}{n}}^{2n}}$ & (LB) \\
        \hline
        \cref{thm:rpcdub} & RPCD & Hessian in $\gA_{\sigma}$ & $\max \bigset{\bigopen{1 - \frac{1}{n}}^n\!\!\!, \bigopen{1 - \frac{\sigma}{n}}^{2n}}$ & (UB) \\
        \hline
        \cref{thm:rcdlbpi} & RCD & Hessian in $\gA_{\sigma}$ & $\bigopen{1 - \frac{1}{n} + \frac{(1 - \sigma)^2}{n}}^n$ & (LB) \\
        \hline
        \cref{sec:d} & RPCD & Hessian in $\gA_{\sigma}$ & $1 - 2 \sigma - \frac{2 \sigma}{n} + 2 \sigma^2 + \gO \bigopen{\frac{\sigma^2}{n}} + \gO \bigopen{\sigma^3}$ & (UB) \\
        \hline
    \end{tabular}
    \caption{Summary of our results for (a subclass of) strongly convex, smooth quadratic functions.
    The rightmost column indicates the upper/lower bounds (UB/LB) of the value $\lim_{K \to \infty} \bigopen{ \frac{\E \bigclosed{\| \vx_K \|^2}}{\| \vx_0 \|^2} }^{1/K}$, where $\vx_K$ is the algorithm output after either $K$ epochs of RPCD or $T = nK$ iterations of RCD.}
    \label{tab:summary}
\end{table*}

\paragraph{Random Permutations.}
One of the earliest works on RPCD by \citet{sun20} (first appeared in 2015) analyzes the convergence of (block-)RPCD.
While the results cover all positive definite quadratic functions and also the more general method of \emph{alternating direction method of multipliers} (ADMM), expected \emph{iterate} convergence is a weaker guarantee than the expected \emph{iterate norm} or \emph{function value}.
Another work by \citet{lee2019} (first appeared in public in 2016) derived convergence bounds for the expected \emph{function value} for quadratic functions with permutation-invariant Hessians, and a follow-up work \citet{wright17} further extends to Hessians with subtle diagonal perturbations, which turns out to be equivalent to $\mA = \sigma \mI + (1- \sigma) \vu \vu^{\top}$ where the entries of $\vu$ are not too far from $1$.
\citet{gurbuzbalaban18} compares the contraction ratio upper bound for RCD and RPCD to demonstrate that RPCD has better convergence guarantees for permutation-invariant matrices with negative off-diagonal entries.
(We later provide a more detailed, quantitative comparison in \cref{tab:comparison}.)



\section{Preliminaries}
\label{sec:2}
For a positive integer $N$, we write $[N] := \{ 1, 2, \dots, N \}$. 
We denote the dimension (number of coordinates) by $n \in \sN$.
We define $\sS^{n}$ ($\sS_{+}^{n}$) as the collection of all symmetric (positive-definite) matrices of size $n \times n$.
We write $\|\cdot\|$ for the Euclidean $\ell_2$-norm for vectors and the spectral norm ({\it i.e.},  operator norm) for matrices.
We also use $\|\cdot\|_{\infty}$ for the Euclidean $\ell_{\infty}$-norm in an analogous sense.
The symbols $\otimes$ and $\odot$ represent the matrix Kronecker product and the elementwise Hadamard product, respectively.

The spectral radius ({\it i.e.}, maximum absolute eigenvalue) of a matrix $\mM$ is denoted by $\rho(\mM)$, and the minimum (maximum) eigenvalue of matrices $\mM$ (with real eigenvalues) is denoted by $\lambda_{\min} (\mM)$ ($\lambda_{\max} (\mM)$). 
We denote by $\tril(\mM)$ the lower triangular part of $\mM$, including the diagonals.
We will usually denote the elements of matrices by lower-case letters with subscripts, as in $\mA = (a_{ij})$, and the elements of vectors via parentheses indicating the index, as in $\vx = (x(1), \dots, x(n))$.

We denote by $\mI_k$ the $k$-dimensional identity matrix, $\vone_k$ the $k$-dimensional ones vector, and $\diag\{a_1, \dots, a_n\}$ the diagonal matrix with $(i, i)$-th entry $a_i$, or the similarly constructed block diagonal matrix if the elements are matrices.
(We may drop the $k$ subscript in obvious cases, usually when $k = n$.)
Also, we denote by $\ve_i \in \R^{n}$ the unit vector with the $1$ in the $i$-th coordinate and $\mE_i \in \R^{n \times n}$ by the unit matrix $\ve_i \ve_i^{\top}$.

\subsection{Problem Settings}

Our goal is to minimize the following quadratic function:
\begin{align*}
    \min_{\vx} f(\vx) := \frac{1}{2} \vx^{\top} \mA \vx,
\end{align*}
where the Hessian $\mA \in \sS_{+}^{n}$ is a \emph{positive definite} matrix.
We write $\mA = (a_{ij})$ for the elements of $\mA$.
The objective will be to find the minimizer $\vx^{\star} = \bm{0}$ of $f$.

For useful purposes, we define the following quantity.
\begin{definition}
    We define $\sigma = \lambda_{\min} (\mD^{-1} \mA)$, where $\mD = \diag(a_{11}, \dots, a_{nn})$ is the diagonal part of $\mA$.
    In particular, if $\mD = \mI$, then we have $\sigma = \lambda_{\min} (\mA)$.
\end{definition}

By definition, we must have $\sigma \in (0, 1]$.
Later in \cref{ass:unitdiag}, we justify why we can set $\mD = \mI$ without loss of generality.
For such cases, $\sigma$ is equivalent to the strong convexity constant of the function $f$, and the convergence bounds we derive will depend on $\sigma$.

\begin{remark*}
    Our analysis automatically includes the translated quadratics $f(\vx) = \frac{1}{2} \vx^{\top} \mA \vx + \vb^{\top} \vx + c$ for any $\vb \in \R^{n}$ and $c \in \R$, as long as the Hessian $\mA$ is positive definite.
    We can straightforwardly replace $\vx$ with $\vx - \vx^{\star}$ for $\vx^{\star} = \mA^{-1} \vb$, the minimizer of the translated problem \citet{wright17}.
\end{remark*}

\subsection{Algorithms}
\label{sec:2.2}

\cref{alg:cd} shows a general framework for coordinate descent (CD) methods.
We focus on two versions of CD that differ in how we choose the index $i_t$ at each iteration.
\begin{itemize}[topsep=0pt,itemsep=0pt]
    \item For \emph{random coordinate descent} (RCD), we choose
    \begin{align}
        i_t \sim \text{Unif}([n]). \label{eq:rcdindex}
    \end{align}
    \item For \emph{random-permutation coordinate descent} (RPCD), we use $T = nK$ iterations ($K$ is the number of epochs).
    For each $k = 0, \dots, K-1$ we choose a permutation $p_k$ of $[n]$ uniformly at random and choose
    \begin{align}
        i_t = p_k(\ell+1), \label{eq:rpcdindex}
    \end{align}
    where $t = nk + \ell$ with $\ell \in \{0, \dots, n-1\}$.
\end{itemize}

\begin{remark*}
    For quadratic objectives $f(\vx) = \frac{1}{2} \vx^{\top} \mA \vx$, we can rewrite the $\argmin$ updates of \cref{alg:cd} in explicit form:
    \begin{align*}
        \vx_{t+1} &= \vx_{t} - \frac{1}{a_{i_t i_t}} \mE_{i_t} \mA \vx_{t} = \vx_{t} - \frac{1}{a_{i_t i_t}} \mE_{i_t} \nabla f (\vx_{t})
    \end{align*}
    which can also be viewed as a coordinate \emph{gradient} descent method with step size $\eta_{i_t} = \frac{1}{a_{i_t i_t}}$.
    Such gradient-based methods are often used as a proxy of CD when it is hard to compute the exact $\argmin$s via line search.
    In the case of quadratics, the two are essentially equivalent.
\end{remark*}

\begin{algorithm}[tb] 
    \caption{Coordinate Descent (CD) }
    \label{alg:cd}
    \begin{algorithmic}
        \STATE {\bfseries Input:} Number of iterations $T$
        \STATE {\bfseries Initialize:} $\vx_{0} \in \R^{n}$
        \FOR{$t=0$ {\bfseries to} $T-1$}
            \STATE Choose the index $i_t \in [n]$ to update
            \STATE Update $\vx_{t+1} = (x_t(1), \dots, \underbrace{x'}_{i_t\text{-th}}, \dots, x_t(n))$, where \vspace{-10pt}
            \begin{align*}
                x' = \argmin_{x} f (x_{t} (1), \dots, \underbrace{x}_{i_t\text{-th}}, \dots, x_{t} (n))
            \end{align*} \vspace{-20pt}
        \ENDFOR
        \STATE {\bfseries Output:} $\vx_{T} \in \R^{n}$
    \end{algorithmic}
\end{algorithm}

\paragraph{RCD.} As shown in \cref{eq:rcdindex}, RCD randomly chooses a coordinate index per iteration to update.

We can concisely write one iteration of RCD as follows:
\begin{align*}
    \vx_{t+1} = \mT_{\mA, i}^{\RCD} \vx_t,
\end{align*}
where we choose to update index $i_t = i$ and we define
\begin{align}
    \mT_{\mA, i}^{\RCD} := \mI - \frac{1}{a_{ii}} \mE_{i} \mA. \label{eq:rcdmatrix}
\end{align}

\begin{remark*}
    \citet{nesterov12} also considers RCD with indices $i_t$ sampled from a \emph{non-uniform} distribution, primarily when the probabilities are proportional to the Lipschitz constants of each coordinate.
    However, in our work, we only focus on the uniformly sampled case as our purpose is to make \emph{a fair comparison with RPCD.}
\end{remark*}

\paragraph{RPCD.} As shown in \cref{eq:rpcdindex}, RPCD randomly chooses a permutation of indices and then takes coordinate updates in the order of the permutation we chose.

This time, we focus on one \emph{epoch} (containing $n$ iterates) of RPCD for quadratic objectives $f(\vx) = \frac{1}{2} \vx^{\top} \mA \vx$.
Accordingly, here we denote by $\vx_{k+1}$ the iterate after applying one epoch (of $n$ updates) to $\vx_{k}$.
For a better illustration, we follow \citet{sun20} and consider the simple case when $n = 3$ and $p_0 = (123)$ is the identity permutation (which can also be viewed as an epoch of cyclic coordinate descent).
Observing that we update only the $i$-th coordinate at the $i$-th iteration (which won't change for the rest of the epoch), we can write RPCD in terms of coordinates as follows:
\begin{align*}
    x_{1} (1) &= x_{0} (1) - \tfrac{1}{a_{11}} \bigopen{a_{11} x_{0} (1) + a_{12} x_{0} (2) + a_{13} x_{0} (3)} \\
    x_{1} (2) &= x_{0} (2) - \tfrac{1}{a_{22}} \bigopen{a_{21} x_{1} (1) + a_{22} x_{0} (2) + a_{23} x_{0} (3)} \\
    x_{1} (3) &= x_{0} (3) - \tfrac{1}{a_{33}} \bigopen{a_{31} x_{1} (1) + a_{32} x_{1} (2) + a_{33} x_{0} (3)}
\end{align*}
which can be rearranged and written in matrix form as:
\begin{align*}
    \begin{bmatrix}
        a_{11} & 0 & 0 \\ a_{21} & a_{22} & 0 \\ a_{31} & a_{32} & a_{33}
    \end{bmatrix} \vx_1 &= - \begin{bmatrix}
        0 & a_{12} & a_{13} \\ 0 & 0 & a_{23} \\ 0 & 0 & 0
    \end{bmatrix} \vx_0,
\end{align*}
or more concisely:
\begin{align*}
    \vx_1 &= - \mGamma^{-1} (\mA - \mGamma) \vx_0 \\
    &= (\mI - \mGamma^{-1} \mA) \vx_0
\end{align*}
where $\mGamma = \tril(\mA)$. 
In the general case, we can similarly observe that one epoch of RPCD with permutation $p_k = p$ boils down to
\begin{align*}
    \vx_{k+1} = \mT_{\mA, p}^{\text{RPCD}} \vx_{k},
\end{align*}
where we define 
\begin{align}
    \mT_{\mA, p}^{\text{RPCD}} &:= \mI - \mP \mGamma_{\mP}^{-1} \mP^{\top} \mA. \label{eq:rpcdmatrix}
\end{align}
Here, $\mP \in \{0, 1\}^{n \times n}$ is the permutation matrix generated by the permutation $p$ ({\it i.e.}, $\mP \ve_i = \ve_{p(i)}$ for all $i\in [n]$) and $\mGamma_{\mP} = \tril(\mP^{\top} \mA \mP)$.

\paragraph{Unit-diagonal Assumption.} 
Here we state and justify the following assumption on the Hessian $\mA$ of $f$.

\begin{assumption}
    \label{ass:unitdiag}
    We assume that the Hessian $\mA$ has unit diagonals, or equivalently, $a_{11} = \dots = a_{nn} = 1$.
\end{assumption}

This assumption might seem restrictive, but this is \emph{without loss of generality for any coordinate-descent type methods on quadratics}.
For any nonzero diagonal matrix $\mF$, every iterate of a certain CD algorithm (RCD, RPCD, etc.) applied on $\tilde{f}(\vx) = \frac{1}{2} \vx^{\top} \widetilde{\mA} \vx$ for $\widetilde{\mA} = \mF^{-1} \mA \mF^{-1}$ proceeds as $\tilde{\vx}_{k} = \mF \vx_{k}$, where $\vx_{k}$ is the trajectory of the same algorithm applied on $f(\vx)$ with the same initialization and choices of $i_k$ (see Appendix A of \citet{wright17}).
We may therefore assume that $\mD = \mI$ by choosing $\mF = \mD^{\frac12}$.
(Note that $\mD$ is defined as the diagonal part of $\mA$.)

\paragraph{Sign-flip Invariance.}
Another particularly useful case of the diagonal transformation is when $\mF = \mathrm{diag}(\vv)$ with $\vv \in \{\pm 1\}^n$. In this setting, we have $\widetilde{\mA} = \mF^{-1} \mA \mF^{-1} = \mA \odot \vv\vv^\top$, which corresponds to flipping the signs of specific rows and columns of $\mA$.

\subsection{Matrix Operators}

Suppose that $f(\vx) = \frac{1}{2} \vx^{\top} \mA \vx$ and we have some iteration that proceeds according to the following update:
\begin{align*}
    \vx_{k+1} &= \mT_{\mA} \vx_{k}
\end{align*}
where the iteration matrix $\mT_{\mA} \in \R^{n \times n}$ is i.i.d.\ random and independent with the iterate $\vx_{k}$.
Let us define a linear matrix operator $\gM_{\mA} : \R^{n \times n} \rightarrow \R^{n \times n}$ as
\begin{align*}
    \gM_{\mA}(\mX) &:= \E \bigclosed{\mT_{\mA}^{\top} \mX \mT_{\mA}}.
\end{align*}
Note that we can also write $\gM_{\mA}$ in matrix form by vectorizing the input and output matrices:
\begin{align*}
    \mathrm{vec} (\gM_{\mA}(\mX)) &= \E \bigclosed{\mT_{\mA}^{\top} \otimes \mT_{\mA}^{\top}} \mathrm{vec} (\mX).
\end{align*}

We denote the matrix operator for RCD and RPCD, generated by the random matrix iterations $\mT_{\mA}^{\text{RCD}}$ and $\mT_{\mA}^{\text{RPCD}}$ by $\MARCD$ and $\MARPCD$, respectively.

For the case of RCD, we can plug in $\mT_{\mA, i} = \mI - \mE_i \mA$ (as in \eqref{eq:rcdmatrix} with $a_{ii} = 1$) to explicitly compute
\begin{align*}
    \gM_{\mA}(\mX) &= \E_{i} \bigclosed{(\mI - \mE_i \mA)^{\top} \mX (\mI - \mE_i \mA)} \\
    &= \bigopen{\mI - \frac{\mA}{n}}^{\top} \mX \bigopen{\mI - \frac{\mA}{n}} \\
    &\phantom{{}={}} + \frac{\mA^{\top} (n\cdot \diag(\mX) - \mX) \mA}{n^2}.
\end{align*}

To motivate the use of such matrix operators,
suppose that we define the convergence measure as the quadratic form below, where $\mTheta \in \sS_{+}^{n}$ is a positive definite matrix that might depend on $\mA$ but not on $\vx_k$:
\begin{align}
    \Psi_k &= \frac{1}{2} \vx_{k}^{\top} \mTheta \vx_{k}. \label{eq:Theta}
\end{align}
This (up to scaling) includes typical choices like the \emph{squared Euclidean norm} $\norm{\vx_{k}}^{2}$ (using $\mTheta = \mI$) and the \emph{function value} $f(\vx_{k})$ (using $\mTheta = \mA$).
Then we can observe that
\begin{align}
    \frac{\E \bigclosed{\Psi_{k}}}{\Psi_{0}} &= \frac{\vy_{0}^{\top} \bigopen{\mTheta^{-1/2} \gM_{\mA}^{k}(\mTheta) \mTheta^{-1/2}} \vy_{0}}{\norm{\vy_{0}}^2}
    \label{eq:Theta2}
\end{align}
where $\vy_{0} := \mTheta^{1/2} \vx_{0}$ and the expectation above is conditioned by $\vx_0$.
Similarly, we omit the conditional notation part of the expectations if clear by context.


\paragraph{Diagonalizability.}
If a linear matrix operator $\gM_{\mA}$ is diagonalizable, then we can view $\gM_{\mA}^{k} (\mTheta)$ in \eqref{eq:Theta2} as a power iteration. 
The limit as $k \rightarrow \infty$ will be dominated by the eigenmatrix of $\gM_{\mA}$ with the largest absolute eigenvalue among nonzero components of the eigendecomposition of $\mTheta$.
Hence we can understand the spectral radius of the operator $\rho (\gM_{\mA})$ as an upper bound of the contraction for convergence measures.

\begin{restatable}{lemma}{lemdiagonalizable}
    \label{lem:diagonalizable}
    The matrix operators $\gM_{\mA}^{\emph{RCD}}$ and $\gM_{\mA}^{\emph{RPCD}}$ are both diagonalizable.
\end{restatable}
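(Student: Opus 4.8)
The plan is to show diagonalizability by exhibiting, for each operator, a full basis of eigenmatrices, which is most naturally done by first symmetrizing the operator with respect to a suitable inner product and then invoking the spectral theorem. Recall from \eqref{eq:Theta2} that the relevant object is $\mTheta^{-1/2} \gM_{\mA}(\mTheta) \mTheta^{-1/2}$; more to the point, the natural move is to conjugate the whole operator $\gM_{\mA}$ by $\mX \mapsto \mA^{1/2} \mX \mA^{1/2}$ (or its inverse) and study the resulting operator $\widetilde{\gM}(\mX) := \mA^{1/2}\gM_{\mA}(\mA^{-1/2}\mX\mA^{-1/2})\mA^{1/2} = \E[\widetilde{\mT}^{\top}\mX\widetilde{\mT}]$, where $\widetilde{\mT} := \mA^{1/2}\mT_{\mA}\mA^{-1/2}$. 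Since $\gM_{\mA}$ and $\widetilde{\gM}$ are similar, it suffices to prove $\widetilde{\gM}$ is diagonalizable, and the advantage is that for quadratics the iteration matrices become self-adjoint after this conjugation.

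First I would check that for a single RCD step the conjugated matrix $\widetilde{\mT}_{\mA,i} = \mA^{1/2}(\mI - \mE_i\mA)\mA^{-1/2} = \mI - \mA^{1/2}\mE_i\mA^{1/2}$ is symmetric; indeed $\mA^{1/2}\mE_i\mA^{1/2}$ is symmetric (it equals $(\mA^{1/2}\ve_i)(\mA^{1/2}\ve_i)^{\top}$, in fact rank one and PSD). Consequently $\widetilde{\gM}^{\RCD}(\mX) = \E_i[\widetilde{\mT}_{\mA,i}\mX\widetilde{\mT}_{\mA,i}]$ is a self-adjoint operator on the space $\sS^n$ of symmetric matrices equipped with the trace (Frobenius) inner product $\langle \mX,\mY\rangle = \tr(\mX\mY)$: each summand $\mX\mapsto \widetilde{\mT}_{\mA,i}\mX\widetilde{\mT}_{\mA,i}$ is self-adjoint because $\widetilde{\mT}_{\mA,i}$ is symmetric, and a convex combination of self-adjoint operators is self-adjoint. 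The spectral theorem then gives an orthonormal eigenbasis of $\widetilde{\gM}^{\RCD}$ on $\sS^n$, hence $\widetilde{\gM}^{\RCD}$ — and therefore $\gM_{\mA}^{\RCD}$ — is diagonalizable. One should also note that $\gM_{\mA}$ maps $\sS^n$ to itself and vanishes on skew-symmetric inputs (or, more carefully, that restricting to $\sS^n$ captures everything relevant, since $\mTheta$ is symmetric); the decomposition $\R^{n\times n} = \sS^n \oplus (\text{skew})$ together with self-adjointness on $\sS^n$ yields diagonalizability on all of $\R^{n\times n}$, possibly after checking the action on the skew part is also diagonalizable (it is, e.g. it is zero, since $\mE_i\mA$ conjugation of a skew matrix... — this needs a short argument, see below).

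For RPCD the same argument goes through for each fixed permutation $p$: the one-epoch matrix is $\mT_{\mA,p}^{\RPCD} = \mI - \mP\mGamma_{\mP}^{-1}\mP^{\top}\mA$, so the conjugated matrix is $\widetilde{\mT}_{\mA,p}^{\RPCD} = \mI - \mA^{1/2}\mP\mGamma_{\mP}^{-1}\mP^{\top}\mA^{1/2}$, and it remains to verify that $\mA^{1/2}\mP\mGamma_{\mP}^{-1}\mP^{\top}\mA^{1/2}$ is symmetric. This reduces to showing $\mGamma_{\mP}^{-1}\mP^{\top}\mA\mP = \mGamma_{\mP}^{-1}(\mGamma_{\mP} + \mGamma_{\mP}^{\top} - \mI)$ — wait, more precisely, writing $\mB := \mP^{\top}\mA\mP$ and $\mGamma_{\mP} = \tril(\mB)$, the product $\mP^{\top}(\mA^{1/2}\mP\mGamma_{\mP}^{-1}\mP^{\top}\mA^{1/2})\mP = \mB^{1/2}\mGamma_{\mP}^{-1}\mB^{1/2}$, so symmetry of the original is equivalent to symmetry of $\mB^{1/2}\mGamma_{\mP}^{-1}\mB^{1/2}$, equivalently $\mGamma_{\mP}^{-1}\mB = \mB\mGamma_{\mP}^{-\top}$, equivalently $\mB\mGamma_{\mP}^{\top} = \mGamma_{\mP}\mB$... which is a known identity for the SOR/Gauss–Seidel splitting of a symmetric matrix. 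I would state this as a short lemma: for symmetric $\mB$ with unit diagonal and $\mGamma = \tril(\mB)$, $\mGamma^{-1}\mB$ is self-adjoint with respect to the inner product induced by $\mB$, or equivalently $\mA^{1/2}\mP\mGamma_{\mP}^{-1}\mP^{\top}\mA^{1/2}$ is symmetric PSD; this is exactly the classical fact that Gauss–Seidel on a positive-definite system is a symmetrizable iteration. Granting this, $\widetilde{\gM}^{\RPCD}(\mX) = \E_p[\widetilde{\mT}_{\mA,p}^{\RPCD}\mX\widetilde{\mT}_{\mA,p}^{\RPCD}]$ is again a convex combination over permutations of self-adjoint operators on $(\sS^n, \langle\cdot,\cdot\rangle_F)$, hence self-adjoint, hence diagonalizable; conjugating back gives diagonalizability of $\gM_{\mA}^{\RPCD}$.

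The main obstacle is the symmetry check for the RPCD (Gauss–Seidel) step: verifying that $\mA^{1/2}\mP\tril(\mP^{\top}\mA\mP)^{-1}\mP^{\top}\mA^{1/2}$ is symmetric. The RCD case is essentially immediate because each $\mE_i\mA$ conjugates to a rank-one PSD matrix, but for RPCD one must use the structure of the triangular splitting of a symmetric matrix rather than any elementwise property. A secondary, more bookkeeping-level obstacle is handling the skew-symmetric subspace and confirming $\gM_{\mA}$ genuinely acts on $\R^{n\times n}$ diagonalizably (not merely on $\sS^n$); I expect this to follow from noting that $\gM_{\mA}$ preserves both $\sS^n$ and the skew part, that it is self-adjoint on the former after conjugation, and that on the latter it is likewise a convex combination of maps $\mX \mapsto \mT^{\top}\mX\mT$ with $\mT$ symmetric (in the conjugated picture), which are again self-adjoint with respect to the Frobenius inner product on the skew subspace. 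Hence the whole operator is self-adjoint on $(\R^{n\times n}, \langle\cdot,\cdot\rangle_F)$ after conjugation, and diagonalizability follows in one stroke.
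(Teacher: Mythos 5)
Your RCD argument is correct and matches the paper's: conjugating by $\mX \mapsto \mA^{1/2}\mX\mA^{1/2}$ makes each $\widetilde{\mT}_{\mA,i}^{\RCD} = \mI - \mA^{1/2}\mE_i\mA^{1/2}$ symmetric, so the averaged operator is self-adjoint (equivalently, its matrix representation $\E[\widetilde{\mT}\otimes\widetilde{\mT}]$ is symmetric) and hence diagonalizable. The worry you raise about the skew-symmetric subspace is unnecessary if you argue via the $n^2 \times n^2$ matrix representation directly, which the paper does.

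The RPCD part has a genuine gap that your approach cannot close. You claim that $\mA^{1/2}\mP\mGamma_{\mP}^{-1}\mP^{\top}\mA^{1/2}$ is symmetric, reducing this to the identity $\mB\mGamma_{\mP}^{\top} = \mGamma_{\mP}\mB$ for $\mB = \mP^{\top}\mA\mP$ and $\mGamma_{\mP} = \tril(\mB)$, and you attribute this to a classical fact about Gauss--Seidel on symmetric positive definite systems. That identity is false, and no such classical fact exists. The well-known result for Gauss--Seidel on a PD system is that the iteration converges, i.e.\ $\rho(\mI - \mGamma^{-1}\mB)<1$; the iteration matrix is not symmetrizable and its eigenvalues can be complex. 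Concretely, take $n=2$, $\mB = \begin{bmatrix} 1 & b \\ b & 1\end{bmatrix}$, $\mGamma = \begin{bmatrix}1 & 0 \\ b & 1\end{bmatrix}$; then $\mB\mGamma^{\top}$ has $(2,1)$-entry $b$ while $\mGamma\mB$ has $(2,1)$-entry $2b$, so they differ whenever $b\neq 0$. A direct computation likewise shows $\mA^{1/2}\mGamma^{-1}\mA^{1/2}$ is not symmetric for this $2\times 2$ example. Thus the individual conjugated RPCD iteration matrices $\widetilde{\mT}_{\mA,p}^{\RPCD}$ are \emph{not} symmetric, and the ``convex combination of self-adjoint operators'' argument does not apply.

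The paper's proof fixes this by a pairing trick rather than claiming termwise symmetry. Define the complementary permutation $p'(i) = (n+1) - p(i)$. One checks via $\mP\mGamma_{\mP}\mP^{\top} = \mA\odot\mW_{\mP}$, where $(\mW_{\mP})_{ij} = \1[p(i)\geq p(j)]$, that $\mW_{\mP'} = \mW_{\mP}^{\top}$, and therefore $\widetilde{\mT}_{\mA,p'}^{\RPCD} = (\widetilde{\mT}_{\mA,p}^{\RPCD})^{\top}$. Hence the sum $\widetilde{\mT}_{\mA,p}^{\RPCD\top}\otimes\widetilde{\mT}_{\mA,p}^{\RPCD\top} + \widetilde{\mT}_{\mA,p'}^{\RPCD\top}\otimes\widetilde{\mT}_{\mA,p'}^{\RPCD\top}$ is symmetric as a matrix plus its transpose, and since the set of all permutations partitions into complementary pairs, the full expectation $\E[\widetilde{\mT}_{\mA}^{\RPCD\top}\otimes\widetilde{\mT}_{\mA}^{\RPCD\top}]$ is symmetric. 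That is the missing structural ingredient: the symmetry lives in the average over permutations, not in any single term.
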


While we defer the proof of \cref{lem:diagonalizable} to \cref{sec:diagonalizable},
the main idea is to define a similar operator:
\begin{align}
    \widetilde{\gM}_{\mA} (\mX) &= \mA^{-\frac12} \gM_{\mA} (\mA^{\frac12} \mX \mA^{\frac12}) \mA^{-\frac12}
    \label{eq:similaroperator}
\end{align}
which can be shown to have a \textit{symmetric matrix form}
\begin{align}
    \widetilde{\mT}_{\mA}^{\top} \otimes \widetilde{\mT}_{\mA}^{\top}, \quad \widetilde{\mT}_{\mA} := \mA^{\frac12} \mT_{\mA} \mA^{-\frac12}
    \label{eq:ttilde}
\end{align}
for both $\MARCD$ and $\MARPCD$, therefore is diagonalizable.

Furthermore, if we can show that $\gM_{\mA}$ is closed inside a subspace $\gS$ which contains positive definite matrices, then choosing any $\mTheta \in \gS$, it suffices to show an upper bound of $\rho(\left. \gM_{\mA} \right|_{\gS})$, the spectral radius of $\gM_{\mA}$ \emph{restricted at $\gS$}.

\section{RCD vs RPCD}
\label{sec:3}
In this section, we compare the convergence rates of RCD and RPCD, under the problem setting defined in \cref{sec:2}.



\subsection{RCD Lower Bounds}
\cref{thm:rcdlb} provides the convergence lower bound of RCD for the expected iterate norm.
Note that this convergence measure is equivalent to choosing $\mTheta = \mI$ in \eqref{eq:Theta}.
\begin{restatable}{theorem}{thmrcdlb}
    \label{thm:rcdlb}
    For an initial point $\vx_0\in \R^{n}$, let $\vx_T$ be the output of \emph{RCD} after $T$ iterates. Then, except for a Lebesgue measure zero set of initial points, 
    \begin{align*}
        \lim_{T \to \infty} \bigopen{\frac{\E\bigclosed{\|\vx_T\|^2}}{\|\vx_0\|^2}}^{\frac{1}{T}} \!\! &\ge \max \bigset{\bigopen{1 - \frac{1}{n}}, \bigopen{1 - \frac{\sigma}{n}}^{2}}.
    \end{align*}
\end{restatable}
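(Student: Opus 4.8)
## Proof Proposal

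\textbf{Overall approach.} The plan is to exploit the diagonalizability of $\gM_{\mA}^{\RCD}$ (\cref{lem:diagonalizable}) to reduce the limit on the left-hand side to the spectral radius $\rho(\gM_{\mA}^{\RCD})$, and then lower-bound this spectral radius by exhibiting two explicit approximate (or exact) eigenmatrices whose eigenvalues are $\left(1-\tfrac1n\right)$ and $\left(1-\tfrac{\sigma}{n}\right)^2$ respectively. Since the $K$-step contraction of the convergence measure is governed by a power iteration of $\gM_{\mA}^{\RCD}$ (as in \eqref{eq:Theta2} with $\mTheta=\mI$), the limit equals $\rho(\gM_{\mA}^{\RCD})$ whenever the initial point $\vx_0$ has nonzero component along the dominant eigenmatrix — which holds outside a measure-zero set, since "having zero inner product with a fixed nonzero matrix" is a nontrivial polynomial condition on $\vx_0$.

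\textbf{Step 1: Reduce to the spectral radius.} Using \eqref{eq:Theta2} with $\mTheta = \mI$, write $\tfrac{\E[\|\vx_k\|^2]}{\|\vx_0\|^2} = \tfrac{\vx_0^\top \gM_{\mA}^k(\mI)\vx_0}{\|\vx_0\|^2}$. By \cref{lem:diagonalizable}, $\gM_{\mA}^{\RCD}$ is diagonalizable, so expand $\mI$ in its eigenbasis; the $k$-th power isolates the largest-modulus eigenvalue $\rho := \rho(\gM_{\mA}^{\RCD})$, and $\left(\tfrac{\E[\|\vx_k\|^2]}{\|\vx_0\|^2}\right)^{1/k} \to \rho$ provided $\vx_0^\top E \vx_0 \neq 0$ for the corresponding eigenmatrix $E$. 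One should argue $E$ contributes a genuinely nonzero quadratic form on a full-measure set of $\vx_0$ (the set where it vanishes is the zero set of a nonzero polynomial, hence Lebesgue-null); a small subtlety is that if $\rho$ has several eigenmatrices one must ensure at least one contributes, or handle possible sign cancellations — taking $k$ along a subsequence or noting the dominant eigenvalue here will turn out to be real and positive sidesteps this.

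\textbf{Step 2: Lower-bound $\rho(\gM_{\mA}^{\RCD})$ by two test matrices.} For the bound $\left(1-\tfrac1n\right)$: try $\mX = \mI$ or, better, work with the similar operator $\widetilde{\gM}_{\mA}$ from \eqref{eq:similaroperator} whose matrix form $\widetilde{\mT}_{\mA}^\top\otimes\widetilde{\mT}_{\mA}^\top$ is symmetric, so $\rho = \lambda_{\max}$ and we can use the Rayleigh quotient: $\rho(\gM_{\mA}^{\RCD}) = \rho(\widetilde{\gM}_{\mA}) \geq \tfrac{\langle \mX, \widetilde{\gM}_{\mA}(\mX)\rangle}{\langle \mX,\mX\rangle}$ for any symmetric $\mX$. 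Plugging in the explicit RCD formula for $\gM_{\mA}$ given in the excerpt, a clean choice like $\mX \propto \mA^{-1}$ (so that $\mA^{1/2}\mX\mA^{1/2}=\mI$) should yield the $\left(1-\tfrac1n\right)$ term after a short computation with $\tr(\cdot)$ and $\diag(\cdot)$. For the $\left(1-\tfrac{\sigma}{n}\right)^2$ term: take the rank-one test matrix built from the bottom eigenvector $\vv$ of $\mA$ (i.e. $\mA\vv = \sigma\vv$), e.g. $\widetilde{\mX} = \vv\vv^\top$ in the $\widetilde{\gM}$ picture; then $\widetilde{\mT}_{\mA,i}^\top \vv\vv^\top \widetilde{\mT}_{\mA,i}$ expands via $\widetilde{\mT}_{\mA,i} = \mI - \mA^{1/2}\mE_i\mA^{1/2}$, and the Rayleigh quotient should produce $\left(\E_i[1 - (\mA)_{ii}\cdot\text{something}]\right)^2$-type expression dominated by $\left(1-\tfrac{\sigma}{n}\right)^2$. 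The $\max$ then follows since $\rho$ is at least each lower bound.

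\textbf{Main obstacle.} The hard part will be Step 2, specifically getting the second bound $\left(1-\tfrac{\sigma}{n}\right)^2$ cleanly: the cross term $\tfrac{\mA^\top(n\diag(\mX)-\mX)\mA}{n^2}$ in the RCD operator formula does not vanish for a rank-one $\mX$, so the naive Rayleigh quotient with $\vv\vv^\top$ picks up an extra nonnegative contribution that one must show does not cancel the $\left(1-\tfrac{\sigma}{n}\right)^2$ leading behavior — this likely requires a more careful choice of test matrix (perhaps $\mX = \vv\vv^\top + c\,\mA^{-1}$ or a limiting argument) or a direct eigenvalue computation rather than a variational bound. A secondary, more routine obstacle is making the measure-zero exceptional set in Step 1 fully rigorous when the leading eigenvalue is degenerate. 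I expect both to be surmountable with the explicit operator formula already provided.
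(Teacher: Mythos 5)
Your plan to lower-bound $\rho(\MARCD)$ via Rayleigh quotients on the symmetrized operator is essentially sound, and in fact your main worry about Step~2 is misplaced. Using the similar operator $\widetilde{\gM}_{\mA}^{\RCD}$ with matrix form generated by the projections $\widetilde{\mT}_{\mA,i}^{\RCD} = \mI - \vv_i\vv_i^\top$ (where $\vv_i = \mA^{1/2}\ve_i$ has unit norm), the test matrix $\mI$ gives a Rayleigh quotient of exactly $1-\tfrac{1}{n}$, while a rank-one test $\vu\vu^\top$ with $\|\vu\|=1$ gives $\E_i\!\left[(1-(\vu^\top\vv_i)^2)^2\right]$, which by Jensen is at least $\left(1-\tfrac{1}{n}\vu^\top\mA\vu\right)^2 = \left(1-\tfrac{\sigma}{n}\right)^2$ when $\vu$ is the bottom eigenvector of $\mA$. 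The extra ``cross term'' you flag does not cancel anything: for a lower bound via a Rayleigh quotient, a positive extra contribution only helps.

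The genuine gap is in Step~1: lower-bounding $\rho$ does not by itself lower-bound the left-hand limit. With $\vy_0 = \mA^{1/2}\vx_0$, we have $\E[\|\vx_T\|^2] = \vy_0^\top\, (\widetilde{\gM}_{\mA}^{\RCD})^{T}(\mA^{-1})\,\vy_0$, and the $T$-th-root limit equals $\rho(\MARCD)$ only if \emph{two} projections are nonzero: the one you mention (the quadratic form of $\vy_0$ against the dominant eigenmatrix), and also the projection of the ``input'' $\mA^{-1}$ onto the dominant eigenspace of $\widetilde{\gM}_{\mA}^{\RCD}$. You never address the second, and it is not automatic; you would need a Perron--Frobenius/Krein--Rutman argument on the PSD cone to produce a nonzero PSD dominant eigenmatrix $\mE$ so that $\tr(\mA^{-1}\mE)>0$. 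The paper avoids this spectral-theory detour entirely: the $\left(1-\tfrac{1}{n}\right)$ term follows from the per-step Loewner bound $\MARCD(\mI) \succeq \left(1-\tfrac{1}{n}\right)\mI$, which needs no measure-zero exclusion at all, and the $\left(1-\tfrac{\sigma}{n}\right)^2$ term uses Nesterov's inequality $\mX\preceq n\cdot\diag(\mX)$ to prove $\MARCD(\mX) \succeq \left(\mI-\tfrac{\mA}{n}\right)\mX\left(\mI-\tfrac{\mA}{n}\right)$ for PSD $\mX$, hence by induction $(\MARCD)^T(\mI) \succeq \left(\mI-\tfrac{\mA}{n}\right)^{2T}$; the asymptotics of this explicit symmetric matrix power, expanded in the eigenbasis of $\mA$, give $\left(1-\tfrac{\sigma}{n}\right)^2$ directly, with the only exclusion being initial points orthogonal to the bottom eigenvector of $\mA$.
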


The green graph in \cref{fig:rcdrpcd} coincides with the ($n$-th power of the) RCD lower bound.
We can observe that the former term of the $\max$ dominates for large $\sigma$ and the latter for small $\sigma$, where the transition happens near $\sigma \approx \frac12$.

We defer the proof of \cref{thm:rcdlb} to \cref{subsec:rcdlb}.
    
\begin{remark*}
    For general convergence lower bounds for a certain function class, the \emph{existence} of a bad function and initialization usually suffices. 
    Our results show an even stronger lower bound argument that holds for \emph{almost all initialization points} and, altogether with \cref{thm:rpcdub}, shows a performance gap for \emph{every specific instance} in the function class.
\end{remark*}

\begin{figure}[t]
    \centering
    \includegraphics[width=0.9\linewidth]{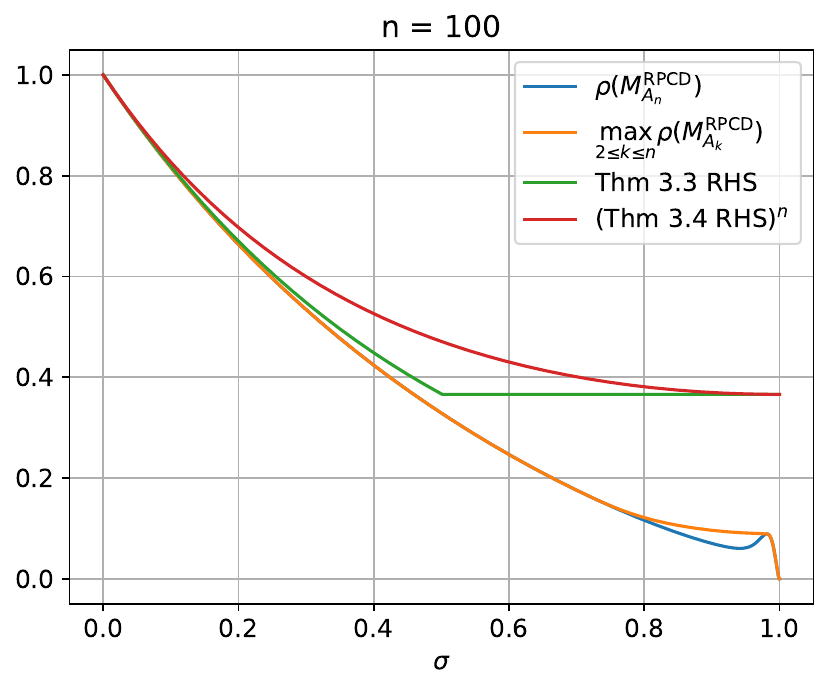}
    \caption{Plots of $\rho(\left. \gM_{\mA_{n}}^{\text{RPCD}} \right|_{\gS})$ (\blue{blue}), $\max_{2 \le k \le n} \rho(\left. \gM_{\mA_{k}}^{\text{RPCD}} \right|_{\gS})$ (\yellow{yellow}), the RPCD upper bound in \cref{thm:rpcdub} (\green{green}), and the $n$-th power (for fair comparison) of the stronger RCD lower bound for $\mA \in \gA_{\sigma}$ in \cref{thm:rcdlbpi} (\red{red}).}
    \label{fig:rcdrpcd}
\end{figure}

\subsection{RPCD Upper Bounds}
In this section, we focus on the case when $\mA$ is either permutation-invariant or is closely related to such a matrix.
In particular, we focus on the following class of Hessians.

\begin{definition}
    We define the class of Hessians $\gA_{\sigma}^{\text{PI}}, \gA_{\sigma}$ as
    \begin{align*}
        \gA_{\sigma}^{\text{PI}} &:= \left\{\diag \{ \sigma \mI_k + (1 - \sigma) \vone_k \vone_k^{\top}, \mI_{n-k} \} : 2 \le k \le n\right\}, \\
        \gA_{\sigma} &:= \left\{\mA = \mA^{\text{PI}} \odot \vv\vv^{\top} : \mA^{\text{PI}} \in \gA_{\sigma}^{\text{PI}}, \vv \in \bigset{\pm 1}^{n} \right\}
    \end{align*}
    for given $\sigma \in (0, 1]$.
\end{definition}
Note that $\sigma \mI_k + (1 - \sigma) \vone_k \vone_k^{\top}$ is a unit-diagonal, permutation-invariant matrix with a $\lambda_{\min}$ of $\sigma$, and that the sign-flip invariance of CD (see the end of \cref{sec:2.2}) allows us to extend any type of convergence analysis from $\gA_{\sigma}^{\text{PI}}$ to $\gA_{\sigma}$.

\cref{thm:rpcdub} provides the convergence upper bound of RPCD for the expected iterate norm.
\begin{restatable}{theorem}{thmrpcdub}
    \label{thm:rpcdub}
    For an initial point $\vx_0\in \R^{n}$, let $\vx_K$ be the output of \emph{RPCD} after $K$ epochs. If $\mA \in \gA_{\sigma}$ and $\vx_0 \ne 0$, 
    \begin{align*}
        \lim_{K \to \infty} \bigopen{\frac{\E\bigclosed{\|\vx_K\|^2}}{\|\vx_0\|^2}}^{\!\!\frac{1}{K}} \!\!\!\! &\le \max \bigset{\bigopen{1 - \frac{1}{n}}^n\!\!\!, \bigopen{1 - \frac{\sigma}{n}}^{2n}}.
    \end{align*}
\end{restatable}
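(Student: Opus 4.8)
The plan is to reduce the statement to a spectral-radius bound on the operator $\gM_{\mA}^{\text{RPCD}}$ restricted to a suitable invariant subspace, and then to compute (or bound) that spectral radius explicitly using the permutation-invariant structure. First I would invoke the sign-flip invariance noted at the end of \cref{sec:2.2}: conjugating $\mA$ by $\diag(\vv)$ carries the whole RPCD trajectory along the same conjugation, so $\E[\|\vx_K\|^2]/\|\vx_0\|^2$ is unchanged and it suffices to treat the case $\mA = \mA^{\text{PI}} \in \gA_\sigma^{\text{PI}}$, i.e. $\mA = \diag\{\sigma\mI_k + (1-\sigma)\vone_k\vone_k^\top,\ \mI_{n-k}\}$. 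For such a block-diagonal $\mA$, the last $n-k$ coordinates are decoupled: an RPCD epoch zeroes each of them the first time it is touched and never revisits, so after the first epoch those coordinates stay at $0$; the asymptotic rate is therefore governed entirely by the $k\times k$ block $\mA_k := \sigma\mI_k + (1-\sigma)\vone_k\vone_k^\top$. So the problem is really: bound $\lim_K (\E[\|\vy_K\|^2]/\|\vy_0\|^2)^{1/K}$ for RPCD on $\mA_k$, uniformly over $2\le k\le n$ and over $\vy_0\ne 0$.

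Next I would use \cref{lem:diagonalizable}: $\gM_{\mA_k}^{\text{RPCD}}$ is diagonalizable, so by \eqref{eq:Theta2} with $\mTheta = \mI$ we have, for a.e.\ $\vx_0$, $\lim_K(\E[\|\vx_K\|^2]/\|\vx_0\|^2)^{1/K} \le \rho(\gM_{\mA_k}^{\text{RPCD}})$, and more precisely we need the spectral radius of $\gM_{\mA_k}^{\text{RPCD}}$ restricted to the smallest invariant subspace $\gS$ containing $\mI$ (as flagged in the ``Diagonalizability'' paragraph). The key structural fact to establish is that $\gM_{\mA_k}^{\text{RPCD}}$, averaged over all permutations, maps the two-dimensional space $\sp\{\mI_k,\ \vone_k\vone_k^\top\}$ (equivalently $\sp\{\mI, \mA_k\}$) into itself: this is because $\E_p[\mP\mGamma_\mP^{-1}\mP^\top]$ and related permutation-averaged quantities are themselves of the form $\alpha\mI + \beta\vone\vone^\top$ by symmetry. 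Restricting $\gM_{\mA_k}^{\text{RPCD}}$ to this $2$-dimensional subspace yields a $2\times 2$ matrix whose entries are explicit rational functions of $\sigma$, $k$ (this is essentially the computation behind \citet{lee2019}); its two eigenvalues, say $\mu_1(k,\sigma)$ and $\mu_2(k,\sigma)$, are computable in closed form, and $\rho(\gM_{\mA_k}^{\text{RPCD}}|_\gS) = \max\{|\mu_1|,|\mu_2|\}$.

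The remaining, and I expect most laborious, step is the analytic one: show that for every $2\le k\le n$ and every $\sigma\in(0,1]$,
\begin{align*}
    \max\{|\mu_1(k,\sigma)|,\ |\mu_2(k,\sigma)|\} \le \max\bigset{\bigopen{1-\tfrac1n}^n,\ \bigopen{1-\tfrac{\sigma}{n}}^{2n}}.
\end{align*}
Here one eigenvalue is pinned down easily — the identity permutation case already shows $(\mI - \mGamma^{-1}\mA_k)$ has an eigenvalue $1-1/k$ of high multiplicity coming from the ``cyclic'' directions, which after the $\otimes$-squaring and averaging produces the $(1-1/k)^?$-type term; tracking that through gives a contribution bounded by $(1-1/n)^n$ once we verify monotonicity in $k$. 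The other eigenvalue tracks the $\vone\vone^\top$-direction and should come out close to $(1-\sigma/k)^2$-like, contributing the $(1-\sigma/n)^{2n}$ term. The genuine obstacles are (i) getting the exact $2\times 2$ reduced operator right — the permutation average of $\tril(\mP^\top\mA_k\mP)^{-1}$ is the delicate combinatorial quantity — and (ii) the monotonicity/uniformity in $k$: one must argue that the worst case over $2\le k\le n$ is attained at $k=n$, which is plausible from \cref{fig:rcdrpcd} (the blue curve at $k=n$ dominates the yellow envelope) but requires a careful sign analysis of the $k$-derivative of each $|\mu_i|$. I would handle (ii) by reducing to showing that $k\mapsto(1-1/k)^k$ and $k\mapsto(1-\sigma/k)^{2k}$ are increasing, together with a uniform bound showing the reduced eigenvalues never exceed the relevant envelope at the corresponding $k$; the bound then follows by taking $k=n$.
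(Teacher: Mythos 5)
Your skeleton — reduce via sign-flip invariance to $\gA_\sigma^{\text{PI}}$, observe that the trailing $n-k$ coordinates are annihilated in the first epoch so the rate is governed by the $k\times k$ permutation-invariant block, identify $\sp\{\mI,\vone\vone^\top\}$ as an invariant subspace of $\MARPCD$, and write the restriction as a $2\times 2$ matrix $\mM_{\mA_k}$ — matches the paper's reduction (Lemmas \ref{lem:expectedperm} and \ref{lem:rpcdperm}). But there is a genuine gap at the final step. You propose to compute the two eigenvalues $\mu_1(k,\sigma),\mu_2(k,\sigma)$ of $\mM_{\mA_k}$ in closed form and bound $\max\{|\mu_1|,|\mu_2|\}$ directly. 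The entries of $\mM_{\mA_k}$ are polynomials of degree $2k$ in $\sigma$ (see \eqref{eq:abcd}), so the eigenvalues involve the square root of a degree-$4k$ discriminant, and no clean comparison with $(1-\sigma/n)^{2n}$ or $(1-1/n)^n$ is available along that route. The paper sidesteps this entirely: it first proves that all four entries of $\mM_{\mA_k}$ are nonnegative (\cref{lem:taunonnegative} — itself a nontrivial entry-wise positivity argument for $\mC^\top\mC$ and $\vw\vw^\top$), and then replaces $\rho(\mM_{\mA_k})$ by the $\ell_\infty$ operator norm $\|\mM_{\mA_k}\|_\infty$, which for a nonnegative matrix is just the larger of the two row sums — polynomials in $\sigma$ with no square root. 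That nonnegativity lemma is the missing idea; without it the $\|\cdot\|_\infty$ bound does not simplify to a row-sum comparison, and your eigenvalue route does not yield a tractable inequality.

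A second underestimated step: even after reducing to ``row-sum polynomial $\le$ envelope,'' establishing the inequality for all $2\le k\le n$ and all $\sigma\in(0,1]$ is itself the bulk of the proof. Your proposal to argue monotonicity in $k$ and ``take $k=n$'' does not work directly — the row sums $T_i(k,\sigma)/(k(k-1))$ are not monotone in $k$ across the whole $\sigma$ range (note the non-monotone bump near $\sigma=1$ discussed after \cref{thm:rpcdub}), which is precisely why the paper splits $\sigma$ into $(0,0.6]$, $[0.6,0.8]$, $[0.8,1]$, uses sign-alternating Taylor truncations of $T_i$ and of the binomial expansion of $(1-\sigma/n)^{2n}$ to replace both sides by low-degree polynomials, handles small $n$ by Sturm's theorem, and uses auxiliary estimates on $\alpha,\beta,\gamma,\delta$ for $\sigma\in[0.8,1)$. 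Also, the heuristic that ``the identity permutation already shows an eigenvalue $1-1/k$'' is not how the bound is obtained: $\rho(\mM_{\mA_k})$ is an averaged quantity over all permutations, and neither $(1-1/n)^n$ nor $(1-\sigma/n)^{2n}$ appears as an eigenvalue of any single $\mT_{\mA,p}^{\RPCD}$. As stated, your plan would stall at both the square-root obstruction and the claimed monotonicity in $k$.
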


Summarizing our results in \cref{thm:rcdlb} and \cref{thm:rpcdub}, we can show that RPCD outperforms RCD for \emph{all} instances of the form $\mA = \diag \{ \sigma \mI_k + (1 - \sigma) \vone_k \vone_k^{\top}, \mI_{n-k} \}$.
The threshold between the two bounds is
\begin{align}
    \max \bigset{\bigopen{1 - \frac{1}{n}}^n, \bigopen{1 - \frac{\sigma}{n}}^{2n}}, \label{eq:thres}
\end{align}
where we recall that $n$ iterations of RCD are equivalent to one epoch of RPCD for a fair comparison.

\paragraph{Proof Sketch.}
The proof starts from the observation that if $\mA \in \gS := \sp \{ \mI, \vone \vone^{\top} \}$, or considering \cref{ass:unitdiag}:
\begin{align}
    \mA = \sigma \mI + (1 - \sigma) \vone \vone^{\top}, \label{eq:A}
\end{align}
then $\MARPCD$ is closed in $\gS = \sp \{ \mI, \vone \vone^{\top} \}$.
This allows us to convert the \emph{restricted matrix operator}
\[ \left. \MARPCD \right|_{\gS} \]
into a $2 \times 2$ matrix $\mM_{\mA}$.

Therefore, if we choose any $\mTheta \in \gS$ with $\mTheta \succ 0$ to define the convergence measure (we choose $\mTheta = \mI$ for \cref{thm:rpcdub}), the matrix $\bigopen{\gM_{\mA}^{\text{RPCD}}}^{k}(\mTheta)$ will stay in $\sp \{ \mI, \vone \vone^{\top} \}$ and it suffices to find an upper bound of $\rho (\left. \MARPCD \right|_{\gS}) = \rho(\mM_{\mA})$ for this $2 \times 2$ matrix.
We also show that the matrix
\begin{align*}
    \mA = \diag\{ \sigma \mI_{k} + (1 - \sigma) \vone_{k} \vone_{k}^{\top}, \mI_{n-k} \}
\end{align*}
for all $2 \le k \le n$ has the same value of $\rho(\MARPCD)$ with that of the $k \times k$ matrix $\mA_{k} = \sigma \mI_{k} + (1 - \sigma) \vone_{k} \vone_{k}^{\top}$.

After this, we upper bound $\rho(\mM_{\mA_k})$ with $\| \mM_{\mA_k} \|_{\infty}$, which can be expressed as the maximum of two polynomials of $\sigma$.
We then proceed to divide the range of $\sigma$ into three regions: $(0, 0.6]$, $[0.6, 0.8]$, and $[0.8, 1.0]$.
Perhaps interestingly, we can find a (nearly) sign-alternating trend in the coefficients of both the polynomials from $\| \mM_{\mA_k} \|_{\infty}$ and the upper bound.
This allows us to use lower-order Taylor approximations instead of the true lower and upper bounds and compare these to complete the proof for the cases $(0, 0.6]$ and $[0.6, 0.8]$, where the difference is in which part of the $\max$ of the upper bound we use for comparison.
For the remaining region $[0.8, 1.0]$, we can show that $\| \mM_{\mA_k} \|_{\infty}$ is always too small compared to $\bigopen{1 - \frac{1}{n}}^n$.

We defer the full proof of \cref{thm:rpcdub} to \cref{subsec:rpcdub}.

\begin{remark*}
    In \cref{fig:rcdrpcd}, we can observe that for permutation-invariant instances $\mA_{n}$ with large enough $n$, a small bump appears in the graph of $\rho(\gM_{\mA_{n}}^{\text{RPCD}})$ close to $\sigma = 1$.
    This breaks monotonicity and makes theoretical analysis harder, motivating us to separately handle the cases of small and large $\sigma$ in \cref{thm:rpcdub} and the proof.
\end{remark*}

\subsection{Stronger RCD Lower Bound}
If we solely focus on the class $\gA_{\sigma}$, we can further show an RCD lower bound stronger than \cref{thm:rcdlb}.
\begin{restatable}{theorem}{thmrcdlbpi}
    \label{thm:rcdlbpi}
    Let $\mA \in \gA_{\sigma}$. For an initial point $\vx_0\in \R^{n}$, let $\vx_T$ be the output of \emph{RCD} after $T$ iterates. Then, except for a Lebesgue measure zero set of initial points, 
    \begin{align*}
        \lim_{T \to \infty} \bigopen{\frac{\E\bigclosed{\|\vx_T\|^2}}{\|\vx_0\|^2}}^{\frac{1}{T}} \!\! &\ge  1 - \frac{1}{n} + \frac{(1 - \sigma)^2}{n}.
    \end{align*}
\end{restatable}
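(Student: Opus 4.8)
The plan is to exploit the permutation-invariant structure in the same way the RPCD upper bound does: collapse the operator $\gM_{\mA}^{\RCD}$ to a $2\times 2$ matrix and read the claimed rate off one of its entries. First I would use the sign-flip invariance from the end of \cref{sec:2.2} to reduce, without loss of generality, to $\mA = \diag\{\mA_k,\mI_{n-k}\}$ with $\mA_k = \sigma\mI_k + (1-\sigma)\vone_k\vone_k^{\top}$ and $2\le k\le n$: every element of $\gA_{\sigma}$ is $\mF\mA^{\text{PI}}\mF$ for some $\mF=\diag(\vv)$ with $\vv\in\{\pm 1\}^n$, and since $\mF$ is orthogonal, conjugating by it changes neither $\E\bigclosed{\norm{\vx_T}^2}/\norm{\vx_0}^2$ nor the Lebesgue-null exceptional set. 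Note every such $\mA$ has unit diagonal, so \cref{ass:unitdiag} holds and the explicit form of $\gM_{\mA}^{\RCD}$ from \cref{sec:2} applies.

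By \eqref{eq:Theta2} with $\mTheta=\mI$, the quantity to bound is $\vx_0^{\top}(\gM_{\mA}^{\RCD})^T(\mI)\,\vx_0/\norm{\vx_0}^2$. Because each iteration matrix $\mT_{\mA,i}^{\RCD}=\mI-\mE_i\mA$ is block-diagonal along $\R^n=\R^k\times\R^{n-k}$, the operator $\gM_{\mA}^{\RCD}$ preserves block-diagonal matrices, and a short computation shows that on the leading $k\times k$ block it acts as $\tfrac kn\,\gM_{\mA_k}^{\RCD}+\bigopen{1-\tfrac kn}\gI$, with $\gI$ the identity operator. A direct computation with the explicit RCD-operator formula shows that $\gM_{\mA_k}^{\RCD}$ is closed in $\gS_k:=\sp\{\mI_k,\vone_k\vone_k^{\top}\}$ (just as $\MARPCD$ is closed in $\gS$ in the proof sketch of \cref{thm:rpcdub}), so starting from $\mI$ we may write $(\gM_{\mA}^{\RCD})^T(\mI)=\diag\{c_T\mI_k+d_T\vone_k\vone_k^{\top},\,\mZ_T\}$, where $\mZ_T\succeq 0$ for all $T$ (since $\gM_{\mA}^{\RCD}$ preserves the PSD cone) and $(c_T,d_T)^{\top}=\widehat\mM_k^{\,T}(1,0)^{\top}$ for the $2\times2$ matrix $\widehat\mM_k:=\tfrac kn\mM_{\mA_k}+\bigopen{1-\tfrac kn}\mI_2$, $\mM_{\mA_k}$ being the matrix of $\left.\gM_{\mA_k}^{\RCD}\right|_{\gS_k}$ in the basis $(\mI_k,\vone_k\vone_k^{\top})$.

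The crux is then a brief calculation: plugging $\mI_k$ and $\vone_k\vone_k^{\top}$ into the RCD-operator formula gives $\mM_{\mA_k}$ with $(1,1)$-entry $1-\tfrac1k+\tfrac{(1-\sigma)^2}{k}$, $(1,2)$-entry $\tfrac{\sigma^2}{k}$, $(2,1)$-entry $(1-\sigma)^2\bigopen{1-\tfrac2k}$, and $(2,2)$-entry $\tfrac{\sigma^2(k-2)}{k}$, so that $\widehat\mM_k$ has $(1,1)$-entry exactly $1-\tfrac1n+\tfrac{(1-\sigma)^2}{n}$ and, for $\sigma\in(0,1]$ and $2\le k\le n$, all four entries nonnegative. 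Nonnegativity forces $c_T,d_T\ge 0$, and then $c_{T+1}=\widehat\mM_k[1,1]\,c_T+\widehat\mM_k[1,2]\,d_T\ge\bigopen{1-\tfrac1n+\tfrac{(1-\sigma)^2}{n}}c_T$, whence $c_T\ge\bigopen{1-\tfrac1n+\tfrac{(1-\sigma)^2}{n}}^T$. Finally, writing $\vy_0\in\R^k$ for the first $k$ coordinates of $\vx_0$ and discarding the PSD term $\vx_0^\top\diag\{0,\mZ_T\}\vx_0\ge 0$ together with the nonnegative term $d_T(\vone_k^{\top}\vy_0)^2$, we get $\E\bigclosed{\norm{\vx_T}^2}/\norm{\vx_0}^2\ge c_T\norm{\vy_0}^2/\norm{\vx_0}^2$; taking $T$-th roots and letting $T\to\infty$ yields the bound for every $\vx_0$ outside the measure-zero subspace $\{0\}^k\times\R^{n-k}$, and pulling back through the Step~1 sign flip completes the proof.

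I expect the main obstacle to be bookkeeping rather than anything conceptual: getting the block decoupling right — in particular the $\tfrac kn$-rescaling of $\gM_{\mA_k}^{\RCD}$ on the leading block, which is exactly what keeps the rate $n$-dependent even when $k<n$ — and running the entry computation cleanly. The fortunate structural fact that makes everything elementary is that $\widehat\mM_k$ is an entrywise-nonnegative $2\times2$ matrix whose $(1,1)$-entry is precisely the target rate, so the spectral-radius/diagonalizability machinery used for \cref{thm:rcdlb} gets replaced here by a one-line monotone recursion on $c_T$.
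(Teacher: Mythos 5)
Your proof is correct and establishes the same lower bound, but it reaches the conclusion by a genuinely different — and more elementary — route than the paper. Both proofs set up the same block reduction: reduce to $\mA = \diag\{\mA_k,\mI_{n-k}\}$ via sign flips, show that $\gM_{\mA}^{\RCD}$ acts block-diagonally, identify the $(1,1)$-block operator as $\tfrac kn\gM_{\mA_k}^{\RCD}+(1-\tfrac kn)\gI$, and pass to the $2\times 2$ matrix $\widehat\mM_k=\tfrac kn\mM_{\mA_k}+(1-\tfrac kn)\mI_2$ on $\sp\{\mI_k,\vone_k\vone_k^{\top}\}$. The paper then argues via the spectral radius: it first evaluates the characteristic polynomial $p$ of $\mM_{\mA}$ at the $(1,1)$-entry, finding $p((\mM_{\mA})_{11})=-(\mM_{\mA})_{12}(\mM_{\mA})_{21}\le 0$, hence $\rho(\mM_{\mA})\ge(\mM_{\mA})_{11}$; and it must separately establish $\lim_{T\to\infty}\alpha_T^{1/T}=\rho(\mM_{\mA})$, which requires showing $(1,0)^{\top}$ is not orthogonal to the Perron eigenvector of $\mM_{\mA}$. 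Your approach sidesteps all of that: observing that $\widehat\mM_k$ is entrywise nonnegative with $(1,1)$-entry exactly $1-\tfrac1n+\tfrac{(1-\sigma)^2}{n}$, the one-line monotone recursion $c_{T+1}=\widehat\mM_k[1,1]c_T+\widehat\mM_k[1,2]d_T\ge\widehat\mM_k[1,1]c_T$ immediately gives $c_T\ge\bigopen{1-\tfrac1n+\tfrac{(1-\sigma)^2}{n}}^T$, and the bound follows. This buys you a cleaner argument with no spectral analysis and no eigenvector-orthogonality step, at the mild cost that the recursion only bounds $\liminf_{T\to\infty}(\cdot)^{1/T}$; to conclude the same for $\lim$ one should invoke that the limit exists, which follows from the diagonalizability established in \cref{lem:diagonalizable} (the paper's first RCD lower bound, \cref{thm:rcdlb}, implicitly makes the same appeal). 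That is the only thing I would add before calling your proof complete. Two minor remarks: the paper writes $\mM_{\mA}$ as the transpose of the matrix you computed (so their $(1,2)$ and $(2,1)$ entries are swapped relative to yours), but since the paper only uses the trace, determinant, and the product of the two off-diagonal entries — all transpose-invariant — and you only use the $(1,1)$-entry together with row-nonnegativity — which holds in either convention — the discrepancy is immaterial to either argument; and your entry computation for $\mM_{\mA_k}$ and the resulting $(1,1)$-entry of $\widehat\mM_k$ check out.
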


Summarizing our results in \cref{thm:rpcdub} and \cref{thm:rcdlbpi}, we can now show that there exists a clear gap between the convergence rates of RCD and RPCD for $\mA \in \gA_{\sigma}$.
The green line depicted in \cref{fig:rcdrpcd} is equal to the RPCD upper bounds in \cref{thm:rpcdub}, which is strictly smaller than the red RCD lower bound graph in \cref{thm:rcdlbpi} at $\sigma \in (0, 1)$.

We defer the proof of \cref{thm:rcdlbpi} to \cref{subsec:rcdlbpi}.

\begin{remark*}
    While our results are written in terms of the \textit{expected iterate norm} $\E\bigclosed{\|\vx_{T}\|^2}$,
    for any $f(\vx) = \frac{1}{2} \vx^{\top} \mA \vx$ where $\mA \succ 0$ has unit diagonals and $\lambda_{\min} (\mA) = \sigma$ we have
    \begin{align*}
        \frac{\sigma}{2} \norm{\vx}^{2} \le f(\vx) \le \frac{n - (n-1) \sigma}{2} \norm{\vx}^{2},
    \end{align*}
    and hence we can easily extend the same convergence analyses to \textit{function values} by scaling with a constant factor.
\end{remark*}

\paragraph{Non-asymptotic results.} 
The inequality between the RCD and RPCD bounds which we establish in the asymptotic limit, in fact, already holds after a finite number of epochs.
The required number of epochs depends on $\sigma$, and the corresponding non-asymptotic result is given in \cref{sec:g}.

\begin{table}[t]
    \centering
    \caption{A comparison of existing convergence rate upper bounds and our results for RPCD.
    Note that \citet{wright17} consider a larger function class and \citet{gurbuzbalaban18} consider a different function.
    In the last row, we use $\alpha = \frac{1 - \sigma}{n - 1}$.}
    \scriptsize
    \begin{tabular}{|c|c|}
        \hline
        Reference & Convergence Rate \\
        \hline
        \hline
        \citet{lee2019} & $\gO \bigopen{\bigopen{1 - 2 \sigma - \frac{2 \sigma}{n} + 2 \sigma^2}^{K}}$ \\
        \hline
        Ours (\cref{thm:rpcdub}) & \kern-.5em $\gO \bigopen{\bigopen{\max \bigset{\bigopen{1 - \frac{1}{n}}^n\!\!\!, \bigopen{1 - \frac{\sigma}{n}}^{2n}}}^{K}}$ \kern-.5em \\
        \hline
        Ours (\cref{sec:d}) & \kern-.5em $\gO \bigopen{\bigopen{1 - 2 \sigma - \frac{2 \sigma}{n} + 2 \sigma^2}^{K}}$ \kern-.5em \\
        \hline
        \citet{wright17} & $\gO \bigopen{K \bigopen{1 - \frac{7\sigma}{5}}^{K}}$ \\
        \hline
        \kern-.5em \citet{gurbuzbalaban18} \kern-.5em & $\gO \bigopen{\bigopen{1 - \frac{\sigma}{n} \bigopen{\frac{(1 + \alpha)^{2n} - 1}{\alpha (\alpha + 2)}}}^{K}}$ \\
        \hline
    \end{tabular}
    \label{tab:comparison}
\end{table}

\paragraph{Comparison with Previous Work.}
\citet{lee2019} demonstrate that, if $n\ge 10$ and $\sigma \in (0, 0.4]$, the epoch-wise contraction ratio of RPCD is of order
\begin{align*}
    1 - 2 \sigma - \frac{2 \sigma}{n} + 2 \sigma^2 + \gO \bigopen{\frac{\sigma^2}{n}} + \gO \bigopen{\sigma^3}
\end{align*}
for quadratic functions with permutation-invariant Hessians.
While our result seems to be weaker in the sense that
\begin{align*}
    \bigopen{1 - \frac{\sigma}{n}}^{2n} = 1 - 2 \sigma - \red{\frac{\sigma^2}{n}} + 2 \sigma^2 + \gO(\sigma^3),
\end{align*}
we can deduce from the details in our proof that in fact, a tighter exposition of our result can recover the rate of \citet{lee2019} (see \cref{sec:d}) while also providing a stronger analysis by \textbf{(i)} finding the exact bounds with explicit coefficients (\emph{without asymptotic terms}) and \textbf{(ii)} covering the whole region of $\sigma \in (0, 1]$ and smaller values of $n$ using the $(1-\frac{1}{n})^n$ term and novel proof techniques.

\citet{wright17} consider an extension to Hessians of the form $\sigma \mI + (1 - \sigma) \vu \vu^{\top}$ where the coordinates of $\vu$ ranges from $\sqrt{\sigma/(\sigma + \epsilon)}$ to $1$ for some small $\epsilon$.
Their convergence rate upper bound shown in \cref{tab:comparison} is quantitatively weaker, but it is not directly comparable because it applies to a slightly larger function class.
\citet{gurbuzbalaban18} consider a different function with Hessian of the form $\mA = (1 + \alpha) \mI - \alpha \vone \vone^{\top}$, where $\sigma = 1 - (n-1) \alpha$, an instance for which CCD is also shown to outperform RCD \citep{gurbuzbalaban17}.

\section{RPCD on General Quadratics}
\label{sec:4}


One limitation of our work is that the RPCD upper bounds in \cref{thm:rpcdub} only apply to problem instances in $\mA \in \gA_{\sigma}$.
Here we leave some noteworthy discussions considering ways to generalize to the \emph{entire class of quadratic functions}.

\subsection{Conjecture on the RPCD Worst Case}

Here we propose a conjecture that the RPCD upper bound we have shown for $\mA \in \gA_{\sigma}$ in \cref{thm:rpcdub} could possibly extend to an upper bound for all positive definite quadratics.
\begin{conjecture}
    \label{conj:general}
    For an initial point $\vx_0\in \R^{n}$, let $\vx_K$ be the output of \emph{RPCD} after $K$ epochs.
    If $\sigma \in (0, 1]$, $\mA \in \sS_{+}^{n}$ with $\lambda_{\min} (\mA) = \sigma$, and $\vx_0 \ne 0$, then
    \begin{align*}
        \lim_{K \to \infty} \bigopen{\frac{\E\bigclosed{\|\vx_K\|^2}}{\|\vx_0\|^2}}^{\!\!\frac{1}{K}} \!\!\!\! &\le \max \bigset{\bigopen{1 - \frac{1}{n}}^n\!\!\!, \bigopen{1 - \frac{\sigma}{n}}^{2n}}.
    \end{align*}
\end{conjecture}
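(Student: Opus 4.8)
The plan is to reduce \cref{conj:general} to an extremal problem over Hessians and to argue that its maximizer is, up to the sign flips of \cref{sec:2.2}, permutation-invariant, so that \cref{thm:rpcdub} finishes the job. By \cref{ass:unitdiag} we may assume $\mA$ has unit diagonal. As in the proof of \cref{thm:rpcdub}, the left-hand side is $\rho$ of $\MARPCD$ restricted to the smallest $\MARPCD$-invariant subspace of $\sS^{n}$ containing the convergence measure $\mTheta = \mI$, hence at most $\rho(\MARPCD)$ on all of $\sS^{n}$; by \cref{lem:diagonalizable} and \eqref{eq:similaroperator}--\eqref{eq:ttilde} this equals $\rho(\widetilde{\gM}_{\mA}^{\RPCD})$, the spectral radius of the \emph{symmetric} operator with matrix form $\E_{p}\bigclosed{\widetilde{\mT}_{\mA,p}^{\top} \otimes \widetilde{\mT}_{\mA,p}^{\top}}$, where $\widetilde{\mT}_{\mA,p} = \mA^{1/2}\mT_{\mA,p}^{\RPCD}\mA^{-1/2}$. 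So it suffices to show
\begin{equation}
    \rho\bigopen{\widetilde{\gM}_{\mA}^{\RPCD}} \le \max\bigset{\bigopen{1 - \tfrac{1}{n}}^{n},\ \bigopen{1 - \tfrac{\sigma}{n}}^{2n}}
    \label{eq:conjreduce}
\end{equation}
for every unit-diagonal $\mA \in \sS_{+}^{n}$ with $\lambda_{\min}(\mA) = \sigma$.

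\paragraph{Reduction to a permutation-invariant worst case.}
Next I would analyze the optimization $\max \rho(\widetilde{\gM}_{\mA}^{\RPCD})$ over the compact set $\bigset{\mA \in \sS_{+}^{n} : \diag(\mA) = \vone,\ \lambda_{\min}(\mA) \ge \sigma}$ and establish two structural facts about its maximizers. First, \emph{monotonicity in the conditioning}: enlarging $\lambda_{\min}$ can only help, so the maximum is attained on the face $\lambda_{\min}(\mA) = \sigma$. I would prove this by a Loewner-order perturbation of the top eigenmatrix of $\widetilde{\gM}_{\mA}^{\RPCD}$, using the Perron-type observation that this eigenmatrix may be taken positive semidefinite (each map $\mX \mapsto \mT_{\mA,p}^{\RPCD\top}\mX\mT_{\mA,p}^{\RPCD}$ preserves the PSD cone, hence so does $\gM_{\mA}^{\RPCD}$, so its spectral radius is attained on the cone). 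Second, \emph{symmetry collapse}: the objective is invariant under $\mA \mapsto \mQ^{\top}\mA\mQ$ for permutation matrices $\mQ$ and under $\mA \mapsto \mA \odot \vv\vv^{\top}$ with $\vv \in \{\pm 1\}^{n}$; combined with the PSD structure above, I would run a symmetrization argument showing that some maximizer on the face $\lambda_{\min} = \sigma$ has its off-diagonal mass concentrated on a single block, i.e., is of the form $\diag\{\mA_k, \mI_{n-k}\} \odot \vv\vv^{\top}$ with $\mA_k = \sigma\mI_k + (1-\sigma)\vone_k\vone_k^{\top}$, whence $\mA \in \gA_{\sigma}$ and \cref{thm:rpcdub} (equivalently the $k$-dimensional computation in its proof together with the $\max_{2 \le k \le n}$) yields \eqref{eq:conjreduce}.

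\paragraph{Main obstacle.}
The crux — and the reason this stays a conjecture — is the symmetry-collapse step, which is genuinely hard because $\mT_{\mA,p}^{\RPCD} = \mI - \mP\mGamma_{\mP}^{-1}\mP^{\top}\mA$ with $\mGamma_{\mP} = \tril(\mP^{\top}\mA\mP)$ depends on $\mA$ through a permutation-dependent triangular solve, and $\E_{p}[\widetilde{\mT}_{\mA,p}^{\top}\otimes\widetilde{\mT}_{\mA,p}^{\top}]$ has no known closed form outside $\gS = \sp\{\mI, \vone\vone^{\top}\}$: averaging $\mGamma_{\mP}^{-1}$ over the $n!$ orderings is precisely what fails to simplify for general $\mA$, so neither the $2\times 2$ reduction nor the $\norm{\mM_{\mA_k}}_{\infty}$ bound used in \cref{thm:rpcdub} extends verbatim. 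I would pursue two fallback routes. Route (a), \emph{soft}: prove that $\rho(\widetilde{\gM}_{\mA}^{\RPCD})$ depends on $\mA$ monotonically enough — e.g.\ is Schur-convex in the eigenvalues of $\mA$ — that the maximizer's spectrum is forced to a permutation-invariant profile without ever evaluating the expectation. Route (b), \emph{hard}: bound $\rho$ of the $n^{2}\times n^{2}$ matrix $\E_{p}[\widetilde{\mT}_{\mA,p}^{\top}\otimes\widetilde{\mT}_{\mA,p}^{\top}]$ through a suitably weighted $\ell_{\infty}$-type norm, using only crude per-permutation estimates of $\norm{\widetilde{\mT}_{\mA,p}}$ together with \cref{lem:diagonalizable}; this seems more tractable, but making the bound tight enough to match the threshold \eqref{eq:thres}, especially near $\sigma = 1$ where the small bump noted after the proof sketch of \cref{thm:rpcdub} appears, is the principal difficulty. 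As partial evidence I would first verify \eqref{eq:conjreduce} exactly for $n \in \{2,3\}$, where $\E_{p}$ is a sum of only $2$ or $6$ terms, and check it numerically for moderate $n$.
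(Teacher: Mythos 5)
The target statement is a \emph{conjecture} in the paper: there is no proof in the paper to compare against, and the authors explicitly leave it as future work (cf.~\cref{sec:5}). Your write-up is honest about this — you explicitly identify the ``symmetry-collapse'' step as the reason it stays open — so there is no pretense of a complete argument. That said, two things in your outline deserve scrutiny.

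First, your ``monotonicity in the conditioning'' claim is not as safe as you make it sound. You assert that enlarging $\lambda_{\min}(\mA)$ can only decrease $\rho(\widetilde{\gM}_{\mA}^{\RPCD})$, so the maximizer over $\{\lambda_{\min} \ge \sigma\}$ sits on the face $\lambda_{\min} = \sigma$. But the paper explicitly records (remark following the proof sketch of \cref{thm:rpcdub}, and visibly in \cref{fig:rcdrpcd}) that $\sigma \mapsto \rho(\gM_{\mA_n}^{\RPCD})$ is \emph{not} monotone along the one-parameter family $\mA = \sigma\mI + (1-\sigma)\vone\vone^{\top}$: there is a small bump near $\sigma = 1$ that breaks monotonicity. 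A Loewner-order perturbation argument would therefore have to grapple with a direction in which the derivative has the ``wrong'' sign, and it is not clear the perturbation works uniformly. Since the bump occurs only near $\sigma = 1$, you would likely need a separate treatment there (just as \cref{thm:rpcdub} splits the proof into three $\sigma$-regions), but the proposal does not acknowledge this.

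Second, your fallback routes differ from what the paper actually pursues in \cref{sec:4.3} and \cref{sec:e}. The paper's ``hard'' route is the operator-norm bound $\rho(\MARPCD) \le \norm{\mA^{-1/2}\MARPCD(\mA)\mA^{-1/2}}$ obtained via the Russo--Dye theorem and the projection-product identity \eqref{eq:matrixamgm}; this is a concrete, positivity-based argument rather than your proposed weighted $\ell_\infty$-norm bound on the $n^2 \times n^2$ matrix, and the paper already documents numerically that it becomes too loose precisely for $\sigma > \tfrac12$ and large $n$ (\cref{fig:radiusnorm}). The paper's ``soft'' route generalizes the invariant-subspace trick to partially invariant Hessians (e.g.~the $4\times 4$ example with two free parameters in \cref{sec:e.1}), which is a genuinely different line from your Schur-convexity idea — and neither the paper nor your outline shows how to handle Hessians with no exploitable symmetry at all, which is where the $\E_p[\mGamma_{\mP}^{-1}]$ computation truly resists simplification. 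In short: your diagnosis of the obstruction is right, the monotonicity step needs more care than you give it, and your suggested remedies are plausible but not the ones the authors considered or ruled out.
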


\paragraph{Algorithmic Search.} 
We use the following framework to search for the worst-case example among the larger space of all (unit-diagonal) quadratics.

\begin{mdframed}[backgroundcolor=black!10]
    \emph{Step 1.} Start with a (randomly initialized) lower triangular matrix $\mX \in \R^{n \times n}$ with nonzero diagonals.

    \emph{Step 2.} Construct a unit-diagonal, positive semi-definite matrix by computing $\mY = \mX^{\top} \mX$ and set unit diagonals by $\mZ = \mD_{\mY}^{-\frac12} \mY \mD_{\mY}^{-\frac12}$, where $\mD_{\mY}$ is the diagonal part of $\mY$.
    
    \emph{Step 3.} Construct a matrix $\mA$ with $\sigma = \lambda_{\min} (\mA)$ by
    \begin{align*}
        \mA = \frac{1 - \sigma}{1 - \mu} \mZ + \frac{\sigma - \mu}{1 - \mu} \mI,
    \end{align*}
    where $\mu = \lambda_{\min} (\mZ)$.
    Note that $\mA$ is also unit-diagonal and positive semi-definite.
    
    \emph{Step 4.} Construct an objective function that takes an input $\mX$ to compute the value of $\rho(\MARPCD)$ and run a \texttt{scipy} optimizer to maximize the objective.
\end{mdframed}

We explore dimensions $n=3,4,5,6$ and $\sigma$ values ranging from $0.1$ to $0.9$ with increments of $0.1$.
Note that \textbf{Algorithmic Search} finds the $\argmax$ of $\rho(\MARPCD)$, which is an \emph{upper bound} of the convergence rates. The results suggest that $\mA \in \gA_{\sigma}$ are the worst cases concerning this upper bound, as all of our trials of \textbf{Algorithmic Search} converged to a problem instance $\mA \in \gA_{\sigma}$ with various sign flips.
We also observed that for any $\mA \in \gA_{\sigma}$, we always have $\rho(\MARPCD) = \rho(\MARPCD|_{\gS})$ (where $\gS = \sp \{ \mI, \vone \vone^{\top} \}$), which we were able to show in \cref{thm:rpcdub} that is smaller than the same upper bound.
These observations altogether motivated us to propose \cref{conj:general} on the worst-case instance for general quadratic functions.



\begin{remark*}
    \citet{sun16} show that $\mA = \sigma \mI + (1 - \sigma) \vone \vone^{\top}$ is the worst-case example of cyclic coordinate descent (CCD), which partially aligns with the idea of our conjecture that CD algorithms using one update per coordinate in each epoch (including both CCD and RPCD) are slow for similar types of matrices.
\end{remark*}

\begin{figure}[t!]
    \centering
    \includegraphics[width=0.495\linewidth]{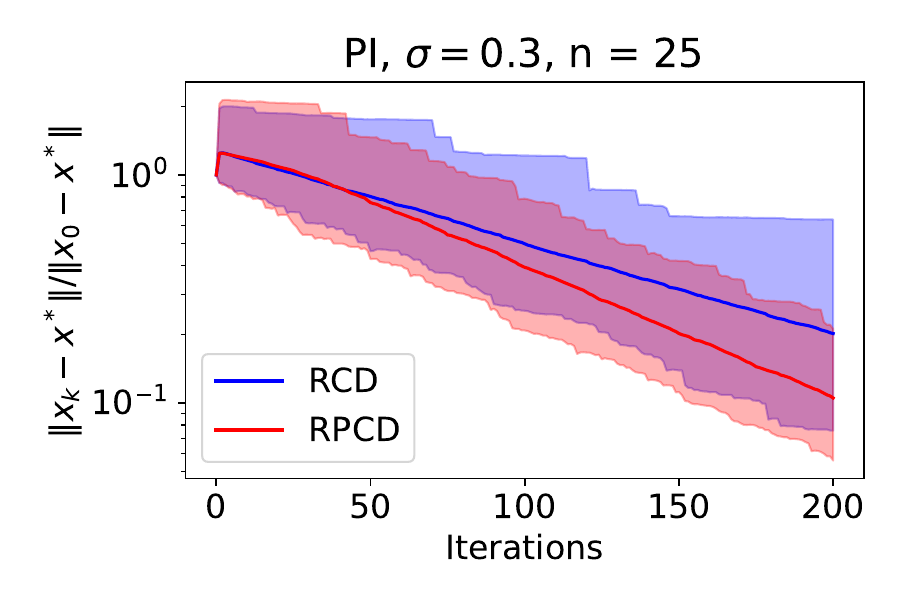}
    \includegraphics[width=0.495\linewidth]{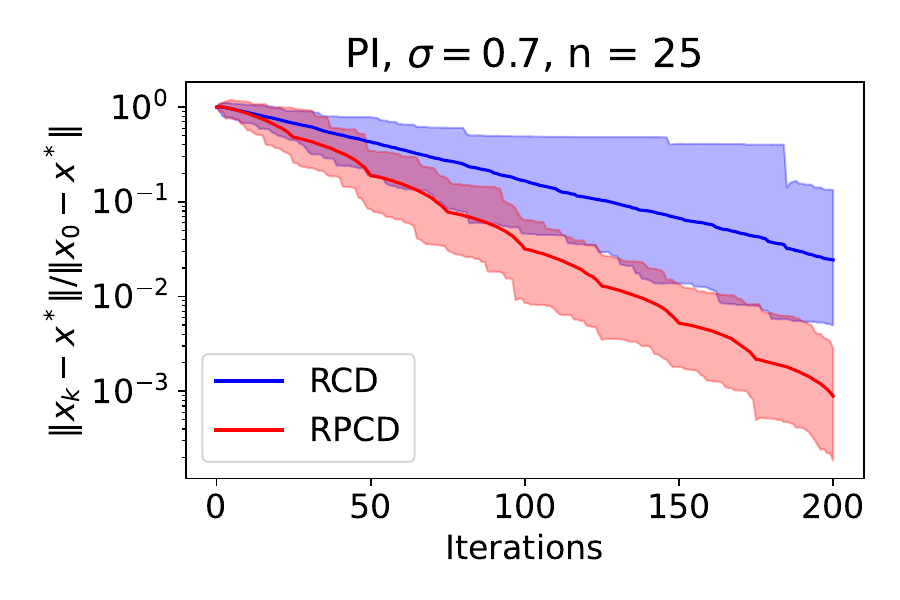}
    \includegraphics[width=0.495\linewidth]{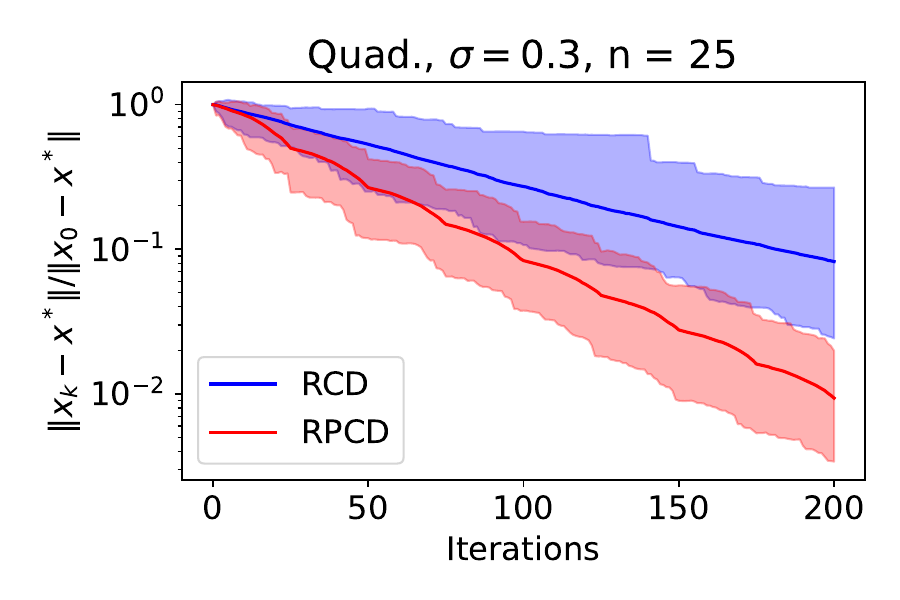}
    \includegraphics[width=0.495\linewidth]{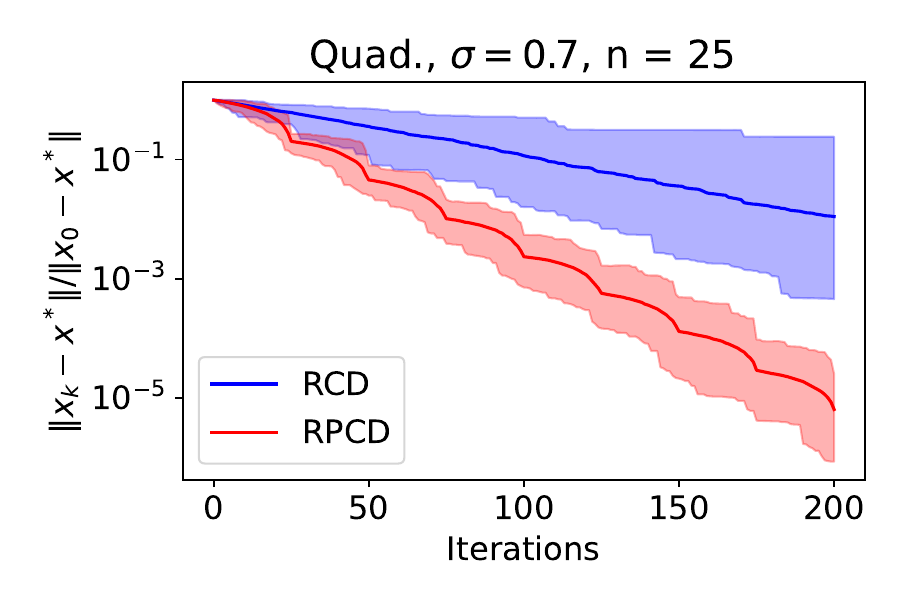}
    \includegraphics[width=0.495\linewidth]{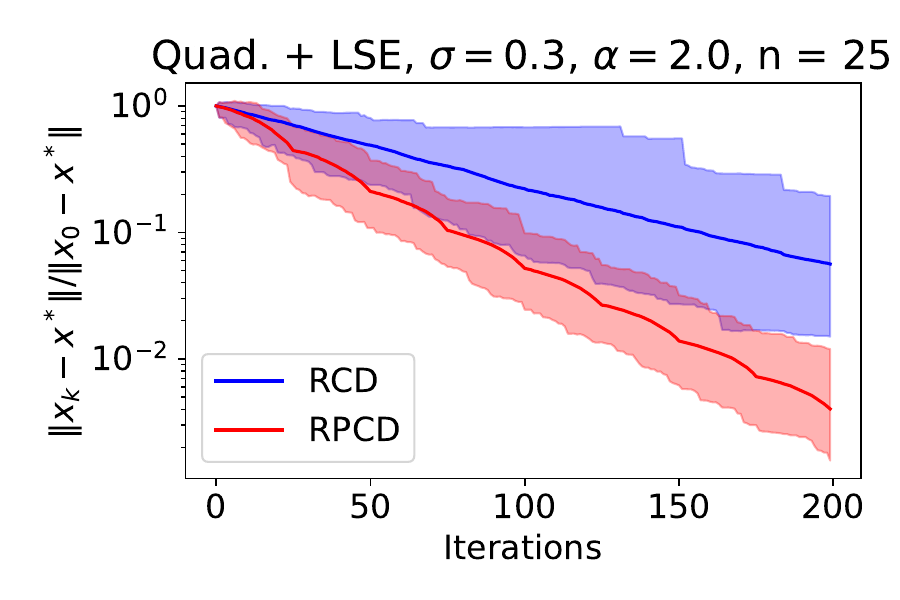}
    \includegraphics[width=0.495\linewidth]{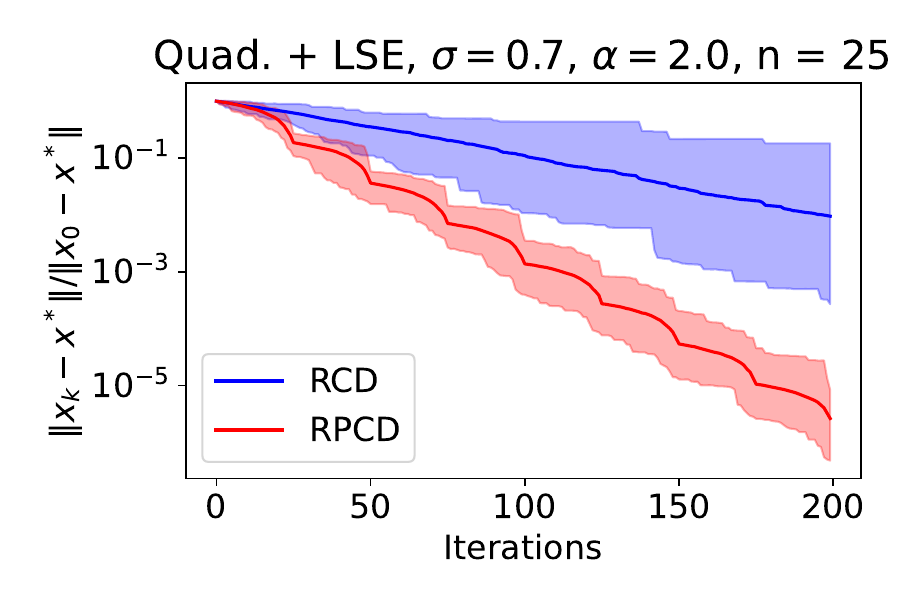}
    \includegraphics[width=0.495\linewidth]{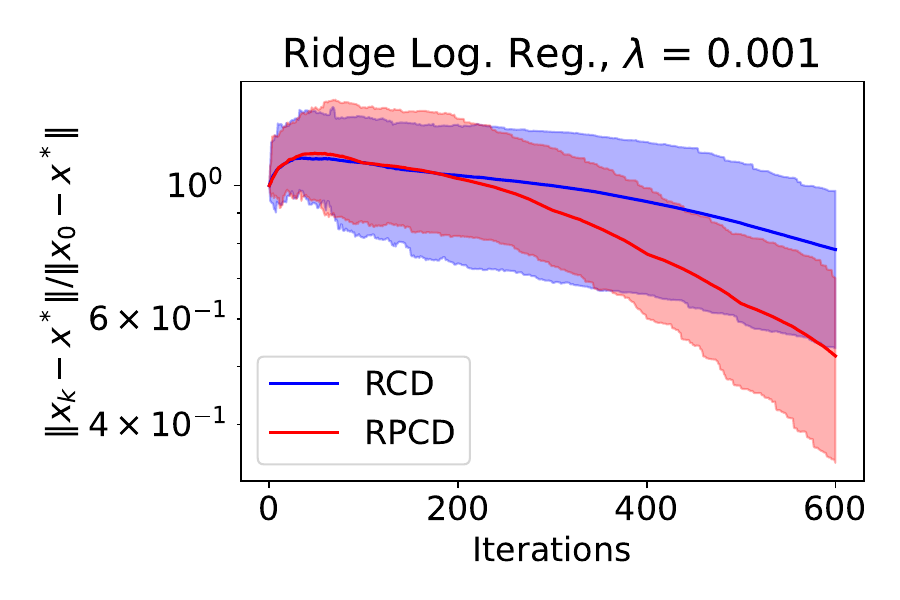}
    \includegraphics[width=0.495\linewidth]{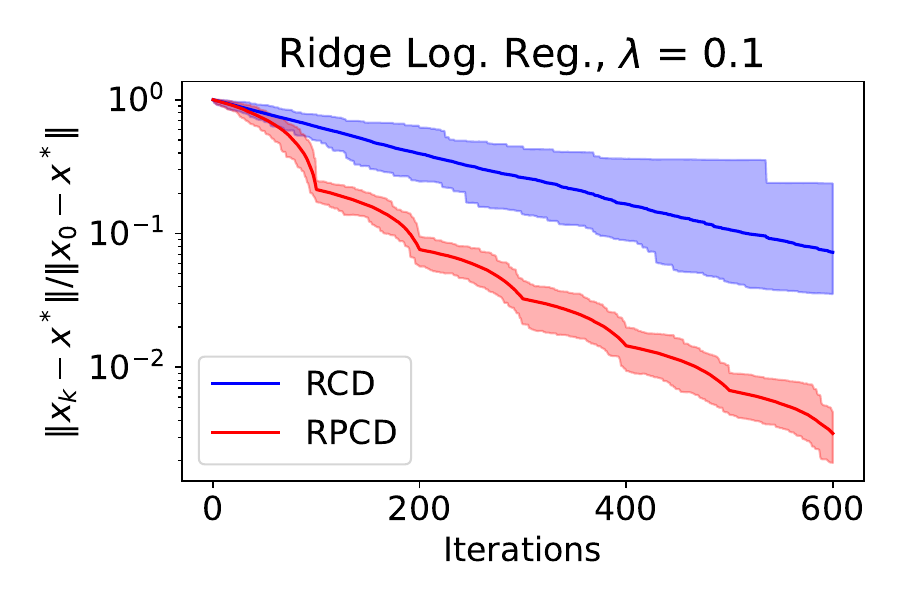}
    \caption{Numerical experiments comparing RCD and RPCD. We plot the mean values and min-max range over multiple trials of RCD/RPCD. For \textbf{(i)}-\textbf{(iii)}, we use $n = 25$, $\sigma \in \{0.3, 0.7\}$. The experiments are conducted on \textbf{(i)} a quadratic function with $\mA \in \gA_{\sigma}$, \textbf{(ii)} a random quadratic with $\lambda_{\min} (\mA) = \sigma$, \textbf{(iii)} a random quadratic + (scaled and transformed) LSE function, and \textbf{(iv)} an $\ell_2$-regularized sparse logistic regression objective with $n=100$.}
    \label{fig:rcdrpcdplots}
\end{figure}

\subsection{Experiments}
To compare the performance of RCD and RPCD, we conducted experiments on four types of convex and smooth functions: 
\textbf{(i)} quadratic functions with permutation-invariant Hessians,
\textbf{(ii)} general quadratic functions with positive definite Hessians, 
\textbf{(iii)} general quadratic functions with positive definite Hessians \emph{plus} a scaled log-sum-exp (LSE) term, and
\textbf{(iv)} $\ell_2$-regularized logistic regression objective functions.
Specifically, we use the following form for \textbf{(i)}-\textbf{(iii)}:
$$f(\vx) = \frac{1}{2} \vx^{\top} \mA \vx + \alpha \cdot \text{LSE} (\mQ \vx),$$
where $\mA$ is either $\sigma \mI + (1 - \sigma) \vone \vone^{\top}$ or randomly generated with unit diagonals and $\lambda_{\min} (\mA) = \sigma$ (following \emph{Steps 1-3} of \textbf{Algorithmic Search}), $\mQ$ is a randomly generated orthogonal matrix we use to avoid coordinate-friendly structures, $\text{LSE}(a_1, \dots, a_n) = \log(e^{a_1} + \dots + e^{a_n})$, and $\alpha \ge 0$.

For \textbf{(iv)}, we use the $\ell_2$-regularized logistic regression objective:
\[
\min_x \frac{1}{m} \sum_{i=1}^{m} \log(1 + \exp(-b_i \va_i^{\top} \vx)) + \frac{\lambda}{2} \|\vx\|^2,
\]
where $\va_i \sim \mathcal{N}(0, \mI_n)$ and the labels are generated by sampling $\vx_{\text{true}} \sim \mathcal{N}(0, \mI_n)$, setting $b_i = \text{sign}(\va_i^\top \vx_{\text{true}})$, and flipping each $b_i$ independently with probability 0.1. This setup follows \citet{nutini2015}.

Our theoretical contribution focuses on establishing the superiority of RPCD over RCD for the specific case of quadratic functions with permutation-invariant Hessians \textbf{(i)}.
As demonstrated in \cref{fig:rcdrpcdplots}, RPCD does not only show faster convergence than RCD for \textbf{(i)} in practice, but also in minimizing both \textbf{(ii)} and \textbf{(iii)}.
Results for \textbf{(ii)} particularly support an immediate corollary of \cref{conj:general}; that RPCD is generally faster than RCD for \emph{any problem instance} in the broader class of \emph{quadratic functions}.
Readers may refer to \cref{sec:f} for more details on the experiments.


\subsection{Discussions}
\label{sec:4.3}
All previous results considering the convergence of \emph{expected iterate norm} or \emph{function value}\footnote{Convergence of the expected iterate $\E[\vx_T]$ is \emph{weaker} than that of the expected iterate norm $\E [\norm{\vx_T}^2]$ or function value $\E [f(\vx_T)]$ as the latter implies the former (if $f$ is convex) but not vice versa.} of RPCD \citep{lee2019,wright17} rely on the properties of permutation invariant matrices, for which $\MARPCD$ is closed in a certain low-dimensional matrix subspace $\gS$.
In this section, we first discuss a special portion of cases when we might be able to use similar arguments, then proceed to discussions regarding arbitrary positive definite quadratics for which we cannot use such an approach.

\paragraph{Partially Invariant Hessians.}
Suppose that we have a $4 \times 4$ unit-diagonal matrix of the form:
\begin{align*}
    \mA = \begin{bmatrix}
        1 & a & a & a \\
        a & 1 & b & b \\
        a & b & 1 & b \\
        a & b & b & 1
    \end{bmatrix}.
\end{align*}
For these types of matrices, there are only $4$ possible cases of matrices $\mP^{\top} \mA \mP$.
Then we can define the bases using the following $(1+(n-1)) \times (1+(n-1))$ block matrices:
\begin{align*}
    \mV_{1} &= \begin{bmatrix}
        1 & \bm{0} \\ \bm{0} & \bm{0}
    \end{bmatrix}, \ \ 
    \mV_{2} = \begin{bmatrix}
        0 & \vone^{\top} \\ \vone & \bm{0}
    \end{bmatrix}, \\
    \mV_{3} &= \begin{bmatrix}
        0 & \bm{0} \\ \bm{0} & \mI
    \end{bmatrix}, \ \
    \mV_{4} = \begin{bmatrix}
        0 & \bm{0} \\ \bm{0} & \vone \vone^{\top}
    \end{bmatrix},
\end{align*}
and show $\MARPCD$ is closed in $\gS' = \sp \{ \mV_{1}, \mV_{2}, \mV_{3}, \mV_{4} \}$.
Then the problem reduces into finding $\rho(\MARPCD|_{\gS'})$ which requires a fine spectral analysis of an asymmetric $4 \times 4$ matrix with two controllable variables $a, b \in [0, 1]$. 
See \cref{sec:e.1} for a more detailed discussion.

\paragraph{General Quadratics.}
The only known results that consider RPCD for general quadratics are those by \citet{sun20}, where they observe that for each permutation $p$,
\begin{align}
    \begin{aligned}
    \widetilde{\mT}_{\mA, p}^{\text{RPCD}} &= \mA^{\frac12} \mT_{\mA, p}^{\text{RPCD}} \mA^{-\frac12} \\
    &= \mI - \mA^{\frac12} \mP \mGamma_{\mP}^{-1} \mP^{\top} \mA^{\frac12} = \mZ_{p(n)} \cdots \mZ_{p(1)}, 
    \end{aligned}\label{eq:matrixamgm}
\end{align}
where $\mZ_{i} = \mI - \vv_{i} \vv_{i}^{\top}$ is a projection matrix with $\vv_{i}$ being the $i$-th column of $\mA^{\frac12}$.
They then show \textit{expected iterate}\footnotemark[2] convergence from $\rho(\mT_{\mA, p}^{\text{RPCD}}) = \rho(\widetilde{\mT}_{\mA, p}^{\text{RPCD}}) \le 1 - \frac{\sigma}{n}$.
The resulting iteration complexity upper bound is $n$ times loose compared to both our upper bounds and previous results for $\mA \in \gA_{\sigma}$ that involves the $n$-th exponent, $(1 - \frac{\sigma}{n})^n$, instead.

For our goal, it is sufficient to show
\begin{align}
    \begin{aligned}
    \rho(\MARPCD) &= \rho \bigopen{\E \bigclosed{\mT_{\mA}^{\text{RPCD} \top} \otimes \mT_{\mA}^{\text{RPCD} \top}}} \\
    &\le \max \bigset{\bigopen{1 - \frac{\sigma}{n}}^{2n}, \bigopen{1 - \frac{1}{n}}^{n}}.
    \end{aligned}
    \label{eq:cocacola}
\end{align}
While there are no well-known analyses on the \emph{upper bounds} of the \emph{spectral radius} of the expectation of \emph{Kronecker powers} of matrices over \emph{permutations}, we can use \eqref{eq:matrixamgm} and $\rho (\MARPCD) = \rho (\widetilde{\gM}_{\mA}^{\text{RPCD}})$ (see \eqref{eq:similaroperator}) to equivalently write\footnote{The notation $(\cdot)^{\otimes 2}$ is for the Kronecker product with itself.}
\begin{align*}
    \rho(\widetilde{\gM}_{\mA}^{\text{RPCD}}) &= \rho \bigopen{\E \bigclosed{\widetilde{\mT}_{\mA}^{\text{RPCD} \top} \otimes \widetilde{\mT}_{\mA}^{\text{RPCD} \top}}} \\
    &= \rho \bigopen{\E \bigclosed{\bigopen{\mZ_{p(n)} \cdots \mZ_{p(1)}}^{\top \otimes 2}}}.
\end{align*}
One possible strategy could be to use
\begin{align}
    \rho(\MARPCD) &\le \bignorm{\mA^{-1/2} \MARPCD (\mA) \mA^{-1/2}} \label{eq:normupperbound}
\end{align}
to circumvent the need to compute the sum of the Kronecker powers.
We can use \eqref{eq:matrixamgm} to express the RHS of \eqref{eq:normupperbound} as
\begin{align*}
    \E \bigclosed{\mZ_{p(1)} \cdots \mZ_{p(n)} \cdots \mZ_{p(1)}}.
\end{align*}
We have numerically checked that \eqref{eq:normupperbound} is quite tight enough to be useful when $\sigma \le \frac12$, but unfortunately gets suddenly loose for $\sigma > \frac12$ if $n$ gets large, as shown in \cref{fig:radiusnorm}.
See \cref{sec:e.2} for proofs of some of the statements above and a more detailed discussion.

\begin{figure}[t!]
    \centering
    \includegraphics[width=0.95\linewidth]{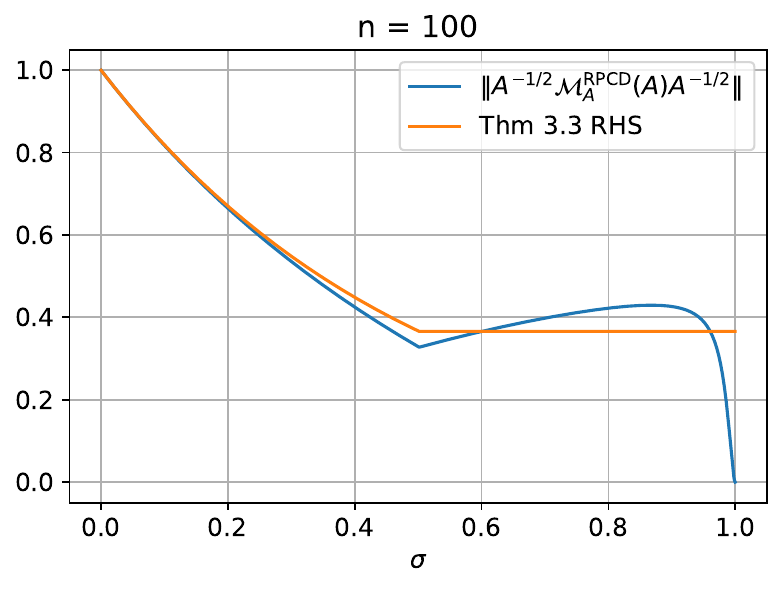}
    \caption{Plot of $\norm{\mA^{-\frac12} \gM_{\mA} (\mA) \mA^{-\frac12}}$ $(\mA = \sigma \mI + (1 - \sigma) \vone \vone^{\top})$ and the RPCD upper bound from \cref{thm:rpcdub} for $n = 100$.}
    \label{fig:radiusnorm}
    \vspace{-5pt}
\end{figure}

\section{Conclusion}
\label{sec:5}

We have shown convergence lower bounds for RCD and convergence upper bounds for RPCD for positive definite quadratic functions with permutation-invariant structures.
We obtain results demonstrating that RPCD outperforms RCD for any problem instance in $\mA \in \gA_{\sigma}$ (\cref{thm:rcdlb,thm:rpcdub}), show a stronger RCD lower bound that induces a clear gap between RCD and RPCD (\cref{thm:rcdlbpi}), and conjecture that RPCD will also outperform RCD for general quadratic functions (\cref{conj:general}).

An immediate future direction would be to prove (or disprove) \cref{conj:general} for general quadratic functions, and possibly further analyze the benefits of random permutations for even broader function/algorithm classes, such as \emph{non-quadratic} (strongly) convex functions or \emph{block} coordinate descent algorithms.

\section*{Acknowledgements}

This work was supported by a National Research Foundation of Korea (NRF) grant funded by the Korean government (MSIT) (No.\ RS-2023-00211352) and an Institute for Information \& communications Technology Planning \& Evaluation (IITP) grant funded by the Korean government (MSIT) (No.\ RS-2019-II190075, Artificial Intelligence Graduate School Program (KAIST)).

\section*{Impact Statement}
This paper presents the convergence analyses of randomized coordinate descent algorithms.
We do not have any specific ethical/societal impact or consequences of our work, other than trivial implications related to the advancement of the field of Machine Learning.

\bibliography{refs}

\begin{thebibliography}{32}
\providecommand{\natexlab}[1]{#1}
\providecommand{\url}[1]{\texttt{#1}}
\expandafter\ifx\csname urlstyle\endcsname\relax
  \providecommand{\doi}[1]{doi: #1}\else
  \providecommand{\doi}{doi: \begingroup \urlstyle{rm}\Url}\fi

\bibitem[Ahn et~al.(2020)Ahn, Yun, and Sra]{ahn20}
Ahn, K., Yun, C., and Sra, S.
\newblock {SGD} with shuffling: Optimal rates without component convexity and large epoch requirements.
\newblock In \emph{Advances in Neural Information Processing Systems (NeurIPS)}, 2020.

\bibitem[Beck \& Tetruashvili(2013)Beck and Tetruashvili]{beck13}
Beck, A. and Tetruashvili, L.
\newblock On the convergence of block coordinate descent type methods.
\newblock \emph{SIAM Journal on Optimization}, 23\penalty0 (4):\penalty0 2037--2060, 2013.
\newblock \doi{10.1137/120887679}.
\newblock URL \url{https://doi.org/10.1137/120887679}.

\bibitem[Bhatia(2007)]{bhatia07}
Bhatia, R.
\newblock \emph{Positive Definite Matrices}.
\newblock Princeton Series in Applied Mathematics. Princeton University Press, Princeton, NJ, USA, 2007.
\newblock URL \url{http://press.princeton.edu/titles/8445.html}.

\bibitem[Bottou(2009)]{bottou09}
Bottou, L.
\newblock Curiously fast convergence of some stochastic gradient descent algorithms.
\newblock In \emph{Proceedings of the symposium on learning and data science, Paris}, 2009.

\bibitem[Cha et~al.(2023)Cha, Lee, and Yun]{cha2023}
Cha, J., Lee, J., and Yun, C.
\newblock Tighter lower bounds for shuffling {SGD}: Random permutations and beyond.
\newblock In \emph{International Conference on Machine Learning (ICML)}, 2023.

\bibitem[Cho \& Yun(2023)Cho and Yun]{cho2023}
Cho, H. and Yun, C.
\newblock {SGDA} with shuffling: faster convergence for nonconvex-p{\l} minimax optimization.
\newblock In \emph{International Conference on Learning Representations (ICLR)}, 2023.
\newblock URL \url{https://openreview.net/forum?id=6xXtM8bFFJ}.

\bibitem[Das et~al.(2022)Das, Sch\"{o}lkopf, and Muehlebach]{das22}
Das, A., Sch\"{o}lkopf, B., and Muehlebach, M.
\newblock Sampling without replacement leads to faster rates in finite-sum minimax optimization.
\newblock In \emph{Advances in Neural Information Processing Systems (NeurIPS)}, 2022.

\bibitem[De~Sa(2020)]{desa20}
De~Sa, C.~M.
\newblock Random reshuffling is not always better.
\newblock In Larochelle, H., Ranzato, M., Hadsell, R., Balcan, M., and Lin, H. (eds.), \emph{Advances in Neural Information Processing Systems}, volume~33, pp.\  5957--5967. Curran Associates, Inc., 2020.
\newblock URL \url{https://proceedings.neurips.cc/paper_files/paper/2020/file/42299f06ee419aa5d9d07798b56779e2-Paper.pdf}.

\bibitem[G{\"u}rb{\"u}zbalaban et~al.(2017)G{\"u}rb{\"u}zbalaban, Ozdaglar, Parrilo, and Vanli]{gurbuzbalaban17}
G{\"u}rb{\"u}zbalaban, M., Ozdaglar, A., Parrilo, P.~A., and Vanli, N.
\newblock When cyclic coordinate descent outperforms randomized coordinate descent.
\newblock In \emph{Advances in Neural Information Processing Systems (NeurIPS)}, 2017.

\bibitem[G{\"u}rb{\"u}zbalaban et~al.(2018)G{\"u}rb{\"u}zbalaban, Ozdaglar, Vanli, and Wright]{gurbuzbalaban18}
G{\"u}rb{\"u}zbalaban, M., Ozdaglar, A.~E., Vanli, N.~D., and Wright, S.~J.
\newblock Randomness and permutations in coordinate descent methods.
\newblock \emph{Mathematical Programming}, 181:\penalty0 349 -- 376, 2018.
\newblock URL \url{https://api.semanticscholar.org/CorpusID:119151332}.

\bibitem[Horn \& Johnson(2012)Horn and Johnson]{horn2012}
Horn, R.~A. and Johnson, C.~R.
\newblock \emph{Matrix Analysis}.
\newblock Cambridge University Press, 2 edition, 2012.

\bibitem[Jacobson(1985)]{jacobson1985}
Jacobson, N.
\newblock \emph{Basic Algebra I}.
\newblock W. H. Freeman and Company, New York, 2 edition, 1985.

\bibitem[Knutson \& Tao(2000)Knutson and Tao]{knutson00}
Knutson, A. and Tao, T.
\newblock Honeycombs and sums of {Hermitian} matrices.
\newblock \emph{Notices Amer. Math. Soc.}, 48, 09 2000.

\bibitem[Lai \& Lim(2020)Lai and Lim]{lai20a}
Lai, Z. and Lim, L.-H.
\newblock Recht-re noncommutative arithmetic-geometric mean conjecture is false.
\newblock In III, H.~D. and Singh, A. (eds.), \emph{Proceedings of the 37th International Conference on Machine Learning}, volume 119 of \emph{Proceedings of Machine Learning Research}, pp.\  5608--5617. PMLR, 13--18 Jul 2020.
\newblock URL \url{https://proceedings.mlr.press/v119/lai20a.html}.

\bibitem[Lee \& Wright(2019)Lee and Wright]{lee2019}
Lee, C.-P. and Wright, S.~J.
\newblock Random permutations fix a worst case for cyclic coordinate descent.
\newblock \emph{IMA Journal of Numerical Analysis}, 39\penalty0 (3):\penalty0 1246--1275, 2019.

\bibitem[Lee \& Sidford(2013)Lee and Sidford]{lee13}
Lee, Y.~T. and Sidford, A.
\newblock Efficient accelerated coordinate descent methods and faster algorithms for solving linear systems.
\newblock In \emph{54th Annual {IEEE} Symposium on Foundations of Computer Science, {FOCS} 2013}, pp.\  147--156, 2013.
\newblock URL \url{https://doi.org/10.1109/FOCS.2013.24}.

\bibitem[Liu \& Zhou(2024)Liu and Zhou]{liu2024}
Liu, Z. and Zhou, Z.
\newblock On the last-iterate convergence of shuffling gradient methods.
\newblock In \emph{International Conference on Machine Learning (ICML)}, 2024.

\bibitem[Mishchenko et~al.(2020)Mishchenko, Khaled, and Richtarik]{mishchenko20}
Mishchenko, K., Khaled, A., and Richtarik, P.
\newblock Random reshuffling: Simple analysis with vast improvements.
\newblock In \emph{Advances in Neural Information Processing Systems (NeurIPS)}, 2020.

\bibitem[Mishchenko et~al.(2022)Mishchenko, Khaled, and Richtarik]{mishchenko22a}
Mishchenko, K., Khaled, A., and Richtarik, P.
\newblock Proximal and federated random reshuffling.
\newblock In \emph{International Conference on Machine Learning (ICML)}, 2022.
\newblock URL \url{https://proceedings.mlr.press/v162/mishchenko22a.html}.

\bibitem[Nesterov(2012)]{nesterov12}
Nesterov, Y.
\newblock Efficiency of coordinate descent methods on huge-scale optimization problems.
\newblock \emph{SIAM Journal on Optimization}, 22\penalty0 (2):\penalty0 341--362, 2012.
\newblock \doi{10.1137/100802001}.
\newblock URL \url{https://doi.org/10.1137/100802001}.

\bibitem[Nesterov \& Stich(2017)Nesterov and Stich]{nesterov17}
Nesterov, Y. and Stich, S.~U.
\newblock Efficiency of the accelerated coordinate descent method on structured optimization problems.
\newblock \emph{SIAM Journal on Optimization}, 27\penalty0 (1):\penalty0 110--123, 2017.
\newblock \doi{10.1137/16M1060182}.
\newblock URL \url{https://doi.org/10.1137/16M1060182}.

\bibitem[Nutini et~al.(2015)Nutini, Schmidt, Laradji, Friedlander, and Koepke]{nutini2015}
Nutini, J., Schmidt, M., Laradji, I., Friedlander, M., and Koepke, H.
\newblock Coordinate descent converges faster with the gauss-southwell rule than random selection.
\newblock In \emph{International Conference on Machine Learning}, pp.\  1632--1641. PMLR, 2015.

\bibitem[Recht \& Re(2012)Recht and Re]{recht12}
Recht, B. and Re, C.
\newblock Toward a noncommutative arithmetic-geometric mean inequality: Conjectures, case-studies, and consequences.
\newblock In \emph{Conference on Learning Theory (COLT)}, 2012.

\bibitem[Richt{\'a}rik \& Tak{\'a}c(2011)Richt{\'a}rik and Tak{\'a}c]{richtarik11}
Richt{\'a}rik, P. and Tak{\'a}c, M.
\newblock Iteration complexity of randomized block-coordinate descent methods for minimizing a composite function.
\newblock \emph{Mathematical Programming}, 144:\penalty0 1 -- 38, 2011.
\newblock URL \url{https://api.semanticscholar.org/CorpusID:16816638}.

\bibitem[Russo \& Dye(1966)Russo and Dye]{russodye}
Russo, B. and Dye, H.~A.
\newblock {A note on unitary operators in $C^{\ast}$-algebras}.
\newblock \emph{Duke Mathematical Journal}, 33\penalty0 (2):\penalty0 413 -- 416, 1966.
\newblock \doi{10.1215/S0012-7094-66-03346-1}.
\newblock URL \url{https://doi.org/10.1215/S0012-7094-66-03346-1}.

\bibitem[Sun \& Ye(2016)Sun and Ye]{sun16}
Sun, R. and Ye, Y.
\newblock Worst-case complexity of cyclic coordinate descent: {$O(n^2)$} gap with randomized version.
\newblock \emph{Mathematical Programming}, 185:\penalty0 487 -- 520, 2016.
\newblock URL \url{https://api.semanticscholar.org/CorpusID:232935420}.

\bibitem[Sun et~al.(2020)Sun, Luo, and Ye]{sun20}
Sun, R., Luo, Z.-Q., and Ye, Y.
\newblock On the efficiency of random permutation for {ADMM} and coordinate descent.
\newblock \emph{Mathematics of Operations Research}, 45\penalty0 (1):\penalty0 233--271, 2020.
\newblock \doi{10.1287/moor.2019.0990}.
\newblock URL \url{https://doi.org/10.1287/moor.2019.0990}.

\bibitem[Weyl(1912)]{weyl12}
Weyl, H.
\newblock Das asymptotische verteilungsgesetz der eigenwerte linearer partieller differentialgleichungen (mit einer anwendung auf die theorie der hohlraumstrahlung).
\newblock \emph{Mathematische Annalen}, 71\penalty0 (4):\penalty0 441--479, 1912.
\newblock URL \url{https://doi.org/10.1007/BF01456804}.

\bibitem[Wright(2015)]{wright15}
Wright, S.~J.
\newblock Coordinate descent algorithms.
\newblock \emph{Mathematical Programming}, 151:\penalty0 3 -- 34, 2015.
\newblock URL \url{https://api.semanticscholar.org/CorpusID:15284973}.

\bibitem[Wright \& Lee(2017)Wright and Lee]{wright17}
Wright, S.~J. and Lee, C.-P.
\newblock Analyzing random permutations for cyclic coordinate descent.
\newblock \emph{Math. Comput.}, 89:\penalty0 2217--2248, 2017.
\newblock URL \url{https://api.semanticscholar.org/CorpusID:39214865}.

\bibitem[Yun et~al.(2021)Yun, Sra, and Jadbabaie]{yun21a}
Yun, C., Sra, S., and Jadbabaie, A.
\newblock Open problem: Can single-shuffle sgd be better than reshuffling sgd and gd?
\newblock In Belkin, M. and Kpotufe, S. (eds.), \emph{Proceedings of Thirty Fourth Conference on Learning Theory}, volume 134 of \emph{Proceedings of Machine Learning Research}, pp.\  4653--4658. PMLR, 15--19 Aug 2021.
\newblock URL \url{https://proceedings.mlr.press/v134/open-problem-yun21a.html}.

\bibitem[Yun et~al.(2022)Yun, Rajput, and Sra]{yun2022}
Yun, C., Rajput, S., and Sra, S.
\newblock Minibatch vs local sgd with shuffling: Tight convergence bounds and beyond.
\newblock In \emph{10th International Conference on Learning Representations, ICLR 2022}. International Conference on Learning Representations (ICLR), 2022.

\end{thebibliography}
\bibliographystyle{icml2025}

\newpage
\appendix
\onecolumn

\section{Proofs in \cref{sec:2}}
\label{sec:a}

\subsection{Proof of \cref{lem:diagonalizable}}
\label{sec:diagonalizable}

Here we prove \cref{lem:diagonalizable}, restated below for the sake of readability.

\lemdiagonalizable*

\begin{proof}
    For $\MARCD$, consider the matrix operator:
    \begin{align*}
        \widetilde{\gM}_{\mA}^{\RCD} (\mX) &= \mA^{-\frac12} \MARCD (\mA^{\frac12} \mX \mA^{\frac12}) \mA^{-\frac12},
    \end{align*}
    a similar operator to $\MARCD$ as it maps $\mA^{-\frac12} \mX \mA^{-\frac12}$ to $\mA^{-\frac12} \MARCD (\mX) \mA^{-\frac12}$.
    This can be rewritten in the form of
    \begin{align*}
        \widetilde{\gM}_{\mA}^{\RCD} (\mX) &= \E \bigclosed{\widetilde{\mT}_{\mA}^{\RCD \top} \mX \widetilde{\mT}_{\mA}^{\RCD}},
    \end{align*}
    where the expectation is over \emph{symmetric} matrices
    \begin{align*}
        \widetilde{\mT}_{\mA, i}^{\RCD} := \mI - \frac{1}{a_{ii}} \mA^{\frac12} \mE_{i} \mA^{\frac12}.
    \end{align*}
    Therefore the matrix form of $\widetilde{\gM}_{\mA}^{\RCD}$,
    \begin{align*}        
        \E \bigclosed{\widetilde{\mT}_{\mA, i}^{\RCD \top} \otimes \widetilde{\mT}_{\mA, i}^{\RCD \top}}
    \end{align*}
    must also be symmetric, which implies that $\widetilde{\gM}_{\mA}^{\RCD}$ is diagonalizable.
    Hence $\MARCD$ must also be diagonalizable by similarity.

    For $\MARPCD$, we also consider the matrix operator:
    \begin{align*}
        \widetilde{\gM}_{\mA}^{\RPCD} (\mX) &= \mA^{-\frac12} \MARPCD (\mA^{\frac12} \mX \mA^{\frac12}) \mA^{-\frac12},
    \end{align*}
    which is also similar to $\MARPCD$ and can be rewritten as
    \begin{align*}
        \widetilde{\gM}_{\mA}^{\RPCD} (\mX) &= \E \bigclosed{\widetilde{\mT}_{\mA}^{\RPCD \top} \mX \widetilde{\mT}_{\mA}^{\RPCD}},
    \end{align*}
    where the expectation is over the matrices
    \begin{align*}
        \widetilde{\mT}_{\mA, p}^{\RPCD} := \mI - \mA^{\frac12} \mP \mGamma_{\mP}^{-1} \mP^{\top} \mA^{\frac12}.
    \end{align*}
    For each permutation $p$, we consider a complementary permutation $p'$ defined as:
    \begin{align*}
        p'(i) = (n+1) - p(i), \ \ \forall i \in [n].
    \end{align*}
    The permutation matrix $\mP'$ generated by $p'$ satisfies $\mP' \mGamma_{\mP'} \mP'^{\top} = \bigopen{\mP \mGamma_{\mP} \mP^{\top}}^{\top}$ \citep{sun20}.
    This is because
    \begin{align*}
        \mP \mGamma_{\mP} \mP^{\top} &= \mP \tril(\mP^{\top} \mA \mP) \mP^{\top} = \mA \odot \mW_{\mP},
    \end{align*}
    where $\mW_{\mP}$ is a binary matrix with
    \begin{align*}
        W_{\mP ij} &= \begin{cases}
            1, & p(i) \ge p(j), \\
            0, & \text{otherwise.}
        \end{cases}
    \end{align*}
    Since $p(i) \ge p(j)$ if and only if $p'(i) \ge p'(j)$, we have $\mW_{\mP'} = \mW_{\mP}^{\top}$ and $\mP \mGamma_{\mP'} \mP'^{\top} = \bigopen{\mP \mGamma_{\mP} \mP^{\top}}^{\top}$.
    
    Therefore we have $\widetilde{\mT}_{\mA, p'}^{\RPCD} = (\widetilde{\mT}_{\mA, p}^{\RPCD})^{\top}$, and the following sum:
    \begin{align*}
        \widetilde{\mT}_{\mA, p}^{\RPCD \top} \otimes \widetilde{\mT}_{\mA, p}^{\RPCD \top} + \widetilde{\mT}_{\mA, p'}^{\RPCD \top} \otimes \widetilde{\mT}_{\mA, p'}^{\RPCD \top}
    \end{align*}
    is also symmetric as a sum of transposes.
    As the set of all permutations can be paired with its unique complementary permutation, the matrix form of $\widetilde{\gM}_{\mA}^{\RPCD}$,
    \begin{align*}        
        \E \bigclosed{\widetilde{\mT}_{\mA}^{\RPCD \top} \otimes \widetilde{\mT}_{\mA}^{\RPCD \top}}
    \end{align*}
    must be symmetric, which implies that $\widetilde{\gM}_{\mA}^{\RPCD}$ and $\MARPCD$ must also be diagonalizable.
\end{proof}
\section{Proofs in \cref{sec:3}}
\label{sec:b}

\subsection{Proof of \cref{thm:rcdlb}}
\label{subsec:rcdlb}

Here we prove \cref{thm:rcdlb}, restated below for the sake of readability.

\thmrcdlb*

\begin{proof}
We will prove the following two inequalities: 
\begin{align*}
    \lim_{T \to \infty} \bigopen{\frac{\E\bigclosed{\|\vx_T\|^2 \mid \vx_0}}{\|\vx_0\|^2}}^{1/T} &\ge 1 - \frac{1}{n}, \\
    \lim_{T \to \infty} \bigopen{\frac{\E\bigclosed{\|\vx_T\|^2 \mid \vx_0}}{\|\vx_0\|^2}}^{1/T} &\ge \bigopen{1 - \frac{\sigma}{n}}^{2}.
\end{align*}
To show the first inequality, we use the fact that $\MARCD(\mI) = \mI - \frac{2\mA}{n} + \frac{\mA^2}{n}$. Then, 
\begin{align*}
    \E \bigclosed{\vx_k^{\top} \vx_k\mid \vx_{k-1}} &= \vx_{k-1}^{\top} \MARCD(\mI) \vx_{k-1}\\
    &= \vx_{k-1}^{\top} \bigopen{\mI - \frac{2\mA}{n} + \frac{\mA^2}{n}} \vx_{k-1}\\
    &\ge \lambda_{\min} \bigopen{\mI - \frac{2\mA}{n} + \frac{\mA^2}{n}} \vx_{k-1}^{\top} \vx_{k-1}.
\end{align*} Since the eigenvalues of $\mI - \frac{2\mA}{n} + \frac{\mA^2}{n}$ are in the form of $1 - \frac{2\lambda}{n} + \frac{\lambda^2}{n}$, where $\lambda$ is an eigenvalue of $\mA$, and $1 - \frac{2x}{n} + \frac{x^2}{n} \ge 1 - \frac{1}{n}$ for any $x\in \R$, we have
\begin{align*}
    \E \bigclosed{\vx_k^{\top} \vx_k\mid \vx_{k-1}} &\ge \bigopen{1 - \frac{1}{n}}\vx_{k-1}^{\top} \vx_{k-1}.
\end{align*}
By the law of total expectation, 
\begin{align*}
    \E \bigclosed{\vx_T^{\top} \vx_T\mid \vx_0} &= \E \bigclosed{\E \bigclosed{\vx_T ^{\top} \vx_T\mid \vx_{T-1}} \mid \vx_0} \\
    &\ge \bigopen{1 - \frac{1}{n}} \E \bigclosed{\vx_{T-1}^{\top} \vx_{T-1}\mid \vx_0} \\
    &\quad \vdots \\
    &\ge \bigopen{1 - \frac{1}{n}}^T \|\vx_0\|^2.
\end{align*}

For the second part, we use the following lemma.
\begin{lemma}[\citet{nesterov12}, Lemma~1]
    \label{lem:diagineq}
    If $\mX \in \R^{n \times n}$ is a positive semi-definite matrix,
    \begin{align*}
        \mX \preceq n\cdot \diag(\mX).
    \end{align*}
\end{lemma}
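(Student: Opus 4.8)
The plan is to reduce the Loewner inequality $\mX \preceq n \cdot \diag(\mX)$ to the scalar statement that $\vy^\top \mX \vy \le n \cdot \vy^\top \diag(\mX)\, \vy$ for every $\vy \in \R^n$, which is exactly the assertion that $n \cdot \diag(\mX) - \mX$ is positive semi-definite.

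Since $\mX \succeq 0$, I would factor $\mX = \mB^\top \mB$ for some $\mB \in \R^{m \times n}$ (for instance $\mB = \mX^{1/2}$), and write $\vb_1, \dots, \vb_n$ for the columns of $\mB$. Then $x_{ij} = \vb_i^\top \vb_j$; in particular each diagonal entry is $x_{ii} = \norm{\vb_i}^2$, and $\mB\vy = \sum_{i=1}^{n} y(i)\, \vb_i$ for any $\vy$. A convenient feature of this route is that it needs no separate treatment of the degenerate case $x_{ii} = 0$: there it forces $\vb_i = \bm{0}$ and the $i$-th row and column of $\mX$ to vanish, and the computation below goes through verbatim.

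With this in hand, the whole proof is the chain
\begin{align*}
    \vy^\top \mX \vy = \bignorm{\sum_{i=1}^{n} y(i)\, \vb_i}^2
    &\le \bigopen{\sum_{i=1}^{n} |y(i)|\, \norm{\vb_i}}^2 \\
    &\le n \sum_{i=1}^{n} y(i)^2 \norm{\vb_i}^2
    = n \sum_{i=1}^{n} y(i)^2 x_{ii}
    = n\, \vy^\top \diag(\mX)\, \vy,
\end{align*}
where the first inequality is the triangle inequality for $\norm{\cdot}$ and the second is Cauchy--Schwarz applied to the vectors $(|y(1)|, \dots, |y(n)|)$ and $(\norm{\vb_1}, \dots, \norm{\vb_n})$. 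Since $\vy$ was arbitrary, $n \cdot \diag(\mX) - \mX \succeq 0$, which is the claim.

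There is no genuine obstacle here --- it is two applications of standard inequalities. The only thing worth flagging is the temptation to argue instead via the normalized ``correlation'' matrix $\diag(\mX)^{-1/2} \mX \diag(\mX)^{-1/2}$, which is positive semi-definite with trace $n$ and hence has spectral radius at most $n$; that argument is equally short but requires the diagonal of $\mX$ to be strictly positive, so the factorization route above is cleaner under the stated hypothesis that $\mX$ is only positive semi-definite.
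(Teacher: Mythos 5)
The paper states this lemma by citation to \citet{nesterov12} and does not supply its own proof, so there is no in-paper argument to compare against. Your argument is correct in structure, but you misidentify the Cauchy--Schwarz pairing used in the second inequality. Cauchy--Schwarz applied to $(|y(1)|,\dots,|y(n)|)$ and $(\norm{\vb_1},\dots,\norm{\vb_n})$, as you state, would give
\[
\bigopen{\sum_{i=1}^n |y(i)|\,\norm{\vb_i}}^2 \le \bigopen{\sum_{i=1}^n y(i)^2} \bigopen{\sum_{i=1}^n \norm{\vb_i}^2} = \norm{\vy}^2\,\tr(\mX),
\]
which is a different bound and not the one you need (for $\vy$ concentrated on a large-$x_{ii}$ coordinate it can be far looser, and it does not in general dominate $\vy^\top \mX\vy$ by a factor $n$ over $\vy^\top\diag(\mX)\vy$ in the way the chain requires). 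The inequality you actually wrote, $\bigopen{\sum_i |y(i)|\norm{\vb_i}}^2 \le n\sum_i y(i)^2\norm{\vb_i}^2$, comes from Cauchy--Schwarz with the all-ones vector paired against $\bigopen{|y(1)|\norm{\vb_1},\dots,|y(n)|\norm{\vb_n}}$, i.e.\ the elementary fact that $\bigopen{\sum_i a_i}^2 \le n\sum_i a_i^2$. With that correction the chain is valid and the proof is complete; the factorization route is also slightly cleaner than the normalized correlation-matrix argument you mention, since it avoids any invertibility caveat on $\diag(\mX)$.
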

Now we define the following matrix operator:
\begin{align*}
    \widehat{\gM}^{\RCD}_{\mA}(\mX) := \bigopen{\mI - \frac{\mA}{n}}^{\top} \mX \bigopen{\mI - \frac{\mA}{n}}.
\end{align*}
According to \cref{lem:diagineq}, $\widehat{\gM}^{\RCD}_{\mA}(\mX) \preceq \MARCD(\mX)$ for any $\mX \succeq 0$ because
\begin{align*}
    \MARCD(\mX) &= \widehat{\gM}^{\RCD}_{\mA}(\mX) + \frac{\mA^{\top} (n\cdot \diag(\mX) - \mX) \mA}{n^2}.
\end{align*} 
Moreover, if $\mX \preceq \mY$, then $\widehat{\gM}^{\RCD}_{\mA}(\mX) \preceq \widehat{\gM}^{\RCD}_{\mA}(\mY)$. 

We now aim to show that for any positive integer $k$ and $\mX \succeq 0$,
\begin{align}
    (\widehat{\gM}^{\RCD}_{\mA})^k (\mX) \preceq (\MARCD)^k (\mX).
    \label{eq:rcdhat}
\end{align}
We prove this by induction on $k$.

\textbf{Base Case:} For $k = 1$, we have already shown that $\widehat{\gM}^{\RCD}_{\mA}(\mX) \preceq \MARCD(\mX)$.

\textbf{Inductive Step:} Assume that for some positive integer $k$, we have 
\begin{align*}
    (\widehat{\gM}^{\RCD}_{\mA})^k (\mX) &\preceq (\MARCD)^k (\mX).
\end{align*} We need to show that $(\widehat{\gM}^{\RCD}_{\mA})^{k+1} (\mX) \preceq (\MARCD)^{k+1} (\mX)$. Since
\begin{align*}
    (\widehat{\gM}^{\RCD}_{\mA})^{k+1} (\mX) &= \widehat{\gM}^{\RCD}_{\mA} ((\widehat{\gM}^{\RCD}_{\mA})^k (\mX)) \\
    &\preceq \widehat{\gM}^{\RCD}_{\mA} ((\MARCD)^k (\mX)) & (\because (\widehat{\gM}^{\RCD}_{\mA})^k (\mX) \preceq (\MARCD)^k (\mX))\\
    &\preceq \MARCD ((\MARCD)^k (\mX)) & (\because \widehat{\gM}^{\RCD}_{\mA}(\mX) \preceq \MARCD(\mX), (\MARCD)^k (\mX)) \succeq 0) \\
    &= (\MARCD)^{k+1} (\mX),
\end{align*}
we have $(\widehat{\gM}^{\RCD}_{\mA})^{k+1} (\mX) \preceq (\MARCD)^{k+1} (\mX)$ and \cref{eq:rcdhat} holds for any positive integer $k$.

Therefore, 
\begin{align*}
    \E \bigclosed{\vx_T^{\top} \vx_T\mid \vx_0} &= \vx_0^{\top} (\MARCD)^T (\mI) \vx_0 \\
    &\ge \vx_0^{\top} (\widehat{\gM}^{\RCD}_{\mA})^T (\mI) \vx_0 \\
    &= \vx_0^{\top} \bigopen{\mI - \frac{\mA}{n}}^{2T} \vx_0.
\end{align*}

Let $\vv_1, \vv_2, \dots, \vv_n$ be the eigenvectors of $\mA$ corresponding to the eigenvalues $\lambda_1 \le \lambda_2 \le \dots \le \lambda_n$, respectively. Since $\mA\succ 0$, the eigenvalues are positive and the eigenvectors form an orthonormal basis for $\R^n$. Therefore, we can express $\vx_0 \in \R^n$ as a linear combination, $\vx_0 = c_1 \vv_1 + c_2 \vv_2 + \dots + c_n \vv_n$, where $c_i$ are scalar coefficients.


Excluding a measure zero set of vectors $\vx$ such that $\vv_1^{\top} \vx = 0$, we may assume that $c_1 \ne 0$. 

Now, let us define 
\begin{align*}
    d_i = \frac{c_i^2}{\sum_{k=1}^n c_k^2}, \quad \mu_i = 1-\frac{\lambda_i}{n}.
\end{align*}
Then, $d_1\ne 0$, $d_i\ge 0$ for $i\in [n]$, $\sum_{i=1}^n d_i = 1$ and $\mu_1\ge \mu_2 \ge \dots \ge \mu_n > 0$. We can express $\frac{\vx_0^{\top} \bigopen{\mI - \frac{\mA}{n}}^{2T} \vx_0}{\vx_0^{\top} \vx_0}$ in terms of $c_i$ and $\mu_i$:
\begin{align*}
    \frac{\vx_0^{\top} \bigopen{\mI - \frac{\mA}{n}}^{2T} \vx_0}{\vx_0^{\top} \vx_0} &= \sum_{i=1}^n d_i \mu_i^{2T}.
\end{align*}
Thus, we have 
\begin{align*}
    \bigopen{\frac{\vx_0^{\top} \bigopen{\mI - \frac{\mA}{n}}^{2T} \vx_0}{\vx_0^{\top} \vx_0}}^{1/T} &= \bigopen{\sum_{i=1}^n d_i \mu_i^{2T}}^{1/T} \\
    &= \mu_1^2 \bigopen{d_1 + d_2 \bigopen{\frac{\mu_2}{\mu_1}}^{2T} + \dots + d_n \bigopen{\frac{\mu_n}{\mu_1}}^{2T}}^{1/T}.
\end{align*}
Since $0 < \frac{\mu_i}{\mu_1} \le 1$, we have 
\begin{align*}
    d_1^{1/T} \le \bigopen{d_1 + d_2 \bigopen{\frac{\mu_2}{\mu_1}}^{2T} + \dots + d_n \bigopen{\frac{\mu_n}{\mu_1}}^{2T}}^{1/T} \le \bigopen{d_1 + \dots + d_n}^{1/T} = 1.
\end{align*}
Thus,
\begin{align*}
    \lim_{T \to \infty} \bigopen{d_1 + d_2 \bigopen{\frac{\mu_2}{\mu_1}}^{2T} + \dots + d_n \bigopen{\frac{\mu_n}{\mu_1}}^{2T}}^{1/T} = 1
\end{align*}
because $d_1 > 0$.
Therefore we can conclude that
\begin{align*}
    \lim_{T \to \infty} \bigopen{\frac{\E \bigclosed{\vx_T^{\top} \vx_T\mid \vx_0}}{\vx_0^{\top} \vx_0}}^{1/T} &\ge \mu_1^2 = \bigopen{1-\frac{\sigma}{n}}^2
\end{align*}
which finishes the proof.
\end{proof}

\subsection{Proof of \cref{thm:rpcdub}}
\label{subsec:rpcdub}

Here we prove \cref{thm:rpcdub}, restated below for the sake of readability.

\thmrpcdub*

\begin{proof}
We first assume that $\mA = \sigma \mI_n + (1-\sigma) \vone_n \vone_n^{\top}$. Then, $\lambda_{\min}(\mA) = \sigma$ and $\lambda_{\max}(\mA) = n-(n-1)\sigma$.

Let $\mGamma = \tril(\mA)$ and $\mC = \mI - \mGamma^{-1} \mA$.
Since both $\mI$ and $\vone \vone^{\top}$ are permutation-invariant, we have 
\begin{align*}
    \MARPCD(\mI) &= \E [\mP \mC^{\top} \mC \mP^{\top}], \\
    \MARPCD(\vone \vone^{\top}) &= \E [\mP \mC^{\top} \vone \vone^{\top} \mC \mP^{\top}].
\end{align*}

We use the following lemma to compute $\MARPCD(\mI)$ and $\MARPCD(\vone \vone^{\top})$.
\begin{lemma}[\citet{lee2019}, Lemma~3.1]
    \label{lem:expectedperm}
    Given any matrix $\mQ\in \R^{n\times n}$ and permutation matrix $\mP$ selected uniformly at random from the set of all permutations $\Pi$, we have
    $\E_{\mP} [\mP\mQ\mP^{\top}] = \tau_1 \mI +\tau_2 \vone \vone^{\top}$, where \[
    \tau_2=\frac{\vone^{\top} \mQ \vone - \tr \mQ}{n(n-1)},\quad \tau_1=\frac{\tr \mQ}{n}-\tau_2.
    \]
\end{lemma}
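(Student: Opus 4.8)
The plan is to compute $\E_{\mP}[\mP\mQ\mP^{\top}]$ entrywise, exploiting the fact that conjugation by a permutation matrix permutes the rows and columns of $\mQ$ simultaneously. First I would fix the convention used in the paper: since $\mP\ve_i = \ve_{p(i)}$ for the permutation $p$ generating $\mP$, one computes $(\mP\mQ\mP^{\top})_{ij} = \sum_{k,l} P_{ik} Q_{kl} P_{jl} = Q_{p^{-1}(i),\,p^{-1}(j)}$. Taking the expectation over a uniformly random permutation $p$ (equivalently over $p^{-1}$, which is again uniform), the $(i,j)$ entry of $\E_{\mP}[\mP\mQ\mP^{\top}]$ depends only on whether $i = j$, by symmetry of the uniform distribution on $\Pi$.

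Next I would evaluate the two cases. For a diagonal entry $i = j$: since $p^{-1}(i)$ is uniform on $[n]$, we get $\E\bigl[Q_{p^{-1}(i),\,p^{-1}(i)}\bigr] = \frac{1}{n}\sum_{k=1}^{n} Q_{kk} = \frac{\tr\mQ}{n}$, independent of $i$. For an off-diagonal entry $i \ne j$: the pair $(p^{-1}(i),\,p^{-1}(j))$ is a uniformly random \emph{ordered pair of distinct} indices in $[n]$, so $\E\bigl[Q_{p^{-1}(i),\,p^{-1}(j)}\bigr] = \frac{1}{n(n-1)}\sum_{k\ne l} Q_{kl} = \frac{\vone^{\top}\mQ\vone - \tr\mQ}{n(n-1)} =: \tau_2$, again independent of $(i,j)$ (this is where $n \ge 2$ is implicitly used; for $n = 1$ the claim is trivial).

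Finally I would assemble the conclusion: $\E_{\mP}[\mP\mQ\mP^{\top}]$ has every diagonal entry equal to $\tr\mQ/n$ and every off-diagonal entry equal to $\tau_2$. The matrix $\tau_1\mI + \tau_2\vone\vone^{\top}$ with $\tau_1 := \tr\mQ/n - \tau_2$ has exactly these entries, since its off-diagonal entries are $\tau_2$ and its diagonal entries are $\tau_1 + \tau_2 = \tr\mQ/n$. This matches $\E_{\mP}[\mP\mQ\mP^{\top}]$ entrywise, proving the lemma.

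I expect no real obstacle here: the only points requiring care are the permutation-matrix convention ($\mP\ve_i = \ve_{p(i)}$ versus its transpose) and the vacuous $n = 1$ case. As an alternative, slightly more structural route, one can observe that $\E_{\mP}[\mP\mQ\mP^{\top}]$ commutes with every permutation matrix $\mR$ (replace $p$ by $p$ composed with the permutation of $\mR$ inside the expectation), hence lies in the commutant of the permutation representation of the symmetric group, which is exactly $\sp\{\mI,\vone\vone^{\top}\}$; one then pins down the two coefficients by matching the two permutation-invariant linear functionals $\tr(\cdot)$ and $\vone^{\top}(\cdot)\vone$, recovering the same $\tau_1,\tau_2$. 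Either way the computation is short.
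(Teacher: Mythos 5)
Your proof is correct: the entrywise computation (diagonal entries average to $\tr\mQ/n$, off-diagonal entries average over ordered pairs of distinct indices to $\tau_2$) is exactly the standard argument, and the paper itself does not reprove this lemma but simply imports it from \citet{lee2019}, where the proof proceeds the same way. No gaps to flag.
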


As a consequence of this lemma, $\MARPCD(\mI)$ and $\MARPCD(\vone \vone^{\top})$ can be expressed as 
\begin{align*}
    \tau_1^{(1)}\mI+\tau_2^{(1)}\vone \vone^{\top} &= \MARPCD(\mI), \\
    \tau_1^{(2)}\mI+\tau_2^{(2)}\vone \vone^{\top} &= \MARPCD(\vone \vone^{\top}),
\end{align*}
where $\tau_1^{(1)}, \tau_2^{(1)}, \tau_1^{(2)}$ and $\tau_2^{(2)}$ can be computed using $\vone^{\top} \mC^{\top} \mC \vone$, $\tr (\mC^{\top} \mC)$, $\vone^{\top} \mC^{\top} \vone \vone^{\top} \mC \vone$, and $\tr (\mC^{\top} \vone \vone^{\top} \mC)$.

We remark that $\sp\{\mI, \vone \vone^{\top}\}$ is invariant under $\MARPCD$ and the matrix representation of $\left. \MARPCD \right|_{\sp\{\mI, \vone \vone^{\top}\}}$ is 
$\mM_{\mA} := \begin{bmatrix}
    \tau_1^{(1)} & \tau_1^{(2)} \\
    \tau_2^{(1)} & \tau_2^{(2)}
\end{bmatrix}$.
That is, if 
\begin{align*}
    \mM_{\mA} \begin{bmatrix}
    a\\
    b
\end{bmatrix} &= \begin{bmatrix}
    a'\\
    b'
\end{bmatrix},
\end{align*}
then $\MARPCD(a\mI + b \vone \vone^{\top}) = a'\mI + b' \vone \vone^{\top}$.

Let us denote $\vone^{\top} \mC^{\top} \mC \vone$, $\tr (\mC^{\top} \mC)$, $\vone^{\top} \mC^{\top} \vone \vone^{\top} \mC \vone$, and $\tr (\mC^{\top} \vone \vone^{\top} \mC)$ as $\alpha, \beta, \gamma$ and $\delta$, respectively. Then, by \cref{lem:expectedperm}, we have 
\begin{align*}
    \mM_{\mA} &= \frac{1}{n(n-1)}
    \begin{bmatrix}
        n\beta-\alpha & n\delta-\gamma \\
        \alpha-\beta & \gamma-\delta
    \end{bmatrix}.
\end{align*}

To derive the explicit forms, we first calculate $\mC$, which is given by $\mI - \mGamma^{-1} \mA$.
We can express $\mGamma^{-1}$ as follows:
\begin{align*}
    (\mGamma^{-1})_{ij} &=
    \begin{cases}
        0 & \text{ if } i < j\\
        1 & \text{ if } i = j\\
        -(1-\sigma) \sigma^{i-j-1} & \text{ if }  i > j.
    \end{cases}
\end{align*}

Consequently, we have 
\begin{align}
    \label{eq:cij}
    \mC_{ij} &=
    \begin{cases}
        -(1-\sigma) \sigma^{i-1} & \text{ if } i < j\\
        (1-\sigma) (\sigma^{i-j}-\sigma^{i-1}) & \text{ if } i \ge j.
    \end{cases}
\end{align}

To proceed, we introduce $\vv = \mC \vone, \vw = \mC^{\top} \vone$ and $L = \lambda_{\max}(\mA) = n - (n-1)\sigma$. 
Then, by \cref{eq:cij}, we have 
\begin{align*}
    \vv = \vone - L 
    \begin{bmatrix}
        1\\
        \sigma\\
        \vdots\\
        \sigma^{n-1}
    \end{bmatrix}, \quad 
    \vw = 
    \begin{bmatrix}
        0\\
        \sigma^n - \sigma^{n-1}\\
        \vdots\\
        \sigma^n - \sigma
    \end{bmatrix}.
\end{align*}

With the explicit form of the entries of $\mC$, we can now calculate the four quantities $\alpha, \beta, \gamma$, and $\delta$, as follows (when $\sigma \ne 1$):
\begin{align}
\begin{aligned}
\alpha &= \vv^{\top} \vv =  n - 2L\frac{1-\sigma^n}{1-\sigma} + L^2\frac{1-\sigma^{2n}}{1-\sigma^2}\\
\beta &= \sum_{1\le i,j\le n} \mC_{ij}^2 = \sum_{j=1}^n \bigopen{(1-\sigma)^2 \frac{1-\sigma^{2n}}{1-\sigma^2} + (1-\sigma)^2(1-2\sigma^{j-1})\frac{1-\sigma^{2(n-j+1)}}{1-\sigma^2}}\\
&= \frac{1-\sigma}{1+\sigma} \bigopen{2n - n\sigma^{2n}-2(1-\sigma^{n+1})\frac{1-\sigma^n}{1-\sigma} - \sigma^2\frac{1-\sigma^{2n}}{1-\sigma^2}}\\
\gamma &=  \bigopen{\vone^{\top} \vv}^2 =\bigopen{1-\frac{1}{1-\sigma} + \bigopen{n-1+\frac{1}{1-\sigma}} \sigma^n}^2\\
\delta &= \vw^{\top} \vw = n\sigma^{2n}-2\sigma^{n+1}\frac{1-\sigma^n}{1-\sigma}+\sigma^2\frac{1-\sigma^{2n}}{1-\sigma^2}.
\end{aligned} \label{eq:abcd}
\end{align}
Note that all of these values are polynomials in $\sigma$.
(If $\sigma = 1$, all four quantities above are equal to zero.)

The explicit form of $\alpha, \beta, \gamma$, and $\delta$ allows us to establish the following lemma.
\begin{lemma}
    \label{lem:taunonnegative}
    $\tau_1^{(1)}, \tau_1^{(2)}, \tau_2^{(1)}$ and $\tau_2^{(2)}$ are all non-negative.
\end{lemma}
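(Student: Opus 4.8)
The plan is to extract, from the explicit $2\times 2$ representation derived just above,
\[
\mM_{\mA} = \frac{1}{n(n-1)}\begin{bmatrix} n\beta-\alpha & n\delta-\gamma \\ \alpha-\beta & \gamma-\delta \end{bmatrix} = \begin{bmatrix} \tau_1^{(1)} & \tau_1^{(2)} \\ \tau_2^{(1)} & \tau_2^{(2)} \end{bmatrix},
\]
the closed forms $\tau_1^{(1)} = \frac{n\beta-\alpha}{n(n-1)}$, $\tau_2^{(1)} = \frac{\alpha-\beta}{n(n-1)}$, $\tau_1^{(2)} = \frac{n\delta-\gamma}{n(n-1)}$, $\tau_2^{(2)} = \frac{\gamma-\delta}{n(n-1)}$. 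Since $n \ge 2$, the common prefactor is positive, so the lemma reduces to the four scalar inequalities $n\beta \ge \alpha$, $\alpha \ge \beta$, $n\delta \ge \gamma$, and $\gamma \ge \delta$. When $\sigma = 1$ one has $\mC = \bm{0}$, hence $\alpha=\beta=\gamma=\delta=0$ and all four hold with equality; from now on I would assume $\sigma \in (0,1)$.

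For the two inequalities carrying the factor $n$, I would argue directly from the definitions $\alpha = \|\mC\vone\|^2$, $\beta = \tr(\mC^{\top}\mC) = \sum_{j}\|\mC\ve_j\|^2$, $\gamma = (\vone^{\top}\vw)^2$, $\delta = \|\vw\|^2$, with $\vw = \mC^{\top}\vone$. Writing $\mC\vone = \sum_{j}\mC\ve_j$ and applying Cauchy--Schwarz gives $\alpha \le n\sum_j\|\mC\ve_j\|^2 = n\beta$; likewise, writing $\vone^{\top}\vw = \sum_i (\text{$i$-th coord of }\vw)$, Cauchy--Schwarz gives $\gamma \le n\|\vw\|^2 = n\delta$. (Equivalently, $\tau_1^{(1)}, \tau_1^{(2)} \ge 0$ because $\MARPCD(\mI) = \E[\mP\mC^{\top}\mC\mP^{\top}]$ and $\MARPCD(\vone\vone^{\top}) = \E[\mP\vw\vw^{\top}\mP^{\top}]$ are positive semidefinite, and a matrix of the form $c_1\mI + c_2\vone\vone^{\top}$ can be PSD only if $c_1 \ge 0$, as seen by restricting to $\vone^{\perp}$.) For $\gamma \ge \delta$, I would use the explicit form of $\vw$ obtained in the proof of \cref{thm:rpcdub}: its $i$-th coordinate equals $\sigma^n - \sigma^{n-i+1}$, which is $\le 0$ for $\sigma \in (0,1]$; hence $\gamma - \delta$ equals $(\sum_i \vw_i)^2 - \sum_i \vw_i^2 = \sum_{i\ne j}\vw_i\vw_j \ge 0$, a sum of products of numbers with the same sign.

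The remaining inequality $\alpha \ge \beta$ (equivalently $\tau_2^{(1)} \ge 0$) is the crux and where the real work lies. It asserts $\|\mC\vone\|^2 \ge \tr(\mC^{\top}\mC)$, i.e.\ $\sum_{i\ne j}(\mC^{\top}\mC)_{ij} \ge 0$, which is \emph{false} for a general matrix $\mC$, so the specific lower-triangular-inverse structure from \eqref{eq:cij} must be used. My plan is to substitute the closed forms of $\alpha$ and $\beta$ from \eqref{eq:abcd}, factor out the common $(1-\sigma)^2$ (both $\alpha$ and $\beta$ carry it, since every entry of $\mC$ does), clear the $1-\sigma^2$ denominators, and reduce $\alpha - \beta$ to $(1-\sigma)^2 q_n(\sigma)$ for an explicit polynomial $q_n$; it then remains to show $q_n(\sigma) \ge 0$ on $(0,1)$. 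I would establish this by a sign analysis of the coefficients of $q_n$, bounding any negative-coefficient terms against the positive constant term (the small cases are encouraging: $q_2 \equiv 0$, so $\alpha = \beta$ when $n=2$, while $q_3(\sigma) = 2(1 - \sigma^3 + \sigma^4) > 0$). The main obstacle is precisely this last step: since individual entries $(\mC^{\top}\mC)_{ij}$ and individual column inner products of $\mC$ can be negative, no entrywise bound works, and one genuinely needs the summed, closed-form polynomial and a careful argument that is uniform in $n$.
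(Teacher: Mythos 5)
Your treatment of three of the four quantities is fine and essentially matches the paper: $\tau_2^{(2)}\ge 0$ via the sign pattern of $\vw$ is exactly the paper's argument, and for $\tau_1^{(1)},\tau_1^{(2)}\ge 0$ your Cauchy--Schwarz route is a clean substitute for the paper's observation that these are eigenvalues of the PSD matrices $\MARPCD(\mI)$ and $\MARPCD(\vone\vone^\top)$ (a fact you note in the parenthetical alternative). These parts buy you the same conclusion by slightly different means.

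The problem is $\tau_2^{(1)}\ge 0$, which you rightly identify as the crux but for which you only sketch a plan that you yourself acknowledge you cannot complete (``a careful argument that is uniform in $n$'' is left open). That is a genuine gap, not a proof. Worse, the plan is premised on a false belief: you assert that individual entries $(\mC^\top\mC)_{ij}$ can be negative so ``no entrywise bound works,'' and this pushes you into summing the global polynomial $\alpha-\beta$ and trying to analyze its coefficients for all $n$, which is genuinely hard. In fact the opposite is true, and the paper's proof hinges on exactly the entrywise bound you ruled out: it shows that \emph{every} entry of $\mC^\top\mC$ is non-negative. The mechanism is a monotonicity argument in the column index pair: for $2\le i<j<n$ one has $\mC_i^\top\mC_j\ge\mC_i^\top\mC_{j+1}$, and for $2<i<j\le n$ one has $\mC_i^\top\mC_j\ge\mC_{i-1}^\top\mC_j$, each proved by explicitly evaluating $\mC_i^\top(\mC_j-\mC_{j+1})$ and $(\mC_{i-1}-\mC_i)^\top\mC_j$ via the geometric-sum structure of \eqref{eq:cij}. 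Chaining these reduces everything to the single scalar $\mC_2^\top\mC_n\ge 0$, which is checked directly, and $\mC_1=\vzero$ handles the first row and column. With entrywise non-negativity, $\tau_2^{(1)}=\sum_{i\ne j}(\mC^\top\mC)_{ij}\ge 0$ is immediate and uniform in $n$. To repair your proof you should drop the polynomial-in-$\sigma$ plan and instead prove the stronger entrywise claim; as written, the hardest of the four inequalities has no proof.
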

\begin{proof}
    First, we show that $\tau_1^{(1)}, \tau_1^{(2)} \ge 0$ from the fact that $\MARPCD$ preserves positive semi-definiteness. 
    
    Recall that $\MARPCD(\mX) = \E[\mT_{\mA, p}^{\RPCD \top} \mX \mT_{\mA, p}^{\RPCD}]$.
    Since $\mX \succeq 0$ implies $\mT_{\mA, p}^{\RPCD \top} \mX \mT_{\mA, p}^{\RPCD} \succeq 0$ and the expectation of positive semi-definite matrices is also positive semi-definite, we can conclude that $\MARPCD(\mX) \succeq 0$ whenever $\mX \succeq 0$.
    As both $\mI \succeq 0$ and $\vone \vone^{\top} \succeq 0$, it follows that $\MARPCD(\mI) \succeq 0$ and $\MARPCD(\vone \vone^{\top}) \succeq 0$, i.e., their eigenvalues are non-negative.
    Since $\tau_1^{(1)}$ and $\tau_1^{(2)}$ are eigenvalues of $\MARPCD(\mI) \succeq 0$ and $\MARPCD(\vone \vone^{\top}) \succeq 0$ respectively, we have $\tau_1^{(1)}, \tau_1^{(2)} \ge 0$.
    
    Next, we show that $\tau_2^{(2)}\ge 0$. For $\vw = \mC^{\top} \vone$, we have 
    \begin{align*}
        \gamma &= \vone^{\top} \vw \vw^{\top} \vone = (\vone^{\top} \vw)^2 \\
        \delta &= \tr(\vw \vw^{\top}) = \tr(\vw^{\top} \vw) = \|\vw\|^2.
    \end{align*}
    Since $\tau_2^{(2)} = \gamma - \delta$, we have
    \begin{align*}
        \tau_2^{(2)} &= (\vone^{\top} \vw)^2 - \|\vw\|^2 \\
        &= 2\sum_{i<j} w(i) w(j).
    \end{align*}
    Since $w(i) = \sigma^n - \sigma^{n+1-i} \le 0$ for all $i\in [n]$, we have $w(i) w(j) \ge 0$, and therefore $\tau_2^{(2)} = 2\sum_{i<j} w(i) w(j) \ge 0$.

    Finally, to show that $\tau_2^{(1)}\ge 0$, we first show that the entries of $\mC^{\top} \mC$ is non-negative.
    
    In particular, we can prove the following two inequalities:
    \begin{itemize}
        \item If $2 \le i < j < n$, then $\mC_i^{\top} \mC_j \ge \mC_i^{\top} \mC_{j+1}$,
        \item If $2 < i < j \le n$, then $\mC_i^{\top} \mC_j \ge \mC_{i-1}^{\top} \mC_j$,
    \end{itemize}
    where $\mC_i$ denotes the $i$-th column of $\mC$.
    
    To show the first inequality, we begin by considering the difference $\mC_j - \mC_{j+1}$.
    We have
    \begin{align}
        (\mC_j - \mC_{j+1})_k &= 
        \begin{cases}
            0 &\quad \text{if } k < j, \\
            1-\sigma &\quad \text{if } k = j, \\
            (1-\sigma)(\sigma^{k-j} - \sigma^{k-j-1}) &\quad \text{if } k > j.
        \end{cases}
    \label{eq:cjdiff}
    \end{align}
    Therefore, if $2 \le i < j < n$, 
    \begin{align*}
        \mC_i^{\top} (\mC_j - \mC_{j+1}) &= (1-\sigma)^2(\sigma^{j-i} - \sigma^{j-1}) + \sum_{k=j+1}^n (1-\sigma)^2(\sigma^{k-i} - \sigma^{k-1})(\sigma^{k-j} - \sigma^{k-j-1})\\
        &= (1-\sigma)^2(\sigma^{-i} - \sigma^{-1}) \bigopen{\sigma^j +(\sigma^{-j} - \sigma^{-j-1})\sum_{k=j+1}^n \sigma^{2k}}.
    \end{align*}
    Since $i \ge 2$ and $\sigma\in (0, 1]$, we have $(1-\sigma)^2 \ge 0$ and $(\sigma^{-i}-\sigma^{-1}) \ge 0$. 
    Thus, it suffices to show that 
    \begin{align*}
        \sigma^j +(\sigma^{-j} - \sigma^{-j-1})\sum_{k=j+1}^n \sigma^{2k} &\ge 0.
    \end{align*}
    Since 
    \begin{align*}
        \sigma^j +(\sigma^{-j} - \sigma^{-j-1})\sum_{k=j+1}^n \sigma^{2k} &= \sigma^j +(\sigma^{-j} - \sigma^{-j-1}) \sigma^{2j+2} \cdot \frac{1-\sigma^{2(n-j)}}{1-\sigma^2} \\
        &= \sigma^j \bigopen{1 - \frac{\sigma}{1+\sigma}(1-\sigma^{2(n-j)})}\\
        &= \frac{\sigma^j}{1+\sigma} (1+\sigma^{2n-2j+1}),
    \end{align*}
    we have $\mC_i^{\top} (\mC_j - \mC_{j+1}) \ge 0$.

    For the second inequality, we will show that $(\mC_{i-1}-\mC_i)^{\top} \mC_j \le 0$. Using \cref{eq:cjdiff}, we obtain
    \begin{align*}
        (\mC_{i-1}-\mC_i)^{\top} \mC_j &= (1-\sigma)^2 \bigopen{-\sigma^{i-2} + (\sigma^{-i-1}-\sigma^{-i})\sum_{k=i}^{j-1} \sigma^{2k} + (\sigma^{-i+1}-\sigma^{-i})(\sigma^{-j}-\sigma^{-1})\sum_{k=j}^n \sigma^{2k}}.
    \end{align*}
    Since 
    \begin{align*}
         (\sigma^{-i+1}-\sigma^{-i})(\sigma^{-j}-\sigma^{-1})\sum_{k=j}^n \sigma^{2k} &\le 0,
    \end{align*}
    it is sufficient to show that 
    \begin{align*}
        -\sigma^{i-2} + (\sigma^{-i-1}-\sigma^{-i})\sum_{k=i}^{j-1} \sigma^{2k} \le 0.
    \end{align*}
    Since
    \begin{align*}
        -\sigma^{i-2} + (\sigma^{-i-1}-\sigma^{-i})\sum_{k=i}^{j-1} \sigma^{2k} &= -\sigma^{i-2} + (\sigma^{-i-1}-\sigma^{-i}) \sigma^{2i} \cdot \frac{1-\sigma^{2(j-i)}}{1-\sigma^2} \\
        &\le -\sigma^{i-2} + \frac{\sigma^{i-1}-\sigma^i}{1-\sigma^2}\\
        &= -\frac{\sigma^{i-2}}{\sigma+1} \\
        &\le 0,
    \end{align*}
    it follows that $(\mC_i-\mC_{i-1})^{\top} \mC_j \le 0$.

    Using a chain of the inequalities we showed, we can obtain $\min_{2\le i<j\le n} \mC_i^{\top} \mC_j = \mC_2^{\top} \mC_n$.
    Moreover, since 
    \begin{align*}
        \mC_2^{\top} \mC_n &= (1-\sigma)^2 \bigopen{1 - (\sigma^{-3}-\sigma^{-2})\sum_{k=2}^{n-1}\sigma^{2k} + (1-\sigma^{n-1})(\sigma^{n-2}-\sigma^{n-1})}
    \end{align*}
    and 
    \begin{align*}
        1 - (\sigma^{-3}-\sigma^{-2})\sum_{k=2}^{n-1}\sigma^{2k} + (1-\sigma^{n-1})(\sigma^{n-2}-\sigma^{n-1}) &\ge 1 - (\sigma^{-3}-\sigma^{-2})\sum_{k=2}^{n-1}\sigma^{2k}\\
        &\ge 1 - (\sigma^{-3}-\sigma^{-2})\frac{\sigma^4}{1-\sigma^2}\\
        &\ge 0,
    \end{align*}
    we have $\mC_2^{\top} \mC_n \ge 0$. Additionally, since $\mC_1 = \vzero$, we have $\mC_i^{\top} \mC_j = 0$ whenever $i = 1$ or $j = 1$.
    With the fact that $\mC_i^{\top} \mC_j \ge 0$ if $2 \le i, j \le n$, we have $\min_{1\le i,j \le n} \mC_i^{\top} \mC_j = \min_{1\le i,j \le n} (\mC^{\top} \mC)_{ij} \ge 0$, i.e., all entries of $\mC^{\top} \mC$ are non-negative.
    Therefore we can conclude that
    \begin{align*}
        \tau_2^{(1)} &= \vone^{\top} \mC^{\top} \mC \vone - \tr (\mC^{\top} \mC)\\
        &= \sum_{i\ne j} (\mC^{\top} \mC)_{ij}\\
        &\ge 0,
    \end{align*}
    which completes the proof.
\end{proof}

We now shift our focus to the left-hand side of the inequality in the theorem. Since $(\MARPCD)^K(\mI)$ is symmetric, we have 
\begin{align*}
    \frac{\E \bigclosed{\|\vx_K\|^2}}{\|\vx_0\|^2} &= \frac{\vx_0^{\top} (\MARPCD)^K(\mI) \vx_0}{\vx_0^{\top} \vx_0} \\
    &\le \lambda_{\max} ((\MARPCD)^K(\mI)).
\end{align*}
Importantly, we have previously shown that $(\MARPCD)^K(\mI)$ can be computed using only $\mM_{\mA}$. Specifically, if we write $(\MARPCD)^K(\mI) = \alpha_K \mI + \beta_K \vone \vone^{\top}$, then it follows that 
\begin{align*}
    \mM_{\mA}^K 
    \begin{bmatrix}
        1\\
        0
    \end{bmatrix} &= \begin{bmatrix}
        \alpha_K\\
        \beta_K
    \end{bmatrix}.
\end{align*}
Moreover, since the entries of $\mM_{\mA}$ are non-negative by \cref{lem:taunonnegative}, we have $\alpha_K, \beta_K \ge 0$ for all $K\ge 0$. Since the eigenvalues of a matrix of the form $a\mI + b\vone \vone^{\top}$ are $a$ and $a+nb$ (with multiplicity $n-1$), the largest eigenvalue of $(\MARPCD)^K(\mI) = \alpha_K \mI + \beta_K \vone \vone^{\top}$ is given by 
\begin{align*}
    \alpha_K + n\beta_K &= \vy^{\top} \mM_{\mA}^K \vx,
\end{align*}
where $\vx = 
\begin{bmatrix}
    1 \\ 0
\end{bmatrix}$ and $\vy = 
\begin{bmatrix}
    1 \\ n
\end{bmatrix}$.

To proceed, we use the following lemma, known as the Gelfand formula.
\begin{lemma}[\citet{horn2012}, Corollary~5.6.14.]
    \label{lem:gelfand}
    If $\mA\in \R^{n \times n}$, then $\rho(\mA) = \lim_{K \to \infty} \|\mA^K\|^{1/K}$.
\end{lemma}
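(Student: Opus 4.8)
The plan is to prove Gelfand's formula by a standard squeezing argument: establish $\rho(\mA) \le \liminf_{K\to\infty}\|\mA^K\|^{1/K}$ and $\limsup_{K\to\infty}\|\mA^K\|^{1/K} \le \rho(\mA)$, so that the two one-sided limits coincide and equal $\rho(\mA)$. For the lower bound I would use two elementary facts. First, the eigenvalues of $\mA^K$ are exactly the $K$-th powers of those of $\mA$, so $\rho(\mA^K) = \rho(\mA)^K$. Second, the spectral norm dominates the spectral radius: for any eigenpair $(\lambda, \vv)$ of a matrix $\mM$ one has $|\lambda|\,\|\vv\| = \|\mM\vv\| \le \|\mM\|\,\|\vv\|$, hence $\rho(\mM)\le\|\mM\|$. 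Applying this to $\mA^K$ gives $\rho(\mA)^K = \rho(\mA^K) \le \|\mA^K\|$, i.e. $\rho(\mA)\le\|\mA^K\|^{1/K}$ for every $K$, and taking $\liminf$ finishes this half. This part is entirely routine.

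For the upper bound I would fix $\epsilon>0$, pass to the rescaled matrix $\mB := (\rho(\mA)+\epsilon)^{-1}\mA$ so that $\rho(\mB) = \rho(\mA)/(\rho(\mA)+\epsilon) < 1$, and reduce everything to the single substantive claim that $\rho(\mB)<1$ forces $\mB^K\to\vzero$ as $K\to\infty$. The cleanest way to see this is through the Jordan decomposition $\mB = \mP\mJ\mP^{-1}$, whence $\mB^K = \mP\mJ^K\mP^{-1}$; each Jordan block has the form $\mu\mI+\mN$ with $\mN$ nilpotent and $\mu$ an eigenvalue of $\mB$, so $|\mu|\le\rho(\mB)<1$, and $(\mu\mI+\mN)^K = \sum_{j\ge 0}\binom{K}{j}\mu^{K-j}\mN^j$ is a finite sum whose every term tends to $0$, since a polynomial in $K$ times $\mu^K$ with $|\mu|<1$ vanishes in the limit. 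Thus $\mJ^K\to\vzero$, hence $\mB^K\to\vzero$ and $\|\mB^K\|\to 0$; in particular $\|\mB^K\|\le 1$ for all sufficiently large $K$, which rearranges to $\|\mA^K\|\le(\rho(\mA)+\epsilon)^K$ and gives $\limsup_{K\to\infty}\|\mA^K\|^{1/K}\le\rho(\mA)+\epsilon$. Sending $\epsilon\downarrow 0$ and combining with the lower bound completes the proof.

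The main (indeed only) obstacle is the step $\rho(\mB)<1\Rightarrow\mB^K\to\vzero$; everything else is bookkeeping. If one prefers to avoid the Jordan form, an alternative is to exhibit, for each $\epsilon$, an operator norm $\|\cdot\|_\epsilon$ (induced by a suitable diagonal rescaling of a Schur or eigenbasis) satisfying $\|\mA\|_\epsilon\le\rho(\mA)+\epsilon$, so that $\|\mA^K\|_\epsilon\le(\rho(\mA)+\epsilon)^K$, and then invoke the equivalence of all norms on $\R^{n\times n}$ to transfer the bound back to $\|\cdot\|$; but the Jordan argument above is the most self-contained.
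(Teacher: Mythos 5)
Your proof is correct and complete. The paper does not actually prove this lemma — it is quoted verbatim from Horn and Johnson (Corollary 5.6.14) — and your squeezing argument (the lower bound $\rho(\mA)^K=\rho(\mA^K)\le\|\mA^K\|$, and the upper bound via the rescaled matrix $\mB=(\rho(\mA)+\epsilon)^{-1}\mA$ with $\mB^K\to\vzero$ deduced from the Jordan form) is precisely the standard textbook derivation of Gelfand's formula, so there is nothing substantive to compare against.
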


Since $\lambda_{\max} ((\MARPCD)^K(\mI)) = \vy^{\top} \mM_{\mA}^K \vx \ge 0$, we have 
\begin{align*}
    \lim_{K \to \infty} \bigopen{\lambda_{\max} ((\MARPCD)^K(\mI))}^{1/K} &= \lim_{K \to \infty} |\vy^{\top} \mM_{\mA}^K \vx|^{1/K} \\
    &\le \lim_{K \to \infty} (\|\vy\| \|\mM_{\mA}^K\| \|\vx\|)^{1/K} \\
    &\le \lim_{K \to \infty} \|\vy\|^{1/K} \|\vx\|^{1/K} \|\mM_{\mA}^K\|^{1/K} \\
    &= \rho(\mM_{\mA}). & (\because \text{\cref{lem:gelfand}})
\end{align*}

Now, consider $\mA = \diag \{ \sigma \mI_k + (1 - \sigma) \vone_k \vone_k^{\top}, \mI_{n-k} \}$ for an integer $k$ with $2 \le k \le n$. We denote the submatrix $\sigma \mI_k + (1 - \sigma) \vone_k \vone_k^{\top}$ by $\mA_k$. Let $S_n$ be the set of permutations on $[n]$. We first show the following lemma: 
\begin{lemma}
    \label{lem:rpcdperm}
    Let $\mP$ be an $n\times n$ permutation matrix generated by a permutation $p\in S_n$. Let $q$ be a permutation of $[n]$ such that $q(l) = i_l$ for $l \in [k]$, where $i_1, \dots, i_k$ is a reordering of $[k]$ satisfying $p^{-1}(i_k) > \dots > p^{-1}(i_1)$, and $q(l) = l$ for $l > k$. Define $\mQ$ as an $n\times n$ permutation matrix generated by $q$. Then, 
    \begin{align*}
        \mP \mGamma_{\mP} \mP^{\top} &= \mQ \mGamma_{\mQ} \mQ^{\top}.
    \end{align*}
\end{lemma}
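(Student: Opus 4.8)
The plan is to reduce this matrix identity to a statement about the \emph{order in which coordinates are visited} within one epoch, and then observe that $q$ was defined precisely so that this order, restricted to $[k]$, agrees with that of $p$. The key tool is the identity established in the computation in the proof of \cref{lem:diagonalizable}: for any permutation $p$ with permutation matrix $\mP$, $\mP \mGamma_{\mP} \mP^{\top} = \mA \odot \mW_{\mP}$, where $\mW_{\mP} \in \{0,1\}^{n\times n}$ has $W_{\mP ij} = 1$ exactly when coordinate $i$ is visited no earlier than coordinate $j$ during the epoch ordered by $p$ (equivalently $W_{\mP ij} = 1$ iff $p^{-1}(i) \ge p^{-1}(j)$, reading $p^{-1}(c)$ as the position at which coordinate $c$ is updated). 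Since the same identity holds with $q$ in place of $p$, it suffices to prove $\mA \odot \mW_{\mP} = \mA \odot \mW_{\mQ}$, i.e.\ that $\mW_{\mP}$ and $\mW_{\mQ}$ agree on every entry $(i,j)$ with $a_{ij} \ne 0$.

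First I would record the sparsity pattern of $\mA = \diag\{\mA_k, \mI_{n-k}\}$: for $i \ne j$, $a_{ij} \ne 0$ forces $i,j \in [k]$ (assuming $\sigma < 1$; the case $\sigma = 1$ is immediate, since then $\mA = \mI$ and $\mGamma_{\mP} = \mA$ for every $p$). Diagonal entries never matter, since $W_{\mP ii} = W_{\mQ ii} = 1$ always. Hence everything reduces to checking $W_{\mP ij} = W_{\mQ ij}$ for all distinct $i,j \in [k]$, which is precisely the claim that $i$ and $j$ are visited in the same relative order under $q$ as under $p$.

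This last point is exactly the design of $q$. The list $i_1,\dots,i_k$ enumerates $[k]$ so that $p^{-1}(i_1) < \dots < p^{-1}(i_k)$, i.e.\ it lists the elements of $[k]$ in the order $p$ visits them; and $q$ satisfies $q(l) = i_l$ for $l \in [k]$ (so $q^{-1}(i_l) = l$) while $q(l) = l$ for $l > k$. In particular $q$ is a genuine permutation of $[n]$, permuting $[k]$ among itself and fixing the rest. Thus for distinct $i,j \in [k]$ we have $p^{-1}(i) < p^{-1}(j)$ iff $i$ precedes $j$ in the list $i_1,\dots,i_k$ iff $q^{-1}(i) < q^{-1}(j)$, so $W_{\mP ij} = W_{\mQ ij}$ throughout $[k] \times [k]$. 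Combined with the diagonal case and the vanishing of $a_{ij}$ off $[k]\times[k]$, this gives $\mA \odot \mW_{\mP} = \mA \odot \mW_{\mQ}$, and hence $\mP \mGamma_{\mP} \mP^{\top} = \mQ \mGamma_{\mQ} \mQ^{\top}$.

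The argument is entirely combinatorial, with no analytic content, so there is no real obstacle. The only thing that needs care is bookkeeping: fixing once and for all the convention relating a permutation to its coordinate-visit order so that the $\mA \odot \mW_{\mP}$ formula is stated with the correct orientation, and verifying that the entries lying in the $\mI_{n-k}$ block or straddling the two blocks are genuinely irrelevant — they are annihilated by the Hadamard product with $\mA$ — so that the order in which coordinates outside $[k]$ are visited plays no role. Making precise that $q$ is well-defined and that $q^{-1}$ restricted to $[k]$ inverts $l \mapsto i_l$ is a short but necessary check.
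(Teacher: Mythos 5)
Your proposal is correct and is essentially the same argument as the paper's: both rest on the entrywise identity $(\mP\mGamma_{\mP}\mP^{\top})_{ij} = a_{ij}\cdot\mathbf{1}[p^{-1}(i)\ge p^{-1}(j)]$, then split into the cases (i) $i,j\in[k]$, where the construction of $q$ preserves the relative order of $p^{-1}$ on $[k]$; (ii) exactly one of $i,j$ in $[k]$, where $a_{ij}=0$; and (iii) $i,j\notin[k]$, where the off-diagonal entries vanish and the diagonal is unaffected by the choice of permutation. Your phrasing via $\mA\odot\mW_{\mP}$ is a cosmetic repackaging of the same computation (and you were right to be wary of the convention: the $\mW_{\mP}$ formula written in the proof of \cref{lem:diagonalizable} has $p(i)\ge p(j)$ where the correct condition, which the paper itself uses in the proof of \cref{lem:rpcdperm}, is $p^{-1}(i)\ge p^{-1}(j)$).
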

\begin{proof}
    Since 
    \begin{align*}
        \ve_i^{\top} \tril(\mA) \ve_j = 
        \begin{cases}
            a_{ij} &\quad \text{if } i\ge j \\
            0 &\quad \text{if } i < j
        \end{cases}
    \end{align*}
    and $\mP^{\top} = \mP^{-1}$ for any permutation matrix $\mP$, we have 
    \begin{align*}
        \ve_i^{\top} \mP \mGamma_{\mP} \mP^{\top} \ve_j &= \ve_{p^{-1}(i)}^{\top} \mGamma_{\mP} \ve_{p^{-1}(j)} \\
        &= \begin{cases}
            \ve_{p^{-1}(i)}^{\top} (\mP^{\top} \mA \mP) \ve_{p^{-1}(j)} &\quad \text{if } p^{-1}(i) \ge p^{-1}(j) \\
            0 &\quad \text{if } i < j
        \end{cases} \\
        &= \begin{cases}
            a_{ij} &\quad \text{if } p^{-1}(i) \ge p^{-1}(j) \\
            0 &\quad \text{if } p^{-1}(i) < p^{-1}(j).
        \end{cases}
    \end{align*}

    Consider first the case where $i, j \in [k]$. Then, 
   \begin{align*}
        p^{-1}(i) \ge p^{-1}(j) &\iff i=i_a,~j=i_b \text{ for some } a,b\in [k] \text{ such that } a \ge b\\
        &\iff q^{-1}(i_a) = a \ge b = q^{-1}(i_b).
    \end{align*}
    Therefore, 
    \begin{align*}
        \ve_i^{\top} \mP \mGamma_{\mP} \mP^{\top} \ve_j &= \ve_i^{\top} \mQ \mGamma_{\mQ} \mQ^{\top} \ve_j
    \end{align*}
    for all $i,j\in [k]$.

    Now consider the case where $i\in [k]$ and $j\notin [k]$. In this case, we have $a_{ij} = 0$, so the equality $\ve_i^{\top} \mP \mGamma_{\mP} \mP^{\top} \ve_j = \ve_i^{\top} \mQ \mGamma_{\mQ} \mQ^{\top} \ve_j$    holds trivially. Similarly, the equality holds when $i\notin [k]$ and $j\in [k]$.

    Finally, suppose $i,j \notin [k]$. Then both $\ve_i^{\top} \mP \mGamma_{\mP} \mP^{\top} \ve_j$ and $\ve_i^{\top} \mQ \mGamma_{\mQ} \mQ^{\top} \ve_j$ are equal to $1$ if $i=j$, and $0$ otherwise. 

    Therefore, we conclude that
    \begin{align*}
        \ve_i^{\top} \mP \mGamma_{\mP} \mP^{\top} \ve_j = \ve_i^{\top} \mQ \mGamma_{\mQ} \mQ^{\top} \ve_j
    \end{align*}
    for all $i,j\in [n]$.
\end{proof}
Note that for any $p \in S_n$, there exists a unique permutation $q_k \in S_k$ satisfying $q(l) = i_l$ for $l \in [k]$, where $i_1, \dots, i_k$ is a reordering of $[k]$ such that $p^{-1}(i_k) > \dots > p^{-1}(i_1)$. Furthermore, there exist $\frac{n!}{k!}$ permutations in $S_n$ that correspond to a given $q_k$.

Let $\gD_{k, n-k}$ be the set of block-diagonal matrices consisting of a $k \times k$ block and an $(n-k) \times (n-k)$ block. Then, $\gD_{k, n-k}$ is closed under scalar multiplication, matrix multiplication, addition, transposition, and inversion.

Now, we have $\mT_{\mA, p}^{\RPCD} = \mI - \mP \mGamma_{\mP}^{-1} \mP^{\top} \mA = \mI - \mQ \mGamma_{\mQ}^{-1} \mQ^{\top} \mA$ by \cref{lem:rpcdperm}. Also, $\mI, \mP, \mGamma_{\mP}$ and $\mA$ are in $\gD_{k, n-k}$, so $\mT_{\mA, p}^{\RPCD}\in \gD_{k, n-k}$. To analyze the structure, let $q_k$ be the restriction of $q$ to $[k]$ and $\mQ_k$ be the $k\times k$ permutation matrix generated by $q_k$, which is equal to the $(1,1)$-block of $\mQ$. Then, the $(1,1)$-block of $\mT_{\mA, p}^{\RPCD}$ is given by $\mI_k - \mQ_k \mGamma_{\mQ_k}^{-1} \mQ_k^{\top} \mA_k$ and the $(2,2)$-block is $\vzero_{n-k,n-k}$, meaning that 
\begin{align*}
    \mT_{\mA, p}^{\RPCD} &= 
    \begin{bmatrix}
        \mT_{\mA_k, q_k}^{\RPCD} & \vzero_{k, n-k}\\
        \vzero_{n-k, k} & \vzero_{n-k, n-k}
    \end{bmatrix}.
\end{align*}

Therefore, if $\mX = 
\begin{bmatrix}
    \mX_{11} & \vzero_{k, n-k} \\
    \vzero_{n-k, k} & \mX_{22}
\end{bmatrix} \in \gD_{k, n-k}$, we have 
\begin{align*}
    \MARPCD(\mX) &= \frac{1}{n!} \sum_{p\in S_n} (\mT_{\mA, p}^{\RPCD \top} \mX \mT_{\mA, p}^{\RPCD}) \\
    &= \frac{1}{k!} \sum_{q_k\in S_k} 
    \begin{bmatrix}
        \mT_{\mA_k, q_k}^{\RPCD \top} & \vzero_{k, n-k}\\
        \vzero_{n-k, k} & \vzero_{n-k, n-k}
    \end{bmatrix} 
    \begin{bmatrix}
        \mX_{11} & \vzero_{k, n-k} \\
        \vzero_{n-k, k} & \mX_{22}
    \end{bmatrix}
    \begin{bmatrix}
        \mT_{\mA_k, q_k}^{\RPCD} & \vzero_{k, n-k}\\
        \vzero_{n-k, k} & \vzero_{n-k, n-k}
    \end{bmatrix} \\
    &= \frac{1}{k!} \sum_{q_k\in S_k} 
    \begin{bmatrix}
        \mT_{\mA_k, q_k}^{\RPCD \top} \mX_{11} \mT_{\mA_k, q_k}^{\RPCD} & \vzero_{k, n-k}\\
        \vzero_{n-k, k} & \vzero_{n-k, n-k}
    \end{bmatrix} \\
    &= 
    \begin{bmatrix}
        \gM_{\mA_k}^{\RPCD} (\mX_{11}) & \vzero_{k, n-k}\\
        \vzero_{n-k, k} & \vzero_{n-k, n-k}
    \end{bmatrix}.
\end{align*}

By the above, for all $K\ge 1$, we can see that 
\begin{align*}
    (\MARPCD)^K(\mI_n) &= 
    \begin{bmatrix}
        (\gM_{\mA_k}^{\RPCD})^K (\mI_k) & \vzero_{k, n-k}\\
        \vzero_{n-k, k} & \vzero_{n-k, n-k}
    \end{bmatrix}
\end{align*}
and 
\begin{align*}
    \lambda_{\max}((\MARPCD)^K(\mI_n)) &= \lambda_{\max}((\gM_{\mA_k}^{\RPCD})^K (\mI_k))
\end{align*}
because $(\gM_{\mA_k}^{\RPCD})^K (\mI_k) \succeq 0$. 

Since we have already shown that $\lim_{K\to\infty} \lambda_{\max}((\gM_{\mA_k}^{\RPCD})^K (\mI_k))^{1/K} \le \rho(\mM_{\mA_k})$, it suffices to show that 
\begin{align*}
    \rho(\mM_{\mA_k}) \le \max \bigset{\bigopen{1 - \frac{1}{n}}^n, \bigopen{1 - \frac{\sigma}{n}}^{2n}}.
\end{align*}

To proceed, we can use the fact that $\rho(\mM_{\mA_k}) \le \|\mM_{\mA_k}\|_{\infty}$.
Since $\|\mM_{\mA_k}\|_{\infty} = \max \{ \tau_1^{(1)} + \tau_1^{(2)}, \tau_2^{(1)} + \tau_2^{(2)}\}$ (all being positive values by \cref{lem:taunonnegative}), it is sufficient to show that the inequalities 
\begin{align}
    \tau_1^{(1)}(k, \sigma) + \tau_1^{(2)}(k, \sigma) &\le \max \bigset{\bigopen{1 - \frac{1}{n}}^n, \bigopen{1 - \frac{\sigma}{n}}^{2n}} \label{eq:tau1} \\
    \tau_2^{(1)}(k, \sigma) + \tau_2^{(2)}(k, \sigma) &\le \max \bigset{\bigopen{1 - \frac{1}{n}}^n, \bigopen{1 - \frac{\sigma}{n}}^{2n}} \label{eq:tau2}
\end{align}
hold for all integers $k$ such that $2 \le k \le n$, where $\tau_i^{(j)}(k, \sigma)$ denotes $(\mM_{\mA_k})_{ij}$ for $i=1,2$ and $j=1,2$. Since both left-hand sides in (\ref{eq:tau1}) and (\ref{eq:tau2}) are polynomials in $\sigma$, the problem reduces to a comparison of polynomials. 

We now focus on finding the coefficient series of $\alpha, \beta, \gamma$, and $\delta$ when each is written as a polynomial of $\sigma$. 
This is because the left-hand sides of the inequalities can be expressed in terms of these polynomials.
We introduce some notations for brevity. 
Let $s_n = \sum_{i=0}^{n-1}\sigma^i$ and $t_n = \sum_{i=0}^{n-1}\sigma^{2i}$.
Then, for $\sigma \in (0, 1)$, we have 
\begin{align*}
    s_n = \frac{1-\sigma^n}{1-\sigma}, \quad t_n = \frac{1-\sigma^{2n}}{1-\sigma^2}.
\end{align*}
Since $\alpha=\beta=\gamma=\delta=0$ when $\sigma=1$, we will focus on calculating the coefficients of $\alpha, \beta, \gamma$ and $\delta$ for $\sigma\in (0, 1)$.

\subsubsection*{Coefficients of $\alpha$}
We have
\begin{align*}
    Ls_{n} &= n + \sum_{i=1}^{n-1}\sigma^i - (n-1)\sigma^n \\
    L^2t_{n} &= n^2 - 2n(n-1)\sum_{i=0}^{n-1}\sigma^{2i+1} + (n^2+(n-1)^2) \sum_{i=0}^{n-1} \sigma^{2i+2} + (n-1)^2 \sigma^{2n}.
\end{align*}
Since $\alpha = n - 2Ls_{n} + L^2t_{n}$, if we write $\alpha = \sum_{k=0}^{2n} a_{k}\sigma^k$, 
\begin{align*}
    a_{k} &= \begin{cases} 
    n^2-n &\text{if } k=0\\
    -2n^2+2n-2 &\text{if } 1\le k\le n-1 \text{ and } k \text{ odd}\\
    2n^2-2n-1 &\text{if } 1\le k\le n-1 \text{ and } k \text{ even}\\
    -2(n-1)^2 &\text{if } k=n \text{ and } n \text{ odd}\\
    2n^2-1 &\text{if } k=n \text{ and } n \text{ even}\\
    -2n(n-1) &\text{if } n+1\le k\le 2n-1 \text{ and } k \text{ odd}\\
    2n^2-2n+1 &\text{if } n+1\le k\le 2n-1 \text{ and } k \text{ even}\\
    (n-1)^2 &\text{if } k=2n.
    \end{cases}
\end{align*}

\subsubsection*{Coefficients of $\beta$}
We first expand \begin{align*}
    (1-\sigma)^2\sum_{j=1}^n \bigopen{ (1-2\sigma^{j-1})\sum_{i=0}^{n-j}\sigma^{2i} } &= (1-\sigma)^2 \sum_{j=1}^n \sum_{i=0}^{n-j}\sigma^{2i} - (1-\sigma)^2\sum_{j=1}^n \sum_{i=0}^{n-j} 2\sigma^{j-1} \sigma^{2i}.
\end{align*}
Using the fact that
\begin{align*}
    \sum_{j=1}^n \sum_{i=0}^{n-j}\sigma^{2i} &= \sum_{k=0}^{n-1} (n-k) \sigma^{2k},
\end{align*}
we can simplify the first term: \begin{align*}
    (1-\sigma)^2 \sum_{j=1}^n \sum_{i=0}^{n-j}\sigma^{2i} &= n + \sum_{k=1}^{2n} (-1)^k (2n+1-k) \sigma^k
\end{align*}
For the second term, we have \begin{align*}
    (1-\sigma)^2 \sum_{j=1}^n \sum_{i=0}^{n-j} \sigma^{j-1} \sigma^{2i} &= -\sigma^n + \sum_{k=0}^{2n} (-\sigma)^k.
\end{align*}
Now, the remaining part is 
\begin{align*}
    \sum_{j=0}^{n-1} (1-\sigma)^2 \frac{1-\sigma^{2n}}{1-\sigma^2} &= n(1-\sigma)^2 \bigopen{1+\sigma^2+\cdots+\sigma^{2n-2}}\\
    &= n + n\sigma^{2n} + \sum_{k=1}^{2n-1} 2n(-\sigma)^k.
\end{align*}
Therefore, $\beta = \sum_{k=0}^{2n} b_{k}\sigma^k$, where
\begin{align*}
    b_k &= 
    \begin{cases}
        2n - 2 &\text{if } k = 0 \\
        -3n + 3 &\text{if } k = n \text{ and } n \text{ odd} \\
        3n + 1 &\text{if } k = n \text{ and } n \text{ even} \\
        n - 1 &\text{if } k = 2n \\
        (-1)^k (4n - 1 - k) &\text{otherwise.}
    \end{cases}
\end{align*}

\subsubsection*{Coefficients of $\gamma$}
We use the fact that $\gamma = \bigopen{\vone^{\top} \vv}^2 = (n - Ls_n)^2$.
Since $n-Ls_{n}=(n-1)\sigma^n-\sum_{i=1}^{n-1}\sigma^i$, we have 
\begin{align*}
    \gamma &= (n-Ls_{n})^2\\
    &= \bigopen{ (n-1)\sigma^n-\sum_{i=1}^{n-1}\sigma^i }^2\\
    &= (n-1)^2\sigma^{2n}-2(n-1)\sigma^n \sum_{i=1}^{n-1} \sigma^i+\sum_{j=2}^{2n-2}(j-1)\sigma^j.
\end{align*}
Consequently, $\gamma=\sum_{k=0}^{2n}c_{k}\sigma^k$, where \begin{align*}
    c_k &= 
    \begin{cases}
        0 &\text{if } k=0\\
        k - 1 &\text{if } 1\le k\le n\\
        1 - k &\text{if } n + 1\le k\le 2n - 1 \\
        (n - 1)^2 &\text{if } k = 2n.
    \end{cases}
\end{align*}

\subsubsection*{Coefficients of $\delta$}
Let $\delta=\sum_{k=0}^{2n}d_{k}\sigma^k$.
Since 
\begin{align*}
\delta &= \sum_{i=1}^n(\sigma^n-\sigma^i)^2\\
&= \sum_{i=1}^n (\sigma^{2n}-2\sigma^{n+i}+\sigma^{2i})^2\\
&= n\sigma^{2n} - 2(\sigma^{n+1}+\dots+\sigma^{2n})+(\sigma^2+\dots+\sigma^{2n}),
\end{align*}
\begin{align*}
    d_k &= \begin{cases}
        0 &\text{if } k = 0 \\
        0 &\text{if } 1 \le k \le n \text{ and } k \text{ odd} \\
        1 &\text{if } 1 \le k \le n \text{ and } k \text{ even} \\
        -2 &\text{if } n+1 \le k \le 2n-1 \text{ and } k \text{ odd} \\
        -1 &\text{if } n+1 \le k \le 2n-1 \text{ and } k \text{ even} \\
        n-1 &\text{if } k=2n.
    \end{cases}
\end{align*}

We can now express the left-hand sides of inequalities (\ref{eq:tau1}) and (\ref{eq:tau2}) using the coefficients of $\alpha, \beta, \gamma$ and $\delta$. Let us define 
\begin{align*}
    T_1(n, \sigma) &= n(\beta+\delta)-(\alpha+\gamma)=\sum_{k=0}^{2n} t_{1,k}\sigma^k \\
    T_2(n, \sigma) &= (\alpha+\gamma)-(\beta+\delta)=\sum_{k=0}^{2n} t_{2,k}\sigma^k.
\end{align*}
Then, $T_1(n, \sigma) = n(n-1)(\tau_{1}^{(1)} + \tau_{1}^{(2)}), T_2(n, \sigma) = n(n-1)(\tau_{2}^{(1)} + \tau_{2}^{(2)})$,
\begin{align*}
    t_{1,k} &= 
    \begin{cases}
        n^2 - n & \text{if } k=0 \\ 
        (n - 1)k - 2n^2 - n + 3 & \text{if } 1 \le k \le n \text{ and } k \text{ odd} \\ 
        -(n + 1)k + 2n^2 + 2n + 2 & \text{if } 1 \le k \le n \text{ and } k \text{ even} \\ 
        (n + 1)k - 2n^2 - 3n - 1 & \text{if } n+1 \le k \le 2n \text{ and } k \text{ odd} \\ 
        -(n - 1)k + 2n^2 - 2 & \text{if } n+1 \le k \le 2n \text{ and } k \text{ even} 
    \end{cases}
\end{align*}
and 
\begin{align*}
    t_{2,k} &= 
    \begin{cases}
        n^2 - 3n + 2 & \text{if } k=0 \\
        -2n^2 + 6n - 4 & \text{if } 1 \le k \le n-1 \text{ and } k \text{ odd} \\
        2k + 2n^2 - 6n - 2 & \text{if } 1 \le k \le n-1 \text{ and } k \text{ even} \\
        -2n^2 + 8n - 6 & \text{if } k=n \text{ and } n \text{ odd} \\
        2n^2 - 2n - 4 & \text{if } k=n \text{ and } n \text{ even} \\
        -2k - 2n^2 + 6n + 2 & \text{if } n+1 \le k \le 2n \text{ and } k \text{ odd} \\
        2n^2 - 6n + 4 & \text{if } n+1 \le k \le 2n \text{ and } k \text{ even.} 
    \end{cases}
\end{align*}

We will first address the case where $n$ is sufficiently large. For the remaining cases, including inequalities (\ref{eq:tau1}) and (\ref{eq:tau2}) for smaller values of $n$ as well as specific polynomial inequalities arising during the proof for sufficiently large $n$, we will utilize Sturm's theorem (\cref{prop:sturm}), which provides a method for counting the number of distinct real roots of a polynomial within a given interval.

\begin{proposition}[\citet{jacobson1985}, Sturm's Theorem]
    \label{prop:sturm}
    Let $f(x)$ be any polynomial with coefficients in $\R$ of positive degree. We define the \textbf{standard sequence} for $f(x)$ by:
    \begin{align*}
    	f_0(x) &= f(x), \quad f_1(x) = f'(x) \quad \text{(formal derivative of } f(x)) \\
    	f_0(x) &= q_1(x)f_1(x) - f_2(x), \quad \deg f_2 < \deg f_1 \\
    	& \qquad \vdots \\
    	f_{i-1}(x) &= q_i(x)f_i(x) - f_{i+1}(x), \quad \deg f_{i+1} < \deg f_i \\
    	& \qquad \vdots \\
    	f_s(x) &= q_s(x)f_s(x) \quad (\text{that is, } f_{s+1}(x) = 0).
    \end{align*}
    Given a sequence $c = (c_1, c_2, \dots, c_m)$ of elements of $\R$, we define the \textbf{number of variations in sign} of $c$ to be the number of variations in sign of the subsequence $c'$ obtained by dropping the $0$'s in $c$.
    
    Let $f(x)$ be a polynomial of positive degree with coefficients in a real closed field $\R$ and let $\{f_0(x) = f(x), f_1(x) = f'(x), \dots, f_s(x)\}$ be the standard sequence for $f(x)$. Assume $[a, b]$ is an interval such that $f(a) \neq 0$, $f(b) \neq 0$. Then the number of distinct roots of $f(x)$ in $(a, b)$ is $V_a - V_b$, where $V_c$ denotes the number of variations in sign of $\{f_0(c), f_1(c), \dots, f_s(c)\}$.
\end{proposition}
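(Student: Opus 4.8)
Since this is the classical theorem of Sturm, the plan is to reproduce its standard proof, which tracks how the count $V_c$ of sign variations of $\{f_0(c),\dots,f_s(c)\}$ behaves as $c$ increases along $[a,b]$. First I would reduce to the squarefree case. The standard sequence is, up to signs, the sequence of successive remainders in the Euclidean algorithm applied to $f_0=f$ and $f_1=f'$, so its last term $f_s$ is a constant multiple of $g:=\gcd(f,f')$; writing $f_i=g\,\tilde f_i$, the $\tilde f_i$ obey the same recurrences $\tilde f_{i-1}=q_i\tilde f_i-\tilde f_{i+1}$, the term $\tilde f_s$ is a nonzero constant, $\tilde f_0=f/g$ is squarefree and has exactly the distinct roots of $f$, and at any root $r$ of $\tilde f_0$ one has $\tilde f_1(r)=(f/g)'(r)\ne 0$. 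Since $f(a),f(b)\ne 0$ forces $g(a),g(b)\ne 0$, rescaling the whole sequence at $a$ (or $b$) by the nonzero constant $g(a)$ (or $g(b)$) does not change $V_a$ or $V_b$, so $V_a-V_b$ is unaffected by passing from $(f_i)$ to $(\tilde f_i)$. Thus it suffices to prove the statement assuming $f$ squarefree, in which case $f_s$ is a nonzero constant and no two consecutive $f_i,f_{i+1}$ vanish at a common point (a shared root would propagate via the recurrence up to $f_0$ and $f_1$, contradicting $\gcd(f,f')=1$).

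Next I would analyze how $V_c$ changes as $c$ crosses a root $r$ of one of the $f_i$, since away from these finitely many points $V_c$ is locally constant. If $r$ is a root of an interior term $f_i$ with $0<i<s$, the recurrence gives $f_{i-1}(r)=-f_{i+1}(r)$, both nonzero, so for $c$ near $r$ the block $(f_{i-1}(c),f_i(c),f_{i+1}(c))$ has outer entries of opposite sign and therefore always contributes exactly one sign variation regardless of the (possibly zero) middle entry; hence $V_c$ is unchanged across $r$. If instead $r$ is a root of $f_0=f$, then $f_1(r)=f'(r)\ne 0$ by squarefreeness, and $\sign(f)$ disagrees with $\sign(f')$ immediately to the left of $r$ but agrees immediately to the right, so the pair $(f_0,f_1)$ loses exactly one sign variation as $c$ passes $r$; combined with the interior-root analysis (which shows any other $f_i$ also vanishing at $r$ contributes nothing), $V_c$ drops by exactly $1$ at each distinct root of $f$ and is otherwise constant. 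Summing these unit decrements over all distinct roots of $f$ in $(a,b)$ then yields $V_a-V_b=\#\{\text{distinct roots of }f\text{ in }(a,b)\}$, which is the claim.

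The main obstacle is purely bookkeeping: handling a value $r$ that is simultaneously a root of $f_0$ and of one or more non-adjacent terms $f_i$ with $i\ge 2$. One must check that the ``interior root contributes zero'' reasoning still applies there, which it does because $f_{i\pm 1}(r)\ne 0$ by the no-common-consecutive-root property and $f_{i-1}(r)=-f_{i+1}(r)$, so that the net change at $r$ is just the single drop from the $(f_0,f_1)$ block. A clean way to sidestep the issue entirely is to perturb $a$ and $b$ off the roots of all the $f_i$ — harmless since $V_c$ is constant on root-free intervals — and treat one root at a time; alternatively one simply invokes \citet{jacobson1985} for the fully detailed argument.
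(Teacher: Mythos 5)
The paper does not prove this proposition; it is quoted verbatim from \citet{jacobson1985} and used as an external black box for the computer-assisted polynomial verifications in Appendix~C. Your proof is a correct and complete rendition of the standard argument for Sturm's theorem: reduce to the squarefree case by dividing through by $g=\gcd(f,f')$ (which leaves $V_a-V_b$ unchanged since $g(a),g(b)\neq 0$), observe that consecutive terms never share a root so interior sign blocks $(f_{i-1},f_i,f_{i+1})$ contribute a constant single variation across any root of $f_i$, and show that each simple root of $f_0$ drops $V_c$ by exactly one via the $(f_0,f_1)$ pair. The bookkeeping remark about coincident roots of non-adjacent terms is handled correctly. Since the paper offers no argument of its own, there is nothing to compare against; your write-up would serve as a self-contained replacement for the citation.
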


To apply \cref{prop:sturm} to inequalities, we utilize the following property: for a polynomial $p$, if $p(x) \ne 0$ on $(a, b)$ and $p(a) > 0$, then $p(b) \ge 0$; similarly, if $p(b) > 0$, then $p(a) \ge 0$. By this property, we can verify that $p \ge 0$ on $(a, b]$, $[a, b)$, or $[a, b]$. That is, since $p(x) \ne 0$ can be confirmed using \cref{prop:sturm}, we only need to check that $p$ is positive at the specific endpoint of the interval. Once this is established, we can conclude that $p \ge 0$ on the interval $(a, b]$, $[a, b)$, or $[a, b]$, as required.

Prior to the main proof, we establish the following results using \cref{prop:sturm}, which address not only the remaining cases for smaller values of $n$ but also certain inequalities that arise during the proof.
\begin{itemize}
    \item For $i=1,2$, $\frac{T_i(m,\sigma)}{m(m-1)} \le \bigopen{1-\frac{\sigma}{n}}^{2n}$ for all integers $m$ and $n$ where $2 \le m \le n \le 6$ and $\sigma \in (0, 0.6]$.
    \item $\frac{T_1(m,\sigma)}{m(m-1)} \le \frac{1}{4}$ for all integers $m$ where $2\le m\le 10$ and $\sigma \in [0.6, 0.8]$.
    \item $\frac{T_1(m,\sigma)}{m(m-1)} \le \frac{1}{4}$ for all integers $m$ where $2\le m\le 14$ and $\sigma \in [0.8, 1)$.
    \item $\frac{T_2(m,\sigma)}{m(m-1)} \le \frac{1}{4}$ for all integers $m$ where $2\le m\le 10$ and $\sigma \in [0.6, 1)$.
\end{itemize}
We verify these results in \cref{sec:c} using \cref{prop:sturm} implemented in a computer algebra system.
See \cref{sec:c} for more details on the implementations of the computational verification system.

Back to the main proof, to prove (\ref{eq:tau1}) and (\ref{eq:tau2}) on the interval $(0, 1]$ for all integers $k$ and $n$ such that $2\le k\le n$, we first divide the interval into three subintervals: $(0, 0.6], [0.6, 0.8]$ and $[0.8, 1]$. We then show these inequalities for each subinterval.
In the subsequent proof, for the cases when $\sigma \in (0, 0.6]$ and when $\sigma \in [0.6, 0.8]$, we initially assume that $n \ge 7$ and $n\ge 11$, respectively. The results of our proof can be summarized as follows:
\begin{itemize}
    \item $\sigma \in (0, 0.6]$: Both inequalities (\ref{eq:tau1}) and (\ref{eq:tau2}) hold for all integers $k$ and $n$ such that $n\ge 7$ and $2\le k\le n$.
    \item $\sigma \in [0.6, 0.8]$: Both inequalities (\ref{eq:tau1}) and (\ref{eq:tau2}) hold for all integers $k$ and $n$ such that $n\ge 11$ and $2\le k\le n$.
    \item $\sigma \in [0.8, 1]$: If $\sigma \ne 1$, inequality (\ref{eq:tau1}) holds for all integers $k$ and $n$ such that $n\ge 15$ and $2\le k\le n$, and (\ref{eq:tau2}) holds for all integers $k$ and $n$ such that $n\ge 10$ and $2\le k\le n$. If $\sigma=1$, since the left-hand side of both (\ref{eq:tau1}) and (\ref{eq:tau2}) is $0$, thus they are trivially satisfied.
\end{itemize}

\subsubsection*{Case I. $\sigma \in (0, 0.6]$}

We first show that if $2\le i\le n$, then $t_{1,2i}\ge 0$, $t_{1,2i-1}\le 0$ and $t_{1,2i-1} + t_{1,2i} \le 0$. 
If $4\le 2i\le n$,
\begin{align*}
    t_{1,2i} &= -(n+1)2i + 2n^2 + 2n + 2 \\
    &\ge -(n+1)n+2n^2+2n+2 \\
    &= n^2+n+2 \\
    &\ge 0
\end{align*}
and
\begin{align*}
    t_{1,2i-1} &= (n-1)(2i-1) - 2n^2 - n + 3 \\
    &\le (n-1)^2-2n^2-n+3 \\
    &= -n^2-3n-5 \\
    &\le 0.
\end{align*}
Moreover, we have $t_{1,2i-1} + t_{1,2i} = -4i + 6 \le 0$ because $i\ge 2$.

If $2i=n+1$,
\begin{align*}
    t_{1,2i} &= -(n-1)(n+1) + 2n^2 - 2 \\
    &= n^2-1 \\
    &\ge 0
\end{align*}
and 
\begin{align*}
    t_{1,2i-1} &= (n-1)n - 2n^2 - n + 3 \\
    &= -n^2 - 2n +3 \\
    &\le 0.
\end{align*}
Furthermore, we have $t_{1,2i-1} + t_{1,2i} = -2n + 2 \le 0$.

Finally, if $n < 2i-1 \le 2n-1$,
\begin{align*}
t_{1,2i} &= -(n-1)2i + 2n^2 - 2 \\
&\ge -2n(n-1) + 2n^2 - 2 \\
&= 2n - 2 \\
&\ge 0
\end{align*}
and
\begin{align*}
t_{1,2i-1} &= (n+1)(2i-1) - 2n^2 - 3n - 1 \\
&\le (2n-1)(n+1) - 2n^2 - 3n - 1 \\
&= -2n - 2 \\
&\le 0.
\end{align*}

In this case, we have $t_{1,2i-1} + t_{1,2i} = 4i - 4n - 4 \le 0$ because $2i\le 2n$.

Now, if $\sigma\in (0, 1]$, 
\begin{align*}
    t_{1,2i}\sigma^{2i}+t_{1,2i-1}\sigma^{2i-1} &= \sigma^{2i-1}(t_{1,2i}\sigma+t_{1,2i-1})\\
    &\le (t_{1,2i}\sigma+t_{1,2i-1})\\
    &\le t_{1,2i}+t_{1,2i-1} & (\because t_{1,2i}\ge 0)\\
    &\le 0.
\end{align*}

Thus, for $2\le i\le n$, we have 
\begin{align}
    \sum_{k=0}^{2i} t_{1,k}\sigma^k &\ge T_1(n, \sigma). \label{eq:t1ub}
\end{align}
This provides an upper bound for $T_1(n, \sigma)$. 

A similar result can be obtained for $T_2(n, \sigma)$. We first show that $t_{2,2i-1}\le 0$, $t_{2,2i}\ge 0$ for all $i\in [n]$.

Since $2n^2 - 6n + 4, 2n^2 - 2n - 4, 2n^2 -8n + 6\ge 0$, we only consider when $k$ is even and $2\le k \le n-1$, or $k$ is odd and $n+1 \le k \le 2n$. For the first case,
\begin{align*}
    t_{2,k} &= 2k + 2n^2 - 6n -2 \\
    &\ge 2n^2 - 6n + 2 \\
    &\ge 0
\end{align*}
and for the second case,
\begin{align*}
    t_{2,k} &= -2k - 2n^2 + 6n + 2 \\
    &\le -2(n+1) - 2n^2 + 6n + 2 \\
    &= -2n^2 + 4n \\
    &\le 0.
\end{align*}

Now, we show that $t_{2,2i} + t_{2,2i-1}\le 0$ if $2i-1\ge n+1$.
We have
\begin{align*}
    t_{2,2i} &= 2n^2 - 6n + 4 \\
    &\ge 0
\end{align*}
and
\begin{align*}
    t_{2,2i-1} &= -2(2i-1) - 2n^2 + 6n + 2\\
    &\le -2(n+1) - 2n^2 + 6n + 2\\
    &= -2n^2 + 4n \\
    &\le 0.
\end{align*}
Moreover, $t_{2,2i} + t_{2,2i-1} = -4i + 8\le 0$ because $2i-1 \ge n+1$.

Using the fact that $t_{2,2i-1}\le 0$, $t_{2,2i}\ge 0$ for all $i\in [n]$, we have 
\begin{align*}
    t_{2,2i}\sigma^{2i} + t_{2,2i-1}\sigma^{2i-1} &= \sigma^{2i-1}(t_{2,2i}\sigma + t_{2,2i-1}) \\
    &\le 0
\end{align*}
if $\sigma\in \bigclosed{0, -\frac{t_{2,2i-1}}{t_{2,2i}}}$. Thus, if 
\begin{align*}
    \sigma \in \bigcap_{i=1}^n \bigclosed{0, -\frac{t_{2,2i-1}}{t_{2,2i}}} &= \bigclosed{0, \min_{1\le i\le n} -\frac{t_{2,2i-1}}{t_{2,2i}}},
\end{align*}
we have 
\begin{align*}
    \sum_{k=0}^{2i-2} t_{2,k}\sigma^k &\ge T_2(n, \sigma)
\end{align*}
for $i\ge 1$.

Now, we focus on the minimum of $-\frac{t_{2,2i-1}}{t_{2,2i}}$. If $n$ is even, \begin{align*}
    -\frac{t_{2,2i-1}}{t_{2,2i}} = & 
    \begin{cases}
        \frac{2n^2-6n+4}{2n^2-6n-2+4i}& \text{if } 1\le 2i\le n-1 \\
        \frac{2n^2-6n+4}{2n^2-2n-4}& \text{if } 2i = n \\
        \frac{2n^2-6n-4+4i}{2n^2-6n+4}& \text{if } n+1 \le 2i \le 2n.
    \end{cases}
\end{align*}
Therefore, if $n$ is even and $n\ge 4$, 
\begin{align*}
    \min_{1\le i \le n} -\frac{t_{2,2i-1}}{t_{2,2i}} &= \min \bigset{ \frac{2n^2-6n+4}{2n^2-4n+4}, \frac{2n^2-6n+4}{2n^2-2n-4}, \frac{2n^2-4n-2}{2n^2-6n+4} }\\
    &= \frac{n-1}{n+1}.
\end{align*}

If $n$ is odd and $2i-1\le n$, 
\begin{align*}
    -\frac{t_{2,2i-1}}{t_{2,2i}} &= 
    \begin{cases}
        \frac{2n^2-6n+4}{2n^2-6n-2+4i}& \text{if } 1 \le 2i-1 \le n-1 \\
        \frac{2n^2-8n+6}{2n^2-6n+4}& \text{if } 2i-1 = n \\
        \frac{2n^2-6n-4+4i}{2n^2-6n+4}& \text{if } n+1 \le 2i - 1 \le 2n.
    \end{cases}
\end{align*}
Therefore, if $n$ is odd and $n\ge 3$,
\begin{align*}
    \min_{1\le i \le n} -\frac{t_{2,2i-1}}{t_{2,2i}} &= \min \bigset{ \frac{2n^2-8n+6}{2n^2-6n+4} , \frac{2n^2-6n+4}{2n^2-4n-2}, \frac{2n^2-4n}{2n^2-6n+4} }\\
    &= \frac{n-3}{n-2}.
\end{align*}

Thus, 
\begin{align}
    \label{eq:t2intersection}
    \min_{1\le i\le n} -\frac{t_{2,2i-1}}{t_{2,2i}} &= \min \bigset{\frac{n-1}{n+1}, \frac{n-3}{n-2}}.
\end{align}
Since $0.6 \le \frac{n-1}{n+1} \le \frac{n-3}{n-2}$ for $n \ge 5$, for $\sigma\in (0, 0.6]$ and $i\in [n]$, we have 
\begin{align}
    \sum_{k=0}^{2i-2} t_{2,k}\sigma^k &\ge T_2(n, \sigma). \label{eq:t2ub}
\end{align}

Our next step is to find a lower bound for $\bigopen{1-\frac{\sigma}{n}}^{2n}$. To this end, we first prove the following lemma about binomial coefficients.
\begin{lemma}
    \label{lem:binom}
    For positive integers $r$ and $n$ such that $1\le r < n$,
    \begin{align*}
        \binom{2n}{r}\frac{1}{n^r} \ge \binom{2n}{r+1}\frac{1}{n^{r+1}}.
    \end{align*}
\end{lemma}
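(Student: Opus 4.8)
The plan is to reduce the claimed inequality to an elementary linear inequality in $r$ and $n$ by passing to the ratio of consecutive binomial coefficients. Since $\binom{2n}{r}\,n^{-(r+1)}>0$, dividing through shows that
\[
\binom{2n}{r}\frac{1}{n^r} \ge \binom{2n}{r+1}\frac{1}{n^{r+1}}
\]
is equivalent to $n \ge \binom{2n}{r+1}\big/\binom{2n}{r}$. Using the standard identity $\binom{2n}{r+1} = \binom{2n}{r}\cdot\frac{2n-r}{r+1}$, the right-hand side equals $\frac{2n-r}{r+1}$, so it remains only to verify $n \ge \frac{2n-r}{r+1}$.

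I would then clear the positive denominator $r+1$: the inequality $n(r+1) \ge 2n-r$ rearranges to $nr+n \ge 2n-r$, i.e. $nr+r \ge n$, i.e. $r(n+1) \ge n$. This is immediate from $r \ge 1$, since $r(n+1) \ge n+1 > n$; in fact the inequality is strict, and only $r \ge 1$ is used (the hypothesis $r<n$ is not needed for this lemma itself, although it is relevant in the surrounding argument).

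There is no genuine obstacle here — the lemma is a one-line manipulation. The reason for isolating it is its downstream role: it shows that, over the relevant range of $r$, the magnitudes $\binom{2n}{r}\,n^{-r}$ of the coefficients of $(1-\sigma/n)^{2n} = \sum_{r=0}^{2n} (-1)^r \binom{2n}{r}\,n^{-r}\,\sigma^r$ are nonincreasing in $r$. This sign-alternating-with-decaying-coefficients structure is precisely what lets one lower bound $(1-\sigma/n)^{2n}$ on $(0,0.6]$ by a low-order truncation and then compare it term by term against the upper bound $T_i(n,\sigma)/(n(n-1))$ obtained above.
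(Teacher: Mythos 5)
Your proof is correct and follows exactly the same route as the paper: reduce to the ratio $\binom{2n}{r+1}/\binom{2n}{r} = \frac{2n-r}{r+1}$ and observe that $n(r+1)\ge 2n-r$ is equivalent to $r(n+1)\ge n$, which holds for $r\ge 1$. Your side remark that the hypothesis $r<n$ is not needed for this lemma is also accurate.
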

\begin{proof}
    \begin{align*}
        \frac{\binom{2n}{r+1}\frac{1}{n^{r+1}}}{\binom{2n}{r}\frac{1}{n^r}} &= \frac{2n-r}{n(r+1)}
    \end{align*}
    and 
    \begin{align*}
        \frac{2n-r}{n(r+1)} &\le 1 \iff n\le (n+1)r.
    \end{align*}
    Since $n\le (n+1)r$ for all $r \ge 1$ and both $\binom{2n}{r}\frac{1}{n^r}$ and $\binom{2n}{r+1}\frac{1}{n^{r+1}}$ are positive, we conclude that 
    \begin{align*}
        \binom{2n}{r}\frac{1}{n^r} &\ge \binom{2n}{r+1}\frac{1}{n^{r+1}}.
    \end{align*}
\end{proof}

By the Binomial Theorem,
\begin{align*}
    \bigopen{ 1-\frac{\sigma}{n} }^{2n} &= \sum_{r=0}^{2n} \binom{2n}{r}\bigopen{ -\frac{1}{n} }^r \sigma^r.
\end{align*}

Additionally, for $i\le n-1$, we have
\begin{align*}
    \sum_{r=2i}^{2n} \binom{2n}{r}\bigopen{ -\frac{1}{n} }^r \sigma^r &= \frac{1}{n^{2n}}\sigma^{2n} + \sum_{k=i}^{n-1} \bigopen{\binom{2n}{2k}\bigopen{ -\frac{1}{n} }^{2k}\sigma^{2k} + \binom{2n}{2k+1}\bigopen{-\frac{1}{n} }^{2k+1}\sigma^{2k+1}}\\
    &\ge \sum_{k=i}^{n-1} \sigma^{2k} \bigopen{\binom{2n}{2k}\bigopen{ -\frac{1}{n} }^{2k} + \binom{2n}{2k+1}\bigopen{ -\frac{1}{n} }^{2k+1}\sigma} & \bigopen{\because \frac{1}{n^{2n}}\sigma^{2n} \ge 0}\\
    &= \sum_{k=i}^{n-1} \sigma^{2k} \bigopen{\binom{2n}{2k} \frac{1}{n^{2k}} - \binom{2n}{2k+1}\frac{1}{n^{2k+1}}\sigma}\\
    &\ge \sum_{k=i}^{n-1} \sigma^{2k} \bigopen{\binom{2n}{2k} \frac{1}{n^{2k}} - \binom{2n}{2k+1}\frac{1}{n^{2k+1}}} & (\because 0\le \sigma\le 1)\\
    &\ge 0.  & (\because \text{\cref{lem:binom}})
\end{align*}

Therefore, we obtain
\begin{align}
    \sum_{r=0}^{2i-1} \binom{2n}{r}\bigopen{ -\frac{1}{n} }^r \sigma^r &\le \bigopen{ 1-\frac{\sigma}{n}}^{2n}, \label{eq:binomlb}
\end{align}
which provides a lower bound for $\bigopen{1-\frac{\sigma}{n}}^{2n}$.

Let $P_1(n, \sigma) = \sum_{k=0}^6 t_{1,k}\sigma^k$. By \cref{eq:t1ub}, $T_1(n, \sigma) \le P_1(n, \sigma)$. Moreover, if $m$ is an integer with $2\le m\le n$,
\begin{align}
    \label{eq:p1diff}
    \frac{P_{1}(m+1, \sigma)}{(m+1)m} - \frac{P_{1}(m, \sigma)}{m(m-1)} &= \frac{2}{m(m^2-1)} \bigopen{m (\sigma^6 - \sigma^5 - \sigma^2 + \sigma) + 5 \sigma^6 + \sigma^5 + 2 \sigma^4 - \sigma^2 - \sigma}.
\end{align}
Let $p_1(\sigma) = \sigma^6-\sigma^5-\sigma^2+\sigma$ and $q_1(\sigma) = 5 \sigma^{6} + \sigma^5 + 2 \sigma^4 - \sigma^2 - \sigma$. 

It can be easily verified that $6p_1 + q_1 \ge 0$ holds for $\sigma \in (0, 0.6]$. Also, $p_{1}(\sigma)=\sigma(\sigma-1)^2(\sigma+1)(\sigma^2+1)\ge 0$ if $\sigma\ge 0$, so $mp_{1}+q_{1}\ge 0$ for all $m\ge 6$ and $\sigma \in (0, 0.6]$.
This implies that
\begin{align*}
    \frac{P_{1}(m+1, \sigma)}{(m+1)m} &\ge \frac{P_{1}(m, \sigma)}{m(m-1)},
\end{align*}
or equivalently, $\frac{P_{1}(m, \sigma)}{m(m-1)}$ is increasing in $m$, for all $m\ge 6$. 

Now, define $D_1(n, \sigma) = \bigopen{\sum_{r=0}^5 \binom{2n}{r} \bigopen{ -\frac{\sigma}{n} }^r} - \frac{P_1}{n(n-1)} = \sum_{k=0}^6 d_{1,k}\sigma^k$. Then, 
\begin{align*}
    d_{1,0} &= 0 \\
    d_{1,1} &= \frac{2}{n} \\
    d_{1,2} &= \frac{-3n + 1}{n(n - 1)} \\
    d_{1,3} &= \frac{2n^2 + 6n - 2}{3n^2} \\
    d_{1,4} &= \frac{-8n^4 - 4n^3 + 35n^2 - 14n + 3}{6n^3(n - 1)} \\
    d_{1,5} &= \frac{26n^4 - 10n^3 - 35n^2 + 25n - 6}{15n^4} \\
    d_{1,6} &= \frac{2(-n^2 + 2n + 2)}{n(n - 1)}.
\end{align*}

Note that $d_{1,1} \ge 0$ and $d_{1,2} \le 0$. Since $n \ge 7$ implies $0.6 \le -\frac{d_{1,1}}{d_{1,2}} = \frac{2n-2}{3n-1}$, we have $d_{1,2}\sigma^2 + d_{1,1}\sigma = \sigma(d_{1,2}\sigma + d_{1,1}) \ge 0$ for $\sigma \in (0, 0.6]$. 

Now, let $r_1(n, \sigma) = d_{1,6}\sigma^3 + d_{1,5}\sigma^2 + d_{1,4}\sigma + d_{1,3}$. Then, we have $D_1 = \sigma^3 r_1 + d_{1,2}\sigma^2 + d_{1,1}\sigma$. Therefore, it suffices to show that $r_1 \ge 0$.

We proceed by examining the derivative of $r_1$ with respect to $\sigma$ to determine its monotonicity on this interval. The derivative is given by 
\begin{align*}
    \frac{d}{d\sigma} r_1(n, \sigma) &= 3d_{1,6}\sigma^2 + 2d_{1,5}\sigma + d_{1,4}.
\end{align*}
For $n\ge 7$, we have $3d_{1,6}\le -4, 2d_{1,5}\le 4$ and $d_{1,4}\le -1$. Therefore, for $\sigma\in (0, 0.6]$, we have 
\begin{align*}
    \frac{d}{d\sigma} r_1(n, \sigma) &\le -4\sigma^2+4\sigma-1 \\
    &= -(2\sigma-1)^2 \\
    &\le 0.
\end{align*}

Thus, $r_1(n, \sigma)$ is decreasing in $\sigma$ on $(0, 0.6]$. Consequently, $r_1(n, \sigma)\ge r_1(n, 0.6)$ if $\sigma\in (0, 0.6]$. Furthermore, we have 
\begin{align*}
    r_1(0.6) &= \frac{44n^5 + 700n^4 + 823n^3 + 530n^2 - 333n + 108}{750 n^4 \bigopen{n - 1}} \ge 0.
\end{align*}
Thus, we can conclude that $D_1(n, \sigma)=\sigma^3 r_1(n, \sigma) + d_{1,2}\sigma^2+d_{1,1}\sigma \ge 0$ for $\sigma \in (0, 0.6]$ and $n\ge 7$.

Similar to $P_1$, let $P_2=\sum_{k=0}^6 t_{2,k}\sigma^k$. Then, $T_2\le P_2$ by (\ref{eq:t2ub}). Furthermore, 
\begin{align}
    \label{eq:p2diff}
   \frac{P_2(m+1, \sigma)}{(m+1)m} - \frac{P_2(m, \sigma)}{m(m-1)} &= \frac{2}{m(m^2-1)} \bigopen{ mp_2(\sigma) + q_2(\sigma) }
\end{align}
where $p_2(\sigma)=2 \sigma^6 - 2 \sigma^5 + 2 \sigma^4 - 2 \sigma^3 + 2 \sigma^2 - 2 \sigma + 1$ and $q_2(\sigma) = - 8\sigma^6 + 2\sigma^5 - 4\sigma^4 + 2\sigma^3 + 2\sigma - 1$.

Then, it follows that $2p_1 + q_1 \ge 0$ for $\sigma \in (0, 0.6]$.

Moreover, 
\begin{align*}
    p_2(\sigma) &= 1-2\sigma(1-\sigma+\sigma^2-\sigma^3+\sigma^4-\sigma^5) \\
    &= \frac{1-\sigma+2\sigma^7}{1+\sigma} \\
    &\ge 0,
\end{align*}
so $mp_2(\sigma)+q_2(\sigma)\ge 0$ for all $m\ge 2$. Therefore, $\frac{P_2(m, \sigma)}{m(m-1)}$ is increasing in $m$.

Now, similar to $D_1$, we define $D_2(n, \sigma) = \bigopen{\sum_{r=0}^5 \binom{2n}{r} \bigopen{ -\frac{\sigma}{n} }^r} - \frac{P_2(n, \sigma)}{n(n-1)} = \sum_{k=0}^6 d_{2,k}\sigma^k$. Then,
\begin{align*}
    d_{2, 0} &= \frac{2}{n}\\
    d_{2, 1} &= - \frac{4}{n}\\
    d_{2, 2} &= \frac{3 n - 1}{n \bigopen{n - 1}}\\
    d_{2, 3} &= \frac{2n^2 - 6n - 2}{3n^2} \\
    d_{2, 4} &= \frac{- 8 n^4 + 20 n^3 - 13 n^2 - 14 n + 3}{6 n^3 \bigopen{n - 1}}\\
    d_{2, 5} &= \frac{26 n^4 - 40 n^3 - 35 n^2 + 25 n - 6}{15 n^4}\\
    d_{2, 6} &= \frac{2 \bigopen{- n^2 + 3 n - 5}}{n \bigopen{n - 1}}.
\end{align*}

Let $r_2(n, \sigma) = d_{2,6}\sigma^3 + d_{2,5}\sigma^2 + d_{2,4}\sigma + d_{2,3}$. Then, we can express $D_2$ as 
\begin{align*}
    D_2 &= \sigma^3 r_2 + d_{2,2}\sigma^2 + d_{2,1}\sigma + d_{2,0}.
\end{align*}

We will show that the minimum of $D_2$ is non-negative on $(0, 0.6]$. First, we consider $d_{2,2}\sigma^2 + d_{2,1}\sigma + d_{2,0}$. This quadratic attains its minimum value of $\frac{2(n+1)}{n(3n-1)}$ at $\sigma = \frac{2(n-1)}{3n-1}$. Thus, to prove that $D_2\ge 0$ on $(0, 0.6]$, it suffices to show that 
\begin{align*}
    \frac{2(n+1)}{n(3n-1)} + \min_{\sigma \in (0, 0.6]} \sigma^3 r_2(\sigma) &\ge 0.
\end{align*}

If $\min_{\sigma \in (0, 0.6]} r_2(\sigma) \ge 0$, then the inequality holds. On the other hand, if $\min_{\sigma \in (0, 0.6]} r_2(\sigma) \le 0$, then we have
\begin{align*}
     \min_{\sigma \in (0, 0.6]} \sigma^3 r_2(\sigma) \ge 0.6^3 \min_{\sigma \in (0, 0.6]} r_2(\sigma).
\end{align*}

Taking the derivative, we have 
\begin{align*}
    \frac{d}{d\sigma} r_2 &= 3d_{2,6}\sigma^2 + 2d_{2,5}\sigma + d_{2,4}.
\end{align*}

We can also observe that $3d_{2,6} \le -\frac{13}{3}, 2d_{2,5} \le \frac{52}{15}$ and $d_{2,4} \le -\frac{3}{4}$ for all $n$. Therefore, given that $\sigma \ge 0$, we can see that 
\begin{align*}
    \frac{d}{d\sigma} r_2 &\le -\frac{13}{3}\sigma^2 + \frac{52}{15}\sigma -\frac{3}{4} \\
    &\le -\frac{17}{300}.
\end{align*}
Hence, $r_2$ is decreasing in $\sigma$ and we have 
\begin{align*}
    \min_{\sigma \in (0, 0.6]} r_2(\sigma) &= r_2(0.6) \\
    &= \frac{44 n^5 - 716 n^4 - 1505 n^3 + 530 n^2 - 333 n + 108}{750 n^4 \bigopen{n - 1}}.
\end{align*}
For $n\ge 19$, we have $r_2(0.6)\ge 0$. In the remaining cases, we use the fact that $\min_{\sigma \in (0, 0.6]} \sigma^3 r_2(\sigma) \ge 0.6^3 r_2(0.6)$. Then, for all $n$ such that $7\le n\le 18$, we have 
\begin{align*}
    \frac{2(n+1)}{n(3n-1)} + \min_{\sigma \in (0, 0.6]} \sigma^3 r_2(\sigma) &\ge \frac{2(n+1)}{n(3n-1)} + 0.6^3 r_2(0.6) \\
    &= \frac{1188 n^6 + 42772 n^5 - 34191 n^4 - 34645 n^3 - 13761 n^2 + 5913 n - 972}{31250 n^4 (3 n^2 - 4 n + 1)} \\
    &\ge 0.
\end{align*}
Hence, we can conclude that $D_2 \ge 0$ for $n\ge 7$ and $\sigma \in (0, 0.6]$.

Finally, we use the following lemma: 
\begin{lemma} \label{lem:increasing}
$\bigopen{1+\frac{c}{n}}^n$ is increasing if $n\ge -c$.
\end{lemma}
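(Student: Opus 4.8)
The plan is to establish the monotonicity via the arithmetic--geometric mean inequality. Fix a positive integer $n$ with $n \ge -c$, so that the base $1 + \frac{c}{n}$ is non-negative, and compare the value at $n$ with the value at $n+1$. Consider the $n+1$ non-negative numbers obtained by taking $n$ copies of $1 + \frac{c}{n}$ together with one copy of $1$. Their arithmetic mean equals
\begin{align*}
    \frac{n \bigopen{1 + \frac{c}{n}} + 1}{n + 1} = \frac{n + c + 1}{n + 1} = 1 + \frac{c}{n + 1},
\end{align*}
while their geometric mean equals $\bigopen{\bigopen{1 + \frac{c}{n}}^{n}}^{1/(n+1)}$. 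The hypothesis $n \ge -c$ is exactly what guarantees all $n+1$ numbers are non-negative, so AM--GM applies and gives
\begin{align*}
    \bigopen{\bigopen{1 + \frac{c}{n}}^{n}}^{1/(n+1)} \le 1 + \frac{c}{n + 1}.
\end{align*}
Raising both sides to the $(n+1)$-th power yields $\bigopen{1 + \frac{c}{n}}^{n} \le \bigopen{1 + \frac{c}{n+1}}^{n+1}$, which is precisely the claimed monotonicity; it is strict unless $c = 0$, since equality in AM--GM forces $1 + \frac{c}{n} = 1$.

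If a real-variable version is wanted, I would instead set $h(t) = t \ln\bigopen{1 + \frac{c}{t}}$ for $t > \max\{0, -c\}$ and substitute $u = c/t \in (-1, \infty)$, giving $h'(t) = \ln(1 + u) - \frac{u}{1 + u}$. The auxiliary function $\phi(u) := \ln(1 + u) - \frac{u}{1 + u}$ has $\phi(0) = 0$ and $\phi'(u) = \frac{u}{(1 + u)^2}$, so $\phi$ decreases on $(-1, 0)$, increases on $(0, \infty)$, and is therefore non-negative throughout $(-1, \infty)$. Hence $h'(t) \ge 0$ and $\bigopen{1 + \frac{c}{t}}^{t} = e^{h(t)}$ is non-decreasing in $t$.

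There is essentially no obstacle here: the only point that requires care is recognizing that the stated condition $n \ge -c$ is exactly the non-negativity condition needed to invoke AM--GM (and to keep the power well defined), after which the result is a one-line mean computation.
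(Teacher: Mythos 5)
Your proof is correct and uses the same AM--GM argument as the paper: the paper's proof likewise applies AM--GM to the $n+1$ numbers consisting of $n$ copies of $1+\frac{c}{n}$ and one copy of $1$. Your write-up is simply more explicit about why $n \ge -c$ is the right hypothesis, and the added real-variable version is an optional bonus.
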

\begin{proof}
Let $a_n = \bigopen{1+\frac{c}{n}}^n$. Then, by the AM-GM inequality, \begin{align*}
    a_n &= \bigopen{1+\frac{c}{n}}^n\\
    &= 1\cdot \bigopen{1+\frac{c}{n}} \cdots \bigopen{1+\frac{c}{n}}\\
    &\le \bigopen{\frac{n+1+c}{n+1}}^{n+1}\\
    &= a_{n+1}.
\end{align*}
\end{proof}

For $\sigma\in (0, 0.6]$ and $i=1,2$, we have 
\begin{align*}
    \max_{2\le m\le n} \frac{T_i(m, \sigma)}{m(m-1)} &= \max \bigset{\max_{2\le m\le 6} \frac{T_i(m, \sigma)}{m(m-1)},~\max_{7\le m\le n} \frac{T_i(m, \sigma)}{m(m-1)}} \\
    &\le \max \bigset{\bigopen{1-\frac{\sigma}{6}}^{12} \!\!,~\max_{7\le m\le n} \frac{T_i(m, \sigma)}{m(m-1)}} \\
    &\le \max \bigset{\bigopen{1-\frac{\sigma}{6}}^{12} \!\!,~\max_{7\le m\le n} \frac{P_i(m, \sigma)}{m(m-1)}} &(\because T_i\le P_i \text{ for } n\ge 7)\\
    &\le \max \bigset{\bigopen{1-\frac{\sigma}{6}}^{12} \!\!,~\frac{P_i(n, \sigma)}{n(n-1)}} &\bigopen{\because \frac{P_i(m,\sigma)}{m(m-1)} \text{ is increasing in } m \text{ for } m\ge 7}\\
    &\le \max \bigset{\bigopen{1-\frac{\sigma}{6}}^{12} \!\!,~\bigopen{1-\frac{\sigma}{n}}^{2n}} &(\because D_i \ge 0 \text{ and } (\ref{eq:binomlb}))\\
    &\le \bigopen{1-\frac{\sigma}{n}}^{2n}. &(\because \text{\cref{lem:increasing}})\\
\end{align*}
Note that we used the fact that $\max_{2\le m\le 6}\frac{T_i(m, \sigma)}{m(m-1)} \le \bigopen{1-\frac{\sigma}{6}}^{12}$ for the first inequality, which can be verified using \cref{prop:sturm}.

\subsubsection*{Case II. $\sigma \in [0.6, 0.8]$}

Since $\bigopen{1-\frac{\sigma}{n}}^{2n} \le \bigopen{1-\frac{1}{n}}^n$ for $\sigma\in [0.6, 0.8]$, it is enough to show that 
\begin{align*}
    \max_{2\le m\le n} \frac{T_i(m, \sigma)}{m(m-1)}  &\le \bigopen{1-\frac{1}{n}}^n
\end{align*}
for $i=1,2$.

We start by examining $P_1$, which is an upper bound for $T_1$. By \cref{eq:p1diff}, if $mp_1+q_1\ge 0$, we have 
\begin{align*}
    \frac{P_1(m+1, \sigma)}{(m+1)m} - \frac{P_1(m, \sigma)}{m(m-1)} \ge 0.
\end{align*}
Note that $p_1(\sigma) = \sigma(\sigma-1)^2 (\sigma+1)(\sigma^2+1)\ge 0$ for $\sigma \ge 0$ and $2p_1+q_1\ge 0$ on $[0.6, 0.8]$. Therefore, $mp_1+q_1 \ge 0$ for all $m\ge 2$, and
\begin{align*}
    \max_{2 \le m \le n} \frac{P_1(m, \sigma)}{m(m-1)} &= \frac{P_1(n, \sigma)}{n(n-1)} \\
    &\le \lim_{n\to \infty} \frac{P_1(n, \sigma)}{n(n-1)} \\
    &= 2 \sigma^6 - 2 \sigma^5 + 2 \sigma^4 - 2 \sigma^3 + 2 \sigma^2 - 2 \sigma + 1
\end{align*}
for $\sigma\in [0.6, 0.8]$.

Next, we consider $P_2$. We have already shown that if $\sigma \in \bigclosed{0, \min\bigset{\frac{n-1}{n+1}, \frac{n-3}{n-2}}}$, then (\ref{eq:t2ub}) holds. Since we assumed that $n \ge 11$ in this case, and $0.8 \le \frac{n-1}{n+1} \le \frac{n-3}{n-2}$ for $n \ge 11$, we have $T_2\le P_2$ on $[0.6, 0.8]$.

Similar to $P_1$, we first note that $p_2(\sigma) = \frac{1-\sigma+2\sigma^7}{1+\sigma} \ge 0$ on $[0.6, 0.8]$. Using the fact that $11p_2+q_2 \ge 0$ on $[0.6, 0.8]$, we can conclude that 
\begin{align*}
    \max_{11 \le m \le n} \frac{P_2(m, \sigma)}{m(m-1)} &= \frac{P_2(n, \sigma)}{n(n-1)} \\
    &\le \lim_{n\to \infty} \frac{P_2(n, \sigma)}{n(n-1)} \\
    &= 2 \sigma^6 - 2 \sigma^5 + 2 \sigma^4 - 2 \sigma^3 + 2 \sigma^2 - 2 \sigma + 1.
\end{align*}
We also note that $2 \sigma^6 - 2 \sigma^5 + 2 \sigma^4 - 2 \sigma^3 + 2 \sigma^2 - 2 \sigma + 1 \le \frac{7}{20} \le \bigopen{1-\frac{1}{11}}^{11}$ on $[0.6, 0.8]$.

Thus, for $i=1,2$,
\begin{align*}
    \max_{2\le m\le n} \frac{T_i(m, \sigma)}{m(m-1)} &\le \max \bigset{\max_{2\le m\le 10} \frac{T_i(m, \sigma)}{m(m-1)},~\max_{11\le m\le n} \frac{T_i(m, \sigma)}{m(m-1)}} \\
    &\le \max \bigset{\frac{1}{4},~\max_{11\le m\le n} \frac{T_i(m, \sigma)}{m(m-1)}} \\
    &\le \max \bigset{\frac{1}{4},~\max_{11\le m\le n} \frac{P_i(m, \sigma)}{m(m-1)}} &(\because T_i\le P_i \text{ for } m\ge 11) \\
    &\le \max \bigset{\frac{1}{4},~2 \sigma^6 - 2 \sigma^5 + 2 \sigma^4 - 2 \sigma^3 + 2 \sigma^2 - 2 \sigma + 1} \\
    &\le \max \bigset{\bigopen{1-\frac{1}{2}}^2,~\bigopen{1-\frac{1}{11}}^{11}} \\
    &\le \bigopen{1-\frac{1}{n}}^{n}. &(\because \text{\cref{lem:increasing}})
\end{align*}
The second inequality follows from the fact that $\max_{2\le m\le 10} \frac{T_i(m, \sigma)}{m(m-1)} \le \frac{1}{4}$, which can be confirmed using \cref{prop:sturm}.

\subsubsection*{Case III. $\sigma \in [0.8, 1]$}

If $\sigma=1$, we have $T_1=T_2=0$, and hence inequalities (\ref{eq:tau1}) and (\ref{eq:tau2}) are trivially satisfied. Therefore, we proceed to prove the inequalities for $\sigma \in [0.8, 1)$.

We first prove the following three lemmas.
\begin{lemma} \label{lem:logineq1}
$\log(1+x)\ge \frac{x}{1+x}$ for $x>-1$.
\end{lemma}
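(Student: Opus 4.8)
The plan is to reduce this to the standard tangent-line bound on the logarithm, or equivalently to verify it directly with a one-variable calculus argument; both routes are elementary. I will describe the self-contained calculus route.

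First I would define $g(x) := \log(1+x) - \frac{x}{1+x}$ on the domain $x > -1$ and note that $g(0) = 0$. Then I would compute the derivative: $g'(x) = \frac{1}{1+x} - \frac{(1+x) - x}{(1+x)^2} = \frac{1}{1+x} - \frac{1}{(1+x)^2} = \frac{x}{(1+x)^2}$. Since $(1+x)^2 > 0$ on the entire domain, the sign of $g'(x)$ coincides with the sign of $x$, so $g$ is strictly decreasing on $(-1, 0)$ and strictly increasing on $(0, \infty)$. Consequently $g$ attains its global minimum at $x = 0$, giving $g(x) \ge g(0) = 0$ for all $x > -1$, which is precisely the claimed inequality $\log(1+x) \ge \frac{x}{1+x}$.

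As an alternative one-line derivation, substitute $t = \frac{1}{1+x} > 0$; then $\frac{x}{1+x} = 1 - t$ and $\log(1+x) = -\log t$, so the claim is equivalent to $\log t \le t - 1$ for $t > 0$, the classical bound of the logarithm by its tangent line at $t = 1$. There is no real obstacle in this lemma; the only point requiring a little care is the domain condition $x > -1$ (equivalently $t > 0$), which is exactly what makes $\log(1+x)$ and the denominators well-defined and keeps the derivative computation valid throughout.
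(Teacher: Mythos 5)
Your proof is correct and follows essentially the same one-variable calculus argument as the paper; the paper simply makes the substitution $x \mapsto 1+x$ first (reducing to $\log x \ge 1 - \tfrac{1}{x}$ for $x>0$) before differentiating, whereas you differentiate directly (and also note the equivalent substituted form). Both routes hinge on the same observation that the difference function has a unique minimum with value $0$.
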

\begin{proof}
The inequality is equivalent to $\log x\ge 1+\frac{1}{x}$ for $x > 0$. This inequality can be easily verified by considering the function $g(x) = \log x + \frac{1}{x} - 1$. Since $g'(x)=\frac{1}{x} - \frac{1}{x^2}$, $g$ has the a local minimum at $x=1$. Since $g(1)=0$, we have $g(x)\ge 0$, which implies $\log x \ge 1-\frac{1}{x}$.
\end{proof}

\begin{lemma} \label{lem:logineq2}
$\frac{\log(1+x)}{x} \ge \frac{2}{2+x}$ for $-1<x<0$ and $x>0$.
\end{lemma}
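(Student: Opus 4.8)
The plan is to collapse both regimes of $x$ into a single auxiliary inequality and then invoke \cref{lem:logineq1}. Define $h(x) := (2+x)\log(1+x) - 2x$ on $(-1,\infty)$. Note that $2+x>0$ on this whole range, while $x(2+x)$ is positive for $x>0$ and negative for $-1<x<0$. Multiplying the target inequality $\frac{\log(1+x)}{x}\ge\frac{2}{2+x}$ through by $x(2+x)$, we see it is equivalent to $h(x)\ge 0$ when $x>0$, and to $h(x)\le 0$ when $-1<x<0$ (the direction flips in the second case precisely because $x<0$). So it suffices to pin down the sign of $h$ on each side of the origin.

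Next I would show $h$ is nondecreasing on $(-1,\infty)$. Differentiating,
\begin{align*}
    h'(x) = \log(1+x) + \frac{2+x}{1+x} - 2 = \log(1+x) + \frac{-x}{1+x} = \log(1+x) - \frac{x}{1+x},
\end{align*}
and by \cref{lem:logineq1} this is $\ge 0$ for every $x>-1$. Hence $h$ is nondecreasing, and since $h(0)=0$ we conclude $h(x)\ge 0$ for $x\ge 0$ and $h(x)\le 0$ for $-1<x\le 0$ — exactly the two statements required by the reduction above. Dividing back by $x(2+x)$ (positive for $x>0$, negative for $-1<x<0$) then gives $\frac{\log(1+x)}{x}\ge\frac{2}{2+x}$ in both cases, completing the proof.

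There is no genuine obstacle here; the only point demanding care is bookkeeping the sign of $x(2+x)$ when clearing denominators, so that the single monotonicity fact about $h$ simultaneously settles both intervals $(-1,0)$ and $(0,\infty)$. The one nontrivial ingredient, $\log(1+x)\ge x/(1+x)$, is supplied by \cref{lem:logineq1}, already established above.
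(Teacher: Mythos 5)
Your proof is correct, and it is a cleaner version of the paper's own argument. Both you and the paper define the same auxiliary function $h(x) = (2+x)\log(1+x) - 2x$ and reduce the claim to pinning down the sign of $h$ on each side of $0$, taking care with the sign of $x(2+x)$ when clearing denominators. The difference is how the sign of $h'$ is obtained: you compute $h'(x) = \log(1+x) - x/(1+x)$ and observe immediately from \cref{lem:logineq1} that $h'(x) \ge 0$ for every $x > -1$, settling both intervals at once. The paper instead differentiates a second time, getting $h''(x) = x/(1+x)^2$, and argues separately that $h' \ge 0$ on $(0,\infty)$ (from $h''>0$, $h'(0)=0$) and then handles $(-1,0)$ via the opposite sign of $h''$; in doing so the paper momentarily states $h'(x) \le 0$ on $(-1,0)$, which is a sign slip (decreasing $h'$ with $h'(0)=0$ actually gives $h' \ge 0$ to the left of $0$), though the final conclusion $h(x)\le 0$ there is still correct. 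Your single application of \cref{lem:logineq1} avoids the case split entirely and is the more economical route.
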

\begin{proof}
If $x > 0$, the inequality becomes $(2+x)\log(1+x) \ge 2x$. Define $f(x) = (2+x)\log(1+x) - 2x$. Then, $f'(x) = \log(1+x) - \frac{x}{1+x}$ and $f''(x) = \frac{x}{(1+x)^2}$. Since $\lim_{ x \downarrow 0} f'(x) = 0$ and $f''(x) > 0$, $f'(x) \ge 0$ on $x>0$.
Now, suppose $-1<x<0$. Then, the inequality is equivalent to $f(x)\le 0$. Then, $\lim_{ x \uparrow 0} f'(x) = 0$ and $f''(x) < 0$, so $f'(x) \le 0$ on $-1<x<0$.
Since $\lim_{ x \to 0 } f(x)=0$, $f(x) \le 0$ for $-1<x<0$. 
\end{proof}

\begin{lemma} \label{lem:logineq3}
$\log (1+x) \ge x-\frac{x^2}{2}$ for $x\ge 0$.
\end{lemma}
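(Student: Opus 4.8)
The plan is to reduce the inequality to a monotonicity statement for an auxiliary function. Define $h(x) := \log(1+x) - x + \frac{x^2}{2}$ on $[0, \infty)$; the claim is equivalent to $h(x) \ge 0$ for all $x \ge 0$. First I would record the boundary value $h(0) = 0$.

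Next I would differentiate, obtaining $h'(x) = \frac{1}{1+x} - 1 + x$. The key step is the algebraic simplification
\begin{align*}
    h'(x) = \frac{1 - (1+x) + x(1+x)}{1+x} = \frac{x^2}{1+x},
\end{align*}
which is manifestly nonnegative for $x \ge 0$. Hence $h$ is nondecreasing on $[0, \infty)$, and combining this with $h(0) = 0$ yields $h(x) \ge 0$ for every $x \ge 0$, which is exactly the desired inequality.

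This argument has no genuine obstacle; the only thing to get right is the simplification of $h'$. An alternative route, matching the style of the preceding lemmas, is to integrate the elementary bound $\frac{1}{1+t} \ge 1 - t$ (which follows from $1 \ge 1 - t^2$) over $[0, x]$, giving $\log(1+x) \ge x - \frac{x^2}{2}$ directly; I would likely present this one-line version since it avoids introducing $h$ altogether.
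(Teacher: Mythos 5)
Your proof is correct and takes essentially the same approach as the paper: defining the auxiliary function $h(x) = \log(1+x) - x + \frac{x^2}{2}$ and arguing it is nonnegative via monotonicity. The only (minor) difference is that you simplify $h'(x) = \frac{x^2}{1+x} \ge 0$ directly, whereas the paper reaches the same conclusion by computing the second derivative and using $h(0) = h'(0) = 0$ with $h'' \ge 0$; your version is a bit more economical.
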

\begin{proof}
Let $f(x)=\log(1+x)-x+\frac{x^2}{2}$. Then, $f'(x)=\frac{1}{1+x}-1+x$ and $f''(x)=1-\frac{1}{(1+x)^2}$. Since $f(0)=f'(0)=0$ and $f''(x)\ge 0$, $f'(x)\ge 0$, so $f(x)\ge 0$ and the inequality is proved.
\end{proof}

We first find an upper bound for $\frac{\gamma}{n(n-1)}$. Recall that $\vw = \mC^{\top} \vone, s_{n}=\sum_{i=0}^{n-1} \sigma^i$ and $L=n-(n-1)\sigma$. We also define $r_n = n\bigopen{ \frac{n}{n+1} }^{n-1}-1$.

We use the following two lemmas:
\begin{lemma}
\label{lem:gammaub1}
\begin{align*}
    0 &\le Ls_n - n \le r_n.
\end{align*}
\end{lemma}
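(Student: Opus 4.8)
The plan is to rewrite $Ls_n-n$ as an explicit polynomial in $\sigma$, settle the lower bound by inspection, and prove the upper bound with a short critical‑point argument. Expanding $Ls_n=(n-(n-1)\sigma)\sum_{i=0}^{n-1}\sigma^i=n\sum_{i=0}^{n-1}\sigma^i-(n-1)\sum_{i=1}^{n}\sigma^i$ and telescoping gives $Ls_n-n=\sum_{i=1}^{n-1}\sigma^i-(n-1)\sigma^n=\sum_{i=1}^{n-1}(\sigma^i-\sigma^n)$; rewriting each $\sigma^i-\sigma^n=\sum_{k=i}^{n-1}(\sigma^k-\sigma^{k+1})$ and swapping the order of summation also yields $Ls_n-n=(1-\sigma)\sum_{k=1}^{n-1}k\sigma^k$. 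Call this common value $h(\sigma)$; it is a polynomial, and both identities are valid for every $\sigma\in[0,1]$ (so, in particular, on the range $[0.8,1)$ of interest). The lower bound is now immediate: for $\sigma\in[0,1]$ and $i\le n$ we have $\sigma^i\ge\sigma^n$, so every summand of the first expression is nonnegative and $h(\sigma)\ge 0$.

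For the upper bound, note that $h$ is continuous on $[0,1]$, nonnegative, vanishes at both endpoints, and is not identically zero, so it attains its maximum over $[0,1]$ at an interior point $\sigma^\star\in(0,1)$ with $h'(\sigma^\star)=0$. Differentiating the first form, $h'(\sigma)=\sum_{m=0}^{n-2}(m+1)\sigma^m-n(n-1)\sigma^{n-1}$, so stationarity reads $\sum_{m=0}^{n-2}(m+1)(\sigma^\star)^m=n(n-1)(\sigma^\star)^{n-1}$; multiplying through by $\sigma^\star$ gives $\sum_{k=1}^{n-1}k(\sigma^\star)^k=n(n-1)(\sigma^\star)^n$. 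Plugging this into the \emph{second} form of $h$ produces the key identity $h(\sigma^\star)=(1-\sigma^\star)\sum_{k=1}^{n-1}k(\sigma^\star)^k=n(n-1)(\sigma^\star)^n(1-\sigma^\star)$. Since the scalar function $x\mapsto x^n(1-x)$ on $[0,1]$ is maximized at $x=\tfrac{n}{n+1}$ with value $\tfrac{n^n}{(n+1)^{n+1}}$, we conclude that $h(\sigma)\le h(\sigma^\star)\le n(n-1)\tfrac{n^n}{(n+1)^{n+1}}=(n-1)\bigopen{\tfrac{n}{n+1}}^{n+1}$ for all $\sigma\in[0,1]$.

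It remains to check the purely numerical inequality $(n-1)\bigopen{\tfrac{n}{n+1}}^{n+1}\le n\bigopen{\tfrac{n}{n+1}}^{n-1}-1=r_n$. Writing $t=\tfrac{n}{n+1}$ and rearranging, this is equivalent to $t^{n-1}\bigopen{n-(n-1)t^2}\ge 1$, and since $n-(n-1)t^2=\tfrac{n(3n+1)}{(n+1)^2}$ it becomes $\bigopen{\tfrac{n}{n+1}}^{n-1}\ge\tfrac{(n+1)^2}{n(3n+1)}$. Taking logarithms and applying \cref{lem:logineq1} at $x=-\tfrac{1}{n+1}$ (which gives $\log\tfrac{n}{n+1}\ge-\tfrac1n$, hence $(n-1)\log\tfrac{n}{n+1}\ge-1+\tfrac1n$) reduces the claim to $\log\tfrac{n(3n+1)}{(n+1)^2}\ge 1-\tfrac1n$, which holds for all sufficiently large $n$ because the left‑hand side increases to $\log 3>1$; the finitely many remaining small values of $n$ are verified directly (for instance by hand, or via \cref{prop:sturm} after clearing denominators).

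The one genuine idea here is the stationarity identity $h(\sigma^\star)=n(n-1)(\sigma^\star)^n(1-\sigma^\star)$, which converts the maximization of the degree‑$n$ polynomial $h$ into the trivial one‑variable maximization of $x^n(1-x)$; everything else is bookkeeping. The only mild obstacle is the closing numerical inequality, where the available elementary logarithmic estimates are a hair too weak for the first few values of $n$ and must be patched with a direct check — the same pattern that recurs throughout this proof.
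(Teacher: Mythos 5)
Your proof is correct, and it takes a genuinely different route from the paper's. The paper's proof of the upper bound never maximizes $Ls_n-n$ directly: it introduces $g_n(\sigma)=(1-\sigma)(Ls_n-n-r_n)$, observes that $g_n'(\sigma)=(n^2-1)\sigma^n - n^2\sigma^{n-1}+1+r_n$ is minimized exactly at $\sigma=\tfrac{n}{n+1}$ where $r_n$ is calibrated to make $g_n'=0$, hence $g_n'\ge 0$, and then uses $g_n(1)=0$ to conclude $g_n\le 0$. You instead locate an interior maximizer $\sigma^\star$ of $h=Ls_n-n$ and substitute the stationarity relation $\sum_{k=1}^{n-1}k(\sigma^\star)^k=n(n-1)(\sigma^\star)^n$ into the factored form $h=(1-\sigma)\sum_k k\sigma^k$, collapsing a degree-$n$ optimization into $\max_x x^n(1-x)$. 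This is a nice idea and gives the tighter constant $(n-1)\bigl(\tfrac{n}{n+1}\bigr)^{n+1}<r_n$, which would actually suffice for the downstream use of this lemma in the proof of $\gamma/(n(n-1))\le 1/e^2$. The trade-off is the closing comparison $(n-1)\bigl(\tfrac{n}{n+1}\bigr)^{n+1}\le r_n$: your reduction to $\log\tfrac{n(3n+1)}{(n+1)^2}\ge 1-\tfrac{1}{n}$ via \cref{lem:logineq1} genuinely fails for $n=2,3,4,5$ (the actual inequality $\bigl(\tfrac{n}{n+1}\bigr)^{n-1}\ge\tfrac{(n+1)^2}{n(3n+1)}$ is true there, but your logarithmic estimate is too weak), so the finite check you flag is not optional — it must cover $n\le 5$. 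The paper's monotonicity argument sidesteps any such final comparison, since $r_n$ appears as the exact constant making $g_n'$ vanish at its critical point; that is cleaner if one only needs $r_n$, while your argument reveals the sharper value implicitly lurking in the lemma.
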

\begin{proof}
The first inequality holds because $s_n=\frac{1-\sigma^n}{1-\sigma}$, $1-\sigma \ge 0$ and 
\begin{align*}
    (1-\sigma)\bigopen{L \frac{1-\sigma^n}{1-\sigma} - n} &= (n-(n-1)\sigma)(1-\sigma^n)-n(1-\sigma) \\
    &= \sigma - n\sigma^n + (n-1)\sigma^{n+1} \\
    &= \sigma (1-\sigma) \bigopen{\sum_{i=0}^{n-2}\sigma^i-(n-1)\sigma^{n-1}} \\
    &\ge 0.
\end{align*}

For the second inequality, we need to show that 
\begin{align*}
    (n-(n-1)\sigma)(1-\sigma^n) - n(1-\sigma) + r_n (\sigma-1) \le 0.
\end{align*}
To this end, let 
\begin{align*}
    g_n(\sigma) &= (n-(n-1)\sigma)(1-\sigma^n) - n(1-\sigma) + r_n (\sigma-1) \\
    &= (n-1)\sigma^{n+1} - n\sigma^n + \sigma + r_n(\sigma-1).
\end{align*} Then, $g_n'(\sigma)=(n^2 - 1) \sigma^n - n^2 \sigma^{n-1} + 1 + r_n$. Since $n^2 \sigma^{n-1} - (n^2-1) \sigma^n$ has the maximum at $\sigma=\frac{n}{n+1}$ on $[0, 1]$, we have 
\begin{align*}
    g_n'(\sigma) &\ge g_n' \bigopen{\frac{n}{n+1}}\\
    &= 0
\end{align*}
by the definition of $r_n$. Therefore, $g_n$ is increasing on $[0, 1]$ As $g_n(1)=0$, we have $g_n\le 0$ on $[0,1]$, and therefore on $[0.8, 1)$.
\end{proof}

\begin{lemma}
$\frac{r_n^2}{n(n-1)}\le \frac{1}{e^2}$.
\end{lemma}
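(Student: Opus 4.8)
\emph{Proof proposal.} The plan is to reduce the claim to the single inequality $e\,r_n\le\sqrt{n(n-1)}$: since $r_n>0$ (checked below), this is equivalent to $e^2r_n^2\le n(n-1)$, i.e.\ to $\tfrac{r_n^2}{n(n-1)}\le\tfrac1{e^2}$. First I would rewrite $r_n$ in a cleaner form. Using $\bigopen{\tfrac{n}{n+1}}^{n-1}=\bigopen{1+\tfrac1n}^{-(n-1)}=\tfrac{n+1}{n}\bigopen{1+\tfrac1n}^{-n}$,
\[
    r_n=\frac{n+1}{\bigopen{1+\tfrac1n}^{n}}-1 .
\]
Since $\bigopen{1+\tfrac1n}^n<e<n+1$ for $n\ge2$ (standard; cf.\ \cref{lem:increasing}), we get $r_n>0$, and it remains to produce a good \emph{upper} bound for $r_n$, equivalently a good \emph{lower} bound for $\bigopen{1+\tfrac1n}^n$.

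The key step is to prove the sharp estimate
\[
    \bigopen{1+\tfrac2n}\bigopen{1+\tfrac1n}^{n-1}=\frac{(n+2)(n+1)^{n-1}}{n^{n}}\ge e,
    \qquad\text{i.e.}\qquad
    \log\bigopen{1+\tfrac2n}+(n-1)\log\bigopen{1+\tfrac1n}\ge1 .
\]
To establish the logarithmic form I would invoke \cref{lem:logineq2}, namely $\log(1+x)\ge\tfrac{2x}{2+x}$ for $x>0$, once with $x=\tfrac2n$ and once with $x=\tfrac1n$; this bounds the left-hand side below by $\tfrac{2}{n+1}+\tfrac{2(n-1)}{2n+1}=\tfrac{2n(n+2)}{(n+1)(2n+1)}$, and $2n(n+2)-(n+1)(2n+1)=n-1\ge0$ for $n\ge1$, so the lower bound is $\ge1$. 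Rearranging the first display yields $\tfrac{e}{(1+1/n)^n}\le\tfrac{n+2}{n+1}$, whence
\[
    e\,r_n=\frac{(n+1)e}{\bigopen{1+\tfrac1n}^{n}}-e\ \le\ (n+2)-e .
\]

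To finish, I would check $n+2-e\le\sqrt{n(n-1)}$ for all integers $n\ge2$. Both sides are nonnegative ($n+2-e\ge4-e>0$), so squaring is legitimate; expanding and cancelling $n^2$, the inequality is equivalent to $(2e-5)\,n\ge(e-2)^2$, i.e.\ $n\ge\tfrac{(e-2)^2}{2e-5}\approx1.18$, which holds for $n\ge2$. Chaining the estimates, $0\le e\,r_n\le n+2-e\le\sqrt{n(n-1)}$, so $e^2r_n^2\le n(n-1)$, which is the claim.

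The main obstacle is the second step: the inequality $n+2-e\le\sqrt{n(n-1)}$ used at the end is tight up to the additive gap $e-\tfrac52\approx0.22$ as $n\to\infty$, so any lower bound on $\bigopen{1+\tfrac1n}^n$ that is loose by an additive constant---or even by a $\Theta(1/n)$ term with the wrong sign---will not suffice. The bound $\bigopen{1+\tfrac1n}^n\ge\tfrac{(n+1)e}{n+2}$ works precisely because it reproduces the leading correction $\bigopen{1+\tfrac1n}^n=e\bigopen{1-\tfrac1{2n}+O(n^{-2})}$ with enough accuracy; everything downstream is then an elementary polynomial comparison. (A less clean alternative would be to verify small $n$ by direct computation and use coarser asymptotics for large $n$, but the argument above avoids any case split.)
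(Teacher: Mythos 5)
Your proof is correct, but it takes a genuinely different route from the paper's. The paper promotes $n$ to a real variable, sets $f(x)=x\bigopen{\frac{x}{x+1}}^{x-1}$ so that $r_n=f(n)-1$, proves the \emph{derivative} bound $f'(x)\le 1/e$ for $x>1$ (this itself requires a small chain: a computation of $f'$, then \cref{lem:logineq2}, then a reduction to $1\le\log(1+\tfrac1{2x})+x\log(1+\tfrac1x)$ which is finished with \cref{lem:logineq2,lem:logineq3}), integrates from $1$ to $n$ to get $r_n\le\frac{n-1}{e}$, and then the final step is trivial because $\frac{(n-1)^2}{n(n-1)}=\frac{n-1}{n}\le1$. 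You instead rewrite $r_n=\frac{n+1}{(1+1/n)^n}-1$, establish the discrete lower bound $\bigopen{1+\tfrac1n}^n\ge\frac{e(n+1)}{n+2}$ by applying \cref{lem:logineq2} twice (with $x=\tfrac2n$ and $x=\tfrac1n$) and an elementary rearrangement, deduce $er_n\le n+2-e$, and finish with the quadratic check $n+2-e\le\sqrt{n(n-1)}$ for $n\ge2$. Your route buys a fully discrete, calculus-free argument that uses only one of the two auxiliary log-inequalities; the cost is a slightly looser intermediate bound, so the final comparison is a genuine quadratic inequality with a uniformly bounded (but positive) margin of roughly $e-\tfrac52\approx0.22$, rather than the paper's trivial $\frac{n-1}{n}\le1$. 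Your flagging of that tightness is exactly right: a lower bound on $(1+1/n)^n$ weaker than $e(n+1)/(n+2)$ by an $\Omega(1/n)$ term would push $er_n$ past $\sqrt{n(n-1)}$ for large $n$ and break the argument.
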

\begin{proof}
Let $x>1$ and $f(x)=x\bigopen{ \frac{x}{x+1} }^{x-1}$. By showing that $f'(x)\le \frac{1}{e}$, we can show that $f(x)\le \frac{1}{e}(x-1)+1$ because $f(1)=1$ and \[
\int_1^x f'(t)\, dt = f(x)-1 \le \frac{x-1}{e}. 
\]
Once the above is shown, it follows that 
\begin{align*}
    \frac{(f(x)-1)^2}{x(x-1)} &\le \bigopen{ \frac{f(x)-1}{x-1} }^2 \\
    & \le \frac{1}{e^2}
\end{align*}
and therefore $\frac{r_n^2}{n(n-1)}\le \frac{1}{e^2}$. To show that $f'(x)\le \frac{1}{e}$, we differentiate $f(x)$ and obtain 
\begin{align*}
    f'(x) &= \bigopen{ \frac{x}{x+1} }^{x-1} \bigopen{ \frac{2x}{x+1} - x\log\bigopen{ 1+\frac{1}{x} } }. 
\end{align*}

By \cref{lem:logineq2}, $x\log\bigopen{ 1+\frac{1}{x} }\ge \frac{2x}{2x+1}$, so \begin{align*}
    f'(x) &\le \bigopen{ \frac{x}{x+1} }^{x-1} \bigopen{ \frac{2x}{x+1} - \frac{2x}{2x+1} }\\
    &= \bigopen{ \frac{2x}{2x+1} } \bigopen{ \frac{x}{x+1} }^x.
\end{align*}
Thus, it suffices to show
\begin{align*}
    \bigopen{ \frac{2x}{2x+1} } \bigopen{ \frac{x}{x+1} }^x &\le \frac{1}{e}
\end{align*}
which is equivalent to 
\begin{align*}
    1\le \log\bigopen{ 1+\frac{1}{2x} } &+ x\log\bigopen{ 1+\frac{1}{x} }.
\end{align*}

We now prove the inequality using Lemma \ref{lem:logineq2} and \ref{lem:logineq3}. It suffices to show that $1\le \frac{1}{2x}-\frac{1}{8x^2}+\frac{2x}{2x+1}$
because $\frac{1}{2x}-\frac{1}{8x^2}\le \log\bigopen{ 1+\frac{1}{2x} }$ and $\frac{2x}{2x+1}\le x\log\bigopen{ 1 + \frac{1}{x} }$. Multiplying both sides by $(2x+1)8x^2$ , we have $8x^2\le (4x-1)(2x+1)$, which holds for all $x\ge \frac{1}{2}$ and therefore for $x>1$.
\end{proof}
By the lemma above, we have 
\begin{align}
    \frac{\gamma}{n(n-1)} &\le \frac{1}{e^2} \label{eq:gammaub}
\end{align}
because $\gamma = (n-Ls_n)^2$.

Next, we will focus on deriving an upper bound for $\frac{\delta}{n-1}$ on $[0, 1]$. Since $\delta=\sum_{i=1}^{n-1} (\sigma^i-\sigma^n)^2\le \sum_{i=1}^{n-1} \max_{0\le \sigma\le 1}(\sigma^i-\sigma^n)^2$, and $(\sigma^i - \sigma^n)^2$ attains its maximum at $\sigma = \bigopen{\frac{i}{n}}^{1/(n-i)}$, we have 
\begin{align*}
    \delta &\le \sum_{i=1}^{n-1} \bigopen{\bigopen{\frac{i}{n}}^{\frac{i}{n}/(1-\frac{i}{n})}-\bigopen{\frac{i}{n}}^{1/(1-\frac{i}{n})}}^2.
\end{align*}

Now we use the following lemma:
\begin{lemma}
$x^{x/(1-x)}-x^{1/(1-x)} \le e^{-(4\log 2)x}$ if $0<x<1$.
\end{lemma}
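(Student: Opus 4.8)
The plan is to reduce the stated inequality to a one-variable calculus problem and to settle it by a careful sign analysis of a single function together with its first two derivatives.

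\textbf{Step 1 (Algebraic reduction).} First I would factor the left-hand side using the identity
\begin{align*}
    x^{x/(1-x)} - x^{1/(1-x)} = x^{x/(1-x)}\bigopen{1 - x^{\frac{1}{1-x} - \frac{x}{1-x}}} = (1-x)\,x^{x/(1-x)},
\end{align*}
since $\frac{1}{1-x} - \frac{x}{1-x} = 1$. Writing $e^{-(4\log 2)x} = 16^{-x}$, the claim becomes $(1-x)\,x^{x/(1-x)} \le 16^{-x}$, and since both sides are positive on $(0,1)$, taking logarithms makes it equivalent to $g(x) \le 0$ on $(0,1)$, where
\begin{align*}
    g(x) := \frac{x\log x}{1-x} + \log(1-x) + (4\log 2)\,x.
\end{align*}

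\textbf{Step 2 (Boundary and critical data).} Next I would record the key values $g(0^+) = 0$ (each term tends to $0$) and, by direct substitution, $g(\tfrac12) = -\log 2 - \log 2 + 2\log 2 = 0$, as well as $g(1^-) = -\infty$. Differentiating and simplifying, one gets the clean expression
\begin{align*}
    g'(x) = \frac{\log x}{(1-x)^2} + 4\log 2,
\end{align*}
so $g'(\tfrac12) = -4\log 2 + 4\log 2 = 0$ while $g'(0^+) = g'(1^-) = -\infty$. Thus $x=\tfrac12$ is an interior critical point at which $g$ vanishes; this already pins down where the bound is tight.

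\textbf{Step 3 (Shape of $g'$ via the second derivative).} The crux is to show these are the only "large" values of $g$. I would compute
\begin{align*}
    g''(x) = \frac{\frac{1-x}{x} + 2\log x}{(1-x)^3},
\end{align*}
whose sign on $(0,1)$ equals that of $v(x) := \tfrac1x - 1 + 2\log x$. Since $v'(x) = (2x-1)/x^2$, $v$ is decreasing on $(0,\tfrac12)$ and increasing on $(\tfrac12,1)$, with $v(0^+) = +\infty$, $v(\tfrac12) = 1 - 2\log 2 < 0$, and $v(1)=0$; hence $v$ has a single zero $x_1 \in (0,\tfrac12)$, with $v>0$ on $(0,x_1)$ and $v<0$ on $(x_1,1)$. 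Therefore $g'$ increases on $(0,x_1)$ and decreases on $(x_1,1)$. Combined with $g'(0^+)=g'(1^-)=-\infty$ and $g'(\tfrac12)=0$ (and $\tfrac12>x_1$), this forces $g'$ to vanish at exactly two points, some $x_0<x_1$ and $\tfrac12$, with $g'<0$ on $(0,x_0)$, $g'>0$ on $(x_0,\tfrac12)$, and $g'<0$ on $(\tfrac12,1)$. Consequently $g$ decreases from $g(0^+)=0$ on $(0,x_0)$, increases back up to $g(\tfrac12)=0$ on $(x_0,\tfrac12)$, and decreases on $(\tfrac12,1)$; hence $g\le 0$ throughout $(0,1)$, with equality only at $x=\tfrac12$, which is exactly the desired inequality.

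\textbf{Main obstacle.} The delicate point is precisely that the inequality is an \emph{equality} at $x=\tfrac12$: crude logarithmic estimates such as $\log x \le x-1$, or even the sharper $\log x \le \frac{2(x-1)}{x+1}$, are not tight enough there (the former gives only $\le e^{-2x}$, and the latter degrades exactly around $x=\tfrac12$), so one genuinely needs the second-derivative argument of Step 3 to control the "hump" of $g$ between $0$ and $\tfrac12$. A secondary subtlety is that $g'$ diverges to $-\infty$ at both endpoints while $g$ itself stays finite (tending to $0$ as $x\to 0^+$), so the monotonicity intervals must be tracked via $g''$ rather than inferred from endpoint behavior alone.
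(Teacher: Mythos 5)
Your proof is correct. You establish the factorization $x^{x/(1-x)}-x^{1/(1-x)}=(1-x)x^{x/(1-x)}$ (as does the paper), but then diverge: you keep the claim in the form $g(x) := \frac{x\log x}{1-x} + \log(1-x) + (4\log 2)x \le 0$ and drive the argument through the remarkably clean derivative $g'(x) = \frac{\log x}{(1-x)^2} + 4\log 2$, then a second-derivative sign analysis showing $g'$ is unimodal, which pins down exactly two zeros of $g'$ and forces $g\le 0$ from the boundary data $g(0^+)=g(\tfrac12)=0$. The paper instead divides through by $x$ and works with the symmetric function $g_{\mathrm{paper}}(x) = \frac{\log x}{1-x} + \frac{\log(1-x)}{x}$, aiming for $g_{\mathrm{paper}}(x)\le -4\log 2$; its derivative's numerator has the anti-symmetric form $h(x)-h(1-x)$ with $h(x)=x^2\log x$, and the sign pattern of $g_{\mathrm{paper}}'$ is read off by observing that $h'(x)+h'(1-x)$ is convex with known boundary and midpoint values. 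The paper's route trades your second-derivative bookkeeping for the symmetry $x\leftrightarrow 1-x$, which makes the maximum at $x=\tfrac12$ almost automatic; your route forgoes symmetry but benefits from an unexpectedly simple closed form for $g'$. Both are essentially the same length and rigor, and each correctly handles the key subtlety you flagged: the inequality is an equality at $x=\tfrac12$, so crude logarithm bounds cannot close it.
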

\begin{proof}
Let $f(x)=x^{x/(1-x)}-x^{1/(1-x)}$. Then, $f(x)=x^{x/(1-x)}(1-x)$ and therefore we need to show that $\log f(x)=\frac{x}{1-x}\log x + \log(1-x)\le -(4\log 2)x$. Let $g(x)=\frac{\log x}{1-x}+\frac{\log(1-x)}{x}$. Then, $g'(x)=\frac{x^2\log x-(1-x)^2\log(1-x)}{x(1-x)}$. To check the change of the sign of $g'$, define $h(x)=x^2\log x$. Since $h'(x)+h'(1-x)=2x\log x+2(1-x)\log(1-x)+1$ and $2x\log x$ is convex, so is $h'(x)+h'(1-x)$ and $\lim_{x \uparrow 1}h'(x)+h'(1-x) = \lim_{x \downarrow 0}h'(x)+h'(1-x) =1$ and $2h'(\frac{1}{2})<0$. Thus, for some $c\in (0, \frac{1}{2})$, $h'(x)+h'(1-x)\ge 0$ is $0<x\le c$ or $1-c\le x<1$ and $h'(x)+h'(1-x)\le 0$ if $c\le x \le 1-c$. Consequently, $g'\ge 0$ if $0<x\le \frac{1}{2}$ and $g'\le 0$ if $\frac{1}{2}\le x <1$. This shows that $g$ has the maximum $g\bigopen{ \frac{1}{2} } = -4\log 2$ at $x=\frac{1}{2}$ and the lemma has been proved.
\end{proof}

By the lemma above, 
\begin{align*}
    \delta &\le \sum_{i=1}^{n-1} e^{(-8\log 2)i/n}\\
    &\le \sum_{i=1}^{\infty} e^{(-8\log 2)i/n}\\
    &= \frac{1}{e^{(8\log 2)/n}-1}
\end{align*} and since $\frac{t}{e^t-1}\le 1$ for $t > 0$, 
\begin{align}
    \frac{\delta}{n-1} &\le \frac{1}{(n-1) \frac{8\log 2}{n}} \frac{\frac{8\log 2}{n}}{e^{(8\log 2)/n}-1} \notag \\
    &\le \frac{n}{n-1} \frac{1}{8\log 2}. \label{eq:deltaub}
\end{align}

Next, we consider an upper and lower bounds of $\alpha$. Recall that
\begin{align*}
    \alpha &= n-2L \frac{1-\sigma^n}{1-\sigma} + L^2 \frac{1-\sigma^{2n}}{1-\sigma^2}.
\end{align*}
First, we derive an upper bound of $\alpha$. To achieve this, we will establish upper bounds for the terms $-2L \frac{1-\sigma^n}{1-\sigma}$ and $L^2 \frac{1-\sigma^{2n}}{1-\sigma^2}$. We start by analyzing $L \frac{1-\sigma^n}{1-\sigma}$. Since $L=n-(n-1)\sigma$ and $s_n=\frac{1-\sigma^n}{1-\sigma}$, by \cref{lem:gammaub1}, we have 
\begin{align}
    \label{eq:alphaub1}
    L \frac{1-\sigma^n}{1-\sigma} &\ge n.
\end{align}

Next we examine the term $L^2 \frac{1-\sigma^{2n}}{1-\sigma^2}$. Since $L=(n-1)(1-\sigma)+1$, we obtain 
\begin{align}
    \label{eq:alphaub2}
    L^2 \frac{1-\sigma^{2n}}{1-\sigma^2} &= (1-\sigma^{2n})\bigopen{ (n-1)^2 \frac{1-\sigma}{1+\sigma} + \frac{2(n-1)}{1+\sigma} + \frac{1}{1-\sigma^2} } \notag \\
    &\le (n-1)^2 \frac{1-\sigma}{1+\sigma} + \frac{2(n-1)}{1+\sigma} + \frac{1-\sigma^{2n}}{1-\sigma^2} \notag \\
    &\le (n-1)^2 \frac{1-\sigma}{1+\sigma} + \frac{2(n-1)}{1+\sigma} + n.
\end{align}
The first inequality holds because $0\le 1-\sigma^{2n}\le 1$, while the second inequality follows from the fact that $\frac{1-\sigma^{2n}}{1-\sigma^2} = \sum_{i=0}^{n-1} \sigma^{2i} \le n$.
From (\ref{eq:alphaub1}) and (\ref{eq:alphaub2}), we have  
\begin{align}
    \frac{\alpha}{n(n-1)} &\le \bigopen{1-\frac{1}{n}} \frac{1-\sigma}{1+\sigma} + \frac{2}{n(1+\sigma)}. \label{eq:alphaub}
\end{align}

Next, we obtain a lower bound of $\alpha$. For any real $c$, we have  
\begin{align*}
    L \frac{1-\sigma^n}{1-\sigma} \le nc &\iff c(1-\sigma)-(1-\sigma^n)\bigopen{ 1-\bigopen{1-\frac{1}{n}\sigma} }  \ge 0,
\end{align*}
which motivates us to define $g(\sigma) = c(1-\sigma)-(1-\sigma^n)\bigopen{ 1-\bigopen{1-\frac{1}{n}\sigma} } $. Then, our objective is to find a value for $c$ such that $g(\sigma) \ge 0$. 
Now, we analyze the function $g(\sigma)$ to determine a suitable value for $c$. We have $g(1)=0$ and 
\begin{align*}
   g'(\sigma) = \bigopen{ \frac{1}{n}-n }\sigma^n+n\sigma^{n-1}-c+1-\frac{1}{n}.
\end{align*}
Since $\bigopen{ \frac{1}{n}-n }\sigma^n+n\sigma^{n-1}+1-\frac{1}{n}$ has the local maximum $\bigopen{ \frac{n}{n+1} }^{n-1} + 1 -\frac{1}{n}$ at $\sigma=\frac{n}{n+1}$, if $c=1+\frac{1}{e}$, $g'(\sigma)\le 0$ by the following lemma:
\begin{lemma}
$\bigopen{ \frac{x}{x+1} }^{x-1}-\frac{1}{x}\le \frac{1}{e}$ if $x > 1$.
\end{lemma}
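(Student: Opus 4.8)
The plan is to pass to logarithms and reduce the statement to a single elementary inequality amenable to the three logarithmic bounds already established (\cref{lem:logineq1,lem:logineq2,lem:logineq3}). Both sides of the target inequality $\bigopen{\frac{x}{x+1}}^{x-1}\le \frac1e+\frac1x$ are positive for $x>1$, so taking $\log$ of both sides (valid by positivity) and rearranging with $\log\frac{x}{x+1}=-\log(1+\tfrac1x)$ and $\log(\tfrac1e+\tfrac1x)=\log(1+\tfrac ex)-1$, the claim becomes
\begin{align*}
    (x-1)\log\!\bigopen{1+\tfrac1x}+\log\!\bigopen{1+\tfrac ex}\ge 1.
\end{align*}

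First I would bound each term from below. Since $x-1>0$, \cref{lem:logineq3} applied with argument $\tfrac1x\ge 0$ gives $(x-1)\log(1+\tfrac1x)\ge(x-1)\bigopen{\tfrac1x-\tfrac{1}{2x^2}}=1-\tfrac{3}{2x}+\tfrac{1}{2x^2}$, and \cref{lem:logineq2} applied with argument $\tfrac ex>0$ gives $\log(1+\tfrac ex)\ge\frac{2e}{2x+e}$. Adding these two estimates, it suffices to show $-\tfrac{3}{2x}+\tfrac{1}{2x^2}+\frac{2e}{2x+e}\ge 0$; multiplying through by the positive quantity $2x^2(2x+e)$ reduces this to the quadratic inequality
\begin{align*}
    (4e-6)x^2-(3e-2)x+e\ge 0.
\end{align*}
Finally, this quadratic has positive leading coefficient $4e-6>0$ and discriminant $(3e-2)^2-4e(4e-6)=-7e^2+12e+4$, which is negative because $e^2>\tfrac{12e+4}{7}$ (numerically $7.39>5.24$); hence the quadratic is strictly positive for every real $x$, which finishes the proof.

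I expect the only genuine difficulty to lie in the choice of which logarithmic lower bounds to feed into the two terms. The most naive pairing --- for instance, applying the first-order bound \cref{lem:logineq1} to $\log(1+\tfrac ex)$ --- yields a quadratic whose discriminant equals $e^2+6e+1>0$, so it fails on a subinterval of $(1,\infty)$ and the argument breaks down. It is precisely the combination of the second-order bound \cref{lem:logineq3} on $\log(1+\tfrac1x)$ together with the Padé-type bound \cref{lem:logineq2} on $\log(1+\tfrac ex)$ that makes the residual polynomial unconditionally nonnegative; everything else is a routine computation.
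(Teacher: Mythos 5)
Your proof is correct and genuinely different from the paper's. The paper proves the lemma by showing $f(x)=\bigopen{\frac{x}{x+1}}^{x-1}-\frac1x$ is \emph{increasing} on $(1,\infty)$: it computes $f'(x)$, bounds it below using $\log x\ge 1-\frac1x$ (a consequence of \cref{lem:logineq1}) together with Bernoulli's inequality $\bigopen{1+\frac1x}^x\ge 2$, and then concludes from $\lim_{x\to\infty}f(x)=\frac1e$ and continuity. You instead take logarithms, rewrite the target as $(x-1)\log\bigopen{1+\tfrac1x}+\log\bigopen{1+\tfrac ex}\ge 1$, and bound the two terms from below via \cref{lem:logineq3} and \cref{lem:logineq2} respectively, reducing everything to the quadratic $(4e-6)x^2-(3e-2)x+e\ge 0$, which holds because the leading coefficient is positive and the discriminant $-7e^2+12e+4$ is negative (equivalently, $7e^2-12e-4=7(e-2)\bigopen{e+\tfrac27}>0$ since $e>2$). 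Your route avoids differentiating $f$ and taking a limit, at the price of a less transparent reason \emph{why} $\frac1e$ is the right constant; the paper's route makes the asymptotic sharpness of $\frac1e$ explicit. Your diagnosis that the weaker bound \cref{lem:logineq1} on the $\log\bigopen{1+\tfrac ex}$ term leads to a quadratic with discriminant $e^2+6e+1>0$ (and hence fails) is also correct, and it is a nice piece of meta-reasoning about why the Pad\'e-type bound of \cref{lem:logineq2} is needed there.
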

\begin{proof}
Let $f(x)=\bigopen{ \frac{x}{x+1} }^{x-1}-\frac{1}{x}$. We first show that $f$ is increasing on $x>1$. Taking the derivative, we have 
\begin{align*}
    f'(x) &= \bigopen{ \frac{x}{x+1} }^{x-1} \bigopen{ \log \frac{x}{x+1} +\frac{x-1}{x(x+1)}} + \frac{1}{x^2}.
\end{align*}
To show that $f'(x)\ge 0$, we can use the inequality $\log x \ge 1-\frac{1}{x}$ for $x>0$, which is directly derived from Lemma \ref{lem:logineq1}. Applying this inequality to the term $\log \frac{x}{x+1}$, we have \begin{align*}
f'(x) &= \bigopen{ \frac{x}{x+1} }^{x-1} \bigopen{ \log \frac{x}{x+1} +\frac{x-1}{x(x+1)}} + \frac{1}{x^2}\\
&\ge \bigopen{ \frac{x}{x+1} }^{x-1} \bigopen{ -\frac{1}{x} +\frac{x-1}{x(x+1)}} + \frac{1}{x^2}\\
& = -\frac{2}{x(x+1)} \bigopen{ \frac{x}{x+1} }^{x-1} + \frac{1}{x^2}.
\end{align*}
Since 
\begin{align*}
-\frac{2}{x(x+1)} \bigopen{ \frac{x}{x+1} }^{x-1} + \frac{1}{x^2} \ge 0 \iff \bigopen{ 1+\frac{1}{x} }^x \ge 2
\end{align*} and $\bigopen{ 1+\frac{1}{x} }^x \ge 2$ holds for all $x>1$ by Bernoulli's inequality, it follows that $f'(x)\ge 0$ and $f(x)$ is increasing. Since $\lim_{x\to \infty} f(x) = \frac{1}{e}$ and $f(x)$ is continuous on $x>1$, we have $f(x) \le \frac{1}{e}$.
\end{proof}

Therefore, $g(\sigma)\ge 0$ on $[0, 1]$ and 
\begin{align}
    \label{eq:alphalb1}
    L \frac{1-\sigma^n}{1-\sigma} &\le n \bigopen{1+\frac{1}{e}}.
\end{align}

For the term $L^2 \frac{1-\sigma^{2n}}{1-\sigma^2}$, we use the fact that 
\begin{align*}
    L^2 \frac{1-\sigma^{2n}}{1-\sigma^2} &= (1-\sigma^{2n})\bigopen{ (n-1)^2 \frac{1-\sigma}{1+\sigma} + \frac{2(n-1)}{1+\sigma} + \frac{1}{1-\sigma^2} } \\
    &= (n-1)^2 \bigopen{\frac{1-\sigma}{1+\sigma} - \sigma^{2n} \frac{1-\sigma}{1+\sigma}} + (1-\sigma^{2n}) \bigopen{ \frac{2(n-1)}{1+\sigma} + \frac{1}{1-\sigma^2} }.
\end{align*}
Since we are considering the case $\sigma\in [0.8, 1)$, we have $\frac{1}{1+\sigma} \le \frac{5}{9}$. Also, $\sigma^{2n}(1-\sigma)$ has the local maximum at $\sigma=\frac{2n}{2n+1}$, and since $\bigopen{ \frac{2n}{2n+1} }^{2n}$ is decreasing (the inverse $\bigopen{1+\frac{1}{2n}}^{2n}$ is positive and increasing by Lemma \ref{lem:increasing}), 
\begin{align*}
    \sigma^{2n} \frac{1-\sigma}{1+\sigma} &\le \frac{5}{9} \sigma^{2n} (1-\sigma) \\
    &\le \frac{5}{9}\bigopen{ \frac{2n}{2n+1} }^{2n} \frac{1}{2n+1}\\
    &\le \frac{256}{1125} \frac{1}{2n+1}.
\end{align*}

Next, since 
\begin{align*}
    (1-\sigma^2) \bigopen{(1-\sigma^{2n}) \bigopen{ \frac{2(n-1)}{1+\sigma} + \frac{1}{1-\sigma^2} } - n} &= (1-\sigma^{2n}) (2(n-1)(1-\sigma)+1) - n(1-\sigma^2) \\
    &= (1-\sigma)^2 \bigopen{n-1+\sum_{i=2}^{2n-1}(i-1)\sigma^i} \\
    &\ge 0,
\end{align*}
we have
\begin{align*}
    (1-\sigma^{2n}) \bigopen{ \frac{2(n-1)}{1+\sigma} + \frac{1}{1-\sigma^2} } &\ge n
\end{align*}
because $1-\sigma^2 \ge 0$.
Therefore, 
\begin{align}
    \label{eq:alphalb2}
    L^2 \frac{1-\sigma^{2n}}{1-\sigma^2} &\ge (n-1)^2 \bigopen{\frac{1-\sigma}{1+\sigma}-\frac{256}{1125} \frac{1}{2n+1}} + n.
\end{align}
By (\ref{eq:alphalb1}) and (\ref{eq:alphalb2}), we obtain 
\begin{align}
    \frac{\alpha}{n(n-1)} &\ge \frac{1}{n(n-1)} \bigopen{ (n-1)^2 \bigopen{\frac{1-\sigma}{1+\sigma}-\frac{256}{1125} \frac{1}{2n+1}} - \frac{2}{e}n}. \label{eq:alphalb}
\end{align}

We now derive an upper bound for $\beta$. Since 
\begin{align*}
    \beta &= \frac{1-\sigma}{1+\sigma} \bigopen{2n-n\sigma^{2n}-2(1-\sigma^{n+1})\frac{1-\sigma^n}{1-\sigma}-\sigma^2\frac{1-\sigma^{2n}}{1-\sigma^2} } \\
    &\le \frac{1-\sigma}{1+\sigma} \bigopen{2n-2(1-\sigma^{n+1})\frac{1-\sigma^n}{1-\sigma}},
\end{align*}
we need to find a lower bound for the term $(1-\sigma^{n+1})\frac{1-\sigma^n}{1-\sigma}$.
We claim that $(1-\sigma^{n+1})\frac{1-\sigma^n}{1-\sigma} \ge 20(1-\sigma)$. Since $1-\sigma > 0$, this is equivalent to showing that $\frac{1-\sigma^{n+1}}{1-\sigma} \frac{1-\sigma^n}{1-\sigma} \ge 20$. 
We can rewrite the inequality as
\begin{align*}
    \bigopen{\sum_{i=0}^n \sigma^i} \bigopen{\sum_{i=0}^{n-1} \sigma^i} \ge 20,
\end{align*}
The left-hand side is increasing in $\sigma$ for $\sigma \in (0,1)$. Given that $n \ge 15$, we can verify the inequality holds because 
\begin{align*}
    \bigopen{\sum_{i=0}^n 0.8^i} \bigopen{\sum_{i=0}^{n-1} 0.8^i} \ge 20.
\end{align*}
Therefore, we have $(1-\sigma^{n+1})\frac{1-\sigma^n}{1-\sigma} \ge 20(1-\sigma)$ as claimed.

Substituting this into the inequality for $\beta$, we obtain
\begin{align}
    \beta &= \frac{1-\sigma}{1+\sigma} \bigopen{2n-n\sigma^{2n}-2(1-\sigma^{n+1})\frac{1-\sigma^n}{1-\sigma}-\sigma^2\frac{1-\sigma^{2n}}{1-\sigma^2} } \notag \\
    &= \frac{1-\sigma}{1+\sigma} \bigopen{2n-2(1-\sigma^{n+1})\frac{1-\sigma^n}{1-\sigma}} \notag \\
    &\le \frac{1-\sigma}{1+\sigma} \bigopen{2n - 40(1-\sigma)}. \label{eq:betaub}
\end{align}

Using the derived bounds for $\alpha,\beta,\gamma$ and $\delta$, we now establish upper bounds for $T_1$ and $T_2$. Combining (\ref{eq:deltaub}), (\ref{eq:alphalb}) and (\ref{eq:betaub}), we have
\begin{align*}
    \frac{T_1}{n(n-1)} &\le \frac{1}{n(n-1)} (n\beta -\alpha + \max_{\sigma \in (0, 1]} n\delta)\\
    &\le \frac{2n}{n-1} \frac{1-\sigma}{1+\sigma} - \frac{40}{n-1} \frac{(1-\sigma)^2}{1+\sigma} - \frac{1}{n(n-1)} \bigopen{ (n-1)^2 \bigopen{\frac{1-\sigma}{1+\sigma}-\frac{256}{1125} \frac{1}{2n+1}} - \frac{2}{e}n} \\
    &\quad + \frac{1}{8 \log 2} \frac{n}{n-1} \\
    &= \frac{n^2+2n-1}{n(n-1)} \frac{1-\sigma}{1+\sigma} - \frac{40}{n-1} \frac{(1-\sigma)^2}{1+\sigma} + \frac{256}{1125} \frac{n-1}{n(2n+1)} + \frac{2}{e} \frac{1}{n-1}+ \frac{1}{8 \log 2} \frac{n}{n-1} \\
    &\le \frac{5}{9} \bigopen{\frac{n^2+2n-1}{n(n-1)} (1-\sigma) - \frac{40}{n-1}(1-\sigma)^2} + \frac{256}{1125} \frac{n-1}{n(2n+1)} + \frac{2}{e} \frac{1}{n-1}+ \frac{1}{8 \log 2} \frac{n}{n-1},
\end{align*}
where the last inequality holds because $n\ge 15$ implies $\frac{n^2+2n-1}{n(n-1)} (1-\sigma) - \frac{40}{n-1}(1-\sigma)^2 \ge 0$ and $\frac{1}{1+\sigma} \le \frac{5}{9}$.

Similarly, by (\ref{eq:gammaub}) and (\ref{eq:alphaub}), we have 
\begin{align*}
    \frac{T_2}{n(n-1)} &\le \frac{1}{n(n-1)} (\alpha + \max_{\sigma \in (0, 1]} \gamma) \\
    &\le \bigopen{1-\frac{1}{n}} \frac{1-\sigma}{1+\sigma}+\frac{2}{n(1+\sigma)} + \frac{1}{e^2} \\
    &\le -\frac{5}{9}\bigopen{1-\frac{1}{n}}\sigma + \frac{5}{9}\bigopen{1+\frac{1}{n}} + \frac{1}{e^2}. & \bigopen{\because \frac{1}{1+\sigma} \le \frac{5}{9}}
\end{align*}

Now, let 
\begin{align*}
    U_1(n, \sigma) &= \frac{5}{9} \bigopen{\frac{n^2+2n-1}{n(n-1)} (1-\sigma) - \frac{40}{n-1}(1-\sigma)^2} + \frac{256}{1125} \frac{n-1}{n(2n+1)} + \frac{2}{e} \frac{1}{n-1}+ \frac{1}{8 \log 2} \frac{n}{n-1} \\
    U_2(n, \sigma) &= -\frac{5}{9}\bigopen{1-\frac{1}{n}}\sigma + \frac{5}{9}\bigopen{1+\frac{1}{n}} + \frac{1}{e^2}.
\end{align*}

We first consider $U_1(n, \sigma)$. Note that $U_1(n, \sigma)$ is a quadratic function of $\sigma$, which attains its maximum at $\sigma=1-\frac{n^2+2n-1}{80n}$. Thus, $U_1(n, \sigma)$ has its maximum at $\sigma=0.8$ on $[0.8, 1)$ because $1-\frac{n^2+2n-1}{80n} \le 0.8$ for all $n\ge 15$. Since  
\begin{align*}
    U_1(n,\sigma) &\le U_1(n, 0.8) \\
    &= \frac{n^2+2n-1}{9n(n-1)} - \frac{8}{9(n-1)} + \frac{256}{1125} \frac{n-1}{n(2n+1)} + \frac{2}{e} \frac{1}{n-1}+ \frac{1}{8 \log 2} \frac{n}{n-1} \\
    &\le \frac{1}{9}\bigopen{1+\frac{3}{n}+\frac{2}{n(n-1)}} + \bigopen{\frac{128}{1125} + \frac{2}{e} + \frac{1}{8\log 2} - \frac{8}{9}} \frac{1}{n-1} & \bigopen{\because \frac{n-1}{n(2n+1)} \le \frac{1}{2(n-1)}}
\end{align*}
and the last line is decreasing in $n$, substituting $n=15$ yields 
\begin{align*}
    U_1(n,\sigma) &\le U_1(n, 0.8) \\
    &\le \frac{1867}{23625} + \bigopen{\frac{2}{e} + \frac{1}{8\log 2}}\frac{1}{14} + \frac{1}{8\log 2} \\
    &\le \bigopen{1-\frac{1}{15}}^{15}
\end{align*}
for $n\ge 15$ and $\sigma\in [0.8, 1)$.

Similarly, since $U_2$ is decreasing in $\sigma$, 
\begin{align*}
    U_2(n,\sigma) &\le U_2(n, 0.8) \\
    &= \frac{1}{n} + \frac{1}{9} + \frac{1}{e^2} \\
    &\le \bigopen{1-\frac{1}{10}}^{10}
\end{align*}
for $n\ge 10$.

Recall that we can verify that 
\begin{align*}
    \frac{T_1(m, \sigma)}{m(m-1)} &\le \frac{1}{4}
\end{align*}
for $2\le m\le 14$ and 
\begin{align*}
    \frac{T_2(m, \sigma)}{m(m-1)} &\le \frac{1}{4}
\end{align*}
for $2\le m\le 9$ using \cref{prop:sturm}.

Therefore, we have 
\begin{align*}
    \max_{2\le m\le n} \frac{T_1(m, \sigma)}{m(m-1)} &= \max \bigset{\max_{2\le m\le 14}\frac{T_1(m, \sigma)}{m(m-1)}, \max_{15\le m\le n} \frac{T_1(m, \sigma)}{m(m-1)}} \\
    &\le \max\bigset{\bigopen{1-\frac{1}{2}}^2, \bigopen{1-\frac{1}{15}}^{15}} \\
    &\le \bigopen{1-\frac{1}{n}}^n & (\because \text{\cref{lem:increasing}})
\end{align*}
if $n\ge 15$ and 
\begin{align*}
    \max_{2\le m\le n} \frac{T_1(m, \sigma)}{m(m-1)} &\le \bigopen{1-\frac{1}{2}}^2\\
    &\le \bigopen{1-\frac{1}{n}}^n
\end{align*}
if $2\le n \le 14$.

Similarly, we have 
\begin{align*}
    \max_{2\le m\le n} \frac{T_2(m, \sigma)}{m(m-1)} &= \max \bigset{\max_{2\le m\le 9}\frac{T_2(m, \sigma)}{m(m-1)}, \max_{10\le m\le n} \frac{T_2(m, \sigma)}{m(m-1)}} \\
    &\le \max\bigset{\bigopen{1-\frac{1}{2}}^2, \bigopen{1-\frac{1}{10}}^{10}} \\
    &\le \bigopen{1-\frac{1}{n}}^n & (\because \text{\cref{lem:increasing}})
\end{align*}
if $n\ge 10$ and 
\begin{align*}
    \max_{2\le m\le n} \frac{T_2(m, \sigma)}{m(m-1)} &\le \bigopen{1-\frac{1}{2}}^2\\
    &\le \bigopen{1-\frac{1}{n}}^n
\end{align*}
if $2\le n \le 9$.

\end{proof}

\subsection{Proof of \cref{thm:rcdlbpi}}
\label{subsec:rcdlbpi}

Here we prove \cref{thm:rcdlbpi}, restated below for the sake of readability.

\thmrcdlbpi*

\begin{proof}

We first assume that $\mA = \sigma \mI_n + (1-\sigma) \vone_n \vone_n^{\top}$. Then, 
\begin{align*}
    \frac{\E\bigclosed{\|\vx_T\|^2}}{\|\vx_0\|^2} &= \frac{\vx_0^{\top} (\MARCD)^T (\mI) \vx_0}{\vx_0^{\top} \vx_0} \\
    &\ge \lambda_{\min} ((\MARCD)^T (\mI))
\end{align*}
because $(\MARCD)^T (\mI)$ is symmetric. 

Now, we observe that $\sp\{\mI, \vone \vone^{\top}\}$ is invariant under $\MARCD$. To verify this, we compute $\MARCD(\mI)$ and $\MARCD(\vone \vone^{\top})$. Then, we obtain 
\begin{align*}
    \MARCD(\mI) &= \frac{n-1+(1-\sigma)^2}{n}\mI+\frac{(1-\sigma)^2(n-2)}{n}\vone \vone^{\top} \\
    \MARCD(\vone \vone^{\top}) &= \frac{\sigma^2}{n}\mI+\frac{\sigma^2(n-2)}{n}\vone \vone^{\top},
\end{align*}
which confirms that both $\MARCD(\mI)$ and $\MARCD(\vone \vone^{\top})$ are in $\sp\{\mI, \vone \vone^{\top}\}$. Consequently, we can only focus on the $2\times 2$ matrix $\mM_{\mA}$ which represents the restriction of $\MARCD$ to $\sp\{\mI, \vone \vone^{\top}\}$, where 
\begin{align*}
    \mM_{\mA} &= \frac{1}{n}\begin{bmatrix}
        n-1+(1-\sigma)^2 & (1-\sigma)^2 (n-2) \\
        \sigma^2 & \sigma^2 (n-2)
    \end{bmatrix}.
\end{align*}

That is, if
\begin{align*}
    \mM_{\mA} \begin{bmatrix}
    a\\
    b
\end{bmatrix} &= \begin{bmatrix}
    a'\\
    b'
\end{bmatrix},
\end{align*}
then $\MARCD(a\mI + b \vone \vone^{\top}) = a'\mI + b' \vone \vone^{\top}$.

Now, define $\alpha_T$ and $\beta_T$ as 
\begin{align*}
    \begin{bmatrix}
        \alpha_T\\
        \beta_T
    \end{bmatrix} &= \mM_{\mA}^T 
    \begin{bmatrix}
        1\\
        0
    \end{bmatrix}
\end{align*}

Then, 
\begin{align*}
    \lambda_{\min}((\MARCD)^T(\mI)) &= \lambda_{\min} (\alpha_T \mI + \beta_T \vone \vone^{\top}) \\
    &= \begin{cases}
        \alpha_T &\quad \text{if } \beta_T \ge 0 \\
        \alpha_T + n\beta_T &\quad \text{if } \beta_T < 0.
    \end{cases}
\end{align*}
Since the entries of $\mM_{\mA}$ are all non-negative, we have $\beta_T \ge 0$ for all $T\ge 0$. Therefore, we have 
\begin{align*}
    \lambda_{\min}((\MARCD)^T(\mI)) &= \alpha_T.
\end{align*}

When $n=2$, we have  
\begin{align*}
    \mM_{\mA} &= \frac{1}{2}\begin{bmatrix}
        1+(1-\sigma)^2 & 0 \\
        \sigma^2 & 0
    \end{bmatrix}
\end{align*}
and 
\begin{align*}
    \mM_{\mA}^T
    \begin{bmatrix}
        1\\
        0
    \end{bmatrix} &= 
    \begin{bmatrix}
        \bigopen{\frac{1+(1-\sigma)^2}{2}}^T\\
        \sigma^2 \bigopen{\frac{1+(1-\sigma)^2}{2}}^{T-1}
    \end{bmatrix}.
\end{align*}
Therefore, $\alpha_T = \bigopen{\frac{1+(1-\sigma)^2}{2}}^T$ and we obtain \begin{align*}
    \bigopen{\frac{\E\bigclosed{\|\vx_T\|^2}}{\|\vx_0\|^2}}^{1/T} &= \alpha_T^{1/T} \\
    &= \frac{1+(1-\sigma)^2}{2}.
\end{align*}
This establishes the result for the case $n=2$.

We now turn to the general case where $n\ge 3$. Let $\vv$ be the dominant eigenvector (i.e., the eigenvector corresponding to the eigenvalue with the largest magnitude) of $\mM_{\mA}$. We will show that $ \begin{bmatrix}
        1 & 0
\end{bmatrix}^{\top}$ is not orthogonal to $\vv$. Assume for contradiction that $  \begin{bmatrix}
        1 & 0
\end{bmatrix}^{\top}$ is orthogonal to $\vv$. 
Then, $\vv$ must be of the form $
\begin{bmatrix}
        0 & c
\end{bmatrix}^{\top}$ for some nonzero scalar $c$. Since $\vv$ is an eigenvector of $\mM_{\mA}$, $\mM_{\mA}\vv$ must also be a multiple of $ 
\begin{bmatrix}
        0 & 1
\end{bmatrix}^{\top}$. This implies that $(\mM_{\mA})_{12} = 0$. 
However, it implies $\sigma=1$. In this case, 
\begin{align*}
    \mM_{\mA} &= \begin{bmatrix}
        1-\frac{1}{n} & 0 \\
        \frac{1}{n} & 1-\frac{2}{n}
    \end{bmatrix}
\end{align*}
and $\mM_{\mA} \vv = \bigopen{1-\frac{2}{n}} \vv$. 
Nevertheless, $1-\frac{2}{n}$ is not the the eigenvalue of $\mM_{\mA}$ with the largest absolute value because the eigenvalues of $\mM_{\mA}$ are $1-\frac{1}{n}$ and $1-\frac{2}{n}$. Therefore, $
\begin{bmatrix}
        1 & 0
\end{bmatrix}^{\top}$ is not orthogonal to $\vv$. This implies that 
\begin{align*}
    \lim_{T \to \infty} \alpha_T^{1/T} = \rho(\mM_{\mA}).
\end{align*}
Now, let $p(\lambda)$ be the characteristic polynomial of $\mM_{\mA}$, which is defined as $\det(\lambda \mI - \mM_{\mA})$. Since $\mM_{\mA}\in \R^{2\times 2}$, we have 
\begin{align*}
    p(\lambda) &= \lambda^2 - \tr(\mM_{\mA})\lambda + \det (\mM_{\mA}) \\
    &= \lambda^2 - ((\mM_{\mA})_{11} + (\mM_{\mA})_{22})\lambda + ((\mM_{\mA})_{11}(\mM_{\mA})_{22} - (\mM_{\mA})_{12}(\mM_{\mA})_{21}).
\end{align*}

Furthermore, since $\tr (\mM_{\mA}) = \frac{1}{n}((n-1)\sigma^2 - 2\sigma + n) \ge 0$, $\det (\mM_{\mA}) = \frac{1}{n^2} \sigma^2 (n-1)(n-2)\ge 0$, $p(\lambda)$ does not have a negative root. Thus, 
\begin{align*}
    p\bigopen{1 - \frac{1}{n} + \frac{(1-\sigma)^2}{n}} &= p((\mM_{\mA})_{11}) \\
    &= (\mM_{\mA})_{11}^2 - ((\mM_{\mA})_{11} + (\mM_{\mA})_{22})(\mM_{\mA})_{11} + ((\mM_{\mA})_{11}(\mM_{\mA})_{22} - (\mM_{\mA})_{12}(\mM_{\mA})_{21}) \\
    &= -(\mM_{\mA})_{12}(\mM_{\mA})_{21} \\
    &\le 0,
\end{align*}
because the entries of $\mM_{\mA}$ are all non-negative. 

Therefore, we have 
\begin{align*}
    1 - \frac{1}{n} + \frac{(1-\sigma)^2}{n} &\le \rho(\mM_{\mA}).
\end{align*}

Next, consider $\mA = \diag \{ \sigma \mI_k + (1 - \sigma) \vone_k \vone_k^{\top}, \mI_{n-k} \}$ for an integer $k$ with $2 \le k \le n$. Since $\sigma=1$ implies $\mA=\mI_n$, which has already been considered, we will assume that $\sigma \in (0, 1)$.

We will denote $\sigma \mI_k + (1 - \sigma) \vone_k \vone_k^{\top}$ as $\mA_k$, and the set of of block-diagonal matrices consisting of a $k \times k$ block and an $(n-k) \times (n-k)$ block as $\gD_{k, n-k}$. Recall that $\gD_{k, n-k}$ is closed under scalar multiplication, matrix multiplication, addition, transposition, and inversion. Thus, if $\mX\in \gD_{k, n-k}$, then $\MARCD(\mX) \in \gD_{k, n-k}$ because $\mT_{\mA, i}^{\RCD} = \mI - \mE_i \mA \in \gD_{k, n-k}$. Therefore, we will now compute the $(1,1)$-block and $(2,2)$-block of $\MARCD(\mX)$ when $\mX \in \gD_{k, n-k}$.


To analyze the $(1,1)$-block of $\MARCD(\mX)$, we consider two cases for the index $i$: $1 \le i \le k$ and $k+1 \le i \le n$.
If $1\le i\le k$, the $(1,1)$-block of $\mT_{\mA, i}^{\RCD \top} \mX \mT_{\mA, i}^{\RCD}$ is $(\mI-\mE_{i,k}\mA_k)^{\top} \mX_{11} (\mI-\mE_{i,k}\mA_k)$, where $\mE_{i,k}$ denotes the $k\times k$ matrix $\ve_i \ve_i^{\top}$. If $k+1\le i\le n$, since the $(1,1)$-block of $\mE_{i}$ is zero, the $(1,1)$-block of $\mT_{\mA, i}^{\RCD \top} \mX \mT_{\mA, i}^{\RCD}$ is $\mX_{11}$.

Therefore, the $(1,1)$-block of $\MARCD(\mX)$ is 
\begin{align*}
    \frac{1}{n} \bigopen{\sum_{i=1}^k (\mI-\mE_{i,k}\mA_k)^{\top} \mX_{11} (\mI-\mE_{i,k}\mA_k) + \sum_{i=k+1}^n \mX_{11}} &= \frac{1}{n} \bigopen{k\gM_{\mA_k}^{\RCD}(\mX_{11}) + (n-k)\mX_{11}}.
\end{align*}
We denote the matrix operator $\mX \mapsto \frac{1}{n} \bigopen{k\gM_{\mA_k}^{\RCD}(\mX) + (n-k)\mX}$ on $\R^{k\times k}$ as $\gM_{\mA}$. Note that $\sp\{\mI_k, \vone_k \vone_k^{\top}\}$ is invariant under $\gM_{\mA}$, and the matrix representation of $\left. \gM_{\mA} \right|_{\sp \{ \mI, \vone \vone^{\top}\}}$ is 
\begin{align*}
    \frac{k}{n} \mM_{\mA_k} + \bigopen{1-\frac{k}{n}} \mI_2,
\end{align*}
where
\begin{align*}
    \mM_{\mA_k} &= \frac{1}{k}\begin{bmatrix}
        k-1+(1-\sigma)^2 & (1-\sigma)^2 (k-2) \\
        \sigma^2 & \sigma^2 (k-2)
    \end{bmatrix}.
\end{align*}

Let us now consider the $(2,2)$-block of $\MARCD(\mX)$. Given that the $(2,2)$-block of $\mA$ is $\mI_{n-k}$, the $(2,2)$-block of can be computed as $\mX_{22}$ when $1\le i\le k$ and $(\mI_{n-k}-\mE_{i-k, n-k})^{\top} \mX_{22} (\mI_{n-k}-\mE_{i-k, n-k})$ when $k+1\le i\le n$. 

Therefore, if we denote the diagonal part of $\mX_{22}$ as $\mD_{22}$, the $(1,1)$-block of $\MARCD(\mX)$ is 
\begin{align*}
    \frac{1}{n} \bigopen{\sum_{i=1}^k \mX_{22} + \sum_{i=k+1}^n (\mI_{n-k}-\mE_{i-k, n-k})^{\top} \mX_{22} (\mI_{n-k}-\mE_{i-k, n-k})} &= \frac{1}{n} \bigopen{k\mX_{22} + (n-k-2)\mX_{22} + \mD_{22}} \\
    &= \bigopen{1-\frac{2}{n}}\mX_{22} + \frac{1}{n} \mD_{22}
\end{align*}
because $\sum_{i=k+1}^n \mE_{i-k, n-k} = \mI_{n-k}$ and $\sum_{i=k+1}^n \mE_{i-k, n-k}^{\top} \mX_{22} \mE_{i-k, n-k} = \mD_{22}$.

From the above, we have 
\begin{align*}
    (\MARCD)^T(\mI_n) &= \begin{bmatrix}
        (\gM_{\mA_k}^{\RCD})^T (\mI_k) & \vzero_{k, n-k} \\
        \vzero_{n-k, k} & \bigopen{1-\frac{1}{n}}^T \mI_{n-k}
    \end{bmatrix}.
\end{align*}

Now, we define $\alpha_T$ and $\beta_T$ as 
\begin{align*}
    \begin{bmatrix}
        \alpha_T\\
        \beta_T
    \end{bmatrix} &= \mM_{\mA}^T 
    \begin{bmatrix}
        1\\
        0
    \end{bmatrix}.
\end{align*}

We now proceed to examine the limit 
\begin{align*}
    \lim_{T \to \infty} \bigopen{\frac{\vx_0^{\top} (\MARCD)^T (\mI) \vx_0}{\vx_0^{\top} \vx_0}}^{1/T}.
\end{align*}

To this end, let $\begin{bmatrix}
    \vy_0 & \vz_0
\end{bmatrix}^{\top} = \vx_0$, where $\vy_0\in \R^k$ and $\vz_0\in \R^{n-k}$. Since the set $\{\vx_0 \in \R^n: \vy_0 = \vzero_k\}$ has measure zero in $\R^n$, we assume that $\vy_0 \ne 0$. Then, 
\begin{align*}
    \vx_0^{\top} (\MARCD)^T (\mI) \vx_0 &= \vy_0^{\top} (\alpha_T \mI + \beta_T \vone \vone^{\top}) \vy_0 + \bigopen{1-\frac{1}{n}}^T \|\vz_0\|^2 \\
    &\ge \alpha_T \|\vy_0\|^2 + \bigopen{1-\frac{1}{n}}^T \|\vz_0\|^2. & (\because \beta_T\ge 0)
\end{align*}
By the same argument in the case when $k=n$, we can show that 
\begin{align*}
    \lim_{T\to \infty} \alpha_T^{1/T} &= \rho\bigopen{\frac{k}{n} \mM_{\mA_k} + \bigopen{1-\frac{k}{n}} \mI_2} \\
    &\ge \frac{k}{n}\bigopen{1 - \frac{1}{k} + \frac{(1-\sigma)^2}{k}} + 1 - \frac{k}{n} \\
    &= 1 - \frac{1}{n} + \frac{(1-\sigma)^2}{n}.
\end{align*}
Since $\sigma \ne 1$ by the assumption, we have 
\begin{align*}
    \lim_{T \to \infty} \frac{\bigopen{1-\frac{1}{n}}^T}{\alpha_T} &= \lim_{T \to \infty} \bigopen{\frac{\bigopen{1-\frac{1}{n}}}{\alpha_T^{1/T}}}^T \\
    &= 0
\end{align*}
and 
\begin{align*}
    \lim_{T \to \infty} \bigopen{\frac{\vx_0^{\top} (\MARCD)^T (\mI) \vx_0}{\vx_0^{\top} \vx_0}}^{1/T} &= \lim_{T \to \infty} \bigopen{\frac{\alpha_T \|\vy_0\|^2 + \bigopen{1-\frac{1}{n}}^T \|\vz_0\|^2}{\|\vy_0\|^2 + \|\vz_0\|^2}}^{1/T} \\
    &= \lim_{T \to \infty} \alpha_T^{1/T} \bigopen{\frac{\|\vy_0\|^2}{\|\vy_0\|^2 + \|\vz_0\|^2} + \frac{\bigopen{1-\frac{1}{n}}^T}{\alpha_T} \bigopen{\frac{\|\vz_0\|^2}{\|\vy_0\|^2 + \|\vz_0\|^2}}}^{1/T} \\
    &= \lim_{T \to \infty} \alpha_T^{1/T} \\
    &\ge 1 - \frac{1}{n} + \frac{(1-\sigma)^2}{n}.
\end{align*}

\end{proof}
\newpage

\section{Computational Verification of Inequalities Deferred from \cref{sec:b}}
\label{sec:c}

In this section, we provide the computer-assisted verification of the inequalities presented in \cref{sec:b}, which were stated to be verifiable using Sturm's theorem (\cref{prop:sturm}). Recall that for a polynomial $p$, if $p(x) \ne 0$ on $(a, b)$ and $p(a) > 0$, then $p(b) \ge 0$; similarly, if $p(b) > 0$, then $p(a) \ge 0$. Therefore, to verify that $p(x) \ge 0$ on an interval, it suffices to show that $p(x)$ has no real roots in the interval using \cref{prop:sturm} and then check that $p(x)$ is positive at one of the endpoints.

The specific cases that require verification using \cref{prop:sturm} are as follows:
\begin{itemize}
    \item For $i=1,2$, $\frac{T_i(m,\sigma)}{m(m-1)} \le \bigopen{1-\frac{\sigma}{n}}^{2n}$ for all integers $m$ and $n$ where $2 \le m \le n \le 6$ and $\sigma \in (0, 0.6]$.
    \item $\frac{T_1(m,\sigma)}{m(m-1)} \le \frac{1}{4}$ for all integers $m$ where $2\le m\le 10$ and $\sigma \in [0.6, 0.8]$.
    \item $\frac{T_1(m,\sigma)}{m(m-1)} \le \frac{1}{4}$ for all integers $m$ where $2\le m\le 14$ and $\sigma \in [0.8, 1)$.
    \item $\frac{T_2(m,\sigma)}{m(m-1)} \le \frac{1}{4}$ for all integers $m$ where $2\le m\le 10$ and $\sigma \in [0.6, 1)$.
\end{itemize}

The following SymPy code can be used to verify these inequalities. Since the coefficients are all rational numbers, the computations are exact and free from floating-point errors.

\begin{minted}{python}
import sympy as sp

def sturm_sequence(f, x):
    f0 = sp.expand(f)
    f1 = sp.expand(f.diff(x))
    sturm_seq = [f0, f1]
    while sturm_seq[-1] != 0:
        f_prev = sturm_seq[-2]
        f_curr = sturm_seq[-1]
        q, r = sp.div(f_prev, f_curr)
        sturm_seq.append(-r)
    return sturm_seq

def sign_variations(seq):
    seq_nonzero = [val for val in seq if val != 0]
    if not seq_nonzero:
        return 0
    variations = 0
    for i in range(len(seq_nonzero) - 1):
        if (seq_nonzero[i] > 0 and seq_nonzero[i+1] < 0) or \
           (seq_nonzero[i] < 0 and seq_nonzero[i+1] > 0):
            variations += 1
    return variations

def count_real_roots(f, a, b, x):
    seq = sturm_sequence(f, x)
    vals_a = [poly.subs(x, a) for poly in seq]
    vals_b = [poly.subs(x, b) for poly in seq]
    return sign_variations(vals_a) - sign_variations(vals_b)

def tau(n, s):
    L = n - (n - 1) * s
    alpha = 0
    for i in range(n):
        alpha += (1 - L * s**i)**2

    C = sp.Matrix.zeros(n, n)
    for i in range(1, n + 1):
        for j in range(1, n + 1):
            if i < j:
                C[i - 1, j - 1] = -(1 - s) * s**(i - 1)
            else:
                C[i - 1, j - 1] = (1 - s) * (s**(i - j) - s**(i - 1))

    beta = sum(C[i, j]**2 for i in range(n) for j in range(n))
    gamma = sum(1 - L * s**i for i in range(n))**2
    delta = sum((s**n - s**i)**2 for i in range(1, n + 1))
    t1 = sp.Rational(1, (n - 1))*(beta + delta) \
        - sp.Rational(1, n*(n - 1))*(alpha + gamma)
    t2 = sp.Rational(1, n*(n - 1))*((alpha + gamma) - (beta + delta))
    return t1, t2

def p(n, s):
    p1_coeffs = []
    p2_coeffs = []
    for k in range(7):
        if k == 0:
            p1_coeffs.append(n**2 - n)
            p2_coeffs.append(n**2 - 3*n + 2)
        else:
            if k % 2 == 1:
                p1_coeffs.append((n-1)*k - 2*n**2 - n + 3)
                p2_coeffs.append(-2*n**2 + 6*n - 4)
            else:
                p1_coeffs.append(-(n+1)*k + 2*n**2+ 2*n + 2)
                p2_coeffs.append(2*k + 2*n**2 - 6*n - 2)

    p1 = sum([coeff * s**i for i, coeff in enumerate(p1_coeffs)])
    p2 = sum([coeff * s**i for i, coeff in enumerate(p2_coeffs)])
    return p1, p2

s = sp.symbols('s')

tau_k = {}
for k in range(2, 15):
    tau_k[k] = tau(k, s)

for n in range(2, 7):
    rcd = (1 - sp.Rational(1, n)*s)**(2*n)
    inequality_verified = True
    for k in range(2, n+1):
        d1 = rcd - tau_k[k][0]
        d2 = rcd - tau_k[k][1]
        roots_1 = count_real_roots(d1, sp.Rational(0), sp.Rational(1), s)
        roots_2 = count_real_roots(d2, sp.Rational(0), sp.Rational(1), s)
        d1_end = d1.subs(s, sp.Rational(1))
        d2_end = d2.subs(s, sp.Rational(1))
        if not all([roots_1 == 0, roots_2 == 0, d1_end > 0, d2_end > 0]):
            inequality_verified = False

        assert all([roots_1 == 0, roots_2 == 0, d1_end > 0, d2_end > 0]), \
            f"Error: Inequality verification failed for n = {n}, k = {k}."

    if inequality_verified:
        print(f"Case 1, n = {n}: All inequalities verified.")
    else:
        print(f"Case 1, n = {n}: Inequality verification failed for some k.")

for m in range(2, 11):
    lhs = sp.Rational(1, m*(m-1)) * tau_k[m][0]
    rhs = sp.Rational(1, 4)
    diff = rhs - lhs
    diff_end = diff.subs(s, sp.Rational(4, 5))
    roots = count_real_roots(diff, sp.Rational(3, 5), sp.Rational(4, 5), s)
    assert roots == 0 and diff_end > 0, \
        f"Error: Inequality verification failed for Case 2, m = {m}."
    print(f"Case 2, m = {m}: Inequality verified.")

for m in range(2, 15):
    lhs = sp.Rational(1, m*(m-1)) * tau_k[m][0]
    rhs = sp.Rational(1, 4)
    diff = rhs - lhs
    diff_end = diff.subs(s, sp.Rational(1))
    roots = count_real_roots(rhs - lhs, sp.Rational(4, 5), sp.Rational(1), s)
    assert roots == 0 and diff_end > 0, \
        f"Error: Inequality verification failed for Case 3, m = {m}."
    print(f"Case 3, m = {m}: Inequality verified.")

for m in range(2, 11):
    lhs = sp.Rational(1, m*(m-1)) * tau_k[m][1]
    rhs = sp.Rational(1, 4)
    diff = rhs - lhs
    diff_end = diff.subs(s, sp.Rational(1))
    roots = count_real_roots(rhs - lhs, sp.Rational(3, 5), sp.Rational(1), s)
    assert roots == 0 and diff_end > 0, \
        f"Error: Inequality verification failed for Case 4, m = {m}."
    print(f"Case 4, m = {m}: Inequality verified.")

\end{minted}

We also provide an example demonstrating the computation involved in applying \cref{prop:sturm} to verify the inequality $\frac{1}{4}-\frac{T_2(m,\sigma)}{m(m-1)}$ on $(0.6, 1)$, when $m=3$.
Since 
\begin{align*}
    \frac{1}{4}-\frac{T_2(m,\sigma)}{m(m-1)} &= - \frac{\sigma^6}{9} + \frac{2 \sigma^5}{9} - \frac{\sigma^4}{9} - \frac{\sigma^2}{18} + \frac{\sigma}{9} + \frac{7}{36},
\end{align*}
the Sturm sequence is 

\begin{minipage}{0.495\textwidth}
    \centering
    \begin{align*}
        p_0(\sigma) &= - \frac{\sigma^6}{9} + \frac{2 \sigma^5}{9} - \frac{\sigma^4}{9} - \frac{\sigma^2}{18} + \frac{\sigma}{9} + \frac{7}{36} \\
        p_1(\sigma) &= - \frac{2 \sigma^5}{3} + \frac{10 \sigma^4}{9} - \frac{4 \sigma^3}{9} - \frac{\sigma}{9} + \frac{1}{9} \\
        p_2(\sigma) &= - \frac{2 \sigma^4}{81} + \frac{2 \sigma^3}{81} + \frac{\sigma^2}{27} - \frac{7 \sigma}{81} - \frac{65}{324} \\
        p_3(\sigma) &= \sigma^3 - 3 \sigma^2 - \frac{15 \sigma}{4} + \frac{7}{2}
    \end{align*}
\end{minipage}
\begin{minipage}{0.495\textwidth}
    \centering
    \begin{align*}
        p_4(\sigma) &= \frac{11 \sigma^2}{54} + \frac{5 \sigma}{27} + \frac{1}{36} \\
        p_5(\sigma) &= \frac{161 \sigma}{484} - \frac{488}{121} \\
        p_6(\sigma) &= - \frac{10021099}{311052} \\
        p_7(\sigma) &= 0.
    \end{align*}
\end{minipage}

Therefore, we have 
\begin{align*}
    p_0(0.6) &= \frac{134329}{562500}, & p_0(1) &= \frac{1}{4}, \\
    p_1(0.6) &= \frac{1142}{28125}, & p_1(1) &= 0, \\
    p_2(0.6) &= - \frac{47993}{202500}, & p_2(1) &= - \frac{1}{4}, \\
    p_3(0.6) &= \frac{193}{500}, & p_3(1) &= - \frac{9}{4}, \\
    p_4(0.6) &= \frac{191}{900}, & p_4(1) &= \frac{5}{12}, \\
    p_5(0.6) &= - \frac{9277}{2420}, & p_5(1) &= - \frac{1791}{484}, \\
    p_6(0.6) &= - \frac{10021099}{311052}, & p_6(1) &= - \frac{10021099}{311052}, \\
    p_7(0.6) &= 0, & p_7(1) &= 0.
\end{align*}

We now examine the number of variations in sign for each sequence. Recall that for a sequence $c = (c_1, c_2, \dots, c_m)$ of real numbers, the number of variations in sign is defined as the number of sign changes in the subsequence obtained by removing any zeros from $c$.

For the sequence evaluated at $\sigma = 0.6$, we observe the signs are given by $(+, +, -, +, +, -, -, 0)$. Removing the zero, the signs of the subsequence are $(+, +, -, +, +, -, -)$. Thus, there are three variations in sign.

Similarly, for the sequence evaluated at $\sigma = 1$, we observe the signs are $(+, 0, -, -, +, -, -, 0)$. Removing the zeros, the signs of the subsequence are $(+, -, -, +, -, -)$. This also has three variations in sign.

Applying \cref{prop:sturm}, the number of distinct real roots of $p_0(\sigma)$ in the interval $(0.6, 1)$ is given by $V_{0.6} - V_1 = 3 - 3 = 0$, where $V_a$ denotes the number of variations in sign of the sequence $(p_0(a), p_1(a), \dots, p_7(a))$. Thus, we conclude that $p_0(\sigma)$ has no roots in the interval $(0.6, 1)$.

\newpage
\newpage

\section{Comparison with Previous Work}
\label{sec:d}

In this section, we provide a detailed comparison of previous work by \citet{lee2019}, continuing from the discussion at the end of \cref{sec:3}.
Recall that \citet{lee2019} established that, for quadratic functions with permutation-invariant Hessians, the epoch-wise contraction ratio of RPCD is of order 
\begin{align}
    \label{eq:leerpcd}
    1 - 2 \sigma - \frac{2 \sigma}{n} + 2 \sigma^2 + \gO \bigopen{\frac{\sigma^2}{n}} + \gO \bigopen{\sigma^3}
\end{align}
when $n \ge 10$ and $\sigma \in (0, 0.4]$. In comparison, \cref{thm:rpcdub} provides an upper bound of $\max\left\{  \left( 1-\frac{\sigma}{n} \right)^{2n}, \left( 1-\frac{1}{n} \right)^n  \right\}$, which simplifies to $\left( 1-\frac{\sigma}{n} \right)^{2n}$ in the region $\sigma\in (0, 0.4]$.

As noted earlier, we obtain 
\begin{align*}
    \bigopen{{1 - \frac{\sigma}{n}}}^{2n} = 1 - 2 \sigma -\frac{\sigma^2}{n} + 2 \sigma^2 + \gO \bigopen{\sigma^3}.
\end{align*}

This expansion initially appears to have extra terms compared to \cref{eq:leerpcd}. However, a closer examination of our proof methodology reveals a more refined upper bound. 

We exploit the inequality $\rho(\mM) \le \|\mM\|_{\infty}$, where $\mM$ is the iteration matrix. In our analysis, we define the iteration matrix as
\begin{align*}
    \mM_{\mA} &= \frac{1}{n(n-1)}
    \begin{bmatrix}
        n\beta-\alpha & n\delta-\gamma \\
        \alpha-\beta & \gamma-\delta
    \end{bmatrix},
\end{align*}
where $\alpha, \beta, \gamma$, and $\delta$ are polynomials in $\sigma$ (see \eqref{eq:abcd} in \cref{sec:b} for their explicit definitions).

\cref{lem:taunonnegative} establishes that all entries of $\mM_{\mA}$ are non-negative. Consequently, $\|\mM_{\mA}\|_{\infty}$ simplifies to the maximum of the sums of the first and second rows of $\mM_{\mA}$, both of which are polynomials in $\sigma$. We define $T_1$ and $T_2$ as the sum of the first and second rows of $
\begin{bmatrix}
    n\beta-\alpha & n\delta-\gamma \\
    \alpha-\beta & \gamma-\delta
\end{bmatrix}$, respectively. 

Importantly, our proof explicitly determines the coefficients of the polynomials $T_1$ and $T_2$ in terms of $n$. Furthermore, (\ref{eq:t1ub}) and (\ref{eq:t2ub}) establish that the even-degree Taylor approximations of $T_1$ and $T_2$ serve as upper bounds. 
Specifically, if we consider the $4$-th order Taylor approximations of $T_1$ and $T_2$, we have
\begin{align*}
    \frac{T_1}{n(n-1)} &\le 1 - \bigopen{2+\frac{1}{n}}\sigma + \frac{2n}{n-1}\sigma^2 - 2\sigma^3 + \frac{2(n^2-n-1)}{n(n-1)}\sigma^4\\
    \frac{T_2}{n(n-1)} &\le 1-\frac{2}{n} - 2\bigopen{1-\frac{2}{n}}\sigma + \frac{2(n^2-3n+1)}{n(n-1)}\sigma^2 - 2\bigopen{1-\frac{2}{n}}\sigma^3 + \frac{2(n^2-3n+3)}{n(n-1)}\sigma^4.
\end{align*}
Remarkably, the upper bound for $\frac{T_{1}}{n(n-1)}$ precisely matches the contraction ratio given in \cref{eq:leerpcd}.
While the upper bound for $\frac{T_{2}}{n(n-1)}$ does not directly match this expression, we have 
\begin{align*}
    \frac{T_2}{n(n-1)} &\le 1-\frac{2}{n} - 2\sigma + \frac{4}{n}\sigma + 2\sigma^2 -  \frac{4}{n}\sigma^2 + \gO\bigopen{\sigma^3} \\
    &\le 1 - 2\sigma - \frac{2}{n}\sigma + 2\sigma^2 + \frac{\sigma^2}{n} + \gO\bigopen{\sigma^3},
\end{align*}
where the last inequality follows from the fact that $2-6\sigma+5\sigma^2\ge 0$.

Therefore, we conclude that 
\begin{align*}
    \rho(\mM_{\mA}) &\le \|\mM_{\mA}\|_{\infty} \\
    &= \max_{i=1,2} \frac{T_i}{n(n-1)} \\
    &= 1 - 2 \sigma - \frac{2 \sigma}{n} + 2 \sigma^2 + \gO \bigopen{\frac{\sigma^2}{n}} + \gO \bigopen{\sigma^3}.
\end{align*}
\newpage

\section{Further Discussions: RPCD on General Quadratics}
\label{sec:e}

Here we continue from and elaborate on our discussions in \cref{sec:4.3}.

\subsection{Partially Invariant Hessians.}
\label{sec:e.1}

Let us revisit the $4 \times 4$ unit-diagonal matrix\footnote{The \emph{minimal} example would be a $3 \times 3$ version with two $a$'s and one $b$ below the diagonal, but we choose $4 \times 4$ for clearer illustration.}:
\begin{align}
    \mA = \begin{bmatrix}
        1 & a & a & a \\
        a & 1 & b & b \\
        a & b & 1 & b \\
        a & b & b & 1
    \end{bmatrix}.
\end{align}
As stated in \cref{sec:4.3}, we can observe that there are only $4$ possible cases of matrices $\mP^{\top} \mA \mP$:
\begin{align*}
    \begin{bmatrix}
        1 & a & a & a \\
        a & 1 & b & b \\
        a & b & 1 & b \\
        a & b & b & 1
    \end{bmatrix}, \ 
    \begin{bmatrix}
        1 & a & b & b \\
        a & 1 & a & a \\
        b & a & 1 & b \\
        b & a & b & 1
    \end{bmatrix}, \ 
    \begin{bmatrix}
        1 & b & a & b \\
        b & 1 & a & b \\
        a & a & 1 & a \\
        b & b & a & 1
    \end{bmatrix}, \ 
    \begin{bmatrix}
        1 & b & b & a\\
        b & b & 1 & a \\
        b & b & b & a \\
        a & a & a & 1
    \end{bmatrix},
\end{align*}
one for each $i \in [4]$ with unit diagonals, $a$ in the non-diagonal entries at the $i$-th row and column, and $b$ for the rest of the entries.
We can also think of a similar formulation where $\mA$ is an $n \times n$ unit-diagonal matrix with the non-diagonal elements of the first row and column filled with $a$'s and the rest of the elements filled with $b$'s:
\begin{align}
    \mA = 
    \begin{bmatrix}
        1 & a \vone^{\top} \\ a \vone & (1-b) \mI + b \vone \vone^{\top}
    \end{bmatrix}.
    \label{eq:api}
\end{align}
Note that the upper diagonal block is a scalar, the lower diagonal block is a $(n-1) \times (n-1)$, permutation-invariant matrix, and the off-diagonal blocks are constant-filled $(n-1)$-dimensional vectors.

We define the bases using the following block matrices with the same dimensions:
\begin{align*}
    \mV_{1} &= \begin{bmatrix}
        1 & \bm{0} \\ \bm{0} & \bm{0}
    \end{bmatrix}, \ \ 
    \mV_{2} = \begin{bmatrix}
        0 & \vone^{\top} \\ \vone & \bm{0}
    \end{bmatrix}, \ \
    \mV_{3} = \begin{bmatrix}
        0 & \bm{0} \\ \bm{0} & \mI
    \end{bmatrix}, \ \
    \mV_{4} = \begin{bmatrix}
        0 & \bm{0} \\ \bm{0} & \vone \vone^{\top}
    \end{bmatrix}.
\end{align*}

\begin{lemma}
    \label{lem:almostpi}
    If $\mA$ is defined as in \eqref{eq:api}, then \textnormal{$\MARPCD$} is closed in $\gS' = \sp \{ \mV_{1}, \mV_{2}, \mV_{3}, \mV_{4} \}$.
\end{lemma}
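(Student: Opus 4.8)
The plan is to show that for every permutation matrix $\mP$, the matrix $\mT_{\mA, p}^{\RPCD} = \mI - \mP \mGamma_{\mP}^{-1} \mP^{\top} \mA$ preserves the structure encoded by $\gS'$, and then conclude the closure of the expectation operator. The key structural observation is that $\gS'$ is exactly the span of matrices of block form $\begin{bmatrix} \alpha & \beta \vone^{\top} \\ \gamma \vone & \delta \mI + \varepsilon \vone \vone^{\top} \end{bmatrix}$; call this set $\gB$. One checks (this is the routine algebra I would not grind through) that $\gB$ is closed under matrix multiplication, transposition, and — crucially — inversion, because a $(1 + (n-1))$-block matrix of this form acts as a "small" $2\times 2$-like object once restricted to the invariant subspaces $\sp\{\ve_1\} \oplus \sp\{\vone_{n-1}\}$ and its orthogonal complement $\{0\} \oplus \vone_{n-1}^{\perp}$. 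Note that $\gB$ is genuinely $5$-dimensional while $\gS'$ is $4$-dimensional (the off-diagonal blocks $\beta, \gamma$ are forced equal in symmetric members of $\gB$, but $\mT_{\mA,p}^{\RPCD}$ need not be symmetric), so I must track all five parameters through the computation and only symmetrize at the end.

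First I would verify that $\mA \in \gB$ (immediate from \eqref{eq:api}) and that every $\mP^{\top} \mA \mP$ for a permutation $\mP$ is again in $\gB$ — indeed, as illustrated in the displayed $4 \times 4$ cases, $\mP^{\top}\mA\mP$ is determined by which index $i = p^{-1}(1)$ gets singled out, and relabeling coordinates shows it always has the block form of $\gB$ (with the singled-out row/column playing the role of the first block). Second, I would show $\mGamma_{\mP} = \tril(\mP^{\top}\mA\mP)$ lies in $\gB$: the lower-triangular part of a $\gB$-matrix, after the coordinate permutation that sorts the singled-out index to position $1$, is again block-structured — this uses that the $(n-1)\times(n-1)$ permutation-invariant block has constant off-diagonal $b$, so its lower-triangular part is $(1-b)\mI + b\,\tril(\vone\vone^{\top})$, and $\tril(\vone\vone^\top)$ together with $\mI$ spans the relevant triangular piece. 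Third, since $\gB$ is closed under inversion, $\mGamma_{\mP}^{-1} \in \gB$, and since $\mP$ permutes coordinates within the appropriate orbits, $\mP \mGamma_{\mP}^{-1} \mP^{\top} \in \gB$; then $\mT_{\mA,p}^{\RPCD} = \mI - (\mP\mGamma_{\mP}^{-1}\mP^{\top})\mA \in \gB$ by closure under products and differences. Fourth, for any $\mX \in \gS' \subseteq \gB$, the product $\mT_{\mA,p}^{\RPCD\,\top} \mX \mT_{\mA,p}^{\RPCD}$ lies in $\gB$; taking the expectation over $p$ keeps it in $\gB$, and moreover the result is symmetric (it is an average of the form $\sum \mT^\top \mX \mT$ with $\mX$ symmetric), so it actually lands in $\gB \cap \sS^n = \gS'$. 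Hence $\MARPCD(\gS') \subseteq \gS'$.

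The main obstacle I anticipate is the bookkeeping in establishing that $\gB$ is closed under inversion and that $\mGamma_{\mP}$ genuinely lands in $\gB$ after the sorting permutation — one has to be careful that $\tril(\cdot)$ does not quite commute with conjugation by $\mP$, so the correct statement is that $\mP\mGamma_{\mP}\mP^{\top} = \mA \odot \mW_{\mP}$ for the binary matrix $\mW_\mP$ from the proof of \cref{lem:diagonalizable}, and one must check this Hadamard mask, restricted to the orbit structure $\{1\} \sqcup \{2,\dots,n\}$, respects the block form. A clean way to sidestep part of this is to note, exactly as in \cref{lem:rpcdperm}, that $\mP\mGamma_\mP\mP^\top$ depends on $p$ only through the singled-out index $i = p^{-1}(1)$ and the induced ordering on $\{2,\dots,n\}$, but since the $(n-1)$-block of $\mA$ is permutation-invariant, that ordering is irrelevant; so there are effectively only $n$ distinct values of $\mP\mGamma_\mP\mP^\top$, and one checks each lies in (the lower-triangular-conjugate image of) $\gB$ directly. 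Once that is in hand, the rest is formal manipulation within the finite-dimensional algebra $\gB$.
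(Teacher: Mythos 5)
There is a genuine gap: the central structural claim — that each iteration matrix $\mT_{\mA,p}^{\RPCD}$ lies in the five-dimensional algebra $\gB$ — is false. The matrix $\mP\mGamma_{\mP}^{-1}\mP^{\top} = (\mA\odot\mW_{\mP})^{-1}$ depends on the full ordering induced by $p$ on $\{2,\dots,n\}$, not merely on the singled-out index, because the Hadamard mask $\mW_{\mP}$ encodes for every pair $(i,j)$ whether $p(i)\ge p(j)$. Concretely, for $n=3$ with $p=\mathrm{id}$ one gets $\mGamma = \left[\begin{smallmatrix}1&0&0\\a&1&0\\a&b&1\end{smallmatrix}\right]$ whereas for $p=(1)(23)$ one gets $\mP\mGamma_{\mP}\mP^{\top} = \left[\begin{smallmatrix}1&0&0\\a&1&b\\a&0&1\end{smallmatrix}\right]$, which are different even though both fix index $1$; so your claimed reduction to ``effectively only $n$ distinct values'' already fails. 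More to the point, taking $a=0$, $b=\tfrac12$, $p=\mathrm{id}$ gives $\mT_{\mA,p}^{\RPCD} = \left[\begin{smallmatrix}0&0&0\\0&0&-\tfrac12\\0&0&\tfrac14\end{smallmatrix}\right]$, whose lower-right $2\times 2$ block has unequal diagonal entries $(0,\tfrac14)$ and unequal off-diagonal entries $(-\tfrac12, 0)$; this is not of the form $\delta\mI+\varepsilon\vone\vone^{\top}$, so $\mT_{\mA,p}^{\RPCD}\notin\gB$. Consequently steps three and four of your plan — closure of $\gB$ under inversion feeding into $\mT\in\gB$, then $\mT^{\top}\mX\mT\in\gB$ pointwise before averaging — break down at the very first link.

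The underlying issue is that the invariance only emerges \emph{after} averaging. The paper's proof respects this: it fixes the singled-out index $i$, factors each $p\in\Pi_i$ as $p'\circ p_{(1i)}$ with $p'$ fixing $1$, notes that $\mC_{\mP}$ is constant over $\Pi_i$ (not $\mP\mGamma_{\mP}^{-1}\mP^{\top}$ itself, which is what you would need), and then averages the conjugation by $\mP'$ using \cref{lem:expectedperm} to \emph{produce} a permutation-invariant lower-right block. Your proposal tries to establish $\gB$-membership of each individual term, which is strictly stronger than what is true. The part of your plan that is recoverable is the observation that $\gB$ (or $\gS'$) is invariant under conjugation by each $\mP'$ fixing index $1$, which is implicitly what allows the paper to invoke the expected-permutation lemma; but the argument must average before it can conclude block structure.
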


\begin{proof}
    First, we observe that we can define
    \begin{align*}
        \mC_{\mP} &= \mI - \mGamma_{\mP}^{-1} \mP^{\top} \mA \mP = \mI - \tril(\mP^{\top} \mA \mP)^{-1} \mP^{\top} \mA \mP
    \end{align*}
    so that $\mT_{\mA, p}^{\RPCD} = \mP \mC_{\mP} \mP^{\top}$.
    Suppose we have any $\mX \in \gS'$.
    Then we can write
    \begin{align*}
        \MARPCD(\mX) = \E \bigclosed{\mP \mC_{\mP}^{\top} \mP^{\top} \mX \mP \mC_{\mP} \mP^{\top}}.
    \end{align*}
    Now, we think of partitioning the set of all possible permutations into $n$ sets, denoted by $\Pi_{i}$ for $i \in [n]$, each containing the set of all permutations such that $p(i) = 1$.
    For any $i$, each permutation $p \in \Pi_{i}$ can be uniquely decomposed into two parts:
    \begin{align*}
        p = p' \circ p_{(1 i)}
    \end{align*}
    where $p_{(1 i)}$ is a permutation that swaps $1$ and $i$ and $p'$ is a permutation that satisfies $p(1) = 1$.
    For each $i \in [n]$ we have $(n-1)!$ different choices of $p'$ and hence $p$ as well.
    Let us use the same analogy between permutations $p$ and permutation matrices $\mP$ preserving the subscripts.
    Using this decomposition, for each \emph{fixed} $i \in [n]$ we have
    \begin{align*}
        \mC_{\mP} &= \mI - \tril(\mP^{\top} \mA \mP)^{-1} \mP^{\top} \mA \mP \\
        &= \mI - \tril(\mP_{(1 i)}^{\top} \mP'^{\top} \mA \mP' \mP_{(1 i)})^{-1} \mP_{(1 i)}^{\top} \mP'^{\top} \mA \mP' \mP_{(1 i)} \\
        &= \mI - \tril(\mP_{(1 i)}^{\top} \mA \mP_{(1 i)})^{-1} \mP_{(1 i)}^{\top} \mA \mP_{(1 i)}
    \end{align*}
    where we use our construction of $\mA \in \gS'$ such that $\mA$ is $\mP'$-invariant.
    Thus, if we fix $i$, then $\mC_{\mP}$ is also a fixed (constant) matrix, allowing us to use the notation $\mC_{\mP} = \mC_{i}$ instead.
    Considering the expectation over $\Pi_i$ for fixed $i$, we have
    \begin{align*}
        \E_{p \in \Pi_{i}} \bigclosed{\mP \mC_{\mP}^{\top} \mP^{\top} \mX \mP \mC_{\mP} \mP^{\top}} &= \E_{p \in \Pi_{i}} \bigclosed{\mP' \mP_{(1 i)} \mC_{i}^{\top} \mP_{(1 i)}^{\top} \mP'^{\top} \mX \mP' \mP_{(1 i)} \mC_{i} \mP_{(1 i)}^{\top} \mP'^{\top}} \\
        &= \E_{p \in \Pi_{i}} \bigclosed{\mP' \mP_{(1 i)} \mC_{i}^{\top} \mP_{(1 i)}^{\top} \mX \mP_{(1 i)} \mC_{i} \mP_{(1 i)}^{\top} \mP'^{\top}}
    \end{align*}
    where we also use the fact that $\mX \in \gS'$ is $\mP'$-invariant.
    Therefore we can \textbf{(i)} use \cref{lem:expectedperm} over the permutations $\mP'$ to conclude that the lower diagonal block is permutation-invariant, and \textbf{(ii)} also observe that the off-diagonal block parts are also filled with constants.
    (This is because the entries in $(1,2), \dots, (1,n)$ of $\mP_{(1 i)} \mC_{i}^{\top} \mP_{(1 i)}^{\top} \mX \mP_{(1 i)} \mC_{i} \mP_{(1 i)}^{\top}$ will be averaged after taking expectations on $\mP'$, and the same holds for entries in $(2,1), \dots, (n,1)$.
    Moreover, the two averaged values must be identical because the matrix $\mP_{(1 i)} \mC_{i}^{\top} \mP_{(1 i)}^{\top} \mX \mP_{(1 i)} \mC_{i} \mP_{(1 i)}^{\top}$ is symmetric.)

    Therefore we can conclude that for all $i \in [n]$,
    \begin{align*}
        \E_{p \in \Pi_{i}} \bigclosed{\mP \mC_{\mP}^{\top} \mP^{\top} \mX \mP \mC_{\mP} \mP^{\top}} &= \E_{p \in \Pi_{i}} \bigclosed{\mP' \mP_{(1 i)} \mC_{i}^{\top} \mP_{(1 i)}^{\top} \mX \mP_{(1 i)} \mC_{i} \mP_{(1 i)}^{\top} \mP'^{\top}} \in \gS'.
    \end{align*}
    Taking one final average over $i \in [n]$, we have shown that
    \begin{align*}
        \MARPCD(\mX) &= \E \bigclosed{\mP \mC_{\mP}^{\top} \mP^{\top} \mX \mP \mC_{\mP} \mP^{\top}} \in \gS'
    \end{align*}
    as desired.
\end{proof}

As we have discussed in \cref{sec:4.3}, we can see that \cref{lem:almostpi} allows us to reduce the problem into finding $\rho(\MARPCD|_{\gS'})$, which can be written as a $4 \times 4$ matrix using the subspace $\gS'$ as the basis.
Note that the remaining steps will require a relatively complicated analysis of an asymmetric $4 \times 4$ matrix with two controllable variables $a, b \in [0, 1]$, compared to \cref{thm:rpcdub} where we use a $2 \times 2$ matrix with only one variable $\sigma$.

More generally, it is intuitive that a similar dimension-reduction argument can also apply to different block structures.
For example, we can think of the following $4 \times 4$ matrix:
\begin{align}
    \mA = \begin{bmatrix}
        1 & a & c & c \\
        a & 1 & c & c \\
        c & c & 1 & b \\
        c & c & b & 1
    \end{bmatrix}.
\end{align}
For this case, we instead have $6 = \frac{4!}{2!2!}$ types of matrices $\mP^{\top} \mA \mP$.
Similar arguments are also always possible for various $n$ and various dimensions of blocks.
In general cases, however, it is much more likely that we \emph{cannot} find such a nice block structure that reduces permutation variance and thus a small-dimensional subspace $\gS$ on which we can express $\MARPCD|_{\gS}$ as a smaller matrix.

\subsection{General Hessians: Matrix AM-GM Inequality}
\label{sec:e.2}

Continuing from our discussion in the second paragraph of \cref{sec:4.3}, the only results we are aware of that consider RPCD for general quadratics are those by \citet{sun20}.
In particular, their objective was to show that
\begin{align*}
    \rho (\mathbb{E} [\mT_{\mA}^{\text{RPCD}}]) \le 1 - \frac{\sigma}{n}.
\end{align*}
This is a direct corollary of a \emph{weak} version of the well-known matrix AM-GM inequality \citep{recht12,lai20a,desa20,yun21a}.
To elaborate, \citet{sun20} (Claim 4.1) observe that:
\begin{align}
    \mA^{\frac12} \mT_{\mA, p}^{\text{RPCD}} \mA^{-\frac12} &= \mI - \mA^{\frac12} \mP \mGamma_{\mP}^{-1} \mP^{\top} \mA^{\frac12} = \mZ_{p(n)} \cdots \mZ_{p(1)}, 
    \label{eq:matrixamgm2}
\end{align}
where $\mZ_{i} = \mI - \vv_{i} \vv_{i}^{\top}$ is a projection matrix, with $\vv_{i}$ being the $i$-th column of $\mA^{\frac12}$.
Then they show for the expectation:
\begin{align*}
    \frac{1}{n!} \sum_{p} \mZ_{p(n)} \cdots \mZ_{p(1)} &\preceq \frac{1}{n} \sum_{i=1}^{n} \mZ_{i} \ \ \bigopen{= \mI - \frac{\mA}{n}},
\end{align*}
which is called \emph{weak} because the RHS is missing the $n$-th exponent from the original matrix AM-GM,
\begin{align*}
    \frac{1}{n!} \sum_{p} \mZ_{p(n)} \cdots \mZ_{p(1)} &\preceq \bigopen{\frac{1}{n} \sum_{i=1}^{n} \mZ_{i}}^{n}.
\end{align*}

To obtain a proper comparison with the RCD lower bound results in \cref{thm:rcdlb}, it suffices to show an upper bound of
\begin{align*}
    \rho(\MARPCD) &= \rho (\E [\mT_{\mA}^{\text{RPCD} \top} \otimes \mT_{\mA}^{\text{RPCD} \top}]) \le \max \bigset{\bigopen{1 - \frac{\sigma}{n}}^{2n}, \bigopen{1 - \frac{1}{n}}^{n}},
\end{align*}
which is not only stronger than the weak AM-GM, considering that we have an order $n$ exponent added to the upper bound, but also much more difficult in many aspects.
\begin{itemize}
    \item Analyzing the \emph{upper bounds} of quantities derived from a \emph{sum of Kronecker power} of matrices is presumably the harder direction than the opposite (we typically use the sum of squares \emph{as} an upper bound).
    \item Even for the case of two matrices $\mA$ and $\mB$, the relationship between the spectrum of $\mA + \mB$ and each of the matrices $\mA$ and $\mB$ (beyond Weyl's inequality \citep{weyl12}) is highly nontrivial \citep{knutson00}.
    In our case, we have a sum of $n!$ matrices, and a fine spectral analysis is extremely hard.
\end{itemize}

One of the few possible solutions to avoid computing the Kronecker powers could be to use the following lemma.
\begin{lemma}
    We have
    \begin{align}
        \rho(\gM_{\mA}^{\emph{\RPCD}}) &\le \bignorm{\mA^{-1/2} \gM_{\mA}^{\emph{\RPCD}} (\mA) \mA^{-1/2}}. \label{eq:normupperbound2}
    \end{align}
\end{lemma}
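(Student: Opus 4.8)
The plan is to pass from $\gM_{\mA}^{\RPCD}$ to the similar operator $\widetilde{\gM}_{\mA}^{\RPCD}$ of \eqref{eq:similaroperator}, to observe that $\widetilde{\gM}_{\mA}^{\RPCD}$ is \emph{completely positive}, and then to invoke the fact that a completely positive map attains its (spectral-norm) operator norm at the identity. Concretely: since $\widetilde{\gM}_{\mA}^{\RPCD}(\mX) = \mA^{-1/2}\gM_{\mA}^{\RPCD}(\mA^{1/2}\mX\mA^{1/2})\mA^{-1/2}$ is conjugate to $\gM_{\mA}^{\RPCD}$ by the invertible linear map $\mX\mapsto\mA^{1/2}\mX\mA^{1/2}$, the two operators share a spectrum, so $\rho(\gM_{\mA}^{\RPCD}) = \rho(\widetilde{\gM}_{\mA}^{\RPCD})$; and setting $\mX=\mI$ gives $\widetilde{\gM}_{\mA}^{\RPCD}(\mI) = \mA^{-1/2}\gM_{\mA}^{\RPCD}(\mA)\mA^{-1/2}$. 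Hence it suffices to prove $\rho(\widetilde{\gM}_{\mA}^{\RPCD})\le\bignorm{\widetilde{\gM}_{\mA}^{\RPCD}(\mI)}$.

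By \eqref{eq:ttilde}, $\widetilde{\gM}_{\mA}^{\RPCD}(\mX) = \E\bigclosed{\widetilde{\mT}_{\mA}^{\RPCD\,\top}\mX\,\widetilde{\mT}_{\mA}^{\RPCD}}$ is an average over permutations $p$ of the conjugation maps $\mX\mapsto\widetilde{\mT}_{\mA,p}^{\RPCD\,\top}\mX\,\widetilde{\mT}_{\mA,p}^{\RPCD}$ by the \emph{fixed} real matrices $\widetilde{\mT}_{\mA,p}^{\RPCD}$; each summand is of Kraus/Stinespring form, so $\widetilde{\gM}_{\mA}^{\RPCD}$ is completely positive, hence $2$-positive and PSD-cone preserving. (Extend it $\C$-linearly to complex matrices; this changes neither the spectrum nor $\widetilde{\gM}_{\mA}^{\RPCD}(\mI)$.) Write $\mB := \widetilde{\gM}_{\mA}^{\RPCD}(\mI)\succeq \bm{0}$. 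For any complex $\mX$ with $\norm{\mX}\le 1$ we have $\begin{psmallmatrix}\mI & \mX \\ \mX^{*} & \mI\end{psmallmatrix}\succeq \bm{0}$, and applying $\widetilde{\gM}_{\mA}^{\RPCD}$ blockwise---legitimate because it is $2$-positive and $\widetilde{\gM}_{\mA}^{\RPCD}(\mX^{*}) = \widetilde{\gM}_{\mA}^{\RPCD}(\mX)^{*}$---gives $\begin{psmallmatrix}\mB & \mC \\ \mC^{*} & \mB\end{psmallmatrix}\succeq \bm{0}$ with $\mC := \widetilde{\gM}_{\mA}^{\RPCD}(\mX)$. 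Running the same argument with $-\mX$, the two block inequalities rearrange to $-\begin{psmallmatrix}\mB & \bm{0} \\ \bm{0} & \mB\end{psmallmatrix}\preceq\begin{psmallmatrix}\bm{0} & \mC \\ \mC^{*} & \bm{0}\end{psmallmatrix}\preceq\begin{psmallmatrix}\mB & \bm{0} \\ \bm{0} & \mB\end{psmallmatrix}\preceq\lambda_{\max}(\mB)\,\mI$; since the eigenvalues of $\begin{psmallmatrix}\bm{0} & \mC \\ \mC^{*} & \bm{0}\end{psmallmatrix}$ are $\pm$ the singular values of $\mC$, this forces $\norm{\mC}\le\lambda_{\max}(\mB)=\norm{\mB}$. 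Therefore the induced operator norm obeys $\vertiii{\widetilde{\gM}_{\mA}^{\RPCD}}=\sup_{\norm{\mX}\le 1}\norm{\widetilde{\gM}_{\mA}^{\RPCD}(\mX)}\le\norm{\widetilde{\gM}_{\mA}^{\RPCD}(\mI)}$, and using $\rho(\cdot)\le\vertiii{\cdot}$,
\[
\rho(\gM_{\mA}^{\RPCD}) = \rho(\widetilde{\gM}_{\mA}^{\RPCD}) \le \vertiii{\widetilde{\gM}_{\mA}^{\RPCD}} \le \bignorm{\widetilde{\gM}_{\mA}^{\RPCD}(\mI)} = \bignorm{\mA^{-1/2}\gM_{\mA}^{\RPCD}(\mA)\mA^{-1/2}}.
\]

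I expect the last part---bounding $\widetilde{\gM}_{\mA}^{\RPCD}$ on \emph{non-Hermitian} test matrices---to be the only real obstacle, since for Hermitian (or positive semidefinite) inputs a softer argument already works: $\widetilde{\gM}_{\mA}^{\RPCD}$ restricted to $\sS^{n}$ is a positive operator on the proper cone $\sS_{+}^{n}$, so by Perron--Frobenius its spectral radius $\rho$ is an eigenvalue with eigenmatrix $\mW\succeq \bm{0}$, which (after scaling to $\mW\preceq\mI$) gives $\rho\,\mW=\widetilde{\gM}_{\mA}^{\RPCD}(\mW)\preceq\widetilde{\gM}_{\mA}^{\RPCD}(\mI)$ by Loewner-monotonicity, hence $\rho\le\lambda_{\max}(\widetilde{\gM}_{\mA}^{\RPCD}(\mI))$---but this only controls the symmetric restriction and says nothing about antisymmetric eigenmatrices, where no cone is available. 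Complete positivity (the Stinespring/Kraus structure), rather than mere positivity, is precisely what makes the $2\times 2$ dilation go through and closes that gap; everything else is routine bookkeeping with \eqref{eq:similaroperator}--\eqref{eq:ttilde}.
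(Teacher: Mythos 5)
Your proof is correct and lands on the same structural observation the paper uses: pass to the similar operator $\widetilde{\gM}_{\mA}^{\RPCD}$, note it is positivity-preserving, and conclude $\rho(\widetilde{\gM}_{\mA}^{\RPCD}) \le \vertiii{\widetilde{\gM}_{\mA}^{\RPCD}} = \bignorm{\widetilde{\gM}_{\mA}^{\RPCD}(\mI)}$. The difference is in how the last inequality is justified: the paper simply cites the Russo--Dye theorem for positive linear maps, whereas you re-derive that inequality from scratch via the Kraus structure of $\widetilde{\gM}_{\mA}^{\RPCD}$, a $2\times 2$ dilation, and $2$-positivity. Your version is more self-contained and arguably more elementary; the paper's is shorter. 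Both are valid here because the map is CP.

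One small correction to your closing remark: the Russo--Dye theorem (in the form the paper cites, Bhatia, Thm.~2.3.7) already holds for \emph{merely positive} linear maps, not just completely positive ones; the statement that positivity alone ``says nothing about antisymmetric eigenmatrices'' is not quite right as an account of the theorem's scope. What is true is that the standard elementary proof you give requires $2$-positivity, and for a positive-but-not-$2$-positive map the $2\times 2$ dilation step fails and one needs a subtler argument (e.g.\ the original Russo--Dye proof via unitary convexity). Since $\widetilde{\gM}_{\mA}^{\RPCD}$ is manifestly CP here, your route is fine, but it is good to be clear that you are proving a convenient special case of a more general theorem rather than establishing the exact boundary of where it applies.
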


\begin{proof}
    The Russo-Dye theorem (\citet{russodye}, Theorem 2.3.7 of \citet{bhatia07}) states that if $\gM$ is a positive linear map,\footnote{A linear map $\R^{n \times n} \rightarrow \R^{k \times k}$ is \emph{positive} if it maps positive semi-definite matrices into positive semi-definite matrices.} then we have $\vertiii{\gM} = \norm{\gM(\mI)}$, where $\vertiii{\cdot}$ is the operator norm induced by the matrix operator norm $\norm{\cdot}$.
    For our case, we can easily check that $\MARPCD$ is positive as it outputs the expectation of positive semi-definite matrices.
    Moreover, we have defined a similar matrix operator earlier in \cref{sec:2}:
    \begin{align*}
        \widetilde{\gM}_{\mA}^{\RPCD} (\mX) &= \mA^{-\frac12} \gM_{\mA}^{\RPCD} (\mA^{\frac12} \mX \mA^{\frac12}) \mA^{-\frac12},
    \end{align*}
    which is also a positive linear map (with a symmetric matrix form).
    Using the Russo-Dye theorem on $\widetilde{\gM}_{\mA}^{\RPCD}$, we have
    \begin{align*}
        \rho (\gM_{\mA}^{\RPCD}) &= \rho (\widetilde{\gM}_{\mA}^{\RPCD}) \le \vertiii{\widetilde{\gM}_{\mA}^{\RPCD}} = \bignorm{\widetilde{\gM}_{\mA}^{\RPCD} (\mI)} = \bignorm{\mA^{-\frac12} \gM_{\mA}^{\RPCD} (\mA) \mA^{-\frac12}}
    \end{align*}
    which completes the proof.
\end{proof}

One caveat of the approach is that the RHS of \eqref{eq:normupperbound2} gets too large and exceeds the RCD lower bound for some values of $\sigma \in (\frac12, 1)$ after $n$ gets sufficiently large, i.e., \eqref{eq:normupperbound2} is too loose for this case (as shown in \cref{fig:radiusnorm} in \cref{sec:4}).
The same bound still seems tight enough for the region $\sigma \in (0, \frac{1}{2}]$, where we also have $(1 - \frac{\sigma}{n})^{2n} \ge (1 - \frac{1}{n})^{n}$ and hence the upper bound simply reduces to $(1 - \frac{\sigma}{n})^{2n}$.

From \eqref{eq:matrixamgm2}, we can observe the following:
\begin{align*}
    \mA^{-\frac12} \gM_{\mA}^{\RPCD} (\mA) \mA^{-\frac12} &= \E \bigclosed{(\mI - \mA^{\frac12} \mP \mGamma_{\mP}^{-1} \mP^{\top} \mA^{\frac12})^{\top} (\mI - \mA^{\frac12} \mP \mGamma_{\mP}^{-1} \mP^{\top} \mA^{\frac12})} \\
    &= \frac{1}{n!} \sum_{p} (\mZ_{p(n)} \cdots \mZ_{p(1)})^{\top} (\mZ_{p(n)} \cdots \mZ_{p(1)}).
\end{align*}
Then by \eqref{eq:normupperbound2}, it suffices to show 
\begin{align*}
    \rho \bigopen{\frac{1}{n!} \sum_{p} (\mZ_{p(n)} \cdots \mZ_{p(1)})^{\top} (\mZ_{p(n)} \cdots \mZ_{p(1)})} &\le \rho \bigopen{\frac{1}{n} \sum_{i=1}^{n} \mZ_{i}}^{2n}
\end{align*}
to prove \cref{conj:general} on $\sigma \in (0, \frac{1}{2}]$, and we leave the proof/disproof for future work.
\newpage

\section{Details on Experiments}
\label{sec:f}

\subsection{Algorithmic Search.}
\label{sec:f.1}
Here we provide detailed information about the algorithmic search used to find the worst-case instances for RPCD.
Recall that we use the following framework to search for the worst-case example among the space of all (unit-diagonal) quadratics.
\begin{mdframed}[backgroundcolor=black!10]
    \emph{Step 1.} Start with a (randomly initialized) lower triangular matrix $\mX \in \R^{n \times n}$ with nonzero diagonals.

    \emph{Step 2.} Construct a unit-diagonal, positive semi-definite matrix by computing $\mY = \mX^{\top} \mX$ and set unit diagonals by $\mZ = \mD_{\mY}^{-\frac12} \mY \mD_{\mY}^{-\frac12}$, where $\mD_{\mY}$ is the diagonal part of $\mY$.
    
    \emph{Step 3.} Construct a matrix $\mA$ with $\sigma = \lambda_{\min} (\mA)$ by
    \begin{align*}
        \mA = \frac{1 - \sigma}{1 - \mu} \mZ + \frac{\sigma - \mu}{1 - \mu} \mI,
    \end{align*}
    where $\mu = \lambda_{\min} (\mZ)$.
    Note that $\mA$ is also unit-diagonal and positive semi-definite.
    
    \emph{Step 4.} Construct an objective function that takes an input $\mX$ to compute the value of $\rho(\MARPCD)$ and run a \texttt{scipy} optimizer to maximize the objective.
\end{mdframed}

\textbf{Experiment Settings.}
In our search for worst-case matrices, we explored the performance of our algorithm for dimensions $n \in \{3, 4, 5, 6\}$ and for minimum eigenvalue $\sigma \in \{0.1, \dots, 0.9\}$ (with increments of $0.1$).

\textbf{Initialization.} To initialize the lower-triangular matrix $\mX$, we generated a vector of length $\frac{n(n+1)}{2}$ with elements drawn from a standard normal distribution and then used this vector to create an $n \times n$ lower-triangular matrix. We used 10 random initial vectors for $n=3, 4$ and 2 vectors for $n=5, 6$. The reduced number of initializations for $n=5$ and $n=6$ was due to the computational complexity of calculating the matrix representation of $\MARPCD$ and its spectral radius, which is $\gO(n^4\cdot n!)$.

\textbf{Construction of $\mA$.}
We computed $\mA = \frac{1 - \sigma}{1 - \mu} \mZ + \frac{\sigma - \mu}{1 - \mu} \mI$, where $\mu = \lambda_{\min} (\mZ)$. 
This construction ensures that $\mA$ is a unit-diagonal matrix and that its eigenvalues are of the form $\frac{1 - \sigma}{1 - \mu} \lambda + \frac{\sigma - \mu}{1 - \mu}$, where $\lambda$ is an eigenvalue of $\mZ$. 
Since $\mZ$ is a unit-diagonal positive semi-definite matrix, its minimum eigenvalue $\mu$ lies in $[0, 1]$, and thus $\frac{1 - \sigma}{1 - \mu} \geq 0$. 
This guarantees that the minimum eigenvalue of $\mA$ is $\frac{1 - \sigma}{1 - \mu} \mu + \frac{\sigma - \mu}{1 - \mu} = \sigma$.

\textbf{Optimization.}
We used the BFGS method in the \texttt{scipy.optimize.minimize} function with default parameters as the optimization algorithm to minimize the function $-\rho(\MARPCD)$. To calculate the spectral radius of $\MARPCD$, we utilized its matrix representation, which is given by $\E \bigclosed{(\mT_{\mA, p}^{\text{RPCD}})^{\top} \otimes (\mT_{\mA, p}^{\text{RPCD}})^{\top}}$. Here, $\mT_{\mA, p}^{\text{RPCD}}$ is defined in \cref{eq:rpcdmatrix}.

\textbf{Results.}
We present a few examples of the optimized matrices obtained through \textbf{Algorithmic Search}. Specifically, for each combination of $n \in \{3, 4, 5, 6\}$ with $\sigma \in \{0.3, 0.7\}$, we selected the matrix that achieved the largest $\rho(\MARPCD)$ among the trials. These examples demonstrate that the optimization process converges to matrices in $\mathcal{A}_{\sigma}$ with various sign flips.
Here, $\mA_{n, \sigma}$ represents the matrix $\sigma \mI_n + (1-\sigma) \vone _n\vone_n^{\top} \in \mathcal{A}_{\sigma}^{\text{PI}}$. Note that $\mA_{n, \sigma} \odot \vv \vv^{\top} \in \mathcal{A}_{\sigma}$ when $\vv \in \{\pm 1\}^n$.
\begin{align*}
    \mA_{n, \sigma}\odot \vv \vv^{\top} + \mathbf{\Delta}, &\quad \vv=\begin{bmatrix}
        1 & 1 & 1
    \end{bmatrix}^{\top}, &\quad \| \mathbf{\Delta} \|_F \approx 1.2\cdot 10^{-10} &\quad \text { for } n=3,~\sigma=0.3 \\ 
    \mA_{n, \sigma}\odot \vv \vv^{\top} + \mathbf{\Delta}, &\quad \vv=\begin{bmatrix}
        1 & -1 & -1
    \end{bmatrix}^{\top}, &\quad \| \mathbf{\Delta} \|_F \approx 8.7\cdot 10^{-11} &\quad \text { for } n=3,~\sigma=0.7 \\ 
    \mA_{n, \sigma}\odot \vv \vv^{\top} + \mathbf{\Delta}, &\quad \vv=\begin{bmatrix}
        1 & 1 & -1 & 1
    \end{bmatrix}^{\top}, &\quad \| \mathbf{\Delta} \|_F \approx 2.7\cdot 10^{-10} &\quad \text { for } n=4,~\sigma=0.3 \\ 
    \mA_{n, \sigma}\odot \vv \vv^{\top} + \mathbf{\Delta}, &\quad \vv=\begin{bmatrix}
        1 & -1 & -1 & 1
    \end{bmatrix}^{\top}, &\quad \| \mathbf{\Delta} \|_F \approx 2.5\cdot 10^{-10} &\quad \text { for } n=4,~\sigma=0.7 \\ 
    \mA_{n, \sigma}\odot \vv \vv^{\top} + \mathbf{\Delta}, &\quad \vv=\begin{bmatrix}
        1 & -1 & 1 & -1 & 1
    \end{bmatrix}^{\top}, &\quad \| \mathbf{\Delta} \|_F \approx 1.1\cdot 10^{-8} &\quad \text { for } n=5,~\sigma=0.3 \\ 
    \mA_{n, \sigma}\odot \vv \vv^{\top} + \mathbf{\Delta}, &\quad \vv=\begin{bmatrix}
        1 & -1 & 1 & -1 & 1
    \end{bmatrix}^{\top}, &\quad \| \mathbf{\Delta} \|_F \approx 9.3\cdot 10^{-10} &\quad \text { for } n=5,~\sigma=0.7 \\ 
    \mA_{n, \sigma}\odot \vv \vv^{\top} + \mathbf{\Delta}, &\quad \vv=\begin{bmatrix}
        1 & 1 & -1 & 1 & 1 & -1
    \end{bmatrix}^{\top}, &\quad \| \mathbf{\Delta} \|_F \approx 1.9\cdot 10^{-4} &\quad \text { for } n=6,~\sigma=0.3 \\ 
    \mA_{n, \sigma}\odot \vv \vv^{\top} + \mathbf{\Delta}, &\quad \vv=\begin{bmatrix}
        1 & -1 & 1 & 1 & -1 & -1
    \end{bmatrix}^{\top}, &\quad \| \mathbf{\Delta} \|_F \approx 9.4\cdot 10^{-7} &\quad \text { for } n=6,~\sigma=0.7
\end{align*}
Furthermore, we empirically observed that for any optimized matrix $\widehat{\mA}$, it holds that $\rho(\mathcal{M}_{\widehat{\mA}}^{\text{RPCD}}) \le \max_{\mA \in \mathcal{A}_\sigma} \rho(\mathcal{M}_{\mA}^{\text{RPCD}})$.

The following Python code implements \textbf{Algorithmic Search}, which also empirically validates that $\rho(\mathcal{M}_{\widehat{\mA}}^{\text{RPCD}}) \le \max_{\mA \in \mathcal{A}_\sigma} \rho(\mathcal{M}_{\mA}^{\text{RPCD}})$ for all optimized $\widehat{\mA}$.

\begin{minted}{python}
import numpy as np
from scipy.optimize import minimize
from itertools import permutations
from numpy.linalg import *
from math import factorial

def rho(M):
    return max(abs(eig(M)[0]))

def unit_diag(M):
    diag_sqrt = np.sqrt(np.diag(M))
    return M / (diag_sqrt * diag_sqrt[:, None])

def mat_rep_rpcd(A):
    n = A.shape[0]
    result = np.zeros((n*n, n*n))
    for perm in list(permutations(np.arange(n))):
        P = np.eye(n)[np.array(perm)]
        Ap = P.T@A@P
        Gp = np.tril(Ap)
        Gxp = P@Gp@P.T
        temp = (np.eye(n)-inv(Gxp)@A).T
        result += np.kron(temp, temp)
    return result/factorial(n)

def rpcd_pi_rho(n, s):
    L = n - (n - 1) * s
    if s != 1.:
        alpha = n - 2 * L * (1 - s**n) / (1 - s) + L**2 * (1 - s**(2*n)) / (1 - s**2)
        beta = ((1 - s) / (1 + s)) * (2 * n - n * s**(2*n) \
            - 2 * (1 - s**(n + 1)) * (1 - s**n) / (1 - s) \
            - s**2 * (1 - s**(2*n)) / (1 - s**2))
        gamma = (1 - 1/(1 - s) + (n - 1 + 1/(1 - s)) * s**n)**2
        delta = n * s**(2 * n) - 2 * s**(n + 1) * (1 - s**n) / (1 - s) \
            + s**2 * (1 - s**(2 * n)) / (1 - s**2)
    else:
        alpha = 0.
        beta = 0.
        gamma = 0.
        delta = 0.
    M = np.array([[n*beta - alpha, n*delta - gamma], \
        [alpha - beta, gamma - delta]])/(n*(n-1))
    return rho(M)

def set_min_eigval(A, s):
    n = A.shape[0]
    mu = min(eigvals(A))
    a = (1-s)/(1-mu)
    b = (s-mu)/(1-mu)
    B = a*A + b*np.eye(n)
    return B

def X_to_A(X, n, s):
    if len(X) != int(n*(n+1)/2):
        raise ValueError("X must have length n(n+1)/2.")
    L = np.zeros((n, n)).astype(float)
    idx = 0
    for i in range(n):
        for j in range(i + 1):
            L[i, j] = X[idx]
            idx += 1
    Y = L.T@L
    Z = unit_diag(Y)
    A = set_min_eigval(Z, s)
    return A

#### Optimization ####

def objective(X, n, s):
    A = X_to_A(X, n, s)
    return -rho(mat_rep_rpcd(A))

n_values = [3, 4, 5, 6]
s_values = np.linspace(0.1, 0.9, 9)

sim_num_list = [10, 10, 2, 2]
results = []

# Compute max rho(M_A^RPCD), for A in A_sigma
rho_max_dict = {}
for n in range(n_values):
    for s in s_values:
        temp = []
        for k in range(2, n+1):
            Aks = s*np.eye(k) + (1-s)*np.ones((k, k))
            temp.append(rho(mat_rep_rpcd(Aks)))
        rho_max_dict[(n, s)] = max(temp)

for i, n in enumerate(n_values):
    for s in s_values:
        sim_num = sim_num_list[i]
        for sim in range(sim_num):
            values = []
            X_init = np.random.randn(int(n*(n+1)/2))
            result = minimize(objective, X_init, args=(n, s), method='BFGS')
            X = result.x
            res_A = X_to_A(X, n, s)
            results.append({'n': n, 's': s, 'sim': sim+1, 'A': res_A})
            assert rho(mat_rep_rpcd(res_A)) <= rho_max_dict[(n, s)], f"Counterexample"
\end{minted}

\subsection{RCD vs RPCD.}
\label{sec:f.2}

\textbf{Problem Settings.}
We considered four types of convex and smooth optimization problems in $n$-dimensional space. The first three share a common structure of the form
\[f(\vx) = \frac{1}{2} \vx^{\top} \mA \vx + \alpha \cdot \text{LSE} (\mQ \vx),
\]
where LSE is the log-sum-exp function, {\it i.e.}, $\text{LSE}(a_1, \dots, a_n) = \log(e^{a_1} + \dots + e^{a_n})$. These problems differ in the specific construction of $\mA$ and the presence of the LSE term:
\begin{itemize}
    \item \textbf{(i)} Quadratic functions with permutation-invariant Hessians: $\mA = \sigma \mI_n + (1 - \sigma) \vone_n \vone_n^{\top}$ with $\sigma \in (0, 1]$ and $\alpha = 0$.
    \item \textbf{(ii)} General quadratic functions with positive definite Hessians: $\mA$ was randomly generated with unit diagonals and $\lambda_{\min} (\mA) = \sigma \in (0, 1]$, and $\alpha = 0$.
    \item \textbf{(iii)} General quadratic functions with positive definite Hessians and a scaled LSE term: we used the same $\mA$ as in \textbf{(ii)} with an additional term $\alpha \cdot \text{LSE} (\mQ \vx)$, where $\mQ$ is a randomly generated orthogonal matrix.
\end{itemize}
We further consider a logistic regression problem with ridge regularization.
\begin{itemize}
    \item \textbf{(iv)} $\ell_2$-regularized logistic regression: the objective is
    \[
    \min_x \frac{1}{m} \sum_{i=1}^{m} \log(1 + \exp(-b_i \va_i^{\top} \vx)) + \frac{\lambda}{2} \|\vx\|^2,
    \]
    where each $\va_i \in \mathbb{R}^n$ is drawn from $\mathcal{N}(0, \mI_n)$. We sample a ground-truth vector $\vx_{\text{true}} \sim \mathcal{N}(0, \mI_n)$, set $b_i = \text{sign}(\va_i^\top \vx_{\text{true}})$, and flip each $b_i$ independently with probability $0.1$. This setup follows \citet{nutini2015}.
\end{itemize}

\textbf{Matrix Generation.}
For \textbf{(ii)} and \textbf{(iii)}, the matrix $\mA$ was constructed by first generating an $n\times n$ random matrix $\mX$ with entries drawn from the standard normal distribution, followed by \emph{Steps 2-3} of \textbf{Algorithmic Search} detailed in \cref{sec:f.1}. This procedure ensures that the generated $\mA$ is unit-diagonal and has a minimum eigenvalue $\sigma$. The orthogonal matrix $\mQ$ in the LSE term was generated by applying QR decomposition to a random matrix with entries drawn from the standard normal distribution.

\textbf{Initialization.}
For each parameter setting in problems \textbf{(i)}-\textbf{(iv)} (i.e., each combination of $n$, $\sigma$, $\alpha$, or $\lambda$), we sampled 10 initial points from the standard normal distribution. For each initial point, both RCD and RPCD were run 10 times, resulting in 100 runs per setting.

\textbf{Parameter Settings.}
We performed experiments for dimension $n \in \{25, 50\}$ and minimum eigenvalue $\sigma \in \{0.1, \dots, 0.9\}$ (with increments of $0.1$). For \textbf{(iii)}, we considered the scaling factor $\alpha \in \{0.5, 2.0\}$. For each algorithm execution, we ran 200 iterations when $n=25$ and 300 iterations when $n=50$. For \textbf{(iv)}, we set $n = m = 100$, $\lambda \in \{0.0001, 0.001, 0.01, 0.1, 1.0\}$, and ran 600 iterations.

\textbf{Implementation Details.}
For the implementation of RCD and RPCD, we computed the $\argmin$ for the objective function at each coordinate, as described in \cref{alg:cd}.
For \textbf{(i)} and \textbf{(ii)}, the update rule can be computed in closed form:
\begin{align*}
    \vx_{t+1} &= \vx_{t} - \frac{1}{a_{i_t i_t}} \mE_{i_t} \mA \vx_{t}.
\end{align*}
This update rule is derived from the CD update formula for quadratic objectives, as remarked in \cref{sec:2.2}. For \textbf{(iii)} and \textbf{(iv)}, we used \texttt{scipy.optimize.minimize\_scalar} with default parameters to compute the coordinate-wise updates.

\textbf{Measurement.}
To evaluate the convergence, we measured $\frac{\|\vx_k - \vx^{\star}\|}{\|\vx_0 - \vx^{\star}\|}$ as our metric. This metric is motivated by Theorems \ref{thm:rcdlb} and \ref{thm:rpcdub}, which provide convergence bounds in terms of the norm of the iterates. 
For \textbf{(i)} and \textbf{(ii)}, the optimal solution is known to be $\vx^{\star} = \vzero$, simplifying the metric to $\frac{\|\vx_k\|}{\|\vx_0\|}$. 
For \textbf{(iii)} and \textbf{(iv)}, we used the BFGS method implemented in \texttt{scipy.optimize.minimize} with custom parameters (\texttt{gtol=1e-12, maxiter=10000}) to ensure a more accurate approximate optimal solution $\vx^{\star}$ before running the RCD and RPCD algorithms.

\textbf{Results.}
Figures~\ref{fig:rcdrpcdn25pi}-\ref{fig:rcdrpcdlogreg} illustrate the convergence behavior of RCD and RPCD. Each algorithm was executed 100 times (10 random initial points and 10 runs per initial point) for each combination of $(n, \sigma)$. The plots depict the ratio $\frac{\|\vx_k - \vx^*\|}{\|\vx_0 - \vx^*\|}$ on a log scale. 
In the plots, solid lines represent the average of the convergence values over all trials at each iteration, and shaded regions indicate the min-max range of the convergence values across all trials at each iteration.
The results demonstrate that RPCD consistently converges faster than RCD. For small values of $\sigma$ (e.g., $\sigma=0.1$), the performance of RCD and RPCD is similar in \textbf{(i)}, with RPCD showing only a slight speed advantage.
However, for the same small $\sigma$, the performance gap between RCD and RPCD becomes noticeably larger in \textbf{(ii)}.
This supports \cref{conj:general} that RPCD is faster than RCD for general quadratic functions and provides additional evidence that \textbf{(i)} represents a worst-case instance for RPCD.

\begin{figure}[t!]
    \centering
    \includegraphics[width=0.32\linewidth]{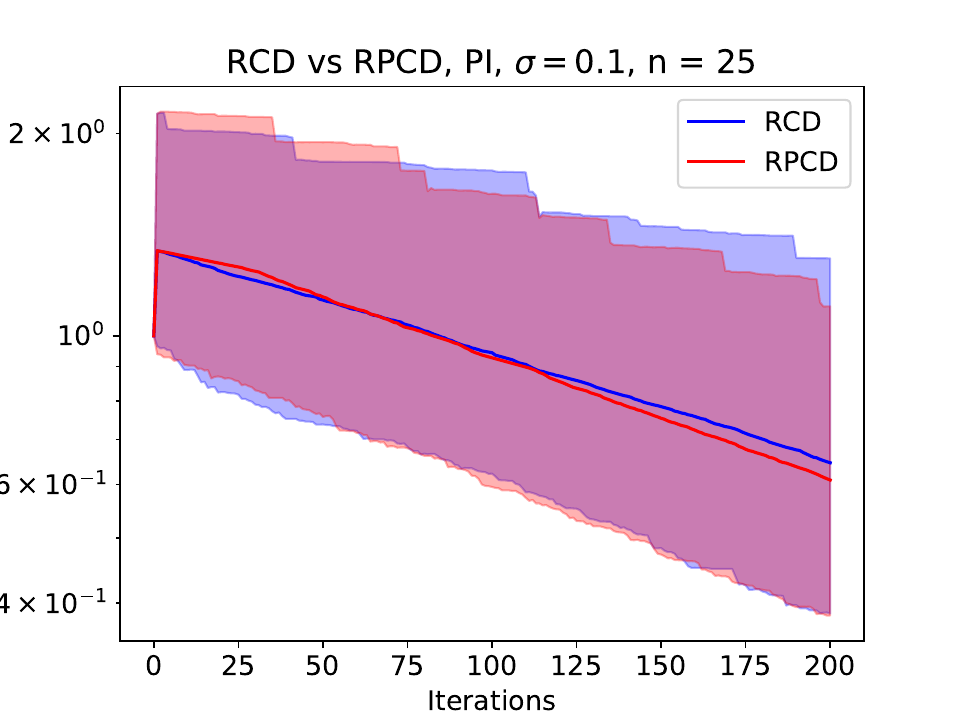}
    \includegraphics[width=0.32\linewidth]{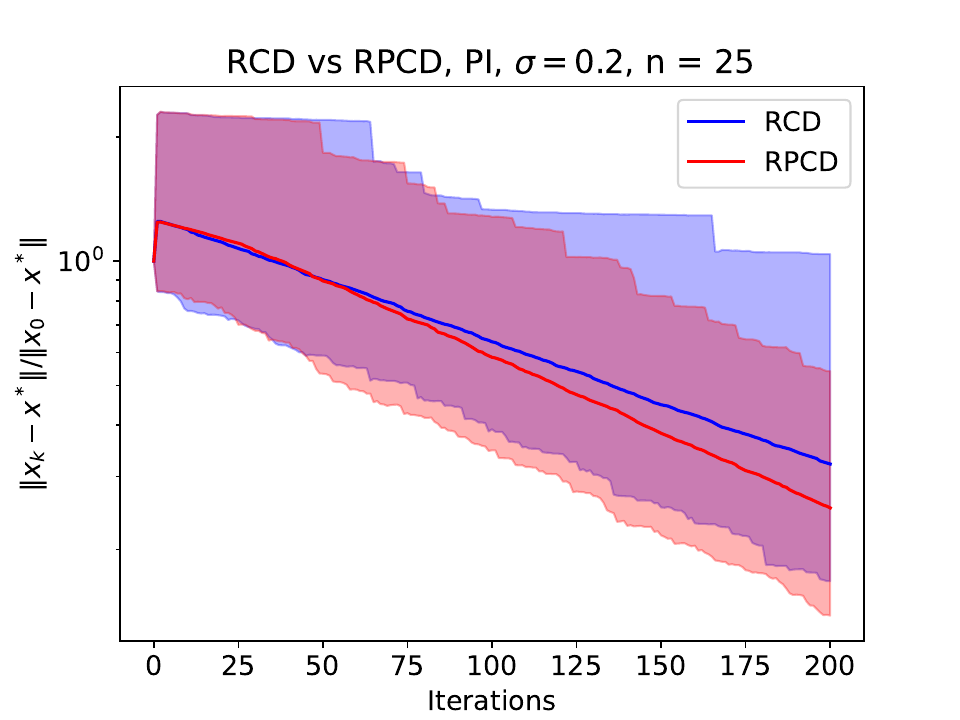}
    \includegraphics[width=0.32\linewidth]{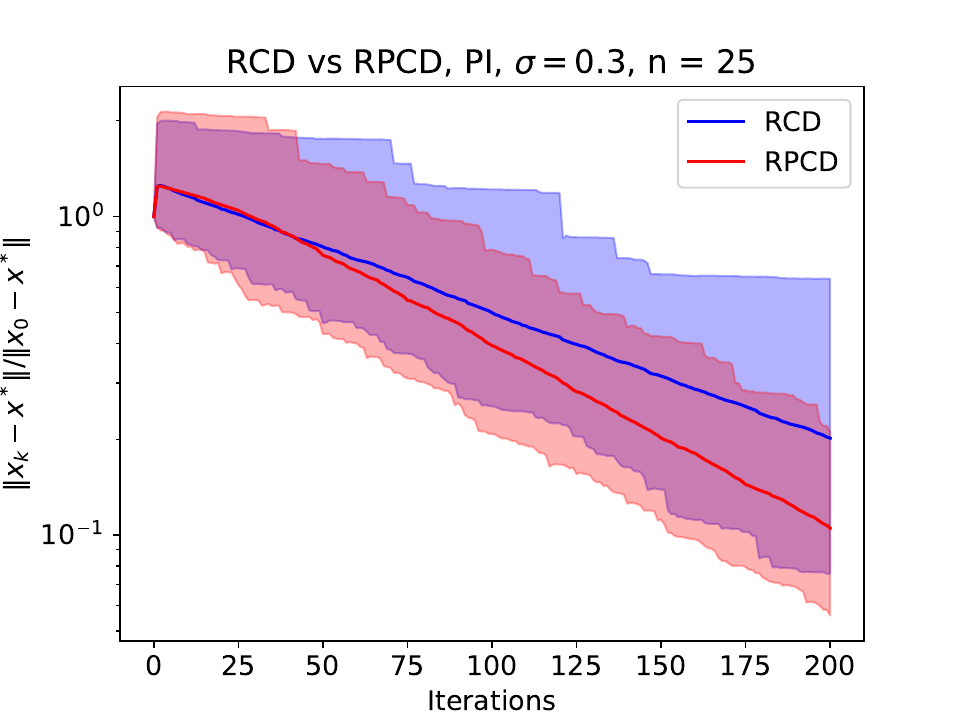}
    \includegraphics[width=0.32\linewidth]{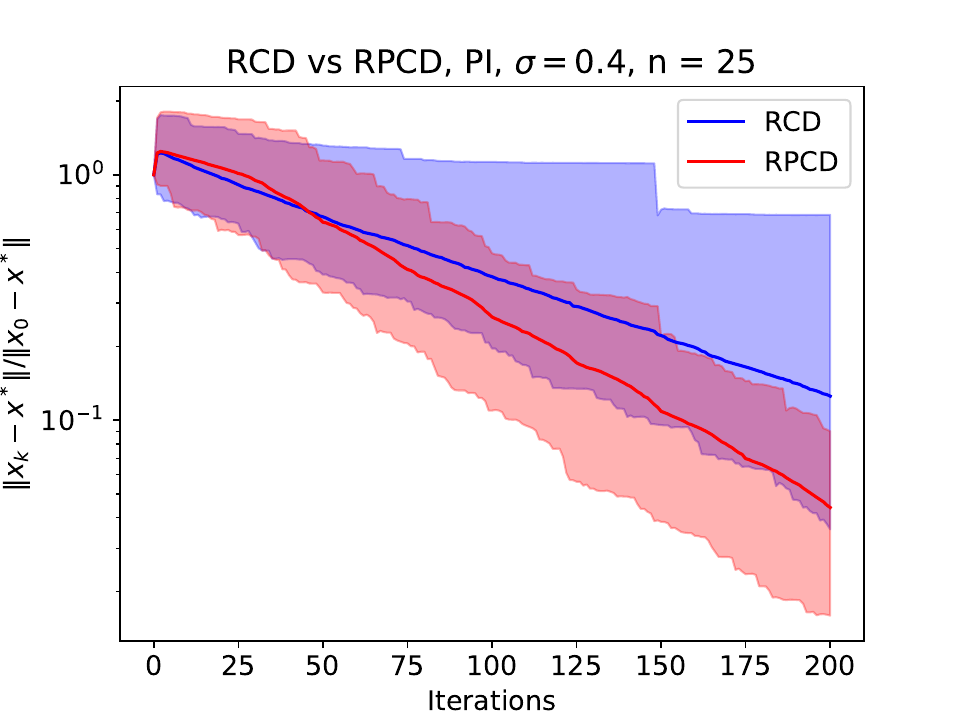}
    \includegraphics[width=0.32\linewidth]{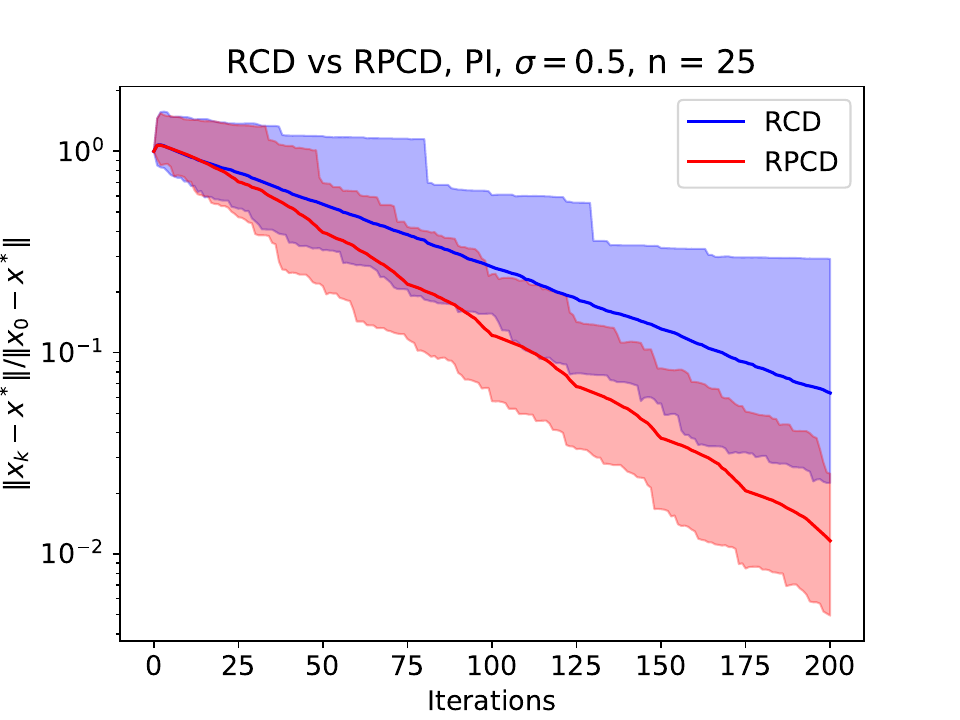}
    \includegraphics[width=0.32\linewidth]{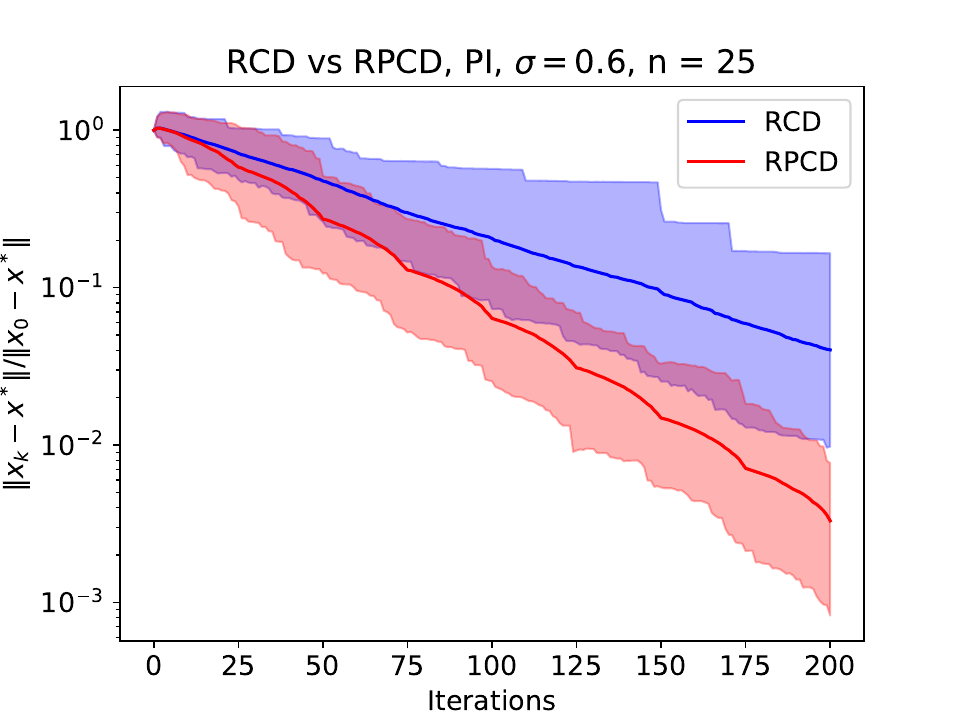}
    \includegraphics[width=0.32\linewidth]{rcd_rpcd_plots/rcd_rpcd_pi/sigma0.7_n025_pi.pdf}
    \includegraphics[width=0.32\linewidth]{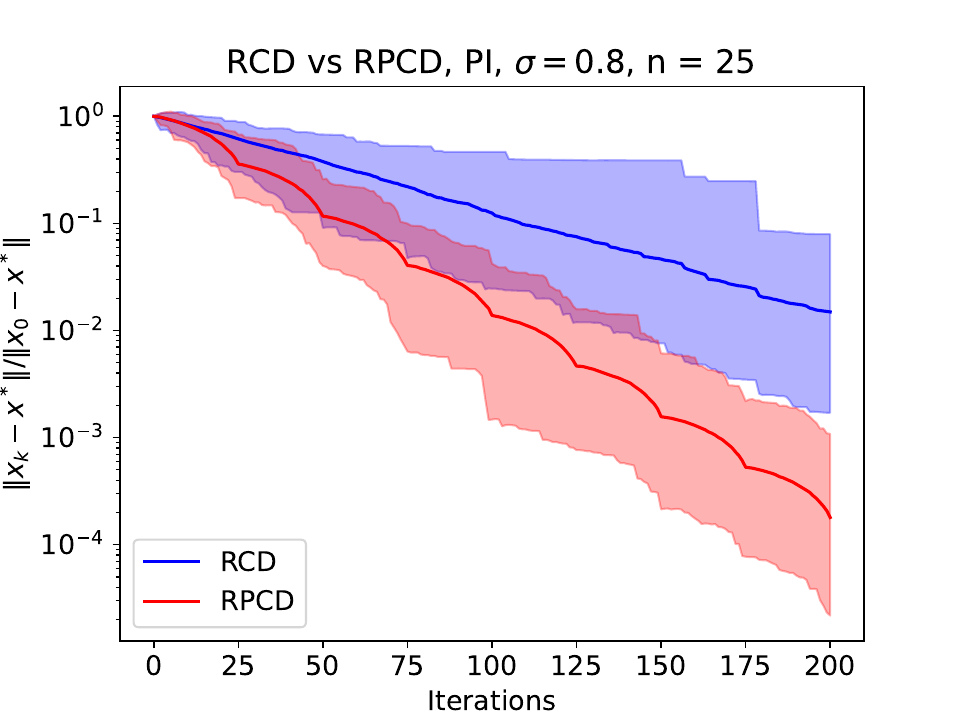}
    \includegraphics[width=0.32\linewidth]{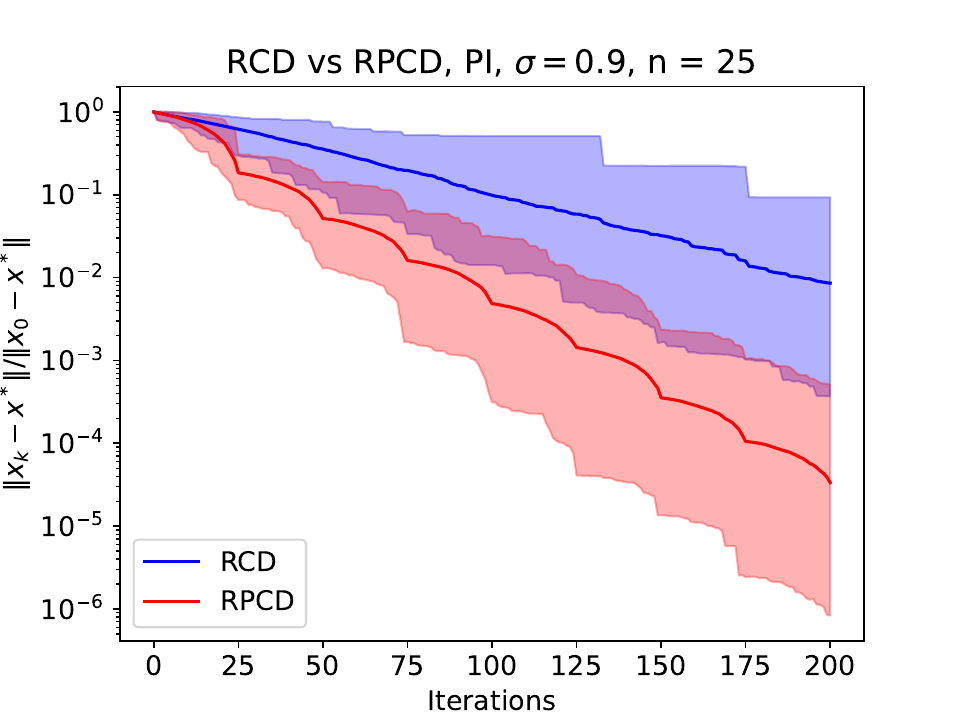}
    \caption{RCD vs RPCD, permutation-invariant quadratic functions. $\mA=\sigma \mI + (1-\sigma)\vone \vone^{\top}, n=25, \sigma \in \{0.1,\dots,0.9\}.$ The $y$-axis is in log scale.}
    \label{fig:rcdrpcdn25pi}
\end{figure}

\begin{figure}[t!]
    \centering
    \includegraphics[width=0.32\linewidth]{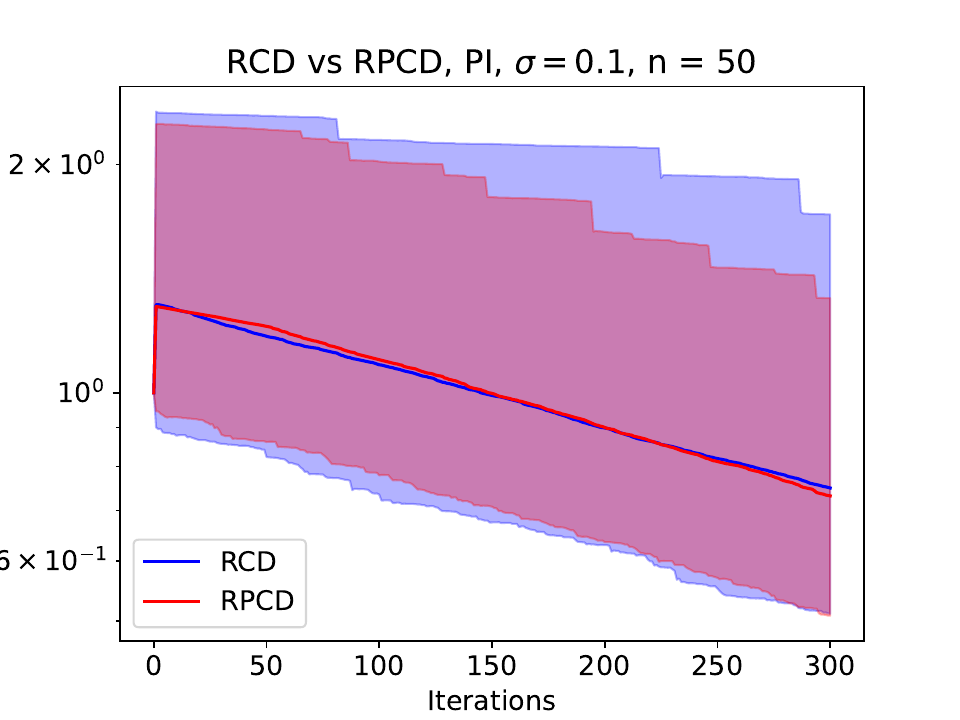}
    \includegraphics[width=0.32\linewidth]{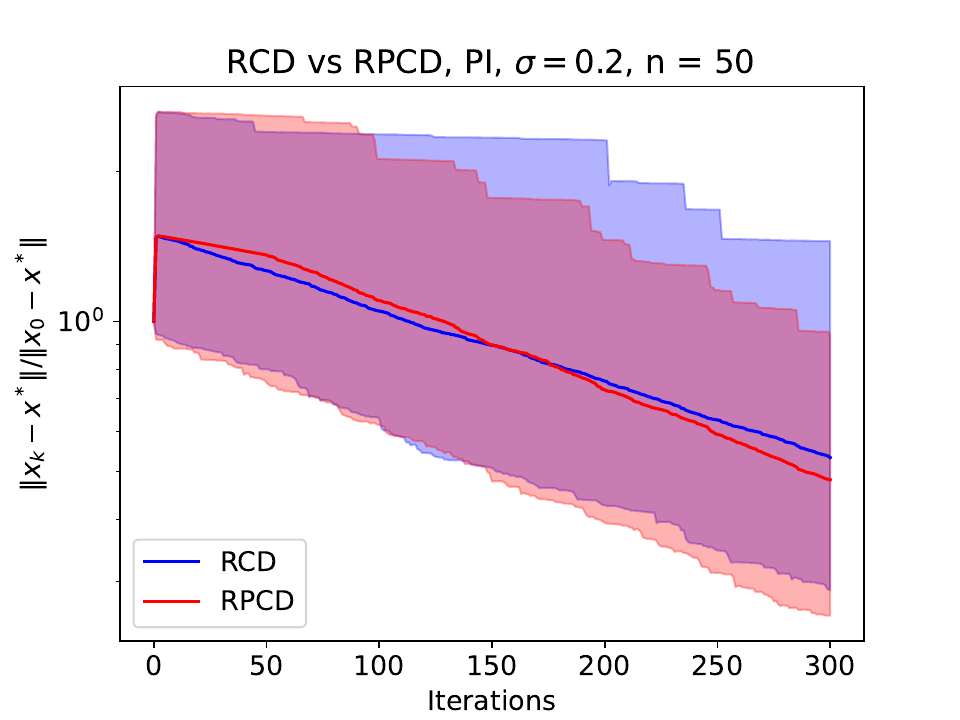}
    \includegraphics[width=0.32\linewidth]{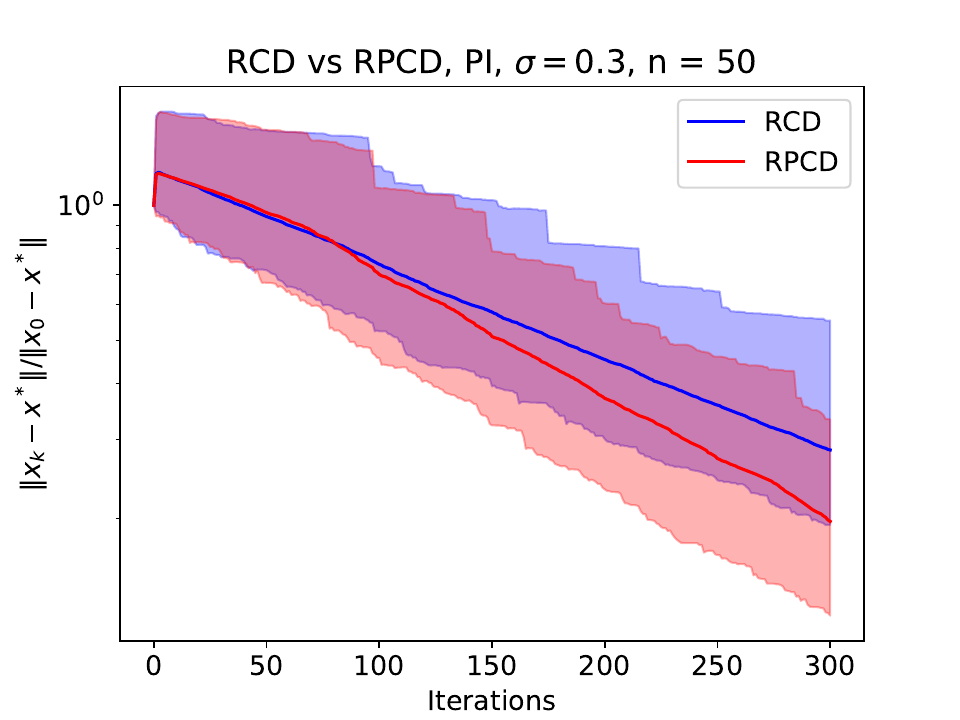}
    \includegraphics[width=0.32\linewidth]{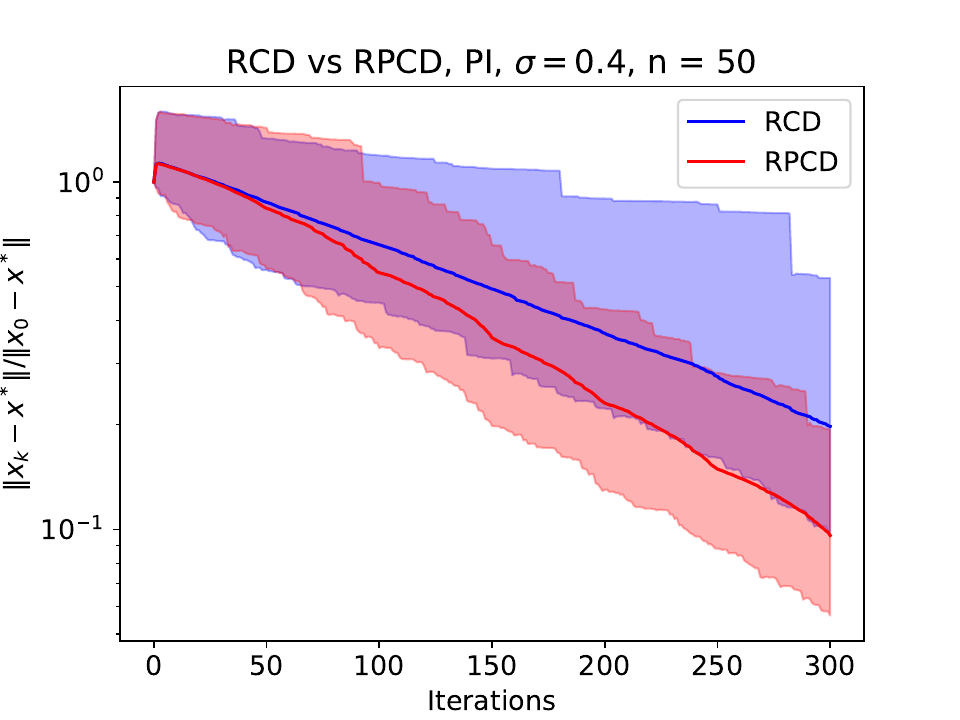}
    \includegraphics[width=0.32\linewidth]{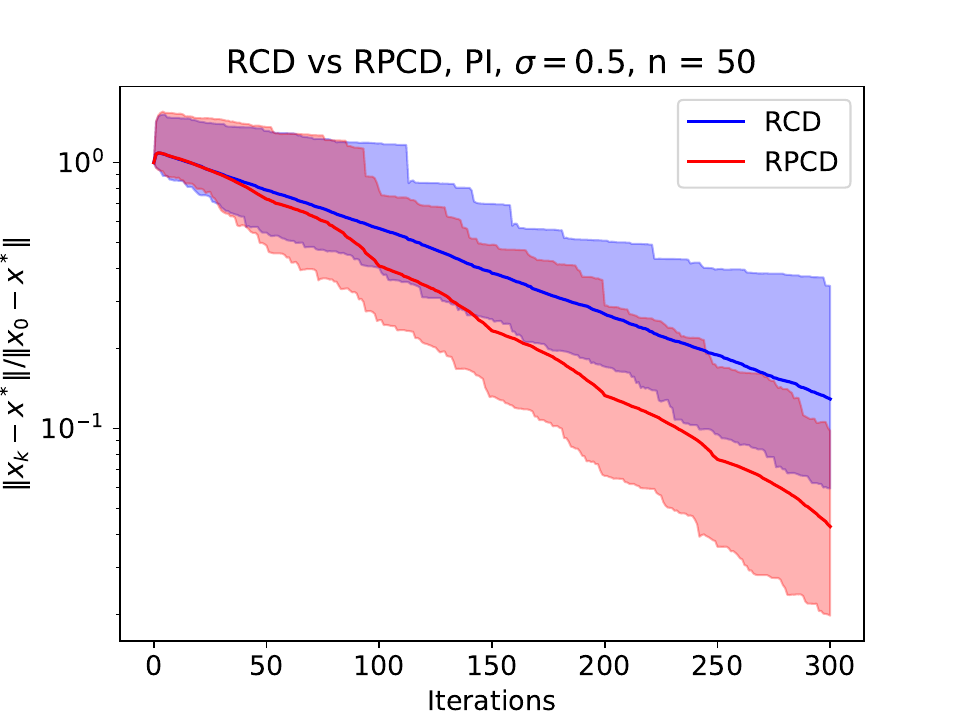}
    \includegraphics[width=0.32\linewidth]{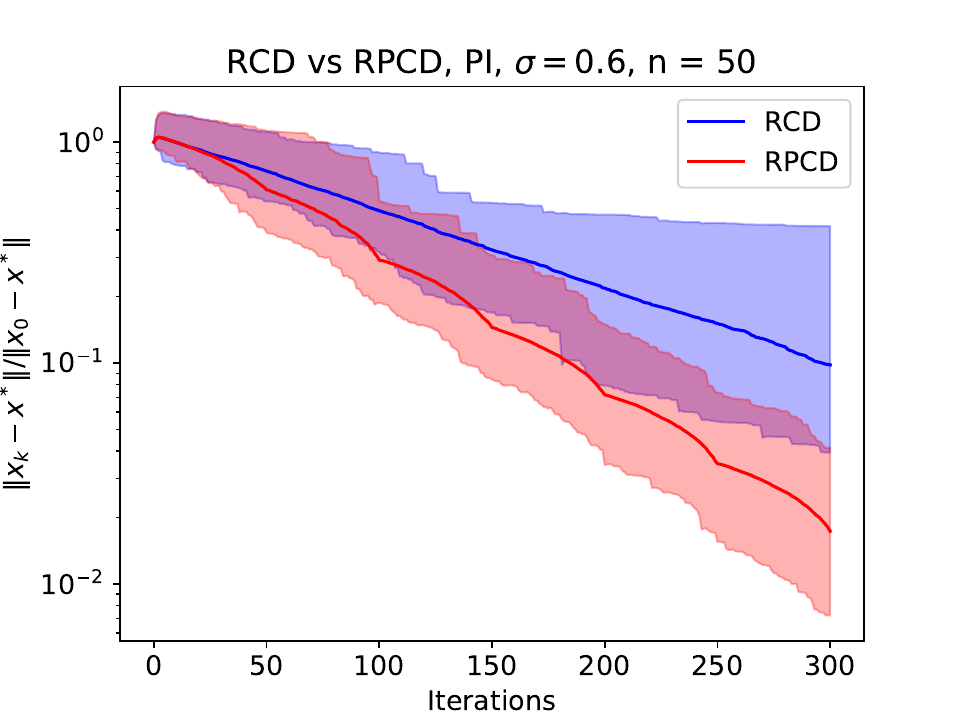}
    \includegraphics[width=0.32\linewidth]{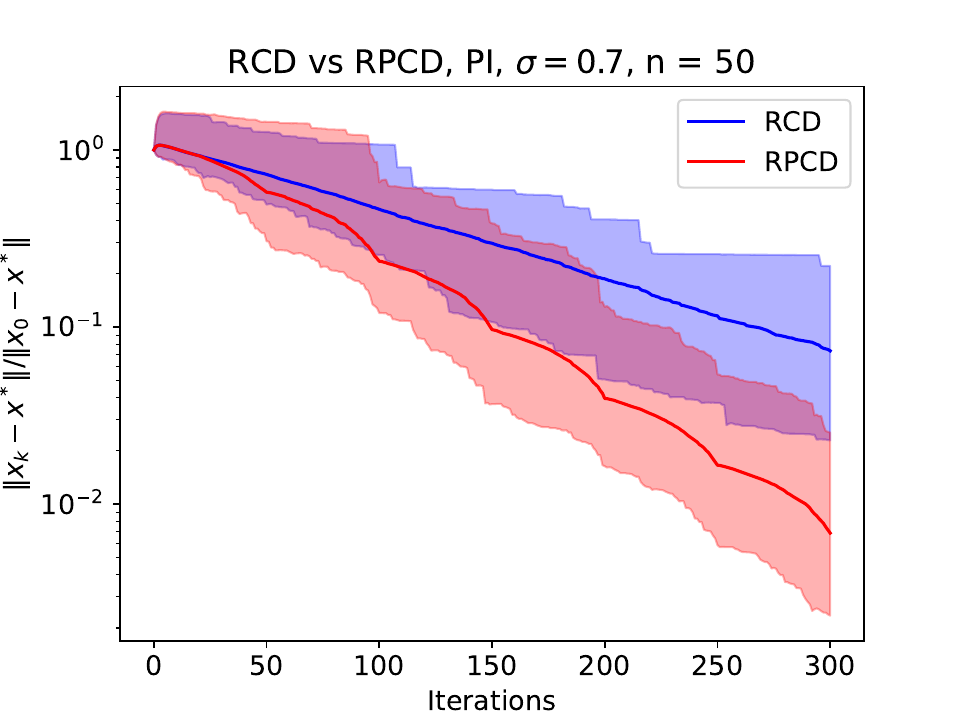}
    \includegraphics[width=0.32\linewidth]{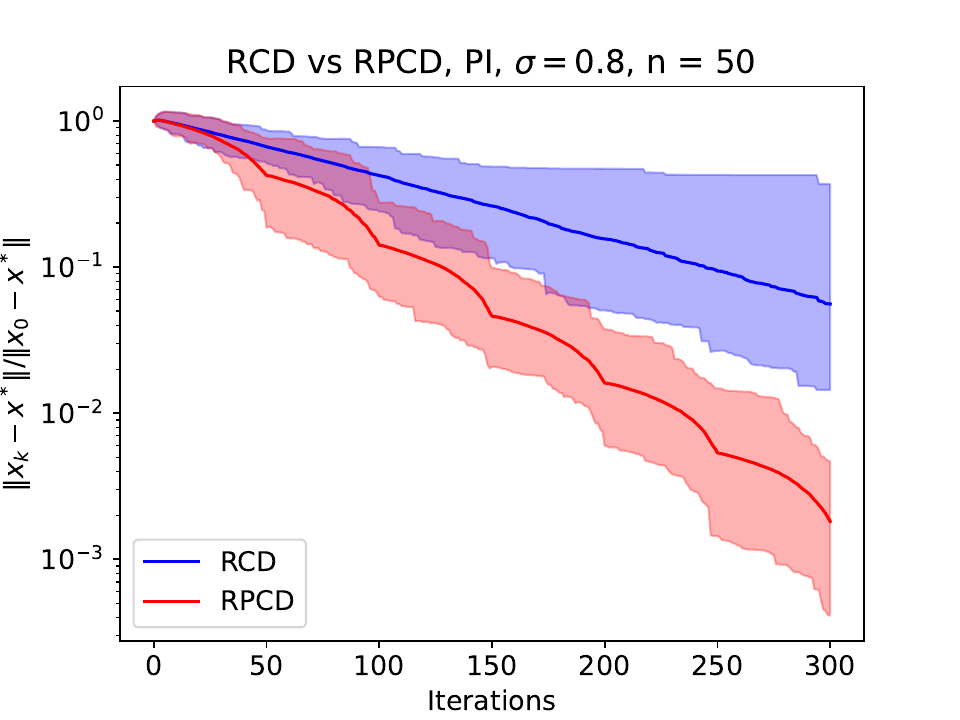}
    \includegraphics[width=0.32\linewidth]{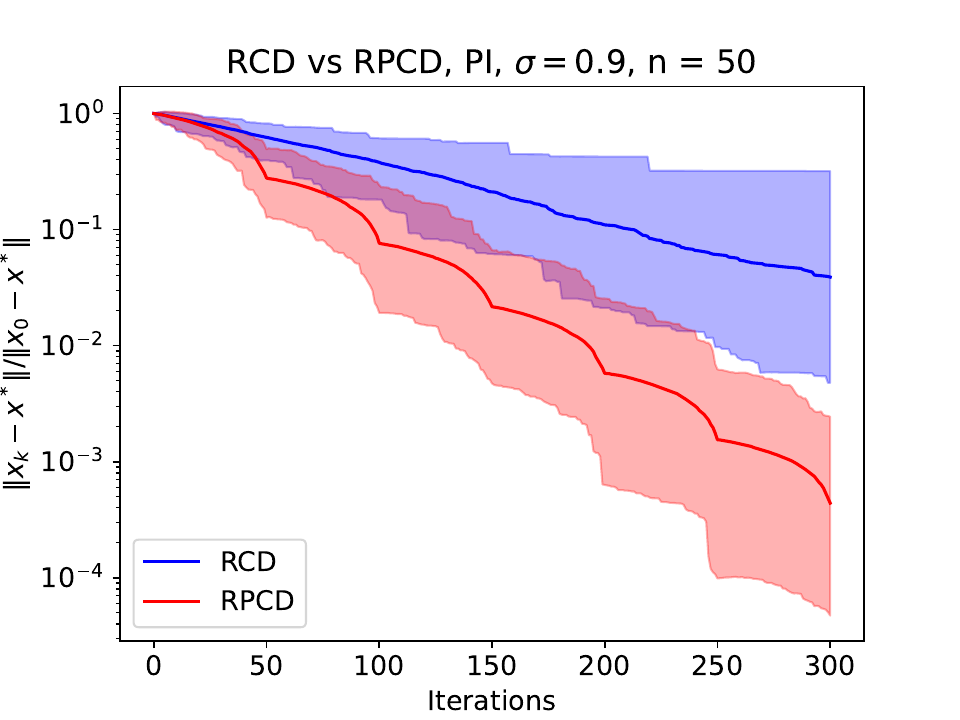}
    \caption{RCD vs RPCD, permutation-invariant quadratic functions. $\mA=\sigma \mI + (1-\sigma)\vone \vone^{\top}, n=50, \sigma \in \{0.1,\dots,0.9\}.$ The $y$-axis is in log scale.}
    \label{fig:rcdrpcdn50pi}
\end{figure}

\begin{figure}[t!]
    \centering
    \includegraphics[width=0.32\linewidth]{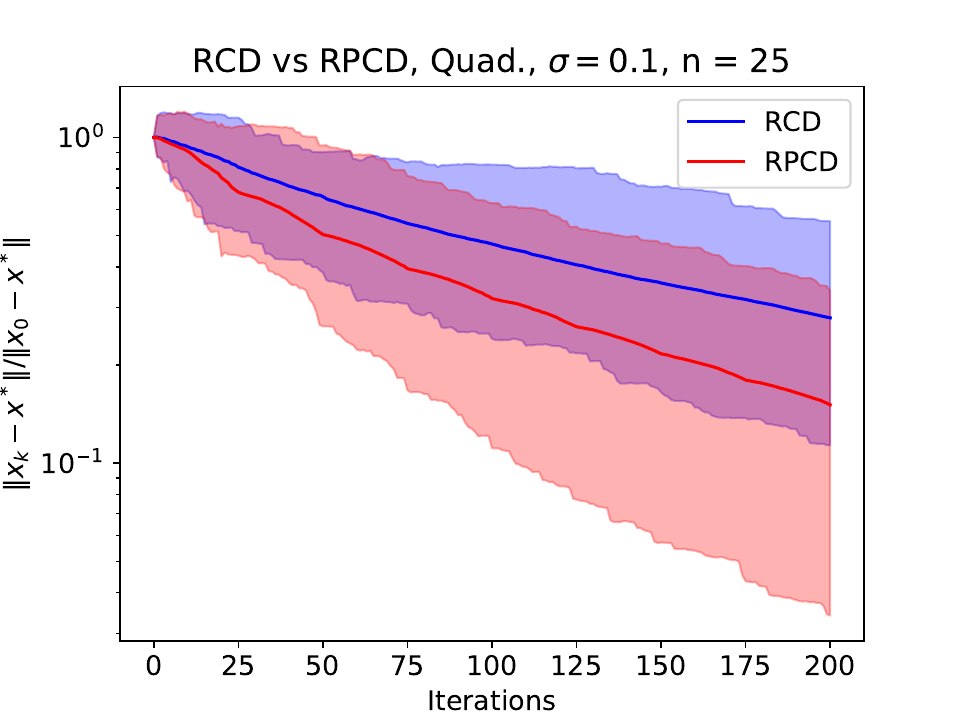}
    \includegraphics[width=0.32\linewidth]{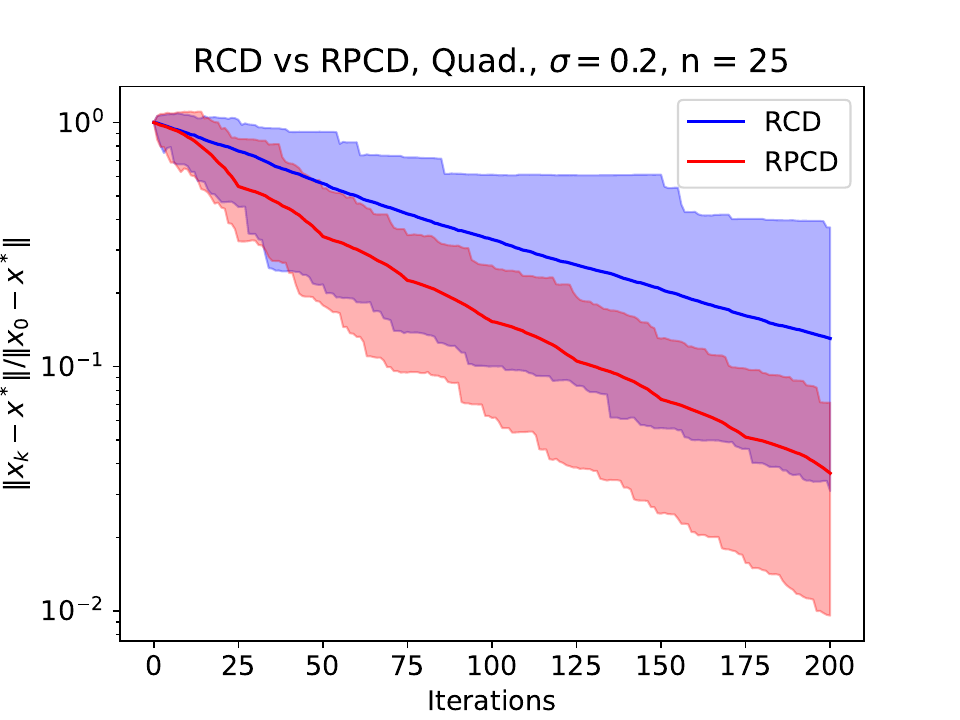}
    \includegraphics[width=0.32\linewidth]{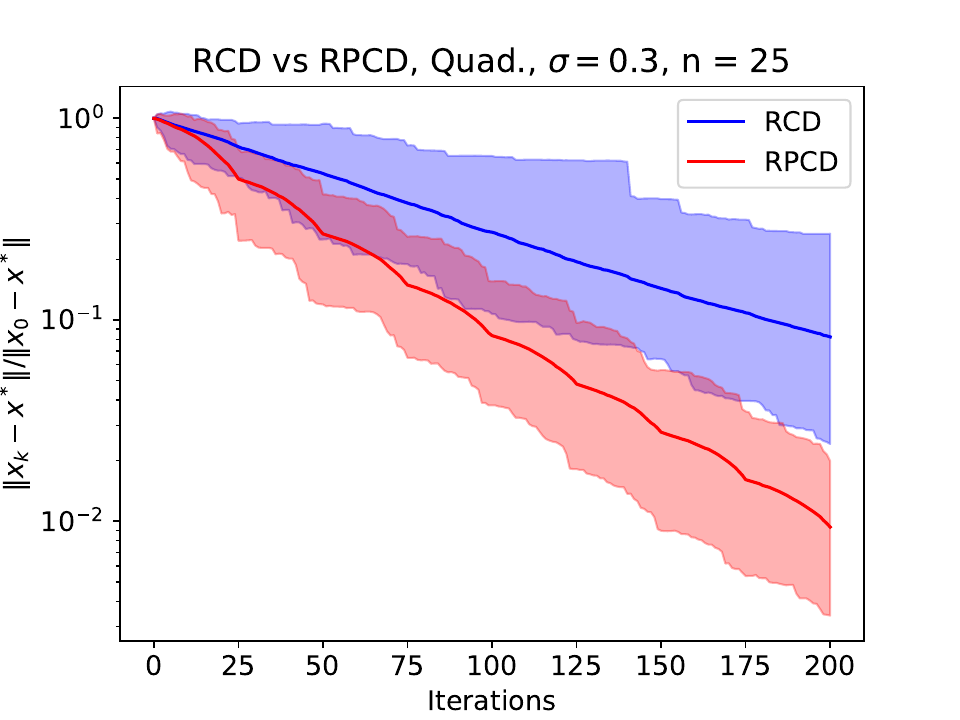}
    \includegraphics[width=0.32\linewidth]{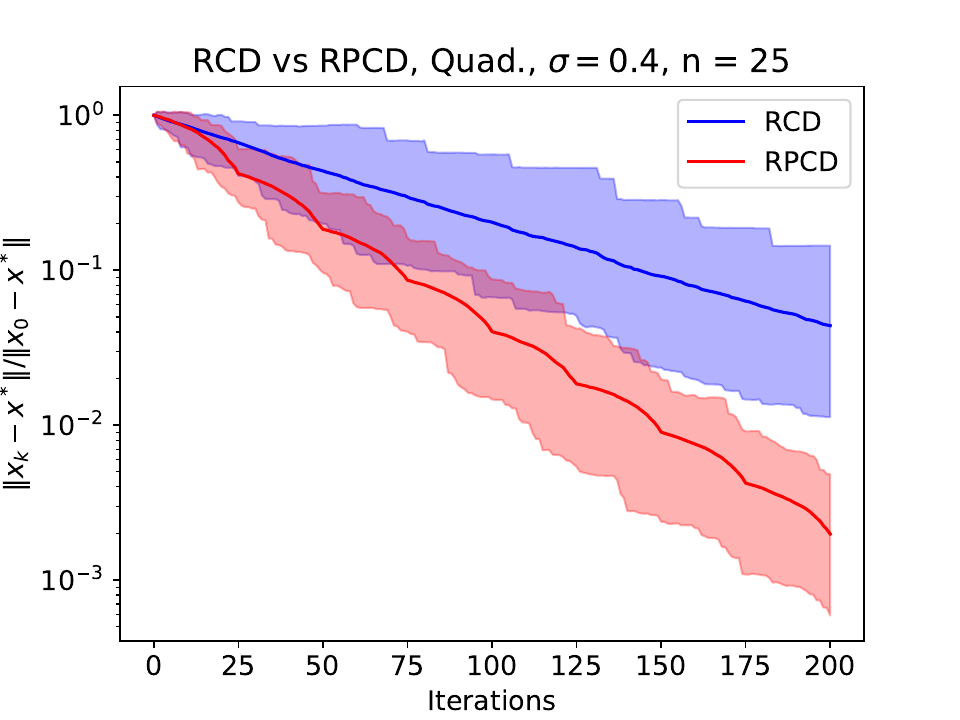}
    \includegraphics[width=0.32\linewidth]{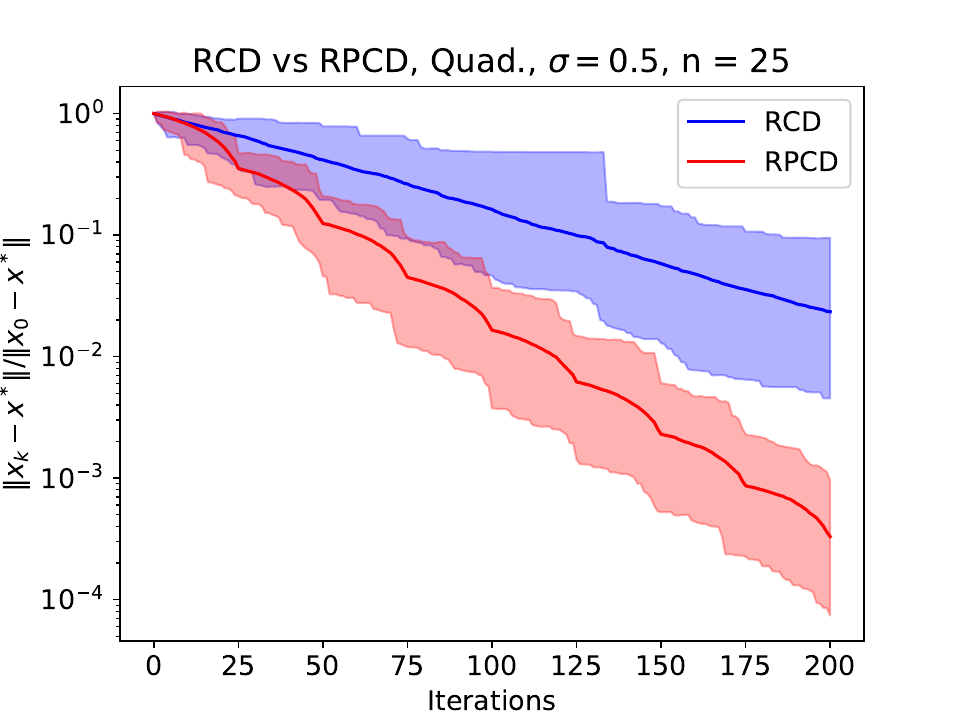}
    \includegraphics[width=0.32\linewidth]{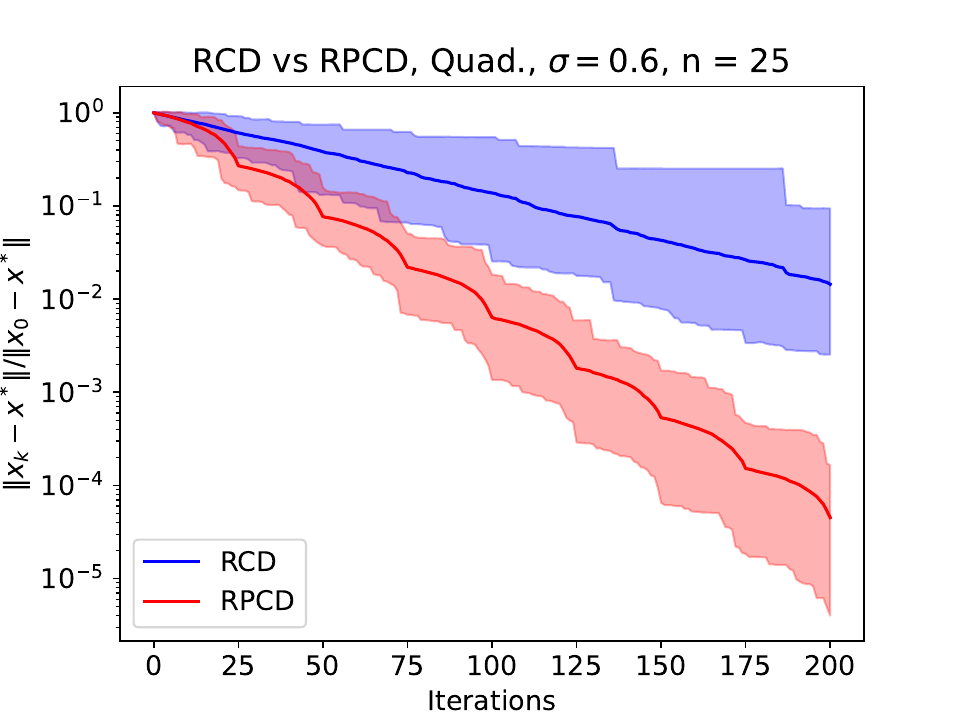}
    \includegraphics[width=0.32\linewidth]{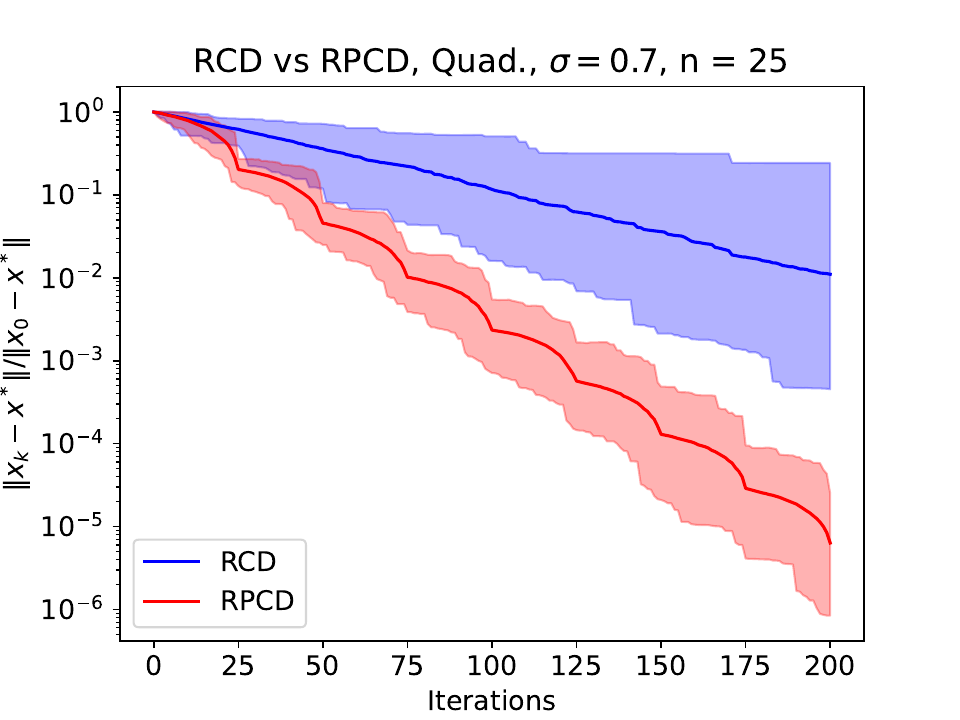}
    \includegraphics[width=0.32\linewidth]{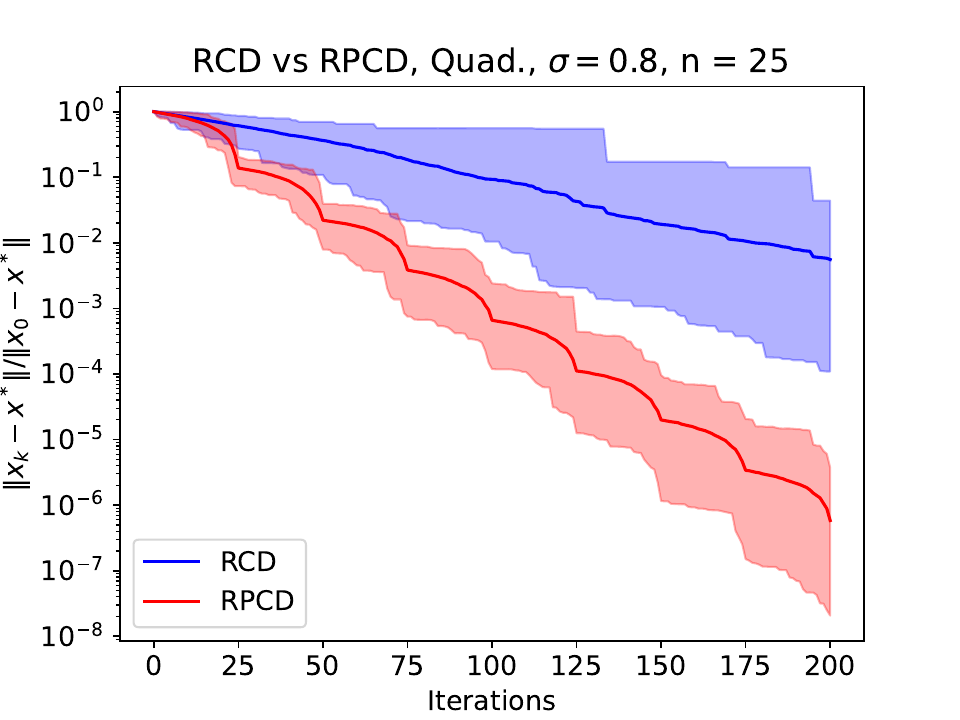}
    \includegraphics[width=0.32\linewidth]{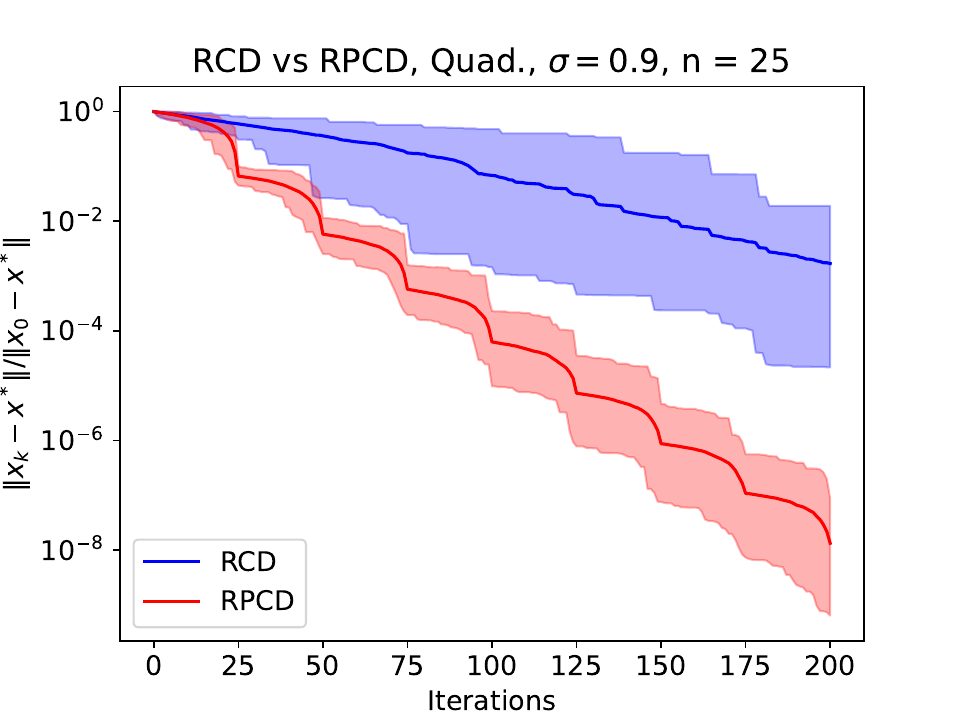}
    \caption{RCD vs RPCD, quadratic functions with unit-diagonal Hessians. $\mA$ is unit-diagonal and $\lambda_{\min}(\mA)=\sigma$, $n=25, \sigma \in \{0.1,\dots,0.9\}.$ The $y$-axis is in log scale.}
    \label{fig:rcdrpcdn25quad}
\end{figure}

\begin{figure}[t!]
    \centering
    \includegraphics[width=0.32\linewidth]{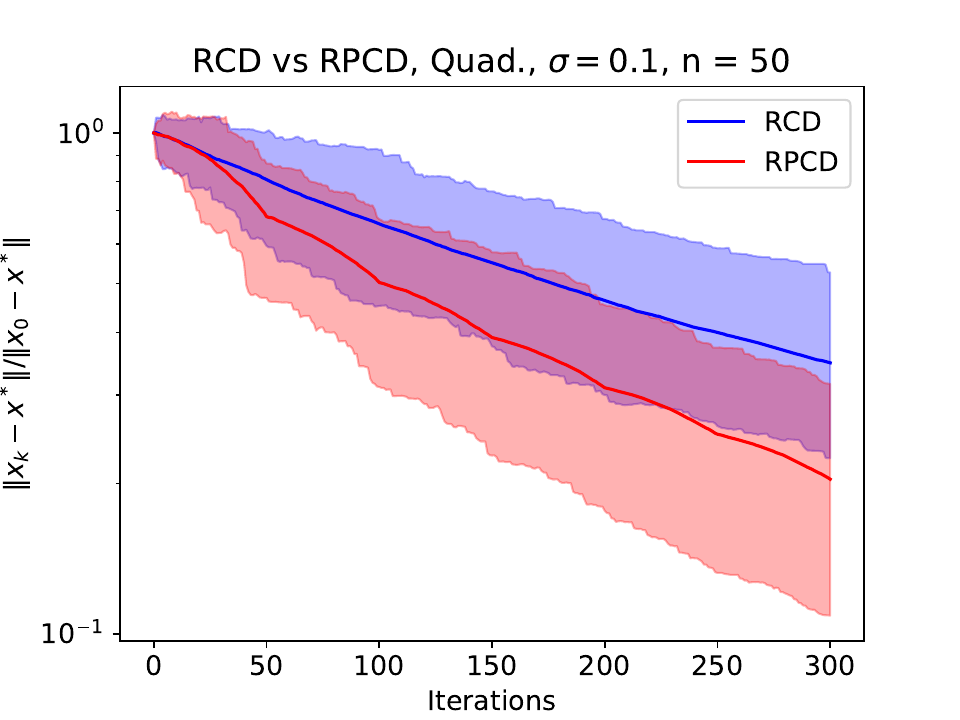}
    \includegraphics[width=0.32\linewidth]{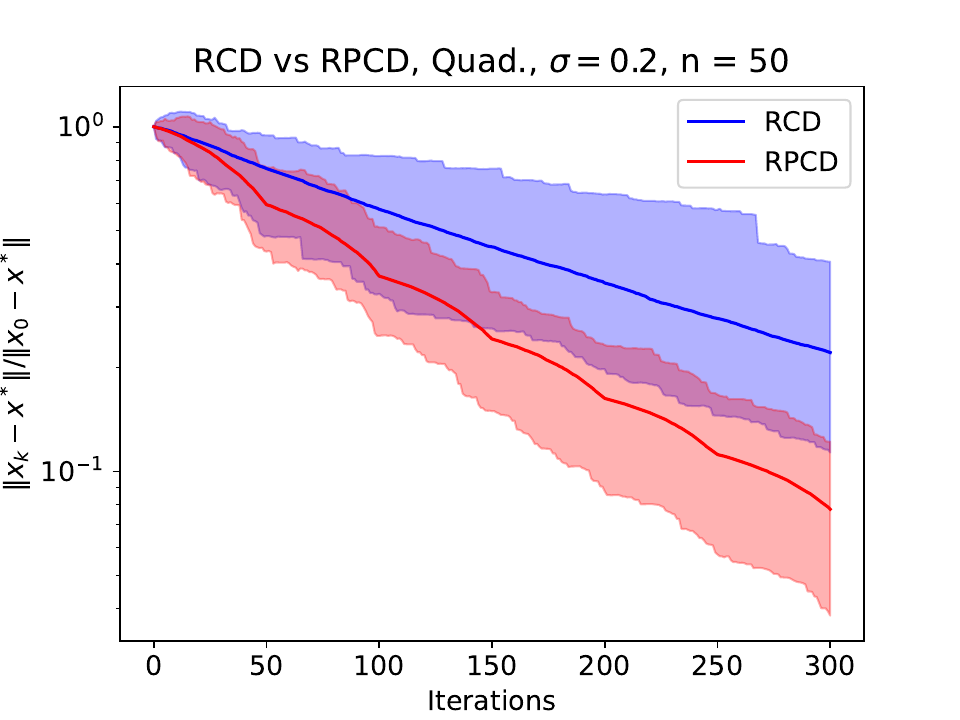}
    \includegraphics[width=0.32\linewidth]{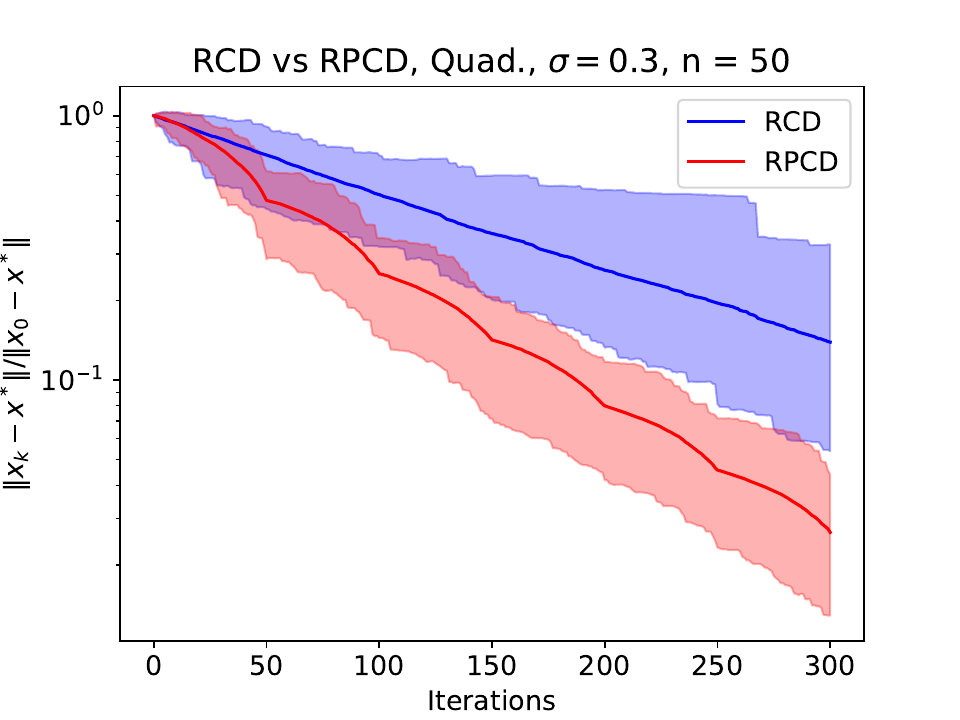}
    \includegraphics[width=0.32\linewidth]{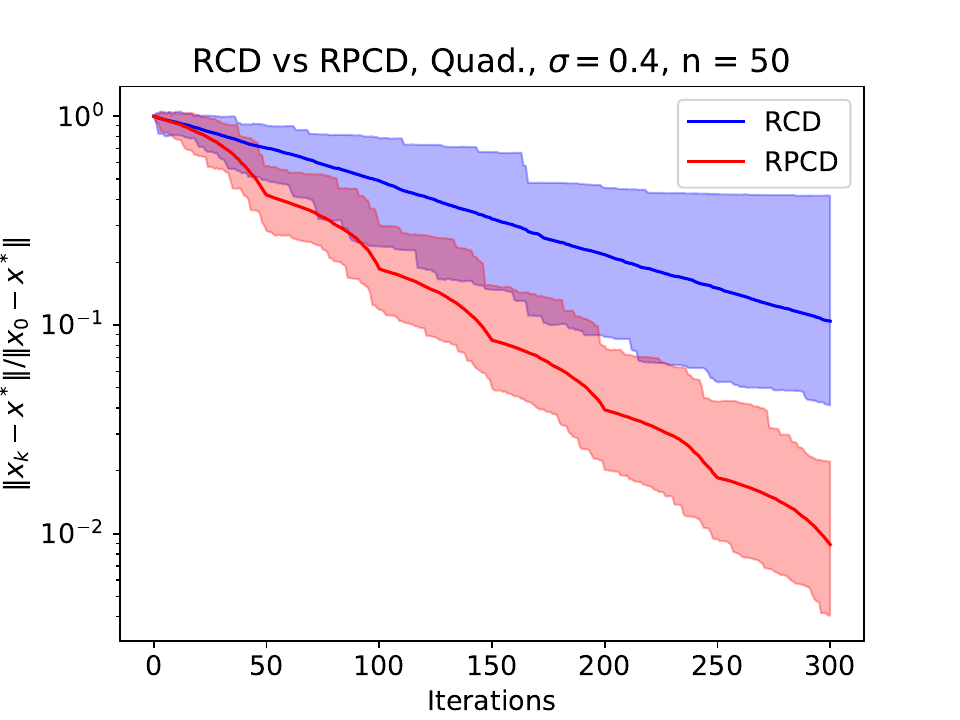}
    \includegraphics[width=0.32\linewidth]{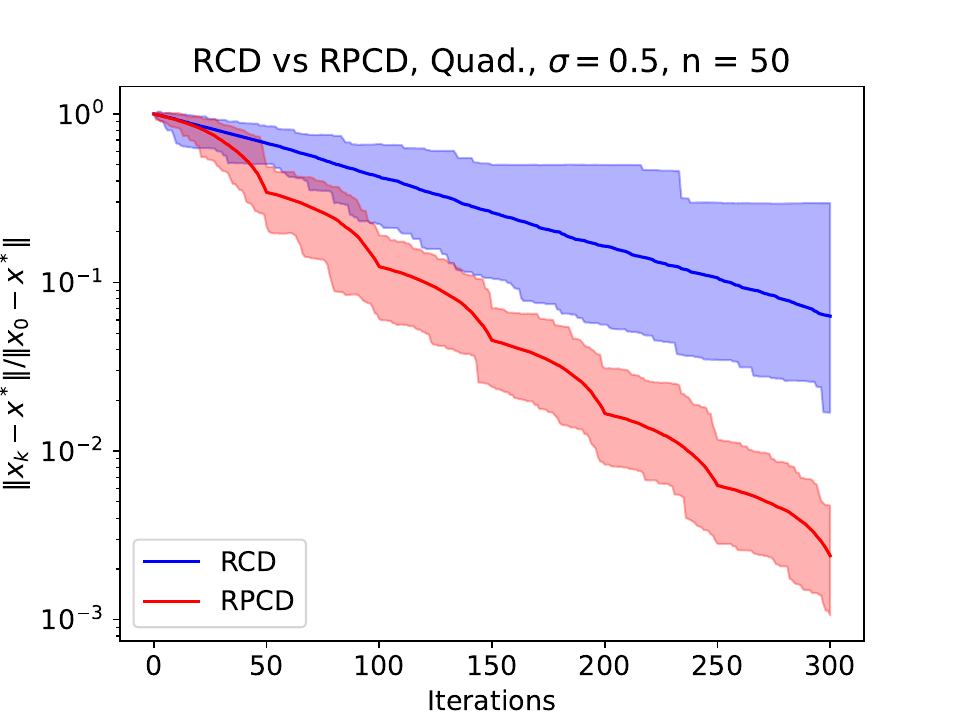}
    \includegraphics[width=0.32\linewidth]{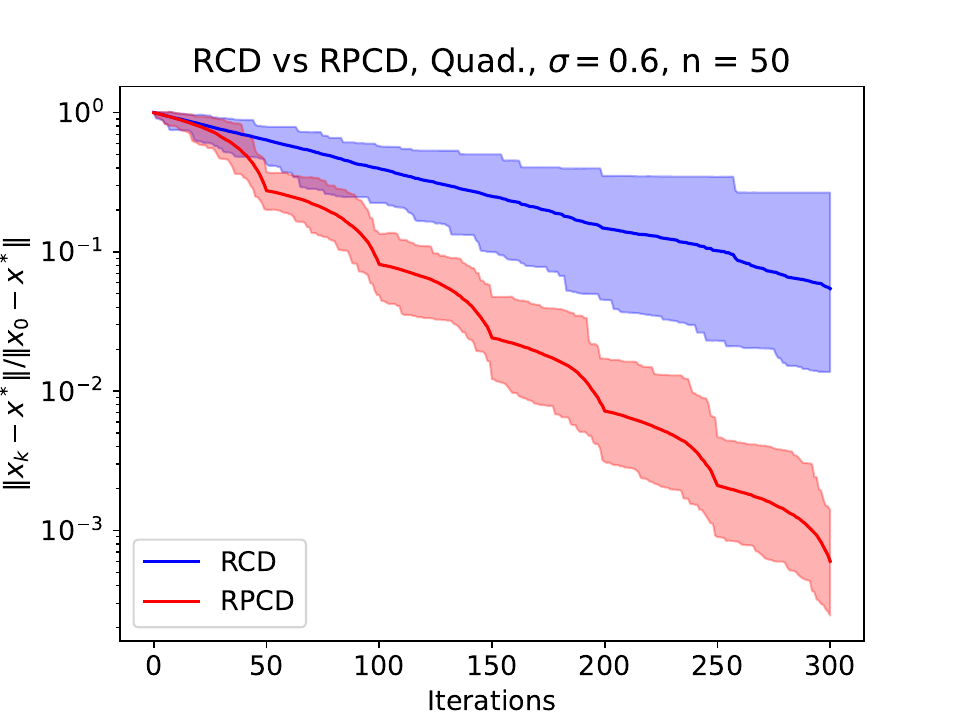}
    \includegraphics[width=0.32\linewidth]{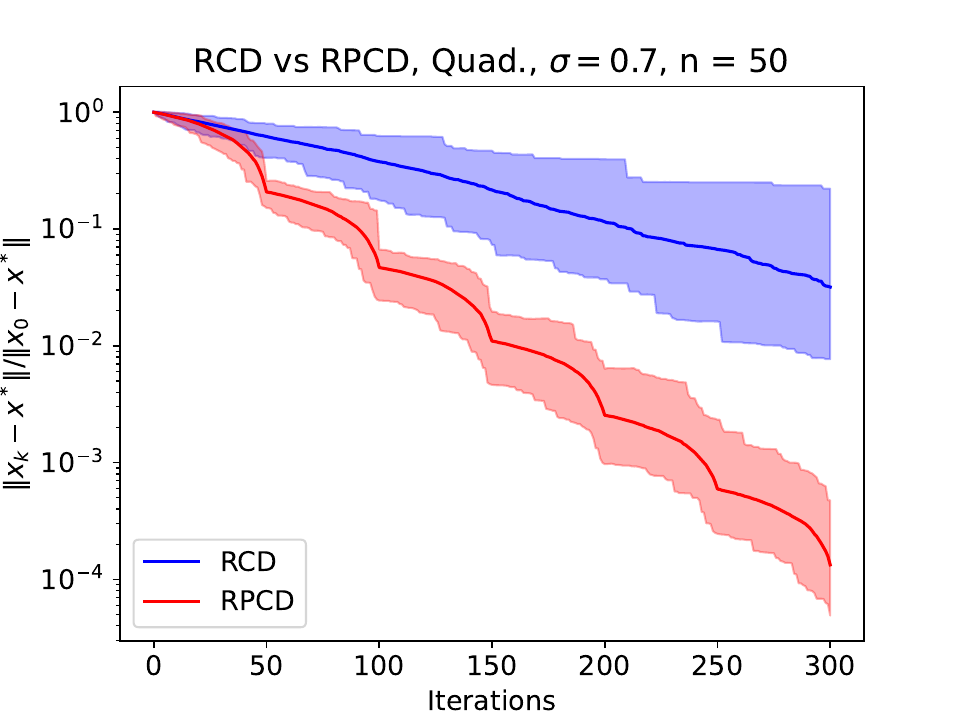}
    \includegraphics[width=0.32\linewidth]{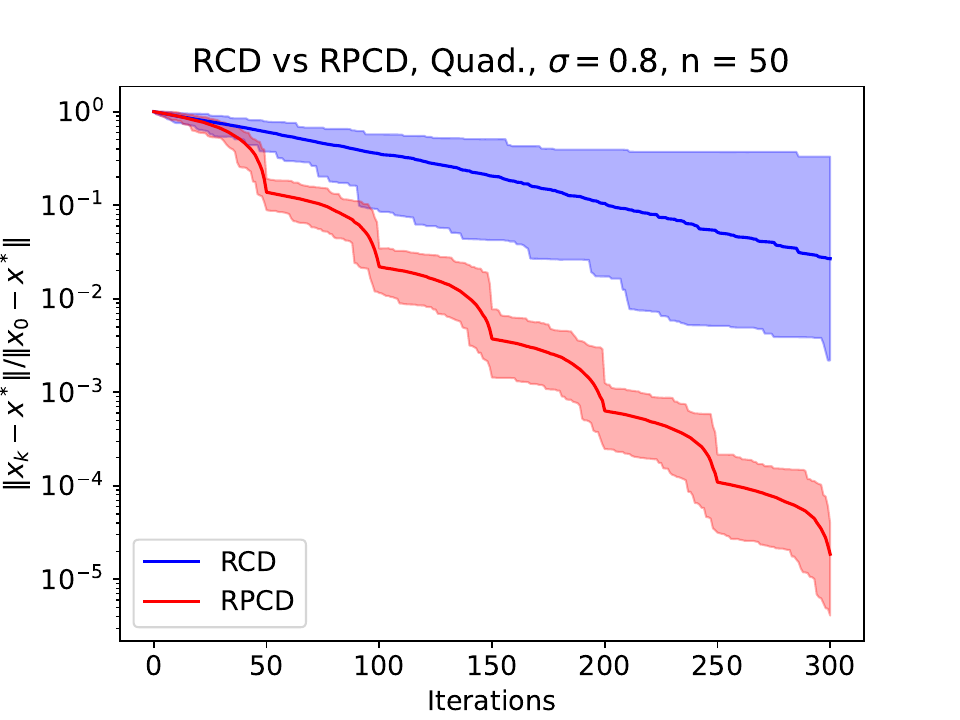}
    \includegraphics[width=0.32\linewidth]{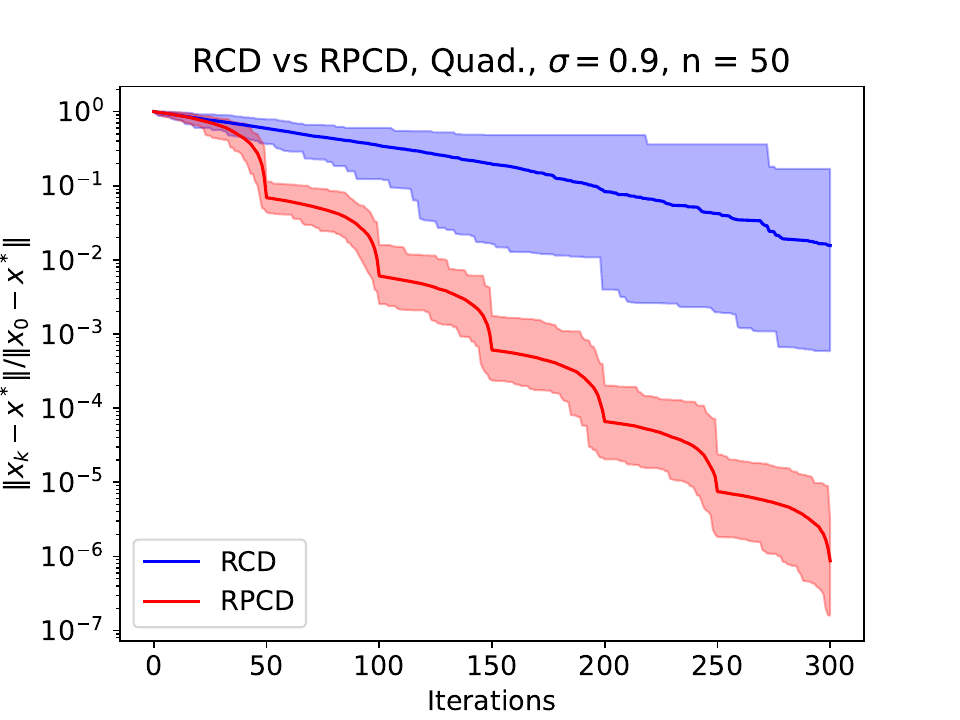}
    \caption{RCD vs RPCD, quadratic functions with unit-diagonal Hessians. $\mA$ is unit-diagonal and $\lambda_{\min}(\mA)=\sigma$, $n=50, \sigma \in \{0.1,\dots,0.9\}.$ The $y$-axis is in log scale.}
    \label{fig:rcdrpcdn50quad}
\end{figure}

\begin{figure}[t!]
    \centering
    \includegraphics[width=0.32\linewidth]{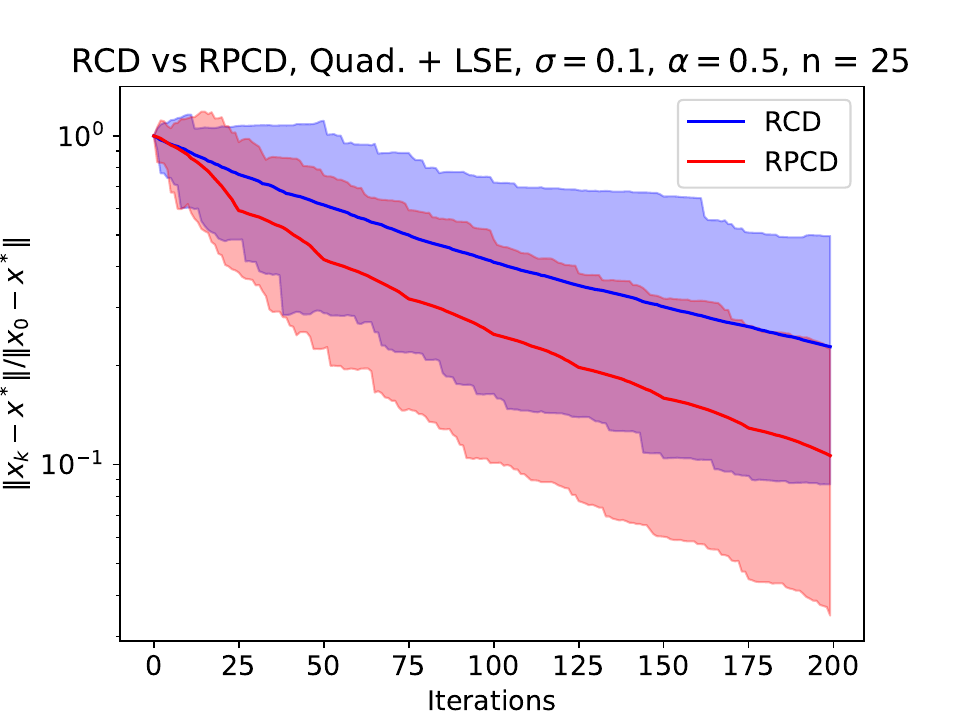}
    \includegraphics[width=0.32\linewidth]{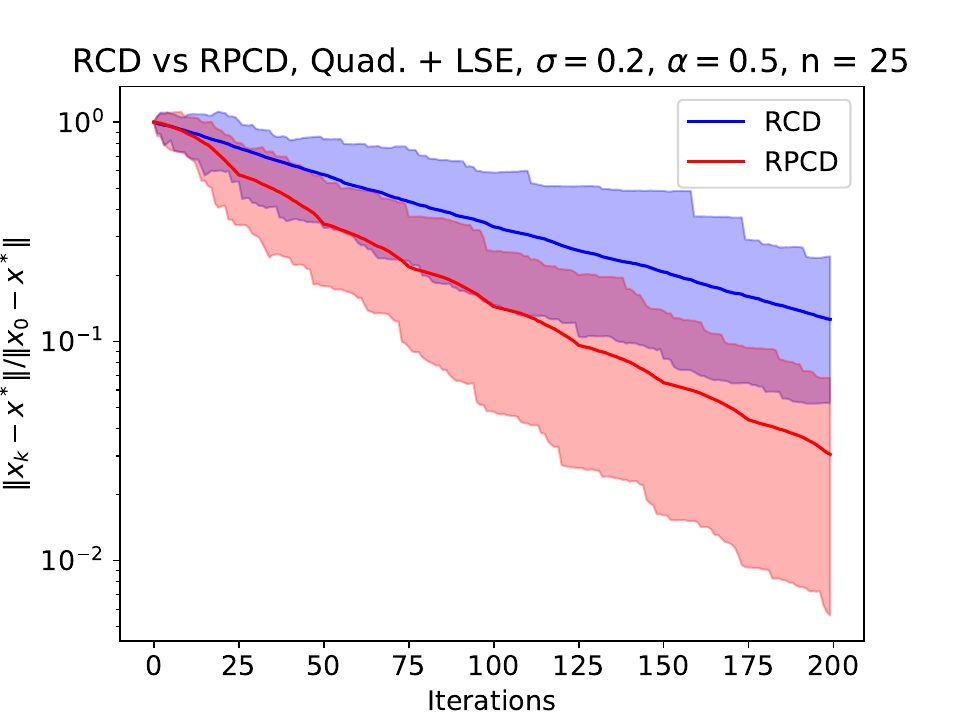}
    \includegraphics[width=0.32\linewidth]{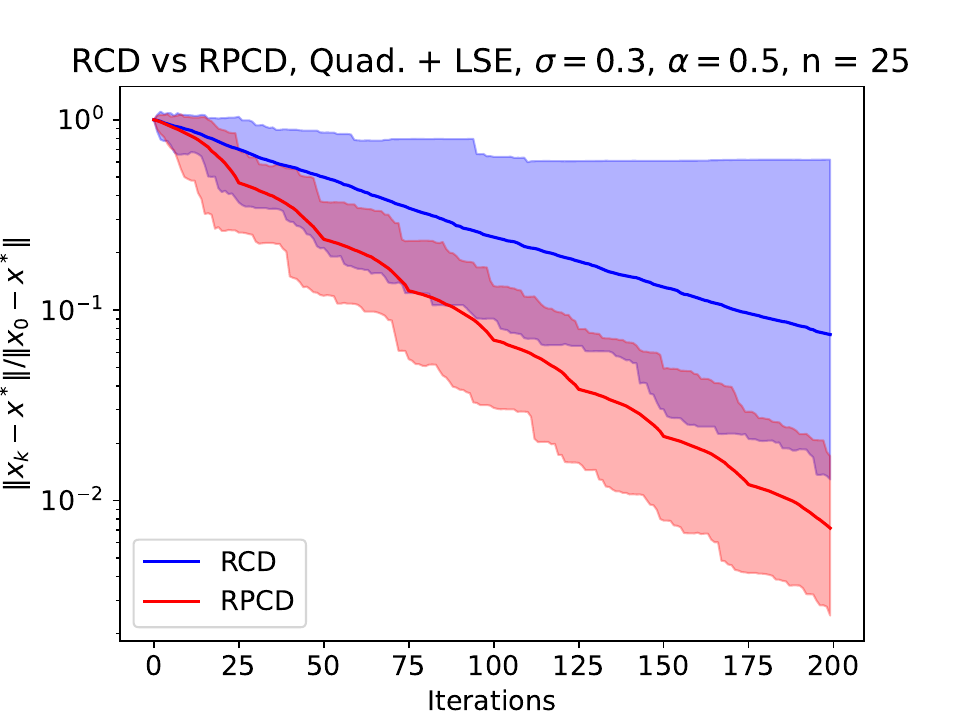}
    \includegraphics[width=0.32\linewidth]{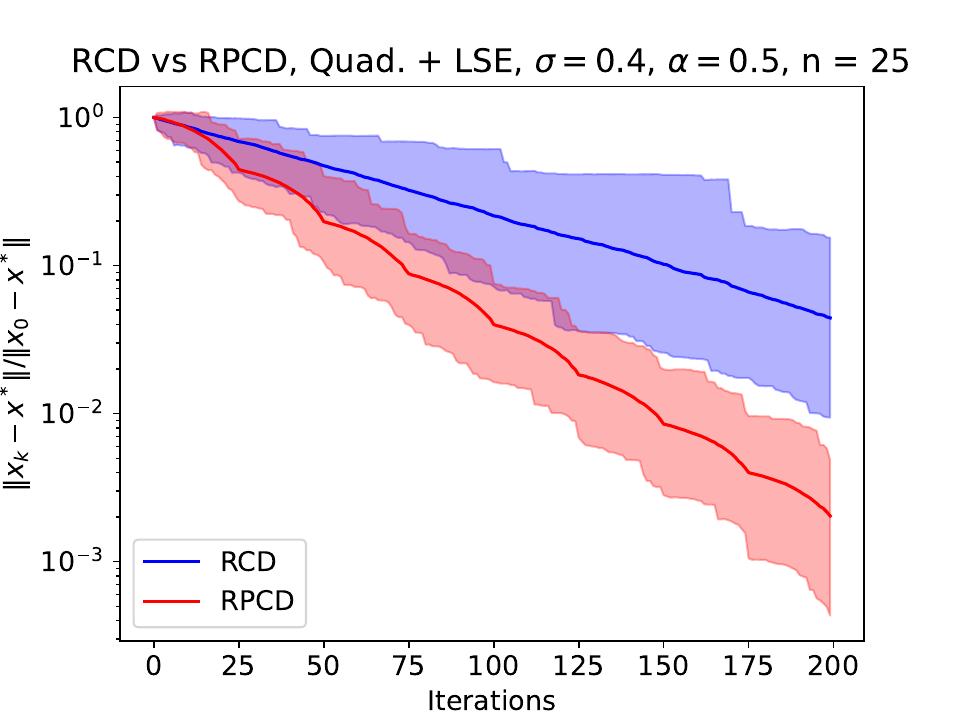}
    \includegraphics[width=0.32\linewidth]{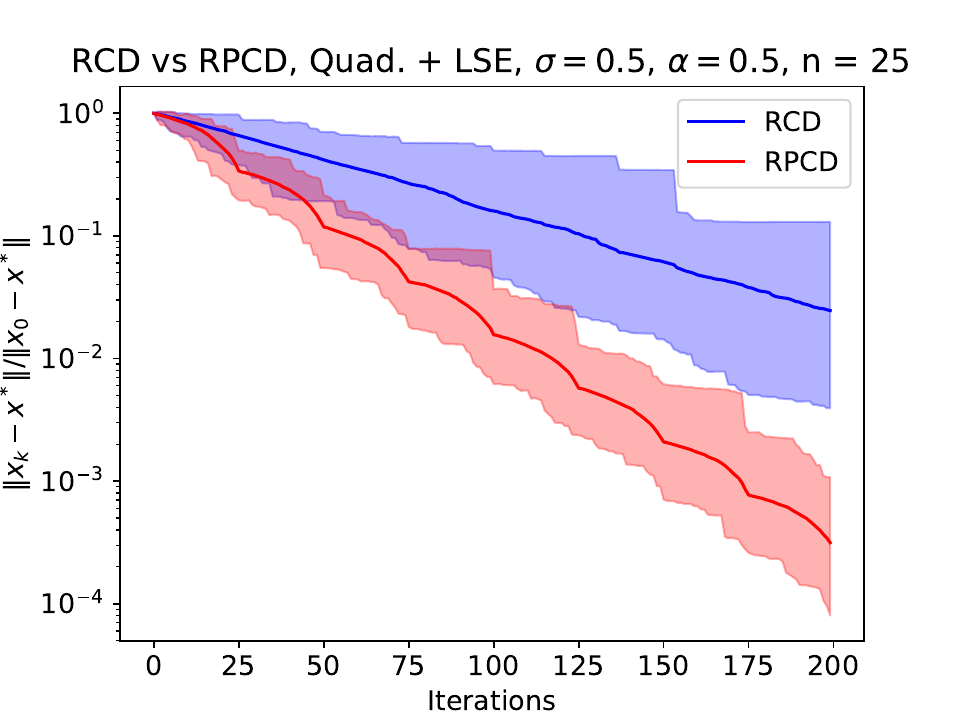}
    \includegraphics[width=0.32\linewidth]{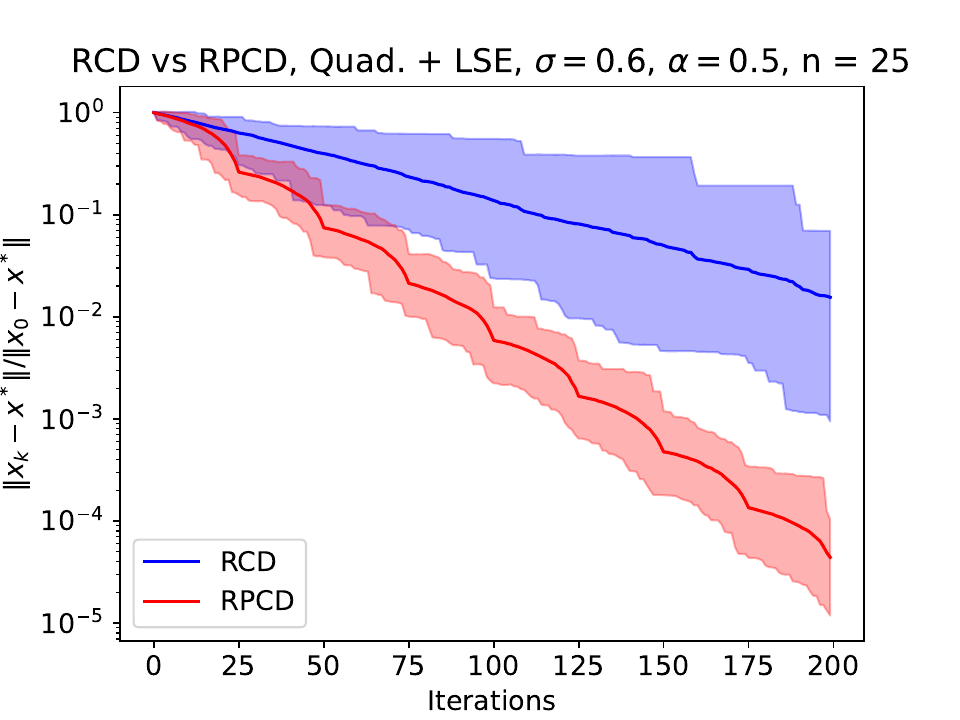}
    \includegraphics[width=0.32\linewidth]{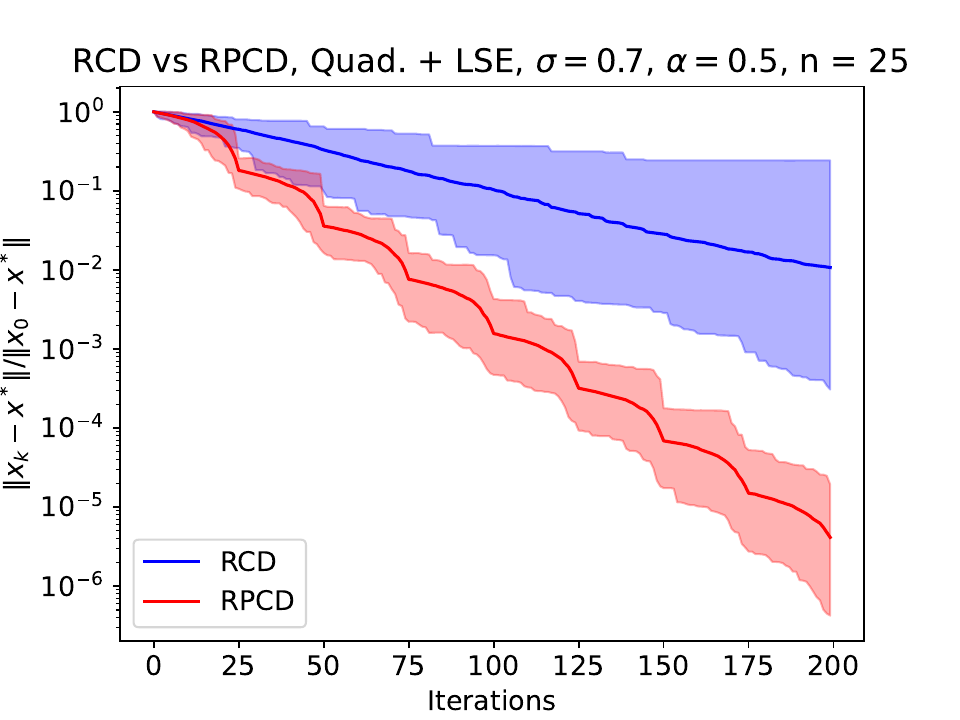}
    \includegraphics[width=0.32\linewidth]{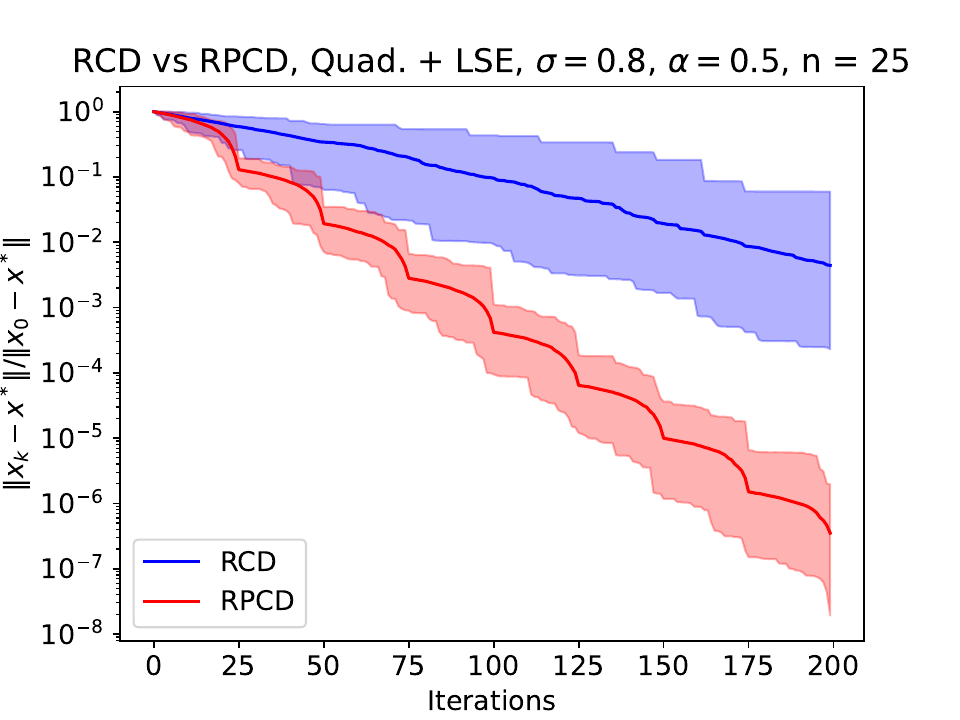}
    \includegraphics[width=0.32\linewidth]{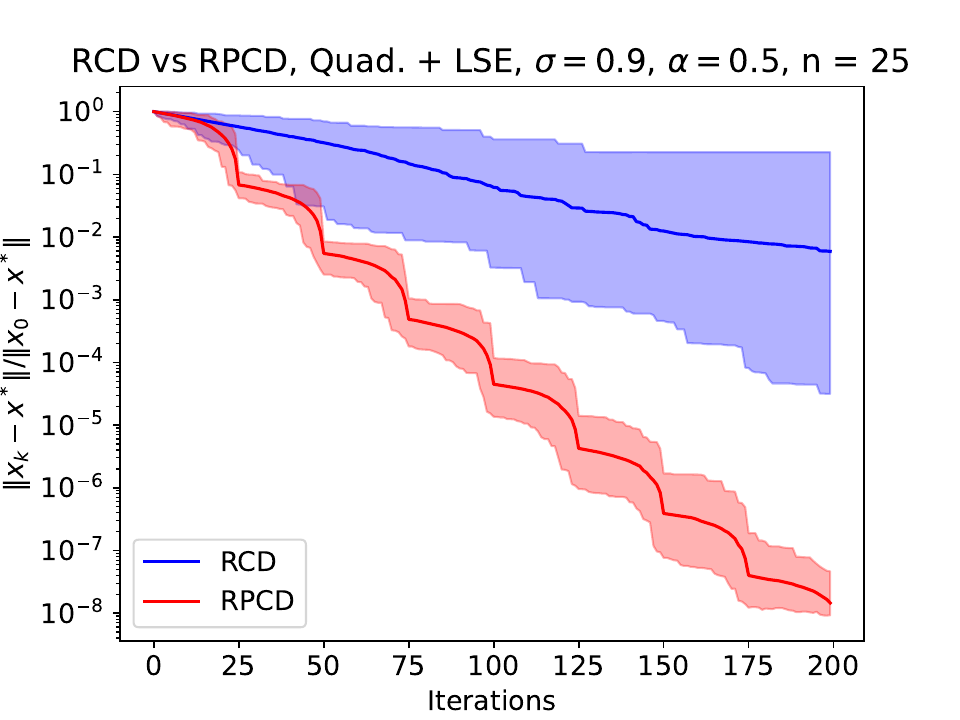}
    \caption{RCD vs RPCD, quadratic functions with unit-diagonal Hessians and log-sum-exp term. $\mA$ is unit-diagonal and $\lambda_{\min}(\mA)=\sigma$, $n=25, \sigma \in \{0.1,\dots,0.9\}, \alpha=0.5.$ The $y$-axis is in log scale.}
    \label{fig:rcdrpcdn25quadlse1}
\end{figure}

\begin{figure}[t!]
    \centering
    \includegraphics[width=0.32\linewidth]{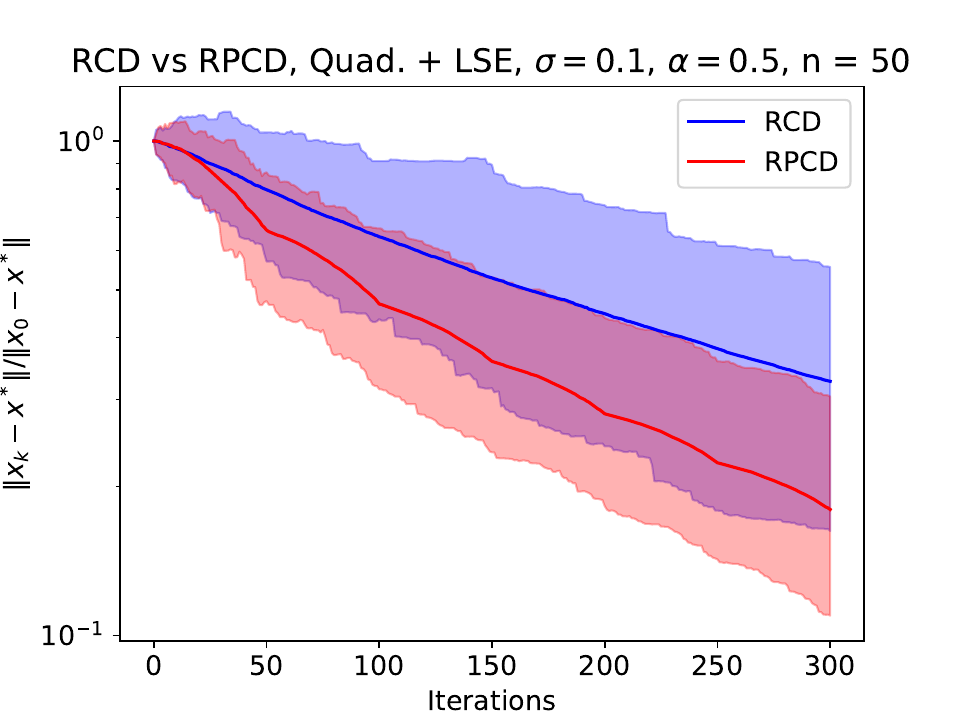}
    \includegraphics[width=0.32\linewidth]{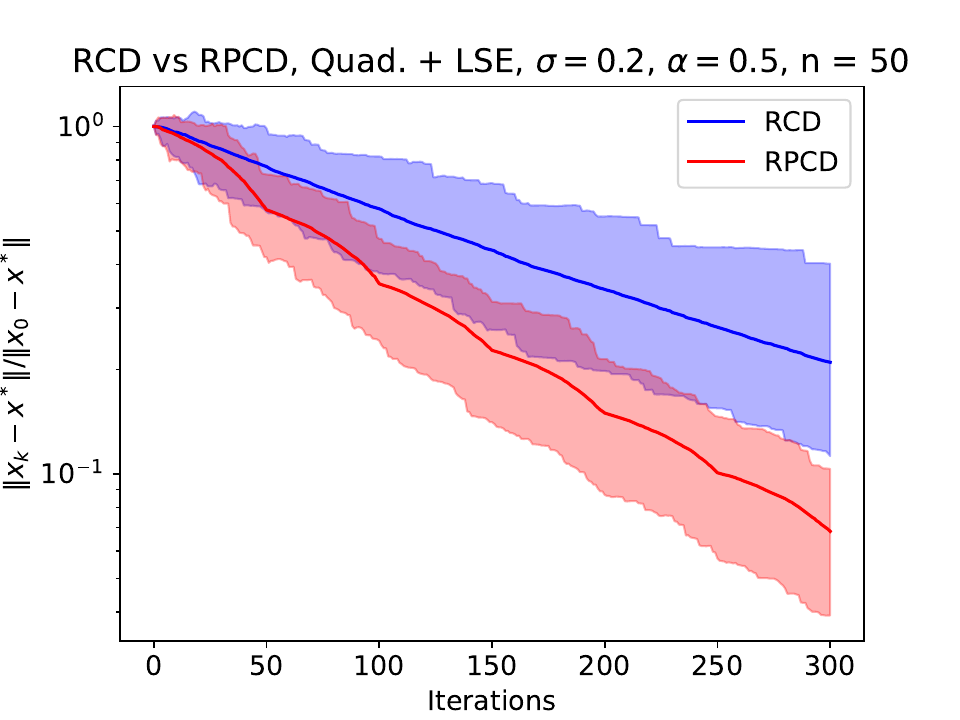}
    \includegraphics[width=0.32\linewidth]{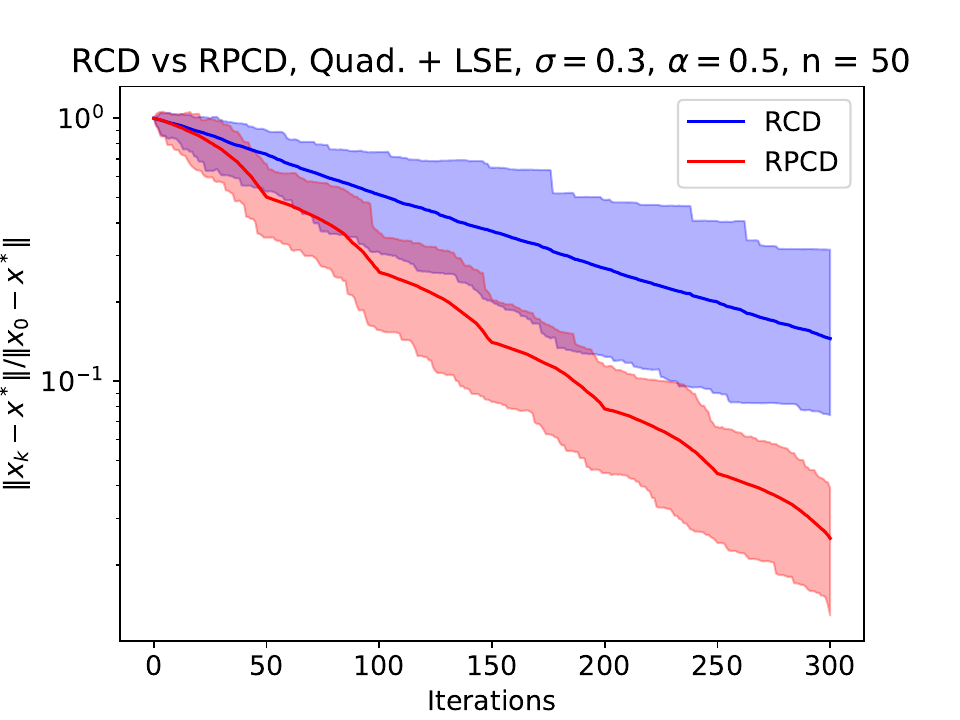}
    \includegraphics[width=0.32\linewidth]{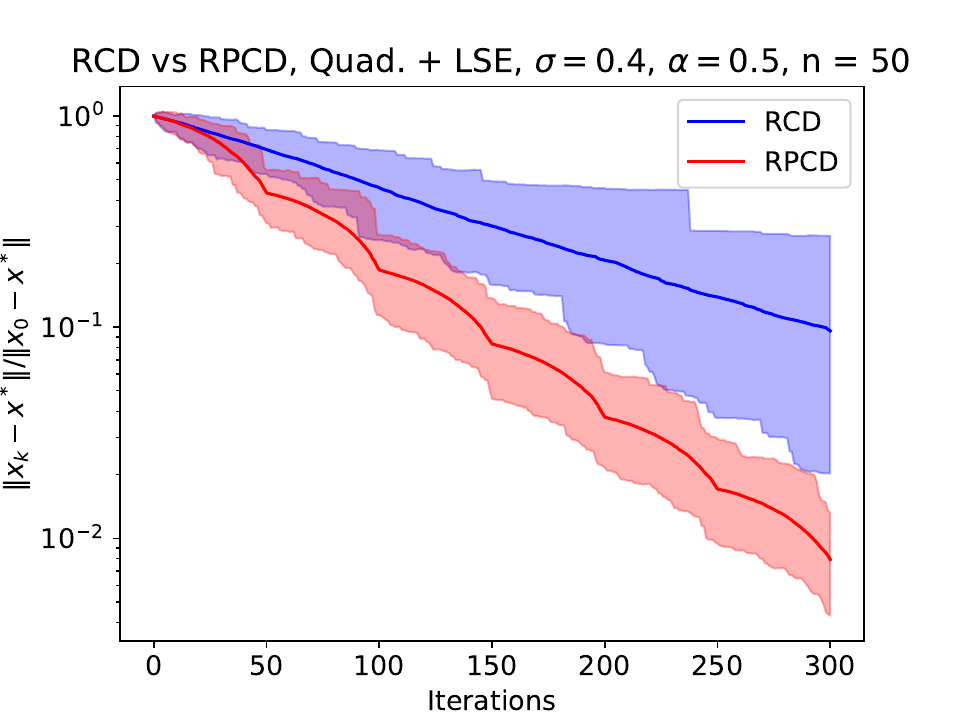}
    \includegraphics[width=0.32\linewidth]{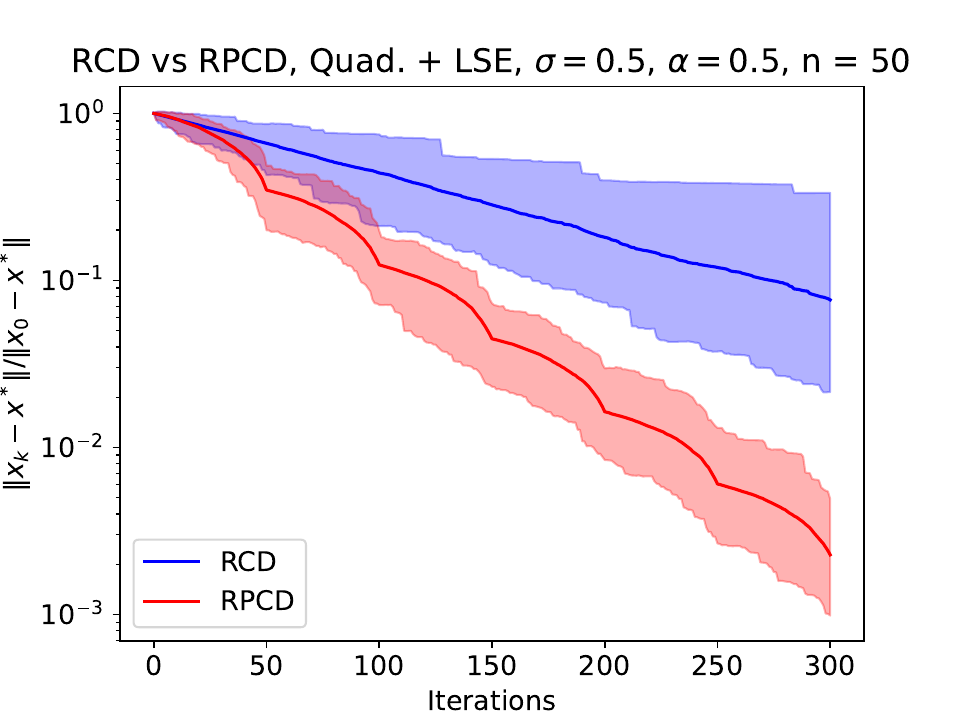}
    \includegraphics[width=0.32\linewidth]{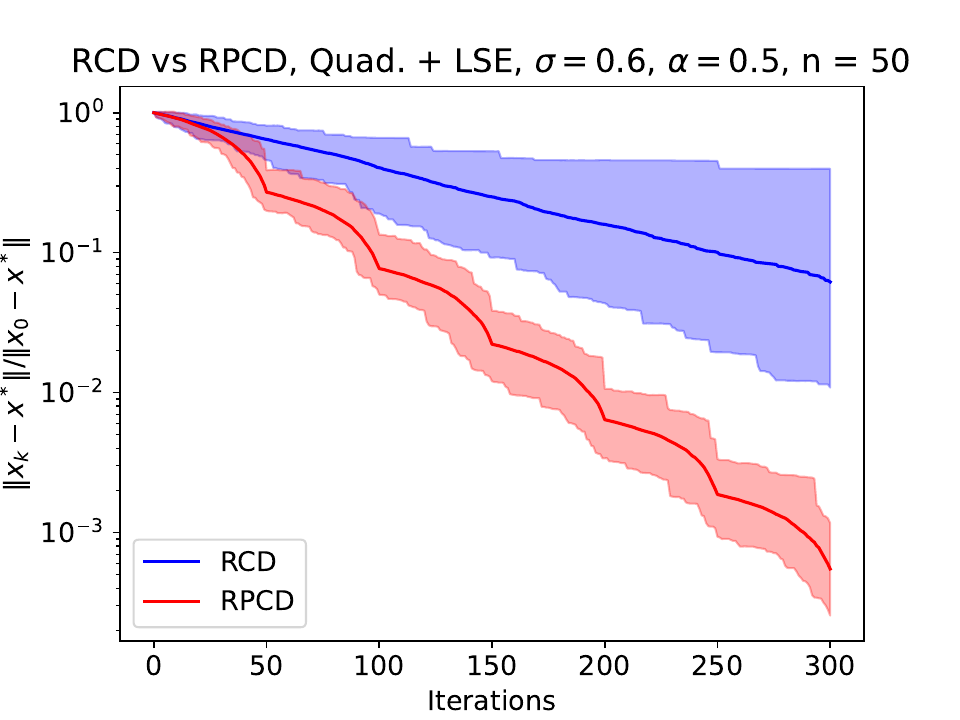}
    \includegraphics[width=0.32\linewidth]{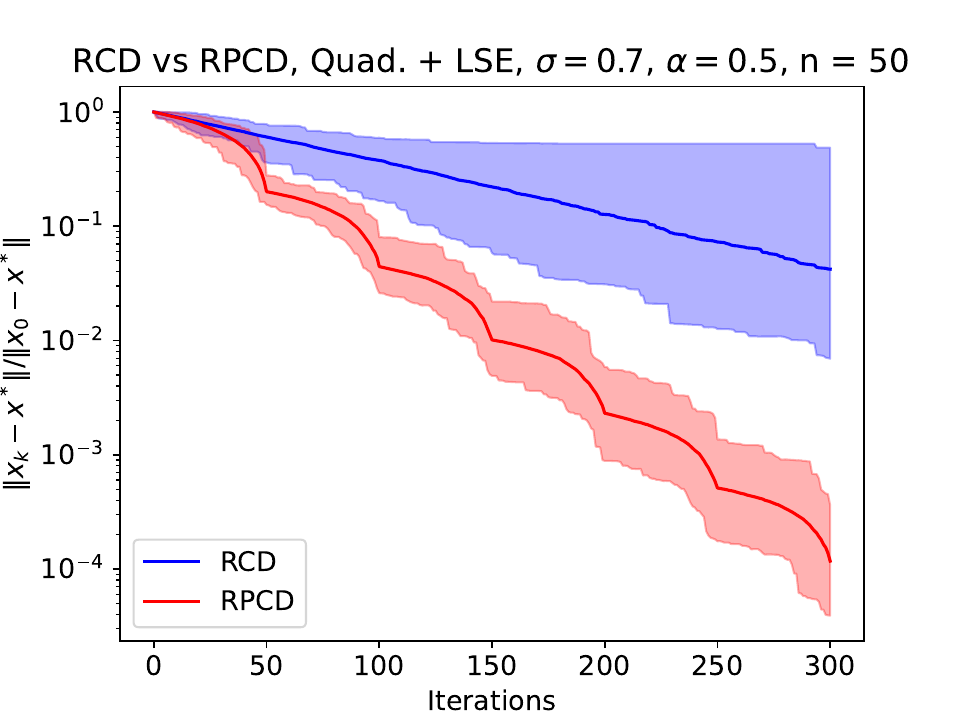}
    \includegraphics[width=0.32\linewidth]{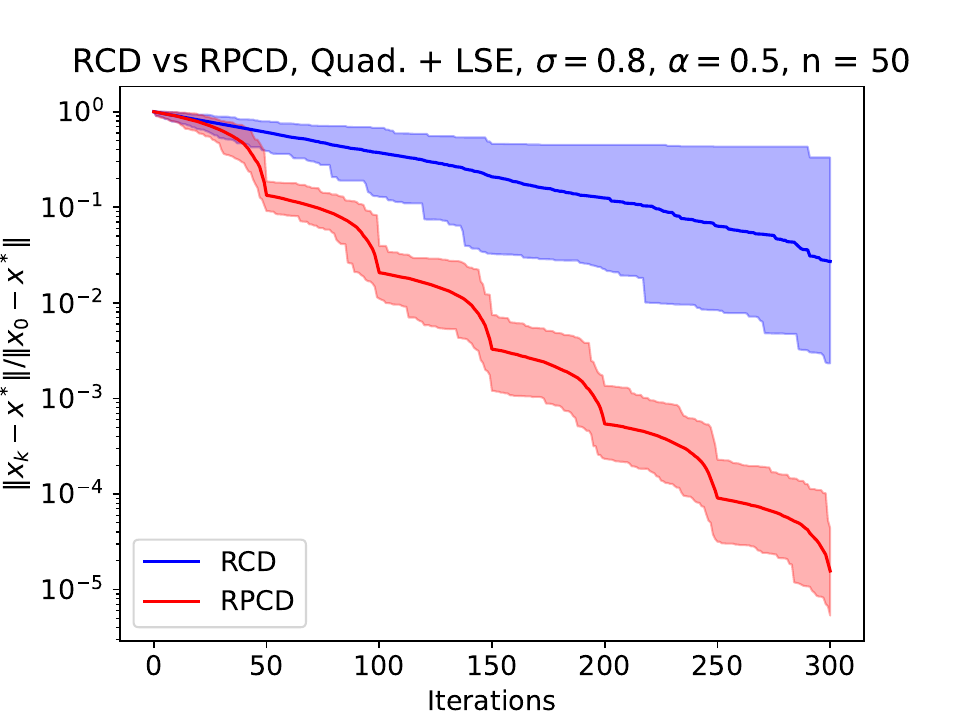}
    \includegraphics[width=0.32\linewidth]{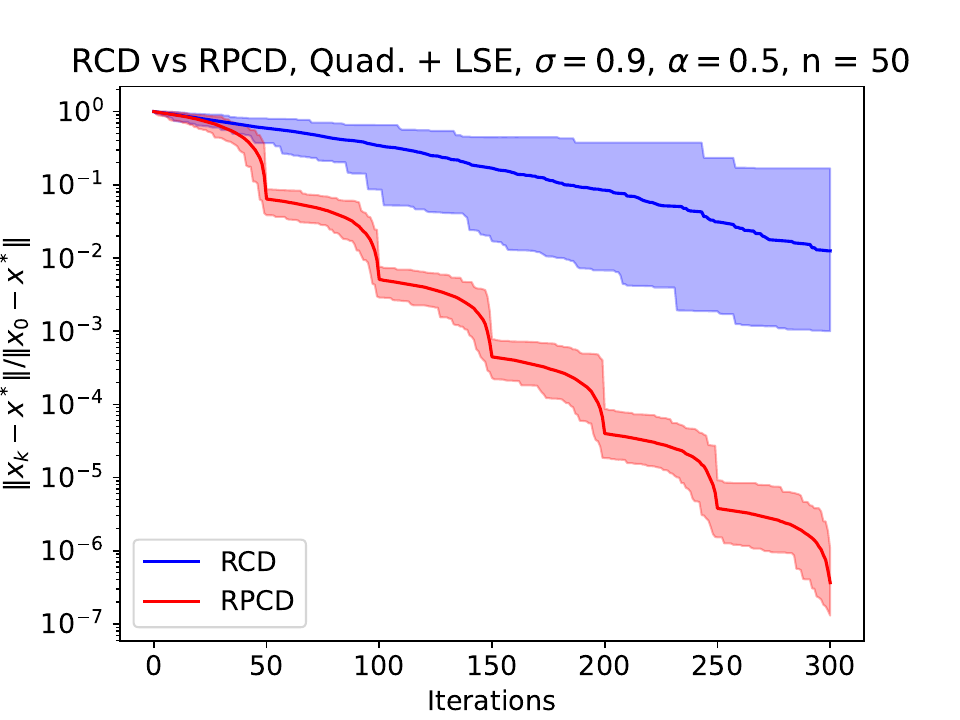}
    \caption{RCD vs RPCD, quadratic functions with unit-diagonal Hessians and log-sum-exp term. $\mA$ is unit-diagonal and $\lambda_{\min}(\mA)=\sigma$, $n=50, \sigma \in \{0.1,\dots,0.9\}, \alpha=0.5.$ The $y$-axis is in log scale.}
    \label{fig:rcdrpcdn50quadlse1}
\end{figure}

\begin{figure}[t!]
    \centering
    \includegraphics[width=0.32\linewidth]{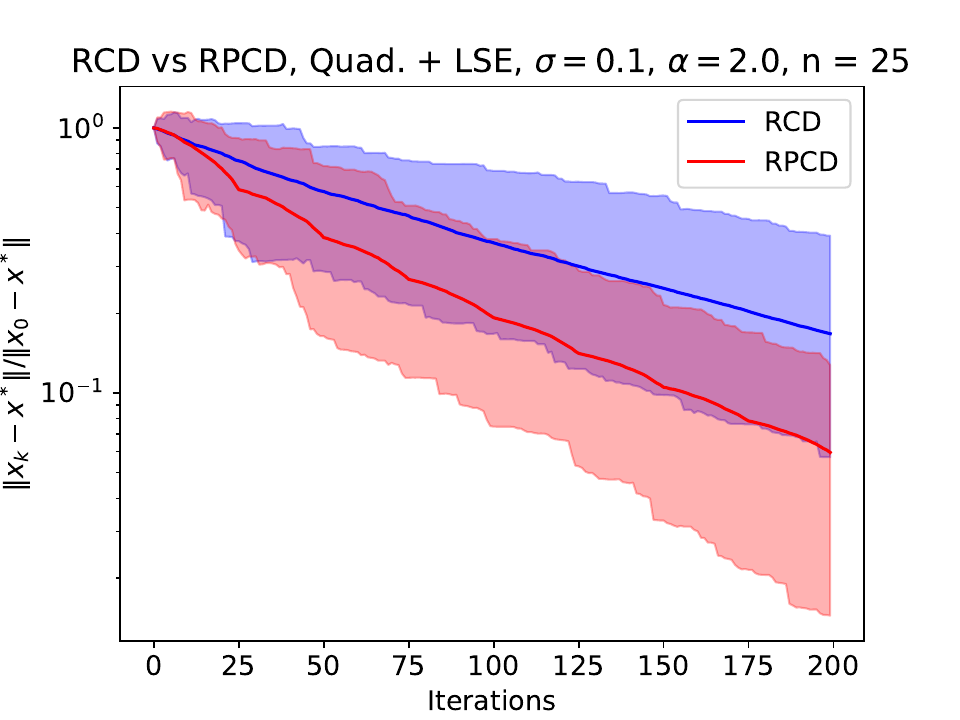}
    \includegraphics[width=0.32\linewidth]{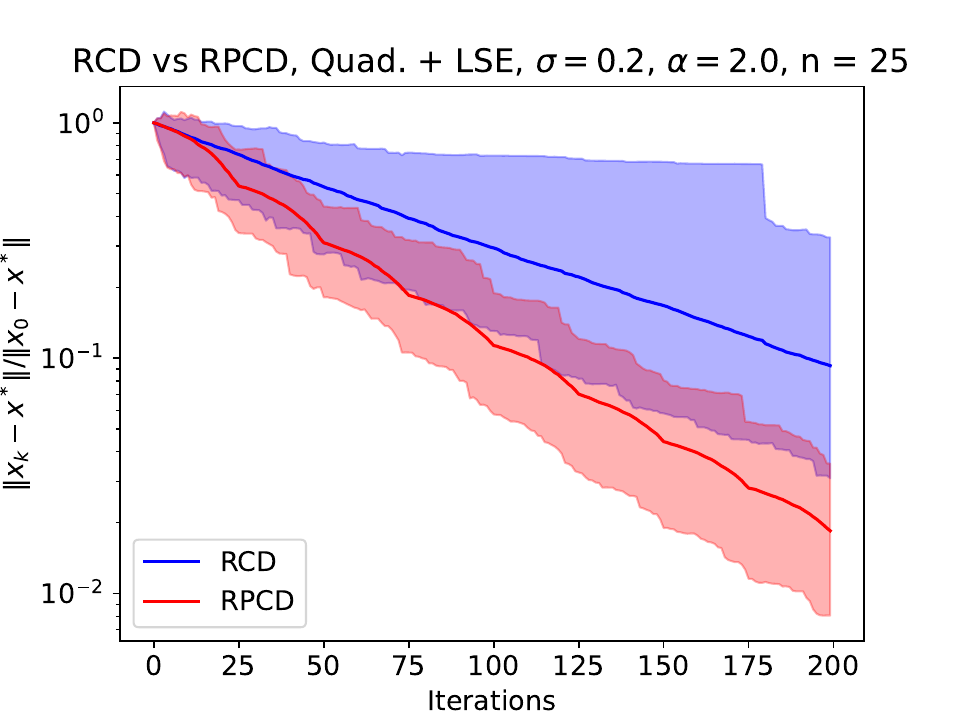}
    \includegraphics[width=0.32\linewidth]{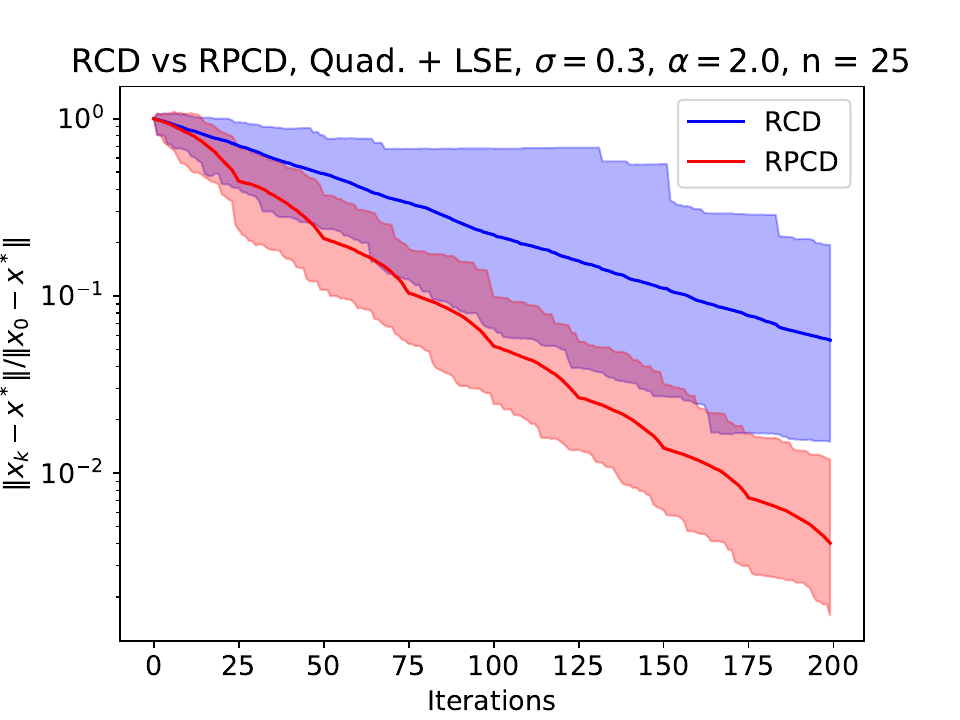}
    \includegraphics[width=0.32\linewidth]{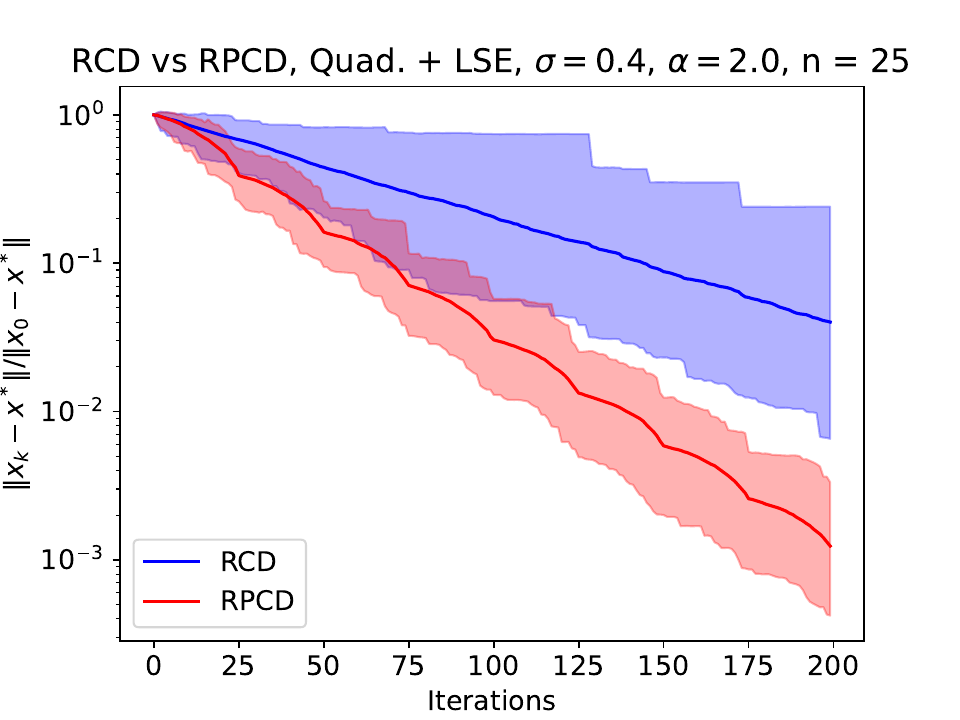}
    \includegraphics[width=0.32\linewidth]{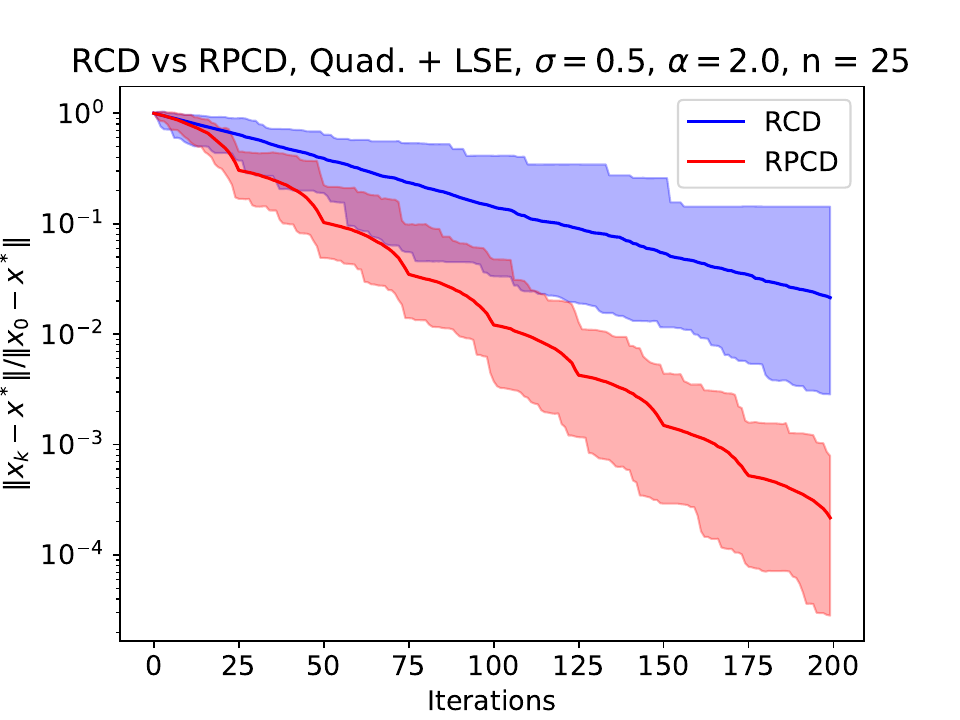}
    \includegraphics[width=0.32\linewidth]{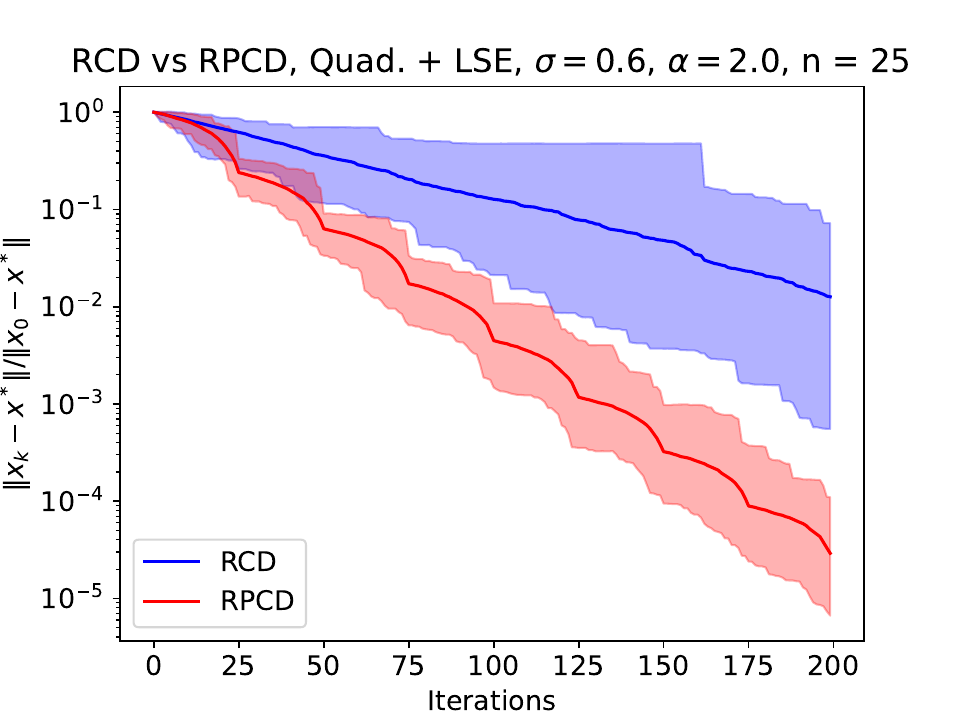}
    \includegraphics[width=0.32\linewidth]{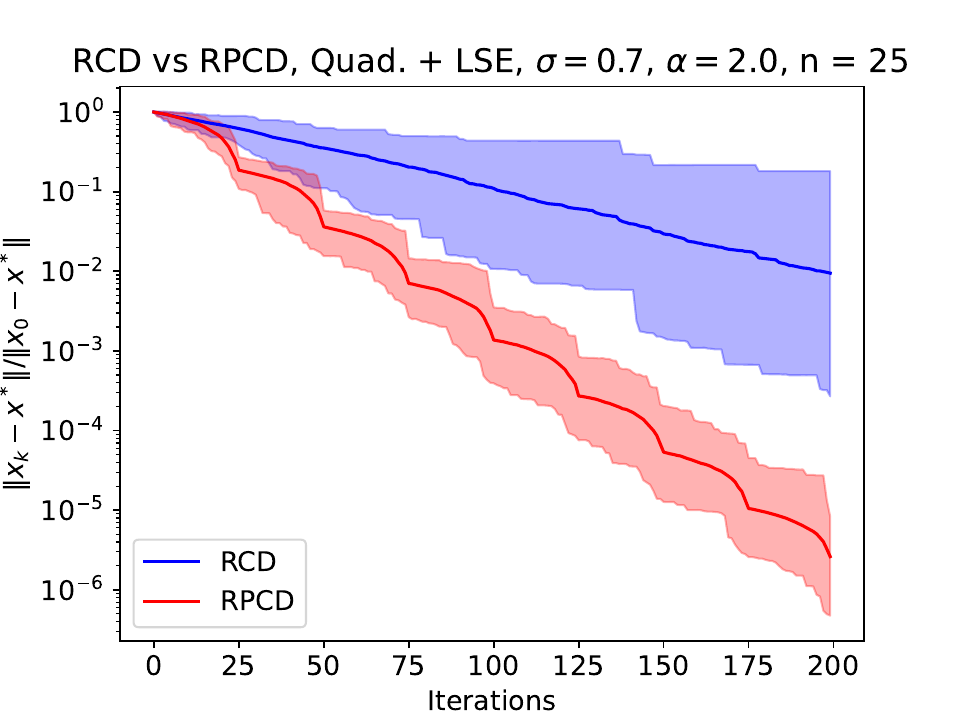}
    \includegraphics[width=0.32\linewidth]{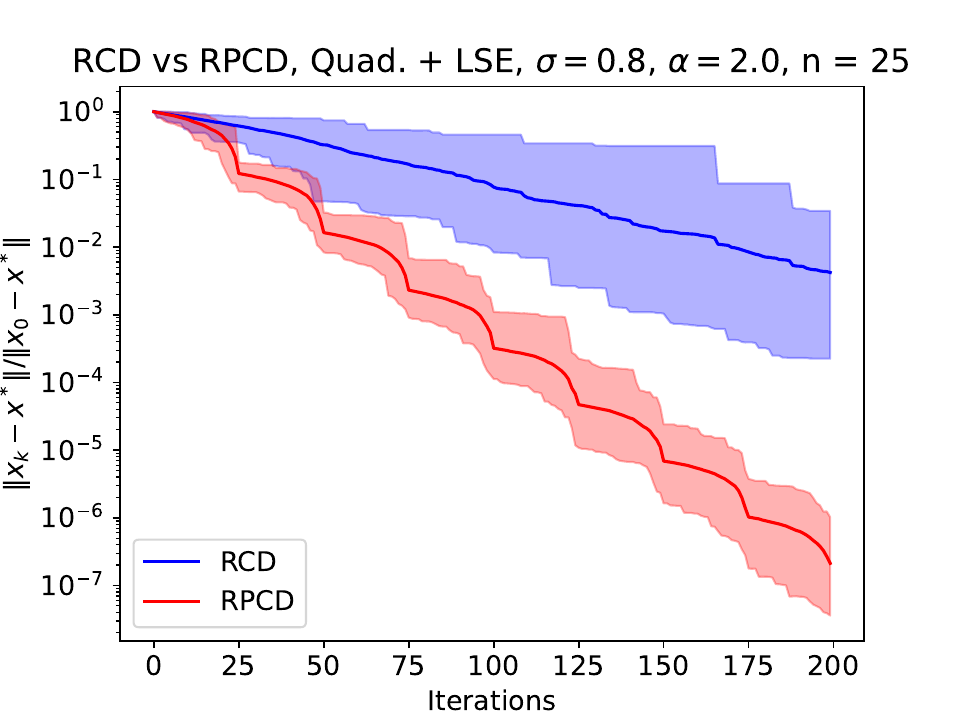}
    \includegraphics[width=0.32\linewidth]{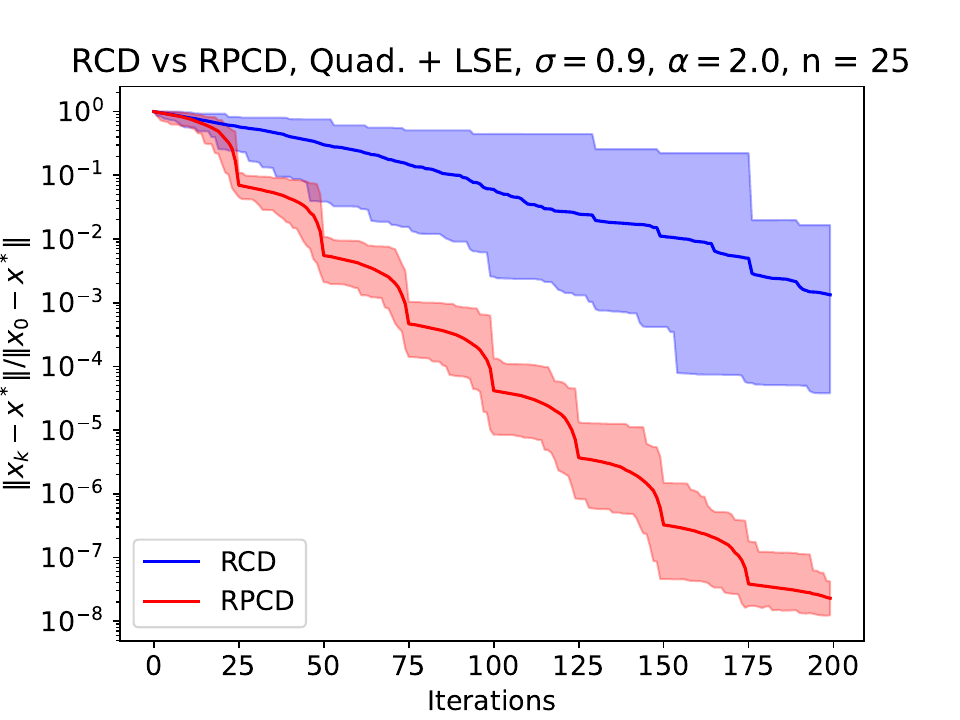}
    \caption{RCD vs RPCD, quadratic functions with unit-diagonal Hessians and log-sum-exp term. $\mA$ is unit-diagonal and $\lambda_{\min}(\mA)=\sigma$, $n=25, \sigma \in \{0.1,\dots,0.9\}, \alpha=2.0.$ The $y$-axis is in log scale.}
    \label{fig:rcdrpcdn25quadlse2}
\end{figure}

\begin{figure}[t!]
    \centering
    \includegraphics[width=0.32\linewidth]{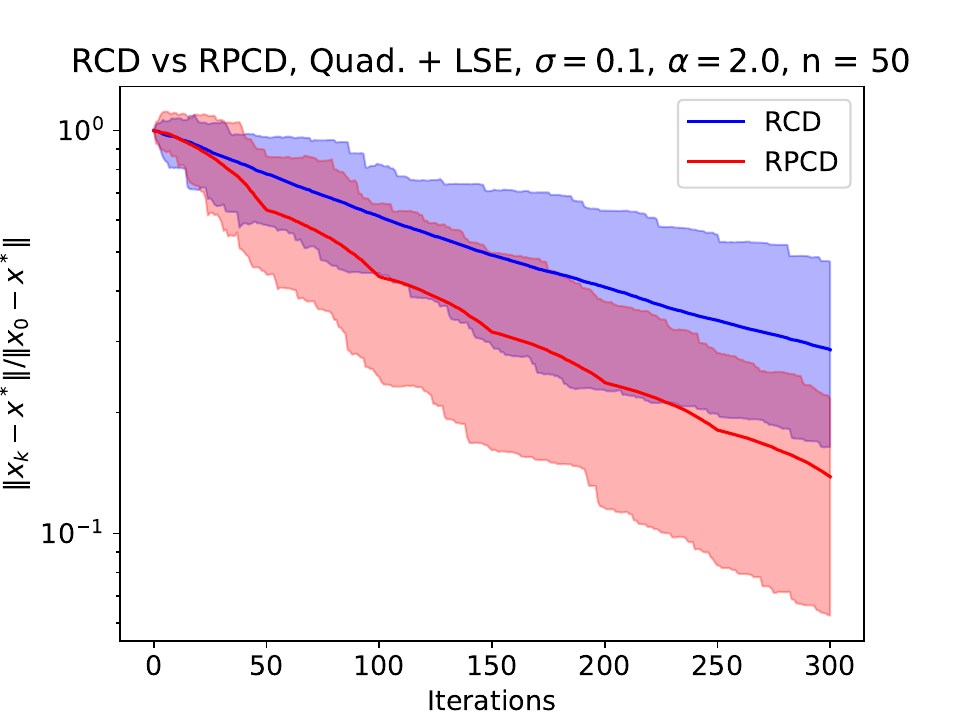}
    \includegraphics[width=0.32\linewidth]{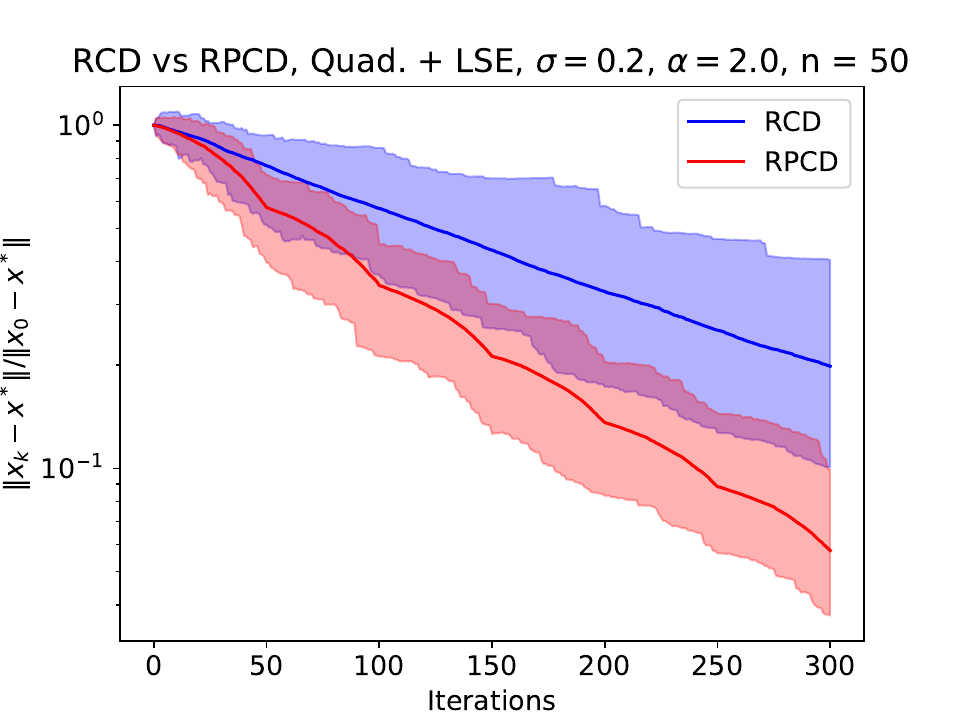}
    \includegraphics[width=0.32\linewidth]{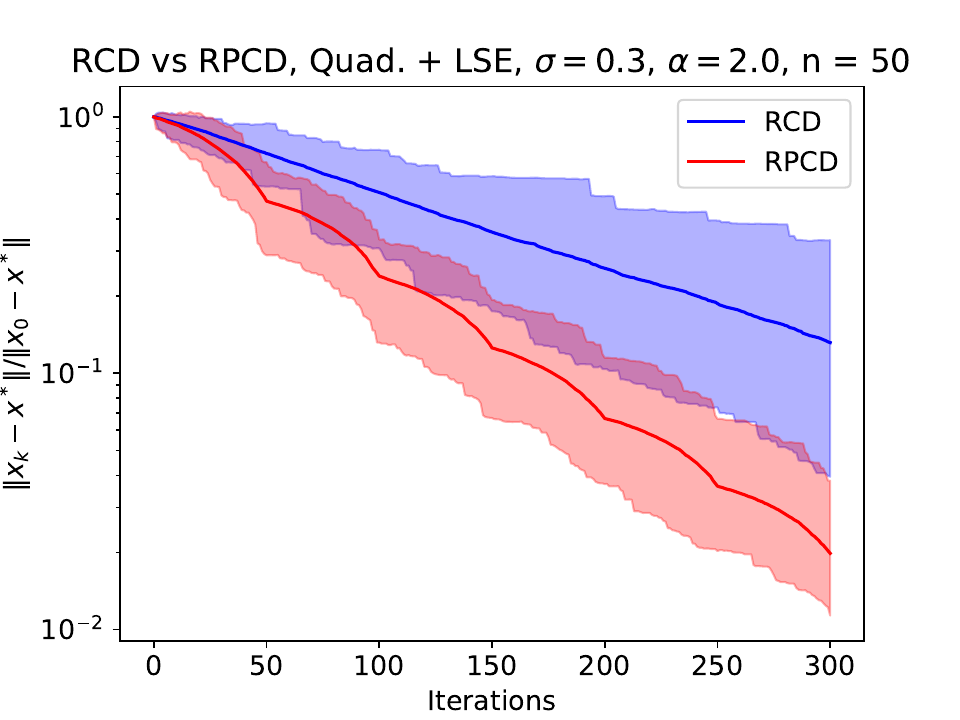}
    \includegraphics[width=0.32\linewidth]{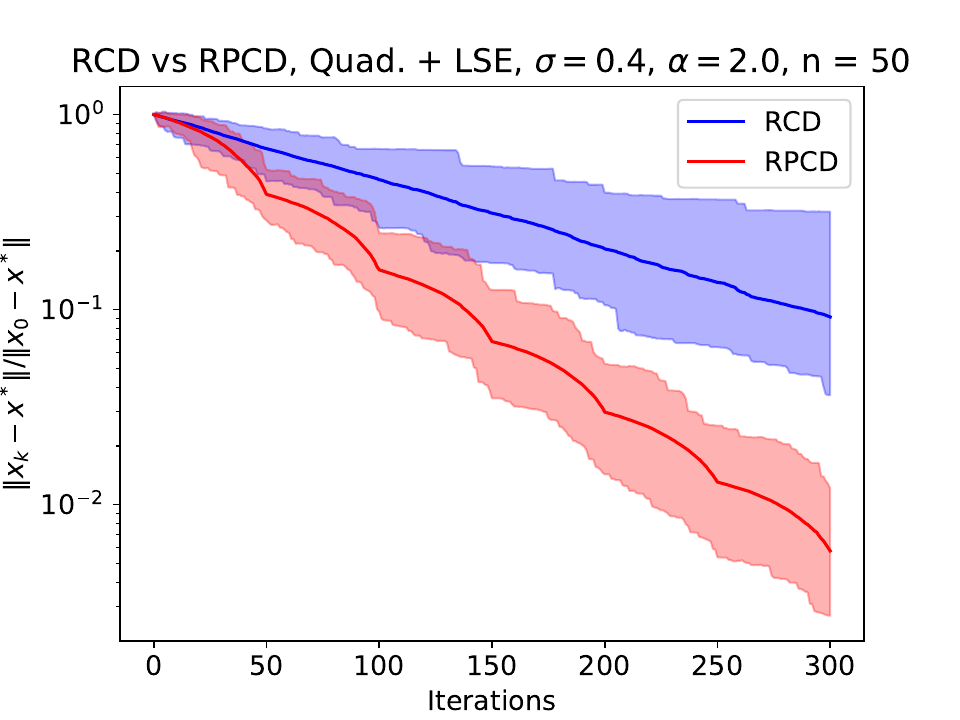}
    \includegraphics[width=0.32\linewidth]{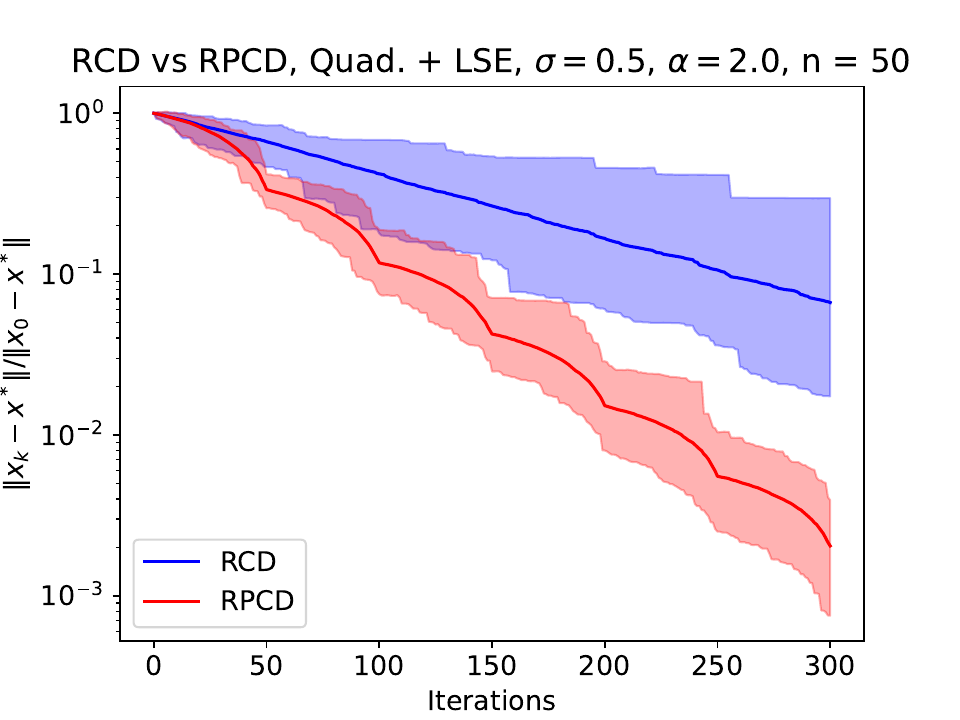}
    \includegraphics[width=0.32\linewidth]{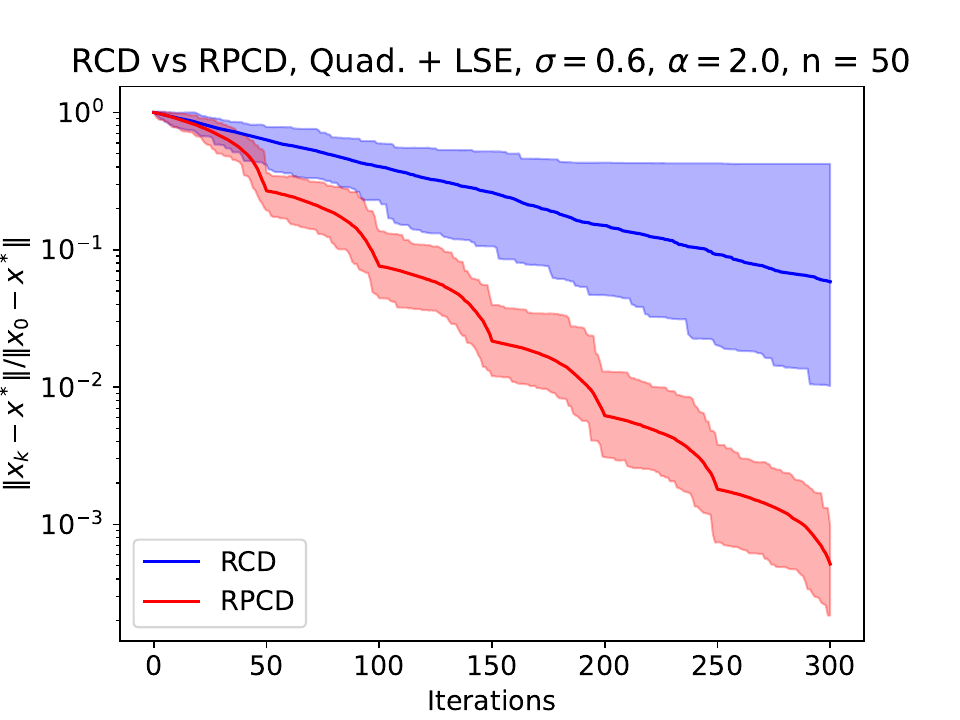}
    \includegraphics[width=0.32\linewidth]{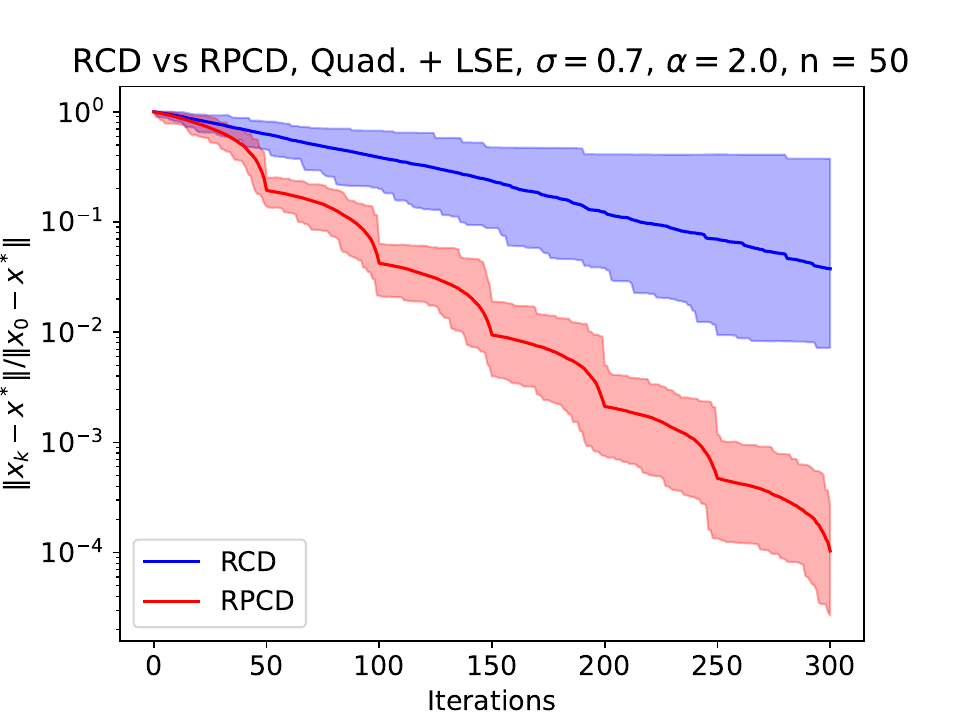}
    \includegraphics[width=0.32\linewidth]{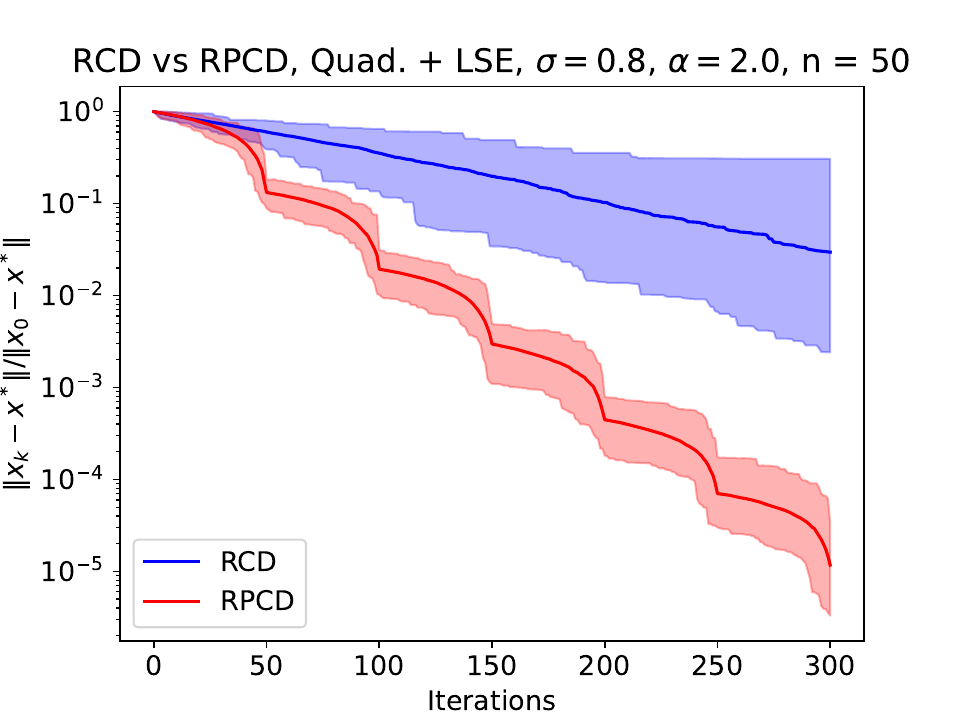}
    \includegraphics[width=0.32\linewidth]{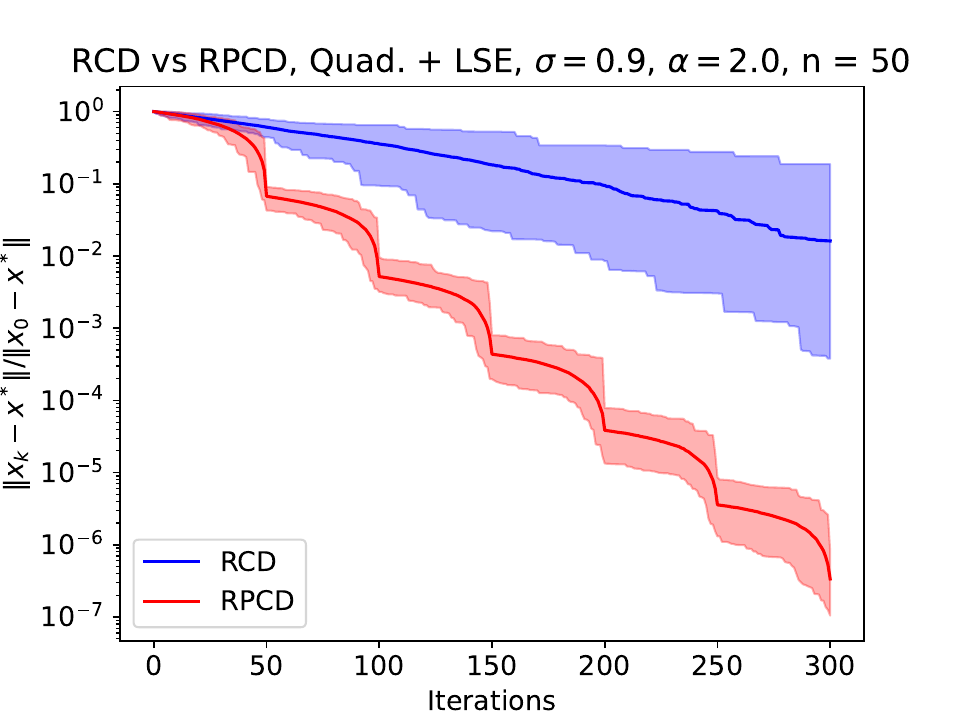}
    \caption{RCD vs RPCD, quadratic functions with unit-diagonal Hessians and log-sum-exp term. $\mA$ is unit-diagonal and $\lambda_{\min}(\mA)=\sigma$, $n=50, \sigma \in \{0.1,\dots,0.9\}, \alpha=2.0.$ The $y$-axis is in log scale.}
    \label{fig:rcdrpcdn50quadlse2}
\end{figure}

\begin{figure}[t!]
    \centering
    \includegraphics[width=0.32\linewidth]{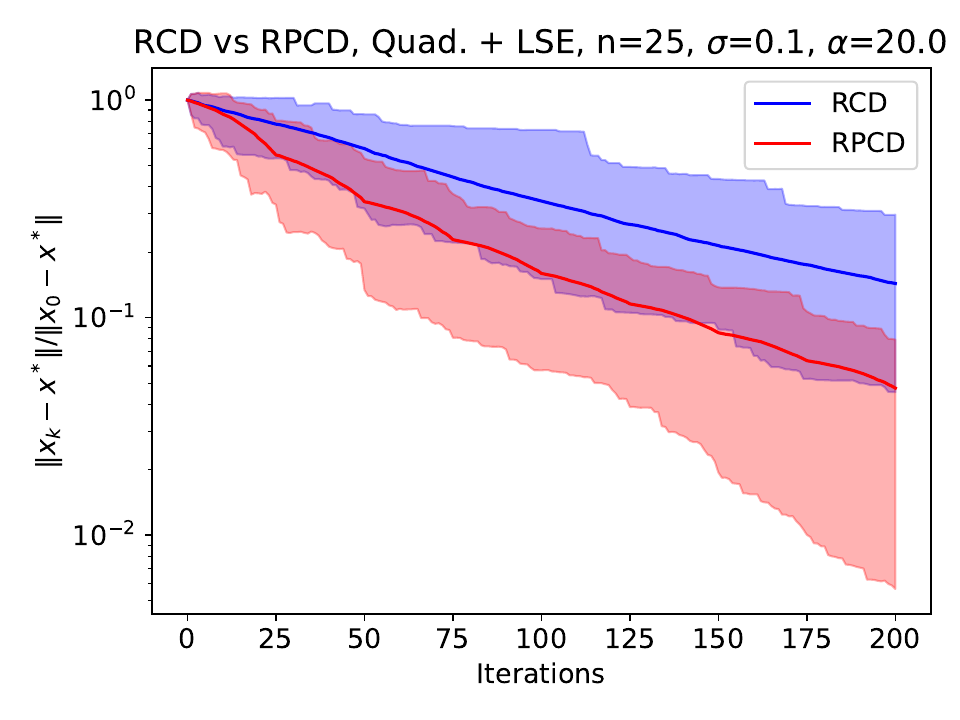}
    \includegraphics[width=0.32\linewidth]{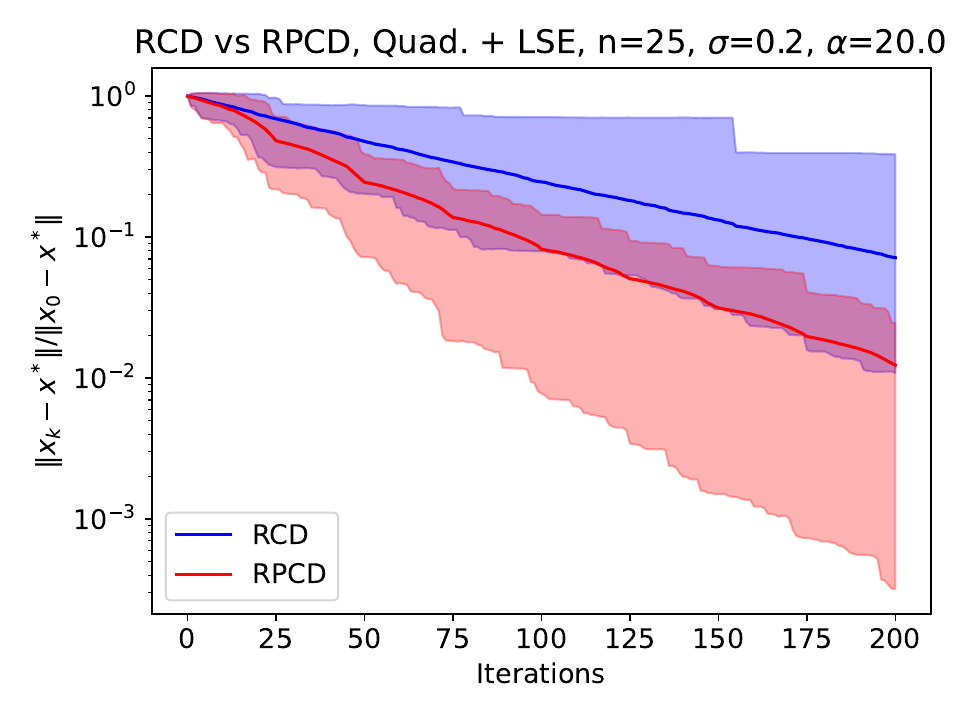}
    \includegraphics[width=0.32\linewidth]{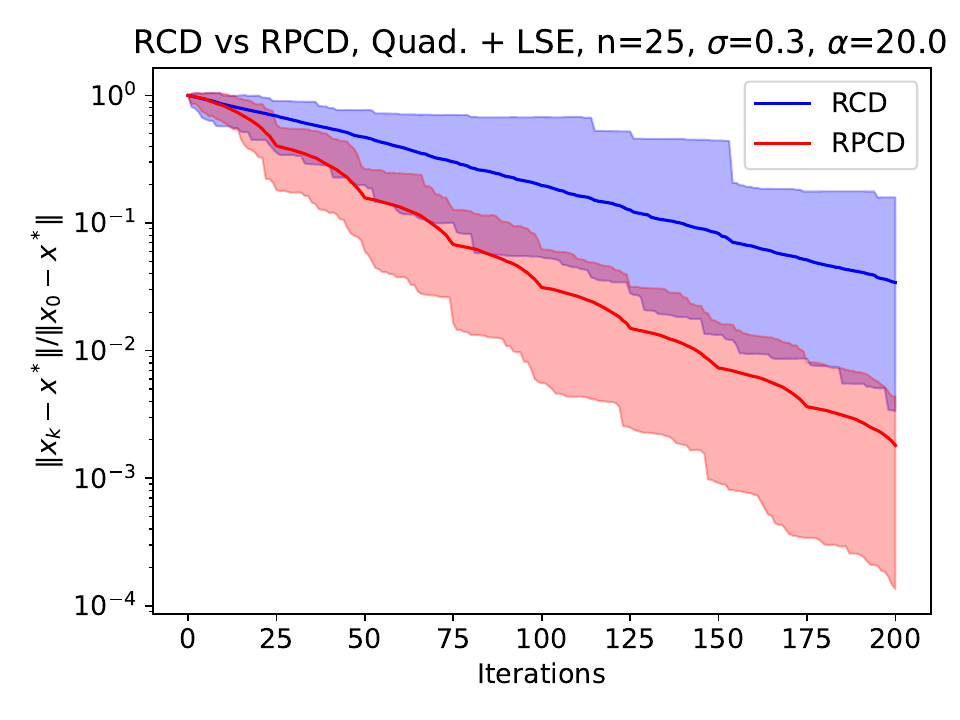}
    \includegraphics[width=0.32\linewidth]{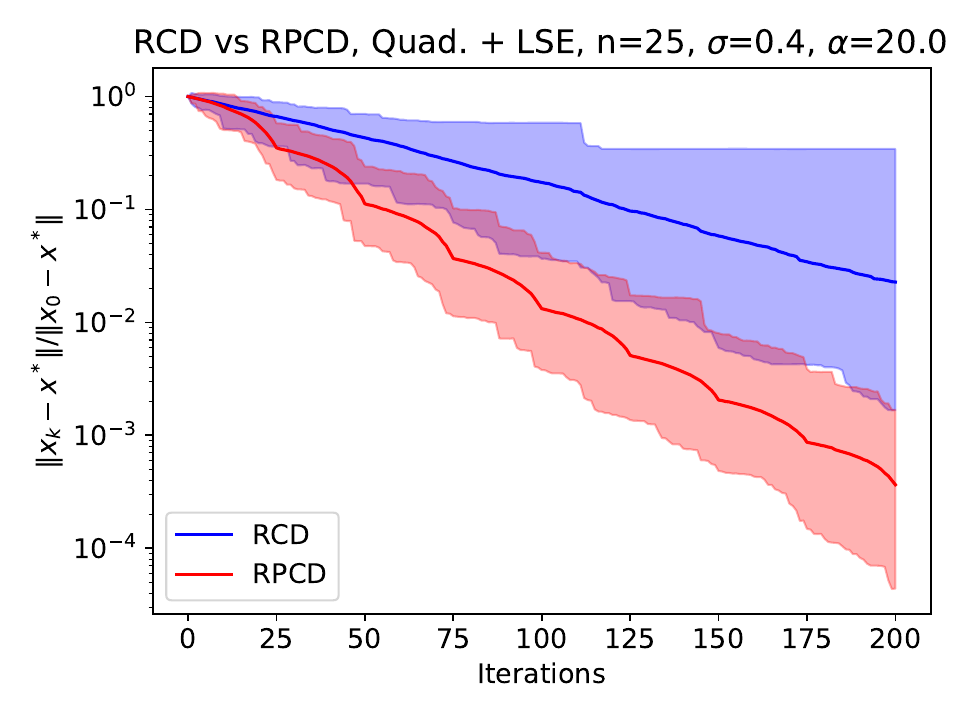}
    \includegraphics[width=0.32\linewidth]{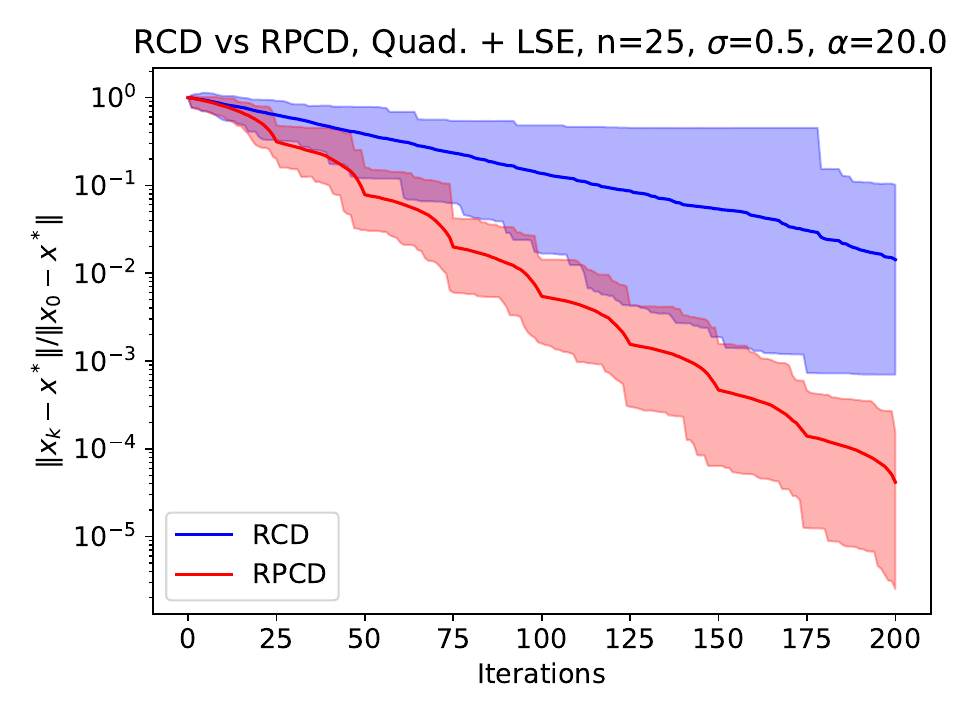}
    \includegraphics[width=0.32\linewidth]{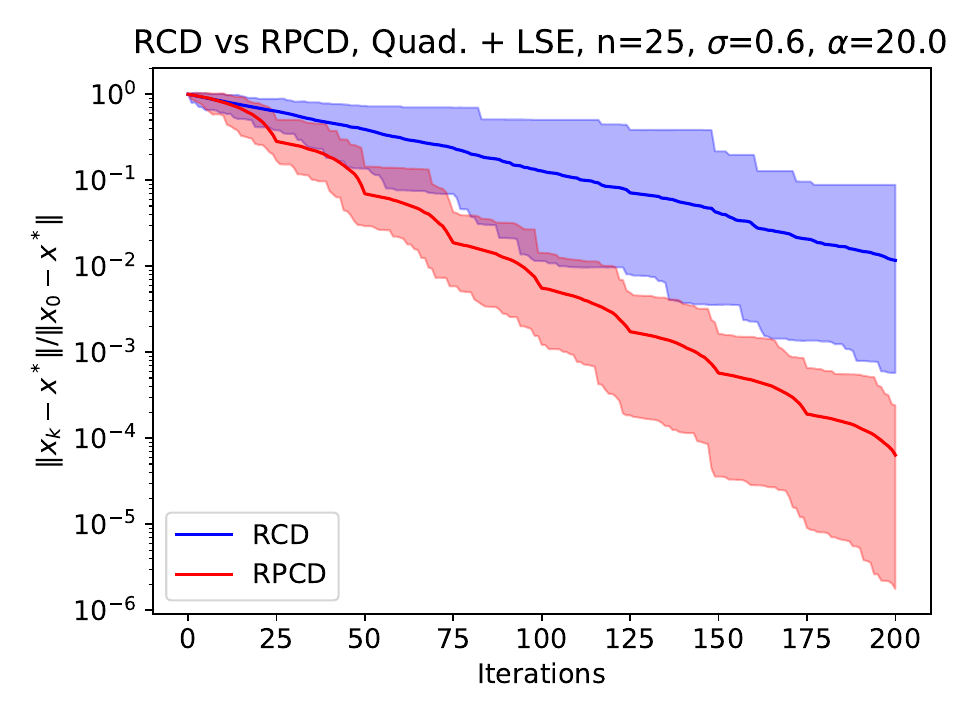}
    \includegraphics[width=0.32\linewidth]{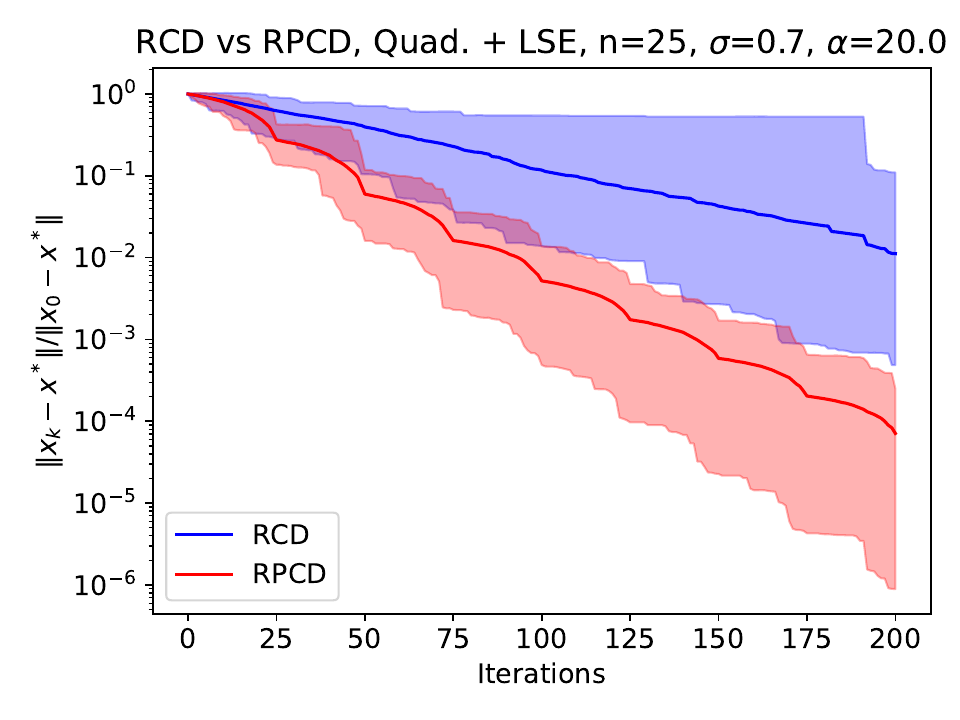}
    \includegraphics[width=0.32\linewidth]{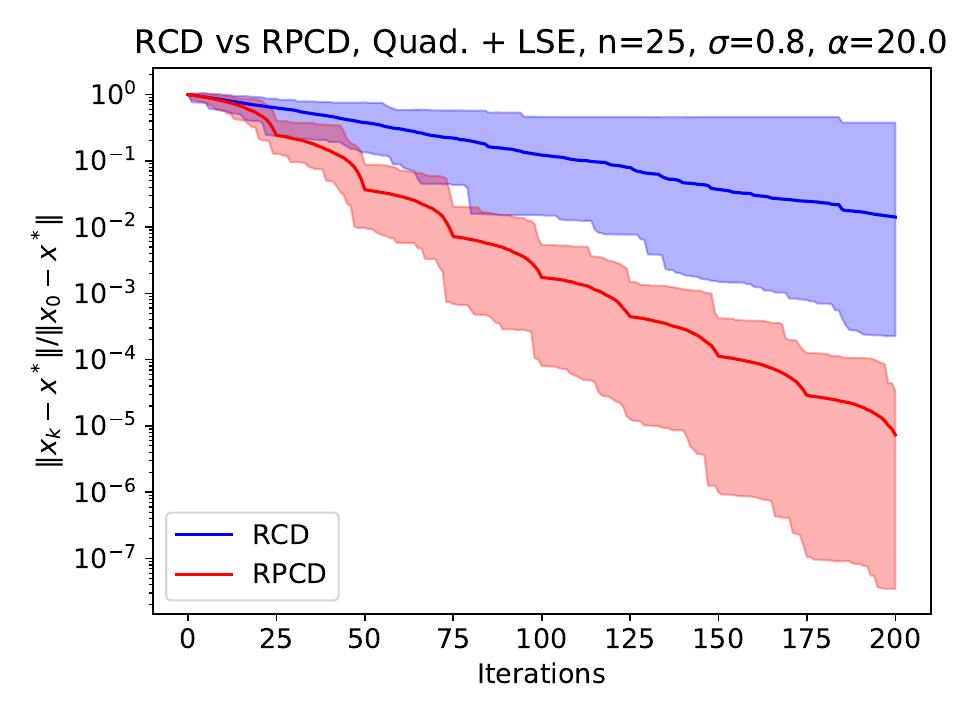}
    \includegraphics[width=0.32\linewidth]{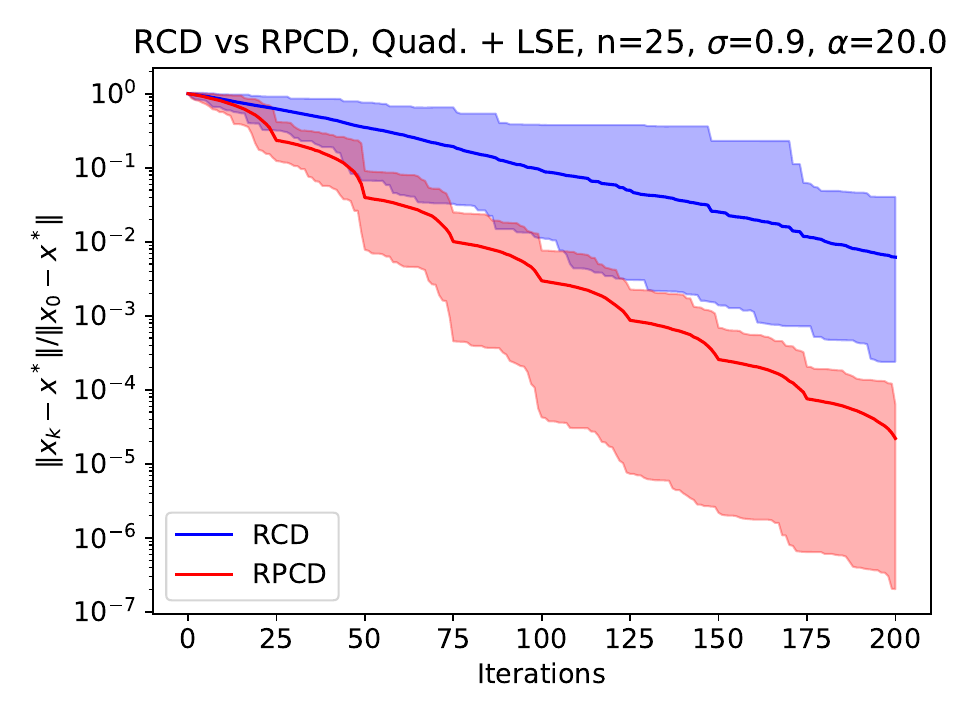}
    \caption{RCD vs RPCD, quadratic functions with unit-diagonal Hessians and log-sum-exp term. $\mA$ is unit-diagonal and $\lambda_{\min}(\mA)=\sigma$, $n=25, \sigma \in \{0.1,\dots,0.9\}, \alpha=20.0.$ The $y$-axis is in log scale.}
    \label{fig:rcdrpcdn25quadlse3}
\end{figure}

\begin{figure}[t!]
    \centering
    \includegraphics[width=0.32\linewidth]{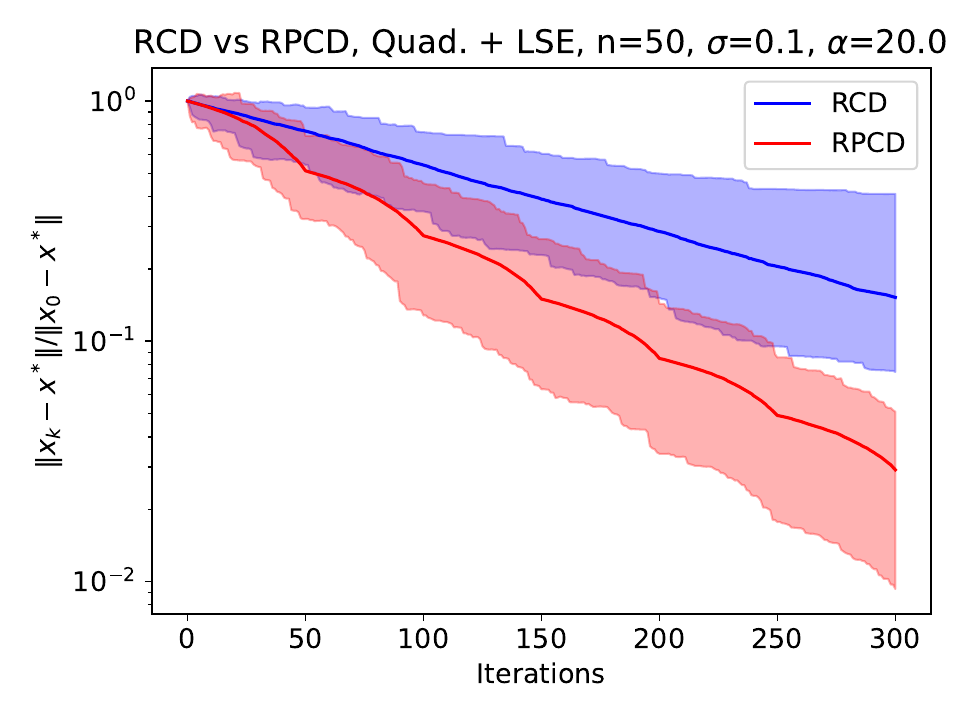}
    \includegraphics[width=0.32\linewidth]{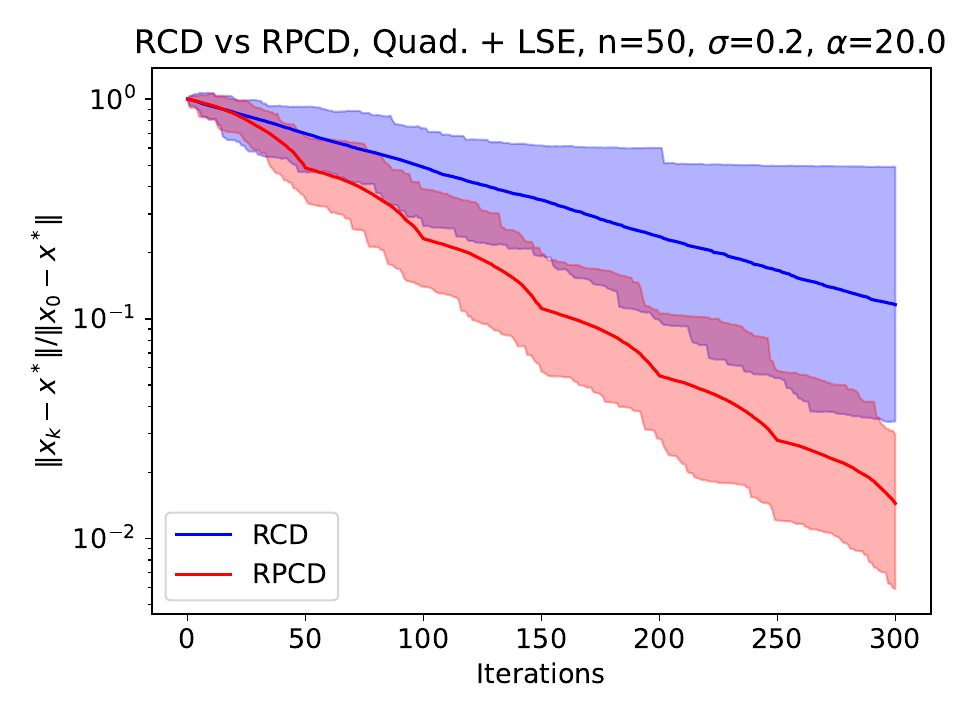}
    \includegraphics[width=0.32\linewidth]{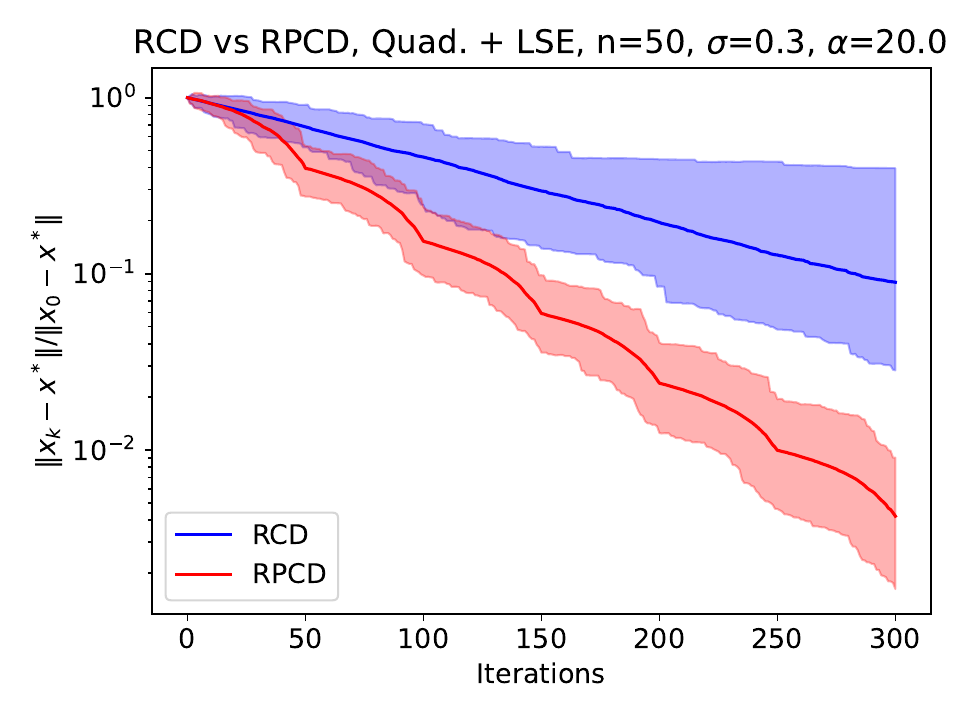}
    \includegraphics[width=0.32\linewidth]{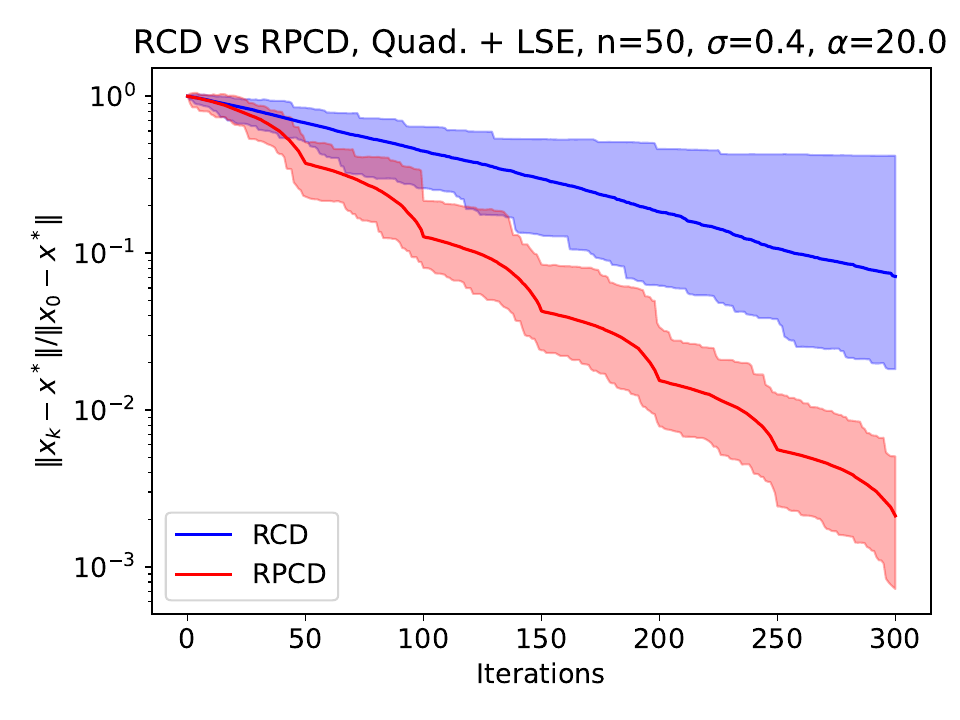}
    \includegraphics[width=0.32\linewidth]{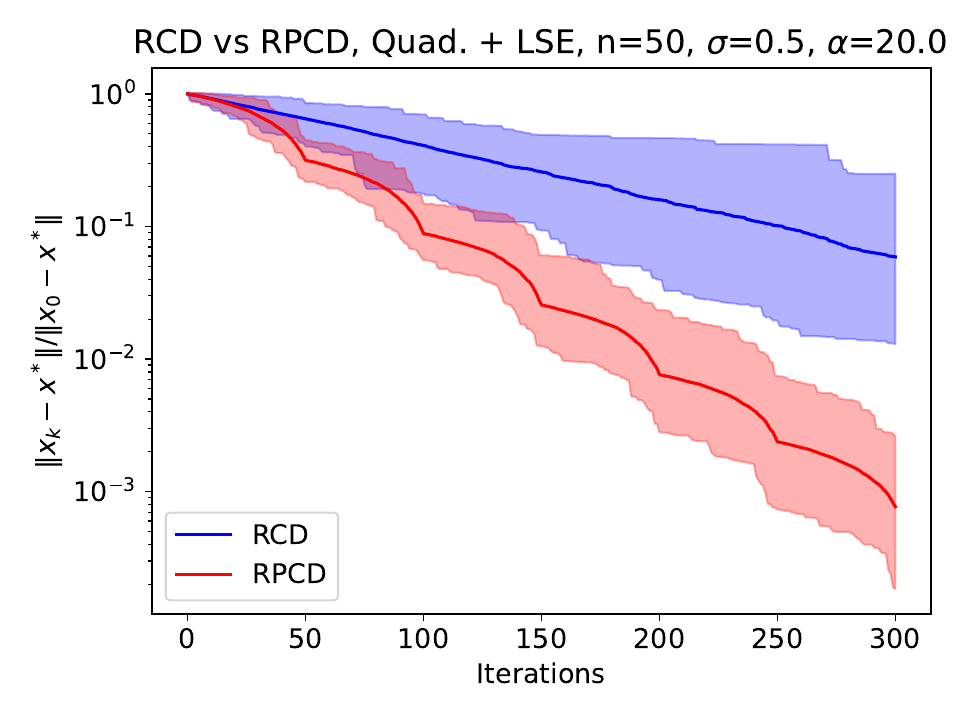}
    \includegraphics[width=0.32\linewidth]{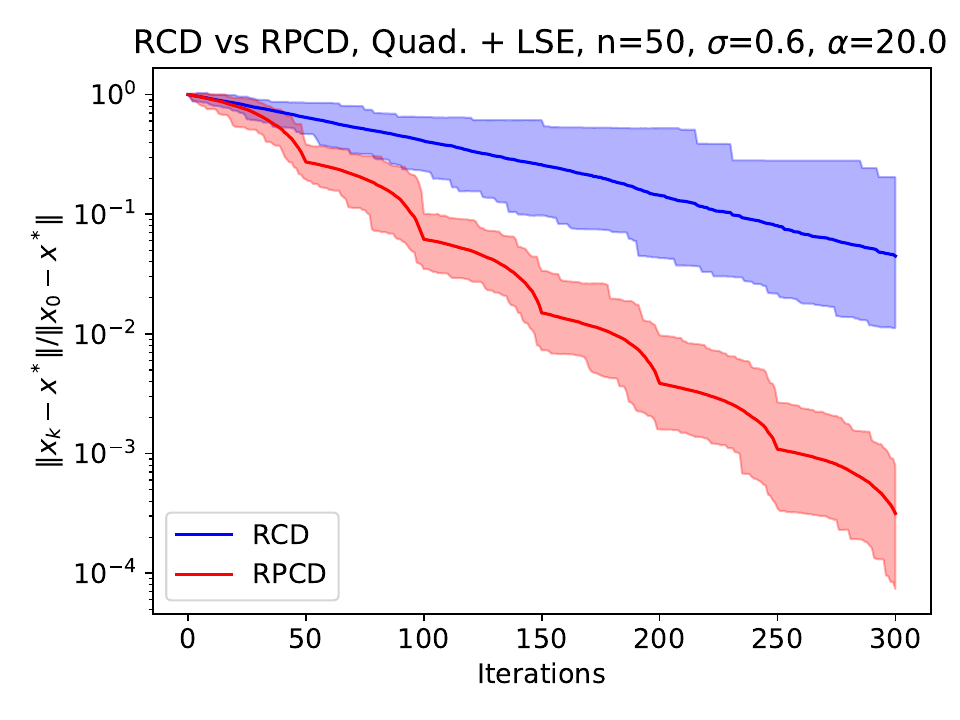}
    \includegraphics[width=0.32\linewidth]{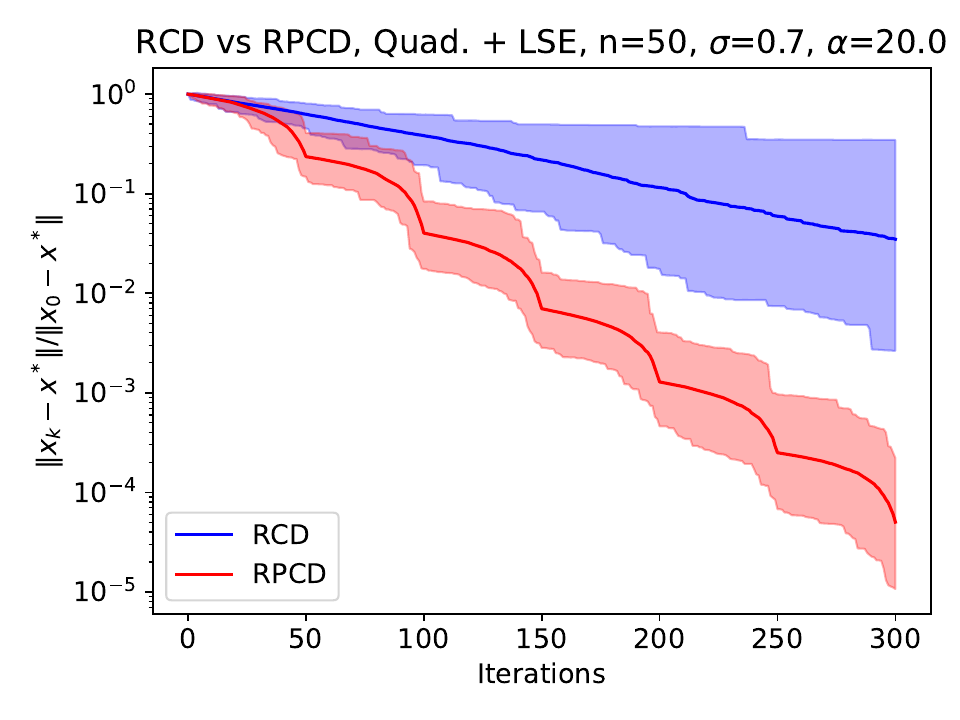}
    \includegraphics[width=0.32\linewidth]{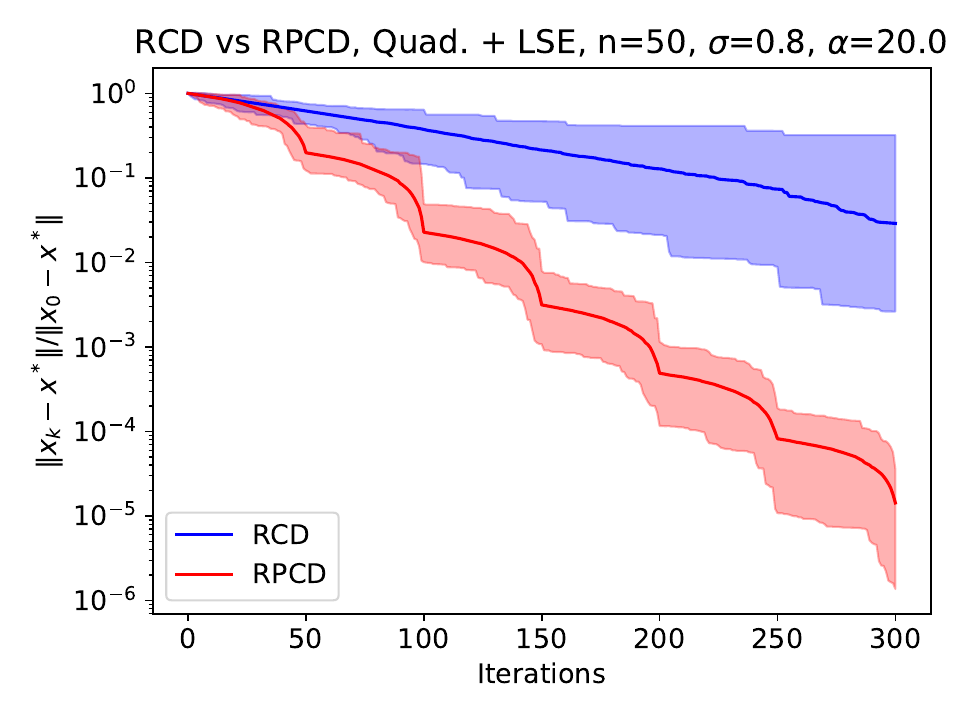}
    \includegraphics[width=0.32\linewidth]{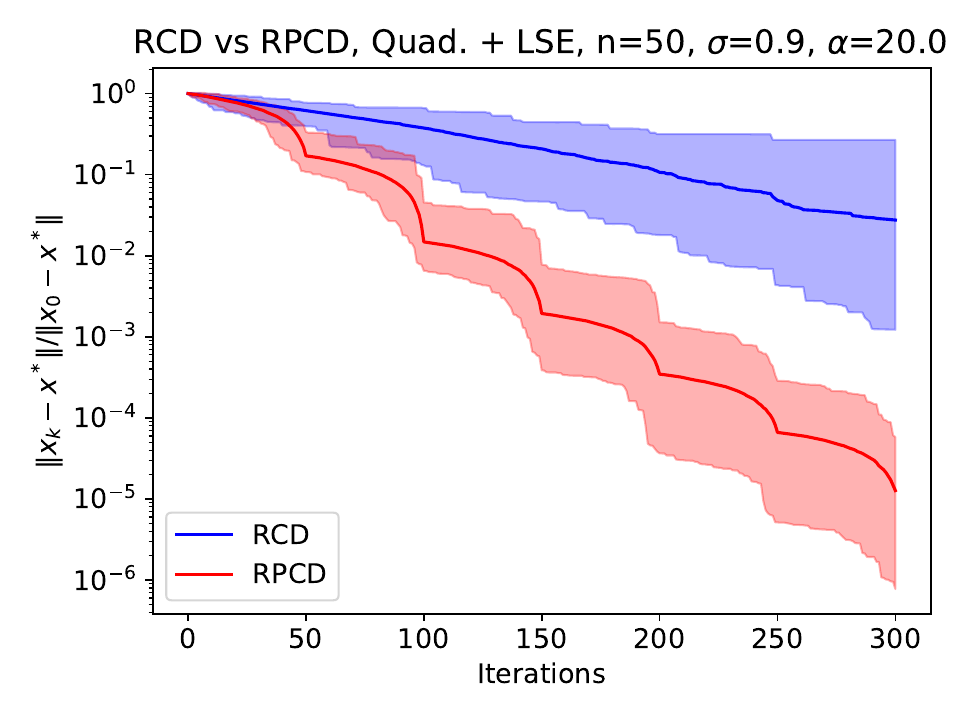}
    \caption{RCD vs RPCD, quadratic functions with unit-diagonal Hessians and log-sum-exp term. $\mA$ is unit-diagonal and $\lambda_{\min}(\mA)=\sigma$, $n=50, \sigma \in \{0.1,\dots,0.9\}, \alpha=20.0.$ The $y$-axis is in log scale.}
    \label{fig:rcdrpcdn50quadlse3}
\end{figure}

\clearpage

\begin{figure}[t!]
    \centering
    \includegraphics[width=0.32\linewidth]{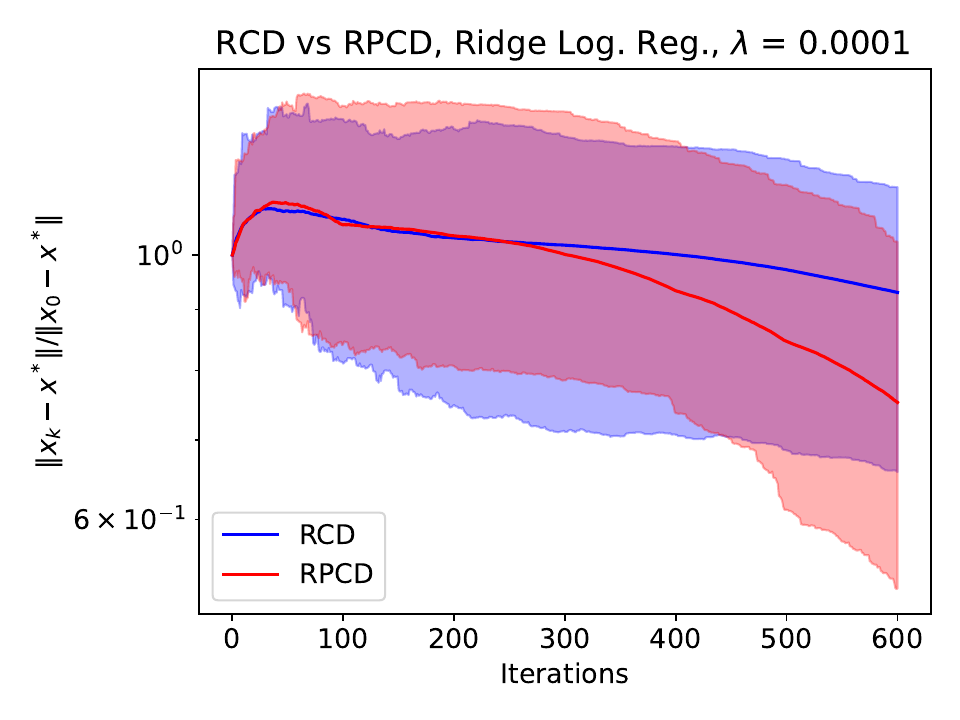}
    \includegraphics[width=0.32\linewidth]{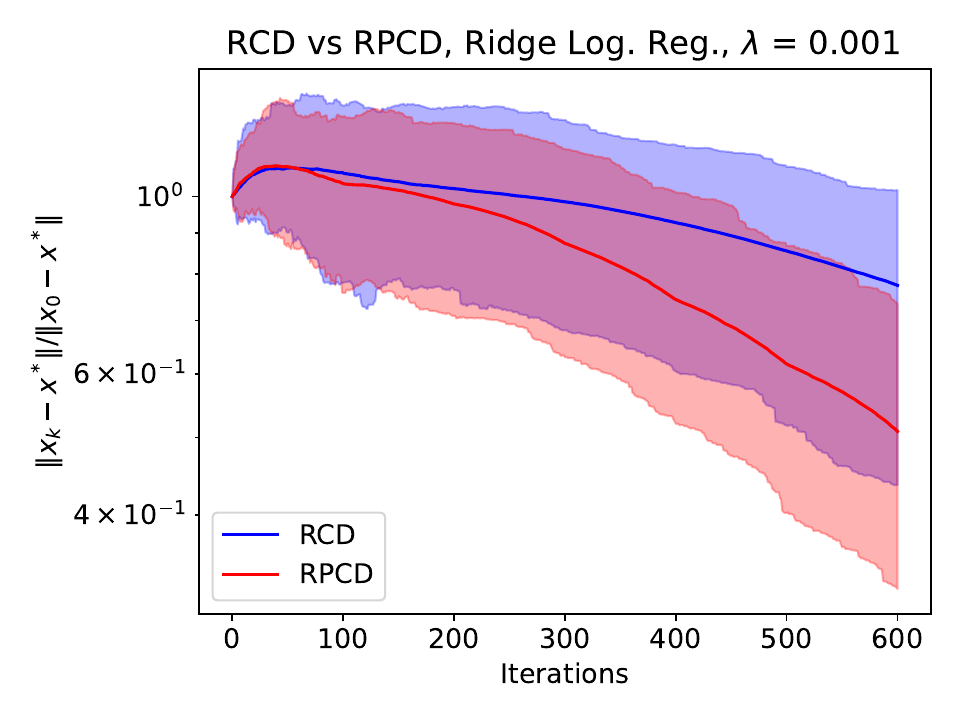}
    \includegraphics[width=0.32\linewidth]{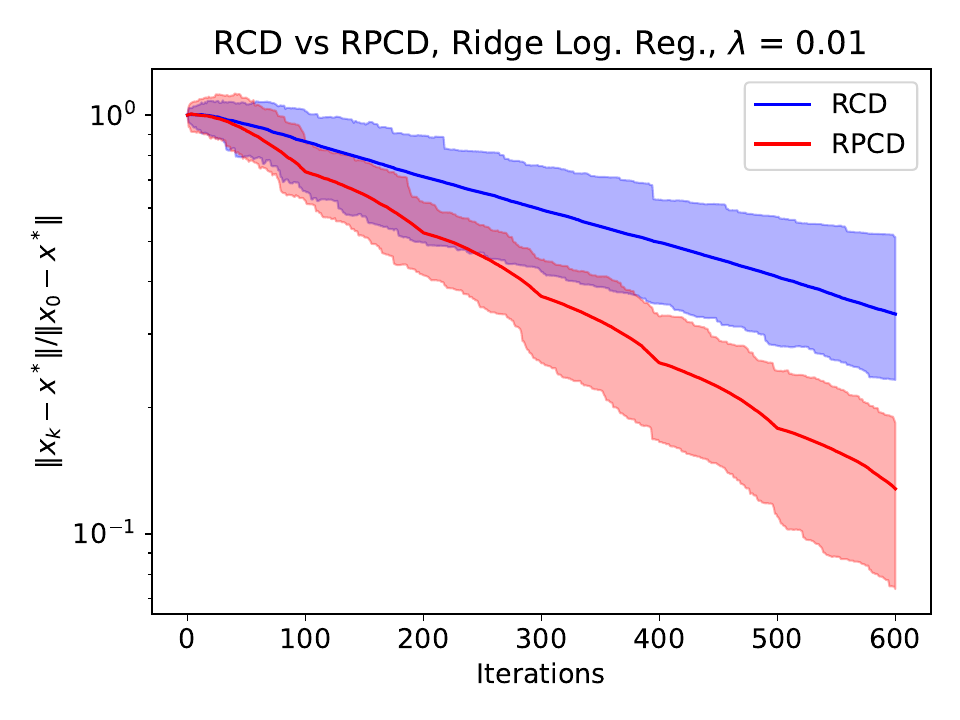}
    \includegraphics[width=0.32\linewidth]{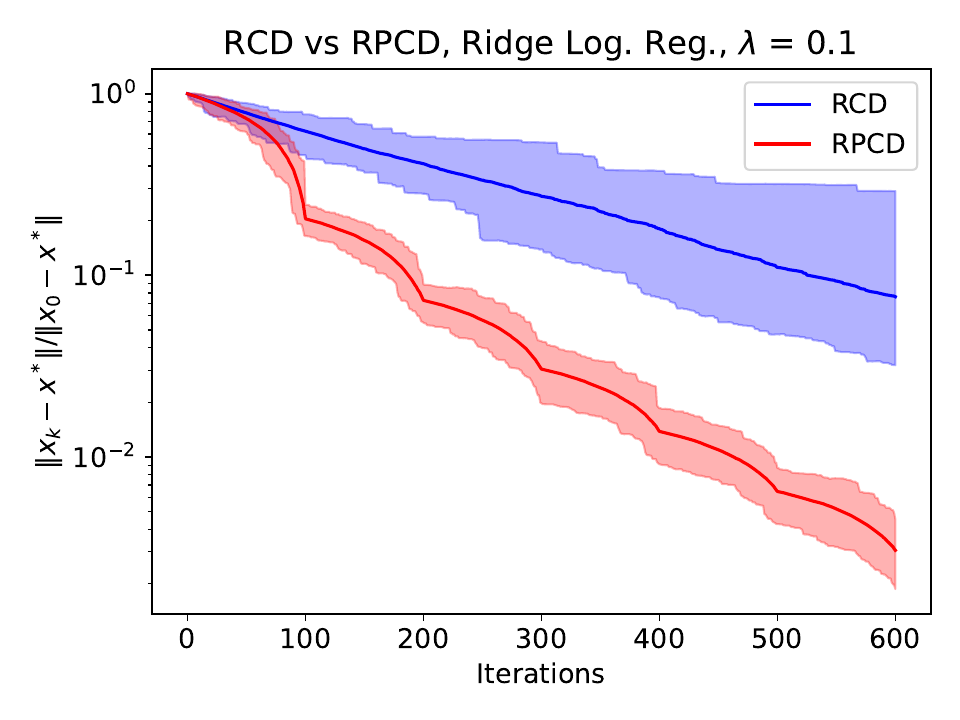}
    \includegraphics[width=0.32\linewidth]{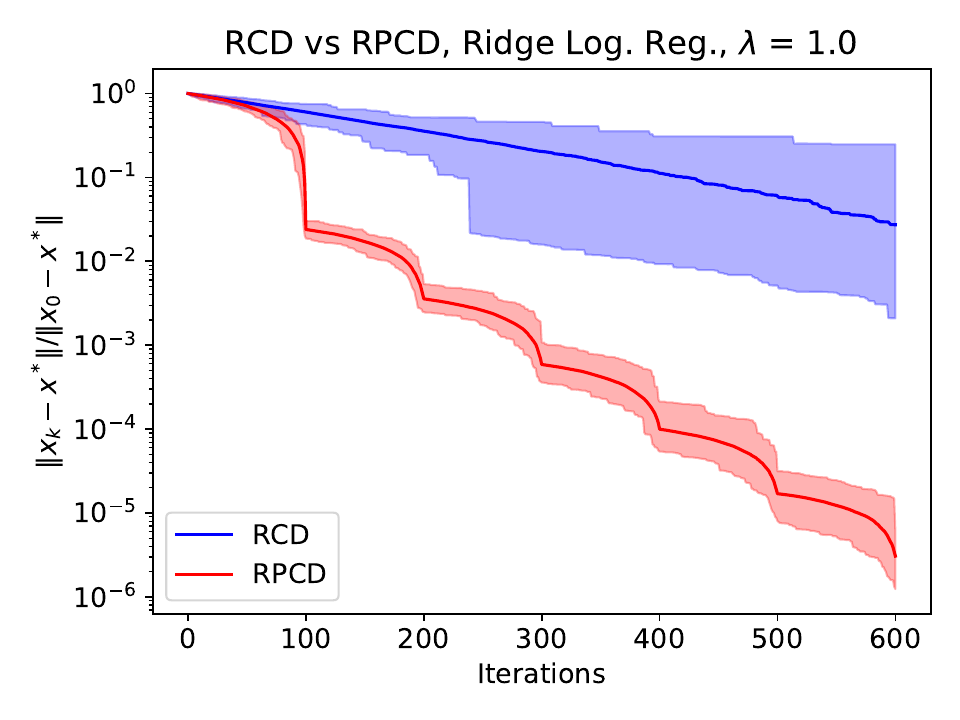}
    \caption{RCD vs RPCD, Ridge-regularized logistic regression, $n=m=100$, $\lambda \in \{0.0001, 0.001, 0.01, 0.1, 1.0\}$. The $y$-axis is in log scale.}
    \label{fig:rcdrpcdlogreg}
\end{figure}

\clearpage

\section{Non-Asymptotic Comparison of RCD and RPCD}
\label{sec:g}
This section shows that RPCD outperforms RCD even after a finite number of epochs when $\mA \in \gA_{\sigma}$, complementing the asymptotic results in the main text.

To avoid confusion, we introduce separate notations for RCD and RPCD. We denote by $\vx_T^{\RCD}$ and $\vx_K^{\RPCD}$ the iterates after $T$ iterations and $K$ epochs, respectively, both starting from the same initial point $\vx_0$. The corresponding iteration matrices, denoted by $\mM_{\mA}^{\RCD}$ and $\mM_{\mA}^{\RPCD}$, are both $2 \times 2$ iteration matrices of RCD and RPCD that represent the restriction of $\MARCD$ and $\MARPCD$ to $\sp\{\mI, \vone \vone^{\top}\}$.

\subsection{RCD Lower Bound}

From the proof of \cref{thm:rcdlbpi}, we have
\begin{align*}
    \lambda_{\min}((\MARCD)^T(\mI)) &= \alpha_T,
\end{align*}
where $\alpha_T = \ve_1^{\top} (\mM_{\mA}^{\RCD})^T \ve_1$. Let $\vv_1$ and $\vv_2$ be the unit eigenvectors of $\mM_{\mA}^{\RCD}$ associated with eigenvalues $\lambda_1$ and $\lambda_2$, where $\lambda_1 \ge \lambda_2$. Then we can decompose $\ve_1 = c_1 \vv_1 + c_2 \vv_2$, where $c_1^2 + c_2^2 = 1$ since $\vv_1$ and $\vv_2$ form an orthonormal basis.

Using this decomposition, we compute
\begin{align*}
    \alpha_T &= \ve_1^{\top} (\mM_{\mA}^{\RCD})^T \ve_1 \\
    &= \bigopen{c_1 \vv_1 + c_2 \vv_2}^{\top} (\mM_{\mA}^{\RCD})^T \bigopen{c_1 \vv_1 + c_2 \vv_2} \\
    &= c_1^2 \lambda_1^T + c_2^2 \lambda_2^T \\
    &\ge c_1^2 \lambda_1^T.
\end{align*}
The inequality follows from the fact that $\lambda_2 \ge 0$, as shown in the proof of \cref{thm:rcdlbpi}.

We now consider two cases: \textbf{(i)} $n>2$ and \textbf{(ii)} $n=2$.

\textbf{(i)} $n>2$. When $n > 2$, all entries of $\mM_{\mA}^{\RCD}$ are positive. Therefore, we can apply the following lemma, known as the Perron-Frobenius Theorem. 
\begin{lemma}[\citet{horn2012}, Theorem~8.2.8.]
    \label{lem:perronfrob}
    Let $\mA \in \mathbb{R}^{n \times n}$ be a positive matrix, i.e., all entries are strictly positive. Then there exists a unique real vector $\vx$ such that $\mA \vx = \rho(\mA) \vx$ and $\vx$ is a positive vector, i.e., all entries are strictly positive.
\end{lemma}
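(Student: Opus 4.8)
The statement is the classical Perron--Frobenius theorem for strictly positive matrices, so the ``proof'' is really to recall the standard fixed-point argument while flagging that in this paper it is only ever applied to the $2 \times 2$ matrix $\mM_{\mA}^{\RCD}$, where everything is elementary. First I would establish the existence of a positive eigenpair: on the simplex $\Delta = \{ \vx \in \R^n : \vx \ge 0,\ \vone^\top \vx = 1 \}$ the map $\phi(\vx) = \mA\vx / (\vone^\top \mA\vx)$ is a continuous self-map (well defined because $\mA > 0$ forces $\mA\vx > 0$ entrywise for every $\vx \in \Delta$), so Brouwer's fixed-point theorem produces $\vx^\star \in \Delta$ with $\mA\vx^\star = \lambda \vx^\star$, $\lambda = \vone^\top \mA\vx^\star > 0$, and then $\vx^\star = \lambda^{-1}\mA\vx^\star > 0$ entrywise. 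Applying the same construction to $\mA^\top$ gives a strictly positive left eigenvector $\vw$ with $\vw^\top \mA = \lambda' \vw^\top$, and evaluating $\vw^\top \mA \vx^\star$ in two ways (using $\vw^\top\vx^\star > 0$) shows $\lambda = \lambda'$.

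Second, I would identify $\lambda = \rho(\mA)$ and settle uniqueness. For any eigenpair $(\mu, \vy)$ of $\mA$, taking entrywise absolute values gives $|\mu|\, |\vy| = |\mA\vy| \le \mA |\vy|$; pairing with $\vw > 0$ and dividing by $\vw^\top |\vy| > 0$ yields $|\mu| \le \lambda$, hence $\rho(\mA) = \lambda$. For uniqueness up to scaling, if $\vz$ is a real eigenvector for $\rho(\mA)$ not proportional to $\vx^\star$, then $\vx^\star + t\vz$ is again an eigenvector for $\rho(\mA)$, and choosing $t$ so that it becomes nonnegative with a vanishing coordinate contradicts the already-proven fact that a nonzero nonnegative eigenvector for a positive eigenvalue of $\mA$ must be strictly positive; hence the eigenspace is one-dimensional. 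Algebraic simplicity of $\rho(\mA)$, if one wants it, then follows from Jacobi's formula together with the observation that $\mathrm{adj}(\rho(\mA)\mI - \mA)$ is a nonzero rank-one matrix of the form $c\,\vx^\star \vw^\top$, so $\frac{d}{dt}\det(t\mI - \mA)\big|_{t = \rho(\mA)} = \tr(\mathrm{adj}(\rho(\mA)\mI - \mA)) = c\, \vw^\top \vx^\star \ne 0$.

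The delicate part is precisely this last step in the general $n \times n$ setting -- excluding a second independent nonnegative eigenvector and establishing algebraic simplicity -- but it is entirely avoidable for the use here, since $\mM_{\mA}^{\RCD}$ is $2 \times 2$. For a positive $2 \times 2$ matrix the discriminant of the characteristic polynomial is $\big( (\mM_{\mA}^{\RCD})_{11} - (\mM_{\mA}^{\RCD})_{22} \big)^2 + 4 (\mM_{\mA}^{\RCD})_{12} (\mM_{\mA}^{\RCD})_{21} > 0$, so the two eigenvalues are real and distinct, the dominant one $\lambda_1$ satisfies $\lambda_1 > (\mM_{\mA}^{\RCD})_{11}$, and then $(\lambda_1 - (\mM_{\mA}^{\RCD})_{11}) v_1 = (\mM_{\mA}^{\RCD})_{12} v_2$ forces its eigenvector $(v_1, v_2)$ to have strictly positive entries -- which is exactly what the subsequent bound on $\alpha_T = \ve_1^\top (\mM_{\mA}^{\RCD})^T \ve_1$ requires. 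So in the writeup I would present this short self-contained $2 \times 2$ argument and merely cite the general Perron--Frobenius statement as stated.
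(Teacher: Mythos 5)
The paper does not prove this lemma at all; it cites it verbatim as Theorem~8.2.8 of \citet{horn2012}, so there is no ``paper's proof'' to compare against. Your argument is a correct and self-contained derivation of the classical Perron--Frobenius theorem for strictly positive matrices: the Brouwer fixed-point construction on the simplex for existence, the left-eigenvector pairing to identify $\lambda = \rho(\mA)$, the strict-positivity of any nonnegative eigenvector combined with a line-crossing argument for one-dimensionality of the eigenspace, and the adjugate/Jacobi-formula computation for algebraic simplicity are all standard and all carried out correctly. (The one cosmetic point, which you inherit from the statement as the paper phrases it, is that ``unique'' should be read as ``unique up to positive scaling'' --- your proof establishes exactly that.) Where you genuinely add value beyond the citation is the observation that the only use of \cref{lem:perronfrob} in the paper is to a $2\times 2$ matrix ($\mM_{\mA}^{\RCD}$ in the nonasymptotic comparison of \cref{sec:g}), and there the entire content reduces to an elementary computation: the discriminant $\big((\mM)_{11}-(\mM)_{22}\big)^2 + 4(\mM)_{12}(\mM)_{21}$ is positive, the larger root $\lambda_1$ exceeds $(\mM)_{11}$, and $(\lambda_1 - (\mM)_{11})v_1 = (\mM)_{12}v_2$ forces the dominant eigenvector to have entries of one sign. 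Presenting that three-line argument in place of the black-box citation would make the paper more self-contained without lengthening it appreciably, and is exactly the kind of localization the paper could have done; keeping the general citation alongside it for context, as you propose, is a sensible editorial choice.
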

By this lemma, the eigenvector $\vv_1 = \begin{bmatrix} v_{11} & v_{12} \end{bmatrix}^\top$ corresponding to the largest eigenvalue $\lambda_{\max}(\mM_{\mA}^{\RCD})$ can be chosen to have strictly positive components.

Now, from the equation $\mM_{\mA}^{\RCD} \vv_1 = \lambda_1 \vv_1$, comparing the second coordinates gives
\[
\frac{v_{11}}{v_{12}} = \frac{n\lambda_1 - \sigma^2(n-2)}{\sigma^2}.
\]
Hence, $v_{11} \ge v_{12}$ is equivalent to $\lambda_1 \ge \left(1 - \frac{1}{n}\right)\sigma^2$.

Now let $p$ be the characteristic polynomial of $\mM_{\mA}^{\RCD}$. One can compute that
\[
p\left(\left(1 - \frac{1}{n}\right)\sigma^2\right) = \frac{2(n-1)\sigma^2(\sigma - 1)}{n^2} \le 0.
\]
Since $p$ is a convex quadratic polynomial, the inequality above implies that $\lambda_1 \ge \left(1 - \frac{1}{n}\right)\sigma^2$. Therefore, $v_{11} \ge v_{12}$, and since $c_1 = \ve_1^\top \vv_1 = v_{11}$ and $v_{11}^2 + v_{12}^2 = 1$, we conclude that $c_1^2 \ge \frac{1}{2}$.

\textbf{(ii)} $n = 2$. In this case,
\[
\mM_{\mA}^{\RCD} = \frac{1}{2} \begin{bmatrix}
1 + (1 - \sigma)^2 & 0 \\
\sigma^2 & 0
\end{bmatrix},
\]
whose eigenvalues are $0$ and $\frac{1 + (1 - \sigma)^2}{2}$. The eigenvector corresponding to the largest eigenvalue is proportional to
\[
\vw = \begin{bmatrix}
1 + (1 - \sigma)^2 \\
1
\end{bmatrix},
\]
so the normalized eigenvector is $\vv_1 = \frac{\vw}{\|\vw\|}$. Since $1 + (1 - \sigma)^2 \ge \sigma^2$, it follows that $v_{11} \ge v_{12}$, and as in the previous case, we again obtain $c_1^2 \ge \frac{1}{2}$.

Finally, we have
\begin{align*}
    \frac{\E [\|\vx_T^{\RCD}\|^2]}{\|\vx_0\|^2} \ge \frac{1}{2} \bigopen{1 - \frac{1}{n} + \frac{(1 - \sigma)^2}{n}}^T.
\end{align*}

\subsection{RPCD Upper Bound}

From the proof of \cref{thm:rpcdub}, we have 
\begin{align*}
    \frac{\E \bigclosed{\|\vx_K^{\RPCD}\|^2}}{\|\vx_0\|^2} &= \frac{\vx_0^{\top} (\MARPCD)^K(\mI) \vx_0}{\vx_0^{\top} \vx_0} \\
    &\le \lambda_{\max} ((\MARPCD)^K(\mI)) \\
    &= \vy^{\top} (\mM_{\mA}^{\RPCD})^K \vx,
\end{align*}
where $\vx = 
\begin{bmatrix}
    1  &  0
\end{bmatrix}^{\top}$ and $\vy = 
\begin{bmatrix}
    1  &  n
\end{bmatrix}^{\top}$. Let $\vv_1$ and $\vv_2$ be the unit eigenvectors of $\mM_{\mA}^{\RPCD}$ such that $\|\vv_i\|=1$, associated with eigenvalues $\lambda_1,\lambda_2$ with $\lambda_1 \ge \lambda_2$. Then $\vx = c_1\vv_1 + c_2\vv_2$ for some $c_1, c_2$ with $c_1^2+c_2^2=1$, because $\vx = \ve_1$.
We have
\begin{align*}
    \vy^{\top} (\mM_{\mA}^{\RPCD})^K \vx &= c_1 \lambda_1^K \vy^{\top} \vv_1 + c_2 \lambda_2^K \vy^{\top} \vv_2 \\
    &= \lambda_1^K \bigopen{c_1 \vy^{\top} \vv_1 + c_2 \bigopen{\frac{\lambda_2}{\lambda_1}}^K \vy^{\top} \vv_2} \\
    &\le \lambda_1^K \bigopen{|c_1|\|\vy\|\|\vv_1\| + |c_2|\|\vy\|\|\vv_2\|\bigopen{\frac{\lambda_2}{\lambda_1}}^K} \\
    &\le \lambda_1^K \|\vy\| (|c_1|+|c_2|).
\end{align*}
Since $c_1^2 + c_2^2 = 1$, we have $|c_1| + |c_2| \le \sqrt{2}$. Also, $\|\vy\| = \sqrt{n^2 + 1}$, so we get $\vy^{\top} (\mM_{\mA}^{\RPCD})^K \vx \le \lambda_1^K \sqrt{2(n^2+1)}$.

\subsection{Comparison of RCD and RPCD}

From the above, if $T = nK$, we have
\begin{align*}
    \bigopen{\frac{\E \bigclosed{\|\vx_T^{\text{RCD}}\|^2}}{\|\vx_0\|^2}}^{\frac{1}{K}} &\ge \left(1 - \frac{1}{n} + \frac{(1-\sigma)^2}{n}\right)^n \bigopen{\frac{1}{2}}^{\frac{1}{K}} \\
    \bigopen{\frac{\E \bigclosed{\|\vx_K^{\text{RPCD}}\|^2}}{\|\vx_0\|^2}}^{\frac{1}{K}} &\le \max\left\{\left(1-\frac{1}{n}\right)^n, \left(1-\frac{\sigma}{n}\right)^{2n}\right\} \bigopen{\sqrt{2(n^2+1)}}^{\frac{1}{K}}.
\end{align*}
Thus, if we can show that 
\begin{align}
\label{eq:newineq}
    \left(1 - \frac{1}{n} + \frac{(1-\sigma)^2}{n}\right)^n \bigopen{\frac{1}{2}}^{\frac{1}{K}} 
    &\ge \max\left\{\left(1-\frac{1}{n}\right)^n, \left(1-\frac{\sigma}{n}\right)^{2n}\right\} \bigopen{\sqrt{2(n^2+1)}}^{\frac{1}{K}},
\end{align}
then we can conclude that RPCD is faster than RCD.
The following is equivalent to (\ref{eq:newineq}):
\begin{align*}
    \frac{\left(1 - \frac{1}{n} + \frac{(1-\sigma)^2}{n}\right)^n}{\max\left\{\left(1-\frac{1}{n}\right)^n, \left(1-\frac{\sigma}{n}\right)^{2n}\right\}} 
    &\ge  \bigopen{2\sqrt{2(n^2+1)}}^{\frac{1}{K}}.
\end{align*}
Since the left-hand side is $\ge 1$, for any $\sigma \in (0, 1)$, there exists $K_0$ such that if $K \ge K_0$, then (\ref{eq:newineq}) holds.

Therefore, after $K_0$ epochs, RPCD is faster than RCD.


\end{document}